\documentclass[reqno,a4paper,twoside]{amsart}

\usepackage[a4paper,margin=0.65in,footskip=0.25in]{geometry}

\usepackage{graphicx}
\usepackage{enumitem}

\setenumerate{label=\textnormal{(\arabic*)}}

\allowdisplaybreaks[4]

\usepackage{amsmath,amssymb,dsfont,verbatim,bm,mathrsfs,stmaryrd,mathabx}

\usepackage{mathtools}
\usepackage[raggedright]{titlesec}

\usepackage[utf8]{inputenc}
\usepackage[T1]{fontenc}

\usepackage[x11names,dvipsnames]{xcolor}
	\definecolor{egyptianblue}{rgb}{0.06, 0.2, 0.65}
	\definecolor{green(ncs)}{rgb}{0.0, 0.62, 0.42}

\usepackage{hyperref}
\hypersetup{
	colorlinks=true,
	citecolor=green(ncs),
	linkcolor= egyptianblue,
	filecolor=magenta,
	urlcolor=cyan,
	unicode=true,
	pdfpagemode=FullScreen,
}

\titleformat{\chapter}[display]
{\normalfont\huge\bfseries}{\chaptertitlename\\thechapter}{20pt}{\Huge}
\titleformat{\section}
{\normalfont\Large\bfseries\center}{\thesection}{1em}{}
\titleformat{\subsection}
{\normalfont\large\bfseries}{\thesubsection}{1em}{}
\titleformat{\subsubsection}
{\normalfont\normalsize\bfseries}{\thesubsubsection}{1em}{}
\titleformat{\paragraph}[runin]
{\normalfont\normalsize\bfseries}{\theparagraph}{1em}{}
\titleformat{\subparagraph}[runin]
{\normalfont\normalsize\bfseries}{\thesubparagraph}{1em}{}

\titlespacing*{\chapter} {0pt}{50pt}{40pt}
\titlespacing*{\section} {0pt}{3.5ex plus 1ex minus .2ex}{2.3ex plus .2ex}
\titlespacing*{\subsection} {0pt}{3.25ex plus 1ex minus .2ex}{1.5ex plus .2ex}
\titlespacing*{\subsubsection}{0pt}{3.25ex plus 1ex minus .2ex}{1.5ex plus .2ex}
\titlespacing*{\paragraph} {0pt}{3.25ex plus 1ex minus .2ex}{1em}
\titlespacing*{\subparagraph} {\parindent}{3.25ex plus 1ex minus .2ex}{1em}

\usepackage{float, subfig}
\usepackage{amsrefs}

\usepackage{titletoc}

\contentsmargin{2.55em}
\dottedcontents{section}[1.5em]{}{2em}{1pc}
\dottedcontents{subsection}[4.35em]{}{2.8em}{1pc}
\dottedcontents{subsubsection}[7.6em]{}{3.2em}{1pc}
\dottedcontents{paragraph}[10.3em]{}{3.2em}{1pc}

\usepackage[columns=3,rule=0pt]{idxlayout}
\setindexprenote{For convenience of the reader we list below almost all symbols used in this work, indicating the page in which each of them is introduced. We only except those that are extremely standard.}

\makeindex

\newtheorem{theorem}{Theorem}[section]

\newtheorem{proposition}[theorem]{Proposition}
\newtheorem{corollary}[theorem]{Corollary}

\theoremstyle{definition}
\newtheorem{definition}[theorem]{Definition}

\newtheorem{notation}[theorem]{Notation}
\newtheorem{example}[theorem]{Example}

\theoremstyle{remark}
\newtheorem{remark}[theorem]{Remark}

\usepackage{tikz}
\usetikzlibrary{matrix,shapes}
\usetikzlibrary{cd}
\tikzcdset{diagrams={nodes={inner sep=2pt}}}
\usetikzlibrary{arrows}
\tikzcdset{arrow style=tikz, diagrams={>={Latex[round,length=3pt,width=2pt,inset=1.75pt]}}}

\DeclareMathOperator{\End}{End}

\DeclareMathOperator{\Ide}{Id}
\DeclareMathOperator{\Ext}{Ext}
\DeclareMathOperator{\EXT}{EXT}

\DeclareMathOperator{\Soc}{Soc}

\DeclareMathOperator{\ima}{Im}

\DeclareMathOperator{\Ho}{H}

\DeclareMathOperator{\Hom}{Hom}

\DeclareMathOperator{\Tot}{Tot}

\DeclareMathOperator{\sh}{sh}

\DeclareMathOperator{\sg}{sg}

\DeclareMathOperator{\ho}{H}

\newcommand{\xcirc}{\hspace{0pt}}

\newcommand{\ov}{\overline}
\newcommand{\ot}{\otimes}
\newcommand{\wh}{\widehat}
\newcommand{\wt}{\widetilde}

\numberwithin{equation}{section}

\DeclareMathAlphabet{\mathpzc}{OT1}{pzc}{m}{it}

\usepackage{mathtools}
\newtagform{brackets}{[}{]}
\usetagform{brackets}

\let\origmaketitle\maketitle
\def\maketitle{
	\begingroup
	\def\uppercasenonmath##1{} 
	\let\MakeUppercase\relax 
	\origmaketitle
	\endgroup
}

\usepackage{fancyhdr}
\usepackage[foot]{amsaddr}

\usepackage{adjustbox}

\begin{document}
	
\title{\large Extensions of a family of Linear Cycle Sets whose underlying group is a product of two cyclic groups}

\author[Jorge A. Guccione]{Jorge A. Guccione$^{1}$}
\address{$^{1}$ Pontificia Universidad Cat\'olica del Per\'u, Secci\'on Matem\'aticas, PUCP, Avenida Universitaria~1801, San Miguel, Lima~32, Per\'u.}
\email{vander@dm.uba.ar}
	
\author[Jorge A. Guccione]{Juan J. Guccione$^{1}$}
\email{jjguccione@gmail.com}
	
\author[Christian Valqui]{Christian Valqui$^{1}$}
\email{cvalqui@pucp.edu.pe}

\thanks{This study was funded by the Vicerrectorado de Investigaci\'on (VRI) at the Pontificia Universidad Cat\'olica del Per\'u through grant CAP 2025 - PI1273}

\subjclass[2020]{55N35, 20E22, 16T25}
\keywords{Linear cycle sets, extensions, cohomology}
	
\begin{abstract}
We use the linear cycle set cohomology in order to compute extensions of a non trivial linear cycle sets whose underlying additive group is a product of two cyclic groups.
\end{abstract}

\maketitle
	
\tableofcontents
	
\section*{Introduction}	

The study of set-theoretic solutions of the Yang--Baxter equation has developed into a rich interaction between algebra, combinatorics, and group theory. Following the systematic study of such solutions initiated by Etingof, Schedler, and Soloviev~\cite{ESS}, several algebraic structures associated with a solution have been investigated. In particular, Rump introduced braces in the study of involutive solutions and established fundamental connections between braces and the Yang--Baxter equation~\cites{R1,R2}. Subsequent work has explored the structure and classification of solutions through associated groups, skew polynomial rings, and matched pairs~\cites{GI1,GIVB,GIM,CJO}. The notion of retractability provided an important tool for analyzing the complexity of solutions and led to structural results for irretractable solutions~\cites{CJO,BCJO}. Other approaches have connected set-theoretic solutions with RC-calculus and Garside structures~\cites{De1,DDM}. These developments have contributed to a broad algebraic framework in which set-theoretic solutions can be studied through associated algebraic objects, among which braces and linear cycle sets play a particularly important role.

Linear cycle sets  are  closely related to left braces, and provide an algebraic framework in which the additive and multiplicative structures occurring in braces can be studied simultaneously. On one hand, extensions of braces were studied by Bachiller in~\cite{B}. On the other hand, extensions of linear cycle sets and their classification by cohomological methods have been developed in~\cite{GGV} and~\cite{LV}. In particular, when the kernel of the extension is a trivial linear cycle set, equivalence classes of extensions are described by a second cohomology group.

The purpose of this paper is to study extensions of a particular family of linear cycle sets whose underlying additive group is a product of two cyclic groups. More precisely, let $d_1,d_2$ be positive integers such that $d_1\mid d_2$, and let $\lambda\in\mathds{Z}_{d_1}$. We consider the linear cycle set
$$
H^{d_1d_2}_{\lambda}\coloneqq \bigl(\mathds{Z}_{d_1}\oplus\mathds{Z}_{d_2},\cdot\bigr),
$$
where
$$
(x,y)\cdot(u,v)\coloneqq (u-\lambda yv,v).
$$
We first determine when two members of this family are isomorphic and analyze the adjoint group associated with $H^{d_1d_2}_{\lambda}$. A relevant feature of this analysis is the condition
$$
d_1\mid \lambda\binom{d_2}{2},
$$
which determines whether the adjoint group can be identified with the underlying additive group $\mathds{Z}_{d_1}\oplus\mathds{Z}_{d_2}$.

A fundamental problem in the theory of linear cycle sets is the description and classification of extensions. Given a linear cycle set $H$ and an abelian group $I$ endowed with the trivial structure, extensions of $H$ by~$I$ can be described in terms of suitable actions $\blackdiamond\colon H \times I \to I$ and $\Yleft\colon I \times H \to I$, together with cohomological data arising from a filtered cochain complex. In this framework, it was proved that equivalence classes of extensions with fixed actions~$\blackdiamond$ and~$\Yleft$, correspond bijectively to the second cohomology group $H^2_{\blackdiamond,\Yleft}(H,I)$, reducing the classification problem to the determination of admissible actions and the computation of this group. Our main objective is to carry out this task for $H\coloneqq H^{d_1d_2}_{\lambda}$. For this we first characterize the actions
$$
\blackdiamond\colon H^{d_1d_2}_{\lambda}\times I\longrightarrow I \qquad\text{and}\qquad \Yleft\colon I\times H^{d_1d_2}_{\lambda}\longrightarrow I
$$
in terms of endomorphisms of $I$. The resulting descriptions depend on the arithmetic of $d_1$, $d_2$ and $\lambda$ (in particular on the corresponding $2$-adic valuations).

A second motivation for this work is methodological. In~\cite{GGV2}, explicit methods were developed for computing the cohomology of linear cycle sets whose adjoint group is finite abelian. These methods give concrete descriptions of the cochain representatives and of the maps whose kernels and images determine the second cohomology. The present paper provides a nontrivial family in which these methods can be applied in detail. Thus, besides obtaining cohomological information for the family $H^{d_1d_2}_{\lambda}$, we aim to illustrate how the general computational procedure of~\cite{GGV2} can be used in a specific family where the structure of the linear cycle set and its adjoint group can be described explicitly. More precisely, after recalling the necessary definitions and the cohomological description of extensions, we study the linear cycle sets $H^{d_1d_2}_{\lambda}$ and their adjoint groups. We then determine the possible actions on $I$ and use them to express the relevant second cohomology in terms of explicit homomorphisms
$$
T_1,\quad T_2,\quad T_3\quad\text{and}\quad S.
$$
These maps are obtained from the general construction of~\cite{GGV2}; the subsequent sections specialize their formulas to the cases under consideration. When
$$
d_1\mid\lambda\binom{d_2}{2},
$$
we obtain a general description under suitable hypotheses and study several important cases. When this divisibility condition does not hold, the same computational framework is applied to several families distinguished by the $2$-adic valuations of $d_1$ and $d_2$ and by the parity of $\lambda$.

The paper is organized as follows. Section~1 recalls the necessary facts about braces, linear cycle sets, extensions, and the cohomology of linear cycle sets. In Section~2 we introduce the family $H^{d_1d_2}_{\lambda}$ and study its basic properties and adjoint group. Section~3 determines the actions $\blackdiamond$ and $\Yleft$ on an abelian group $I$. Subsection~4.1 applies the computational machinery of~\cite{GGV2} to compute the relevant cohomology when $d_1\mid\lambda\binom{d_2}{2}$. Finally, Subsection~4.2 treats several examples in the complementary case
$d_1\nmid\lambda\binom{d_2}{2}$.

\section{Preliminaries}\label{section:Preliminaries}

In this paper, we work within the category of abelian groups. All maps are understood to be $\mathbb{Z}$-linear, the symbol~$\ot$ denotes the tensor product $\ot_{\mathbb{Z}}$, and $\Hom$ refers to $\Hom_{\mathbb{Z}}$. Moreover we shall employ the Iverson bracket that maps any statement $P$ to the function $[P]$, of the free variables of $P$, defined by
$$
[P]\coloneqq \begin{cases} 1 & \text{if $P$ is true,}\\ 0 & \text{if $P$ is false.} \end{cases}
$$

\subsection[Braces and linear cycle sets]{Braces and linear cycle sets}\label{Braces and linear cycle sets}
A {\em left brace} is an abelian group $(A,+)$ endowed with an additional group operation $(a,b)\mapsto a\times b$, such that
\begin{equation}\label{compatibilidad braza}
a\times (b+c)= a\times b+a\times c-a\quad\text{for all $a,b,c\in A$.}
\end{equation}
The two group structures necessarily share the same neutral element, denoted by $0$. Of course, the additive inverse of $a$ is denoted by $-a$, while its multiplicative inverse is denoted by $\cramped{a^{\times^{-1}}}$. In particular $\cramped{0^{\times^{-1}}} = 0$. Let $A$ and $B$ be left braces. A map $f\colon A\to B$ is an homomorphism of left braces if
$$
f(a+a') = f(a)+f(a')\quad\text{and}\quad f(a\times a') = f(a)\times f(a')\qquad\text{for all $a,a'\in A$.}
$$
A {\em linear cycle set} is an abelian group $(A,+)$ endowed with a binary operation $\cdot$, such that
\begin{equation}\label{compatibilidades cdot suma y suma cdot}
a\cdot (b+c)=a\cdot b + a\cdot c\quad\text{and}\quad (a+b)\cdot c = (a\cdot b)\cdot (a\cdot c) \qquad\text{for all $a,b,c\in A$,}
\end{equation}
and moreover, for each $b\in A$, the left translation $a\mapsto b\cdot a$, is bijective. It is well known that each linear cycle set is a cycle set. That is:
\begin{equation}
(a\cdot b)\cdot (a\cdot c)=(b\cdot a)\cdot (b\cdot c)\quad\text{for all $a,b,c\in A$}\label{condicion de cycle set}.
\end{equation}
Let $A$ and $B$ be linear cycle sets. A map $f\colon A\to B$ is an homomorphism of linear cycle sets if
$$
f(a+a') = f(a)+f(a')\quad\text{and}\quad f(a\cdot a')=f(a)\cdot f(a')\qquad\text{for all $a,a'\in A$.}
$$
The notions of left braces and linear cycle sets were introduced by Rump, who proved that they are equivalent via the relations
\begin{equation}
a\cdot b = \cramped{a^{\times^{-1}}}\times (a+b)\quad \text{and}\quad a\times b = a+{}^ab,\label{equivalencia linear cycle set braza}
\end{equation}
where the map $b\mapsto {}^ab$ is the inverse of the map $b\mapsto a\cdot b$.

\smallskip

Each additive abelian group $A$ is a linear cycle set via $a\cdot b\coloneqq b$. These ones are the so called {\em trivial linear cycle sets}. Note that by~\eqref{equivalencia linear cycle set braza}, a linear cycle set $A$ is trivial if and only if $a\times b = a+b$, for all $a,b\in A$.

\smallskip

Let $A$ be a linear cycle set. An {\em ideal} of $A$ is  a subgroup $I$ of $(A,+)$ such that $a\cdot y \in I$ and $y\cdot a - a\in I$, for all~$a\in A$ and $y\in I$. An ideal $I$ of $A$ is called a {\em central ideal} if $a\cdot y=y$ and $y\cdot a=a$, for all $a\in A$ and $y\in I$. The {\em socle} of $A$ is the ideal $\Soc(A)$, of all $y\in A$ such that $y\cdot a = a$, for all $a\in A$. The {\em center} of $A$ is the set $Z(A)$, of all~$y\in \Soc(A)$ such that $a\cdot y = y$, for all $a\in A$. Thus, the center of $A$ is a central ideal of $A$, and each central ideal of $A$ is included in $Z(A)$.

\begin{notation}\label{Yleft} Given an ideal $I$ of a linear cycle set $A$, for each $y\in I$ and $a\in A$, we set $y\Yleft a\coloneqq y\cdot a-a$.
\end{notation}

\subsection[Extensions of linear cycle sets]{Extensions of linear cycle sets}\label{Extensions of linear cycle sets}
Let $I$ and $H$ be additive abelian groups. Recall that an extension $(\iota, B, \pi)$, of $H$ by $I$, is a short exact sequence
\begin{equation}
    \begin{tikzpicture}
    \begin{scope}[yshift=0cm,xshift=0cm, baseline]
			\matrix(BPcomplex) [matrix of math nodes, row sep=0em, text height=1.5ex, text
			depth=0.25ex, column sep=2.5em, inner sep=0pt, minimum height=5mm, minimum width =6mm]
			{0 & I & B & H & 0,\\};
			\draw[-{latex}] (BPcomplex-1-1) -- node[above=1pt,font=\scriptsize] {} (BPcomplex-1-2);
			\draw[-{latex}] (BPcomplex-1-2) -- node[above=1pt,font=\scriptsize] {$\iota$} (BPcomplex-1-3);
			\draw[-{latex}] (BPcomplex-1-3) -- node[above=1pt,font=\scriptsize] {$\pi$} (BPcomplex-1-4);
			\draw[-{latex}] (BPcomplex-1-4) -- node[above=1pt,font=\scriptsize] {} (BPcomplex-1-5);
    \end{scope}\label{sucesion exacta corta de grupos abelianos}
    \end{tikzpicture}
\end{equation}
of additive abelian groups. Recall also that two such extensions, $(\iota, B, \pi)$ and $(\iota', B', \pi')$, are equivalent if there~exists a morphism $\phi\colon B\to B'$ such that $\pi'\xcirc \phi=\pi$ and $\phi\xcirc \iota=\iota'$. Then, $\phi$ is necessarily an isomorphism.

\smallskip

Let $(\iota, B, \pi)$ be an extensions of $H$ by $I$ and let $s\colon H\to B$ be a set theoretic section of $\pi$, with $s(0)=0$. Then, for each $b\in B$, there exist unique $y\in I$ and $h\in H$ such that $b=\iota(y)+s(h)$. Moreover, since~$\iota$ and $\pi$ are morphisms and $\iota$ is injective, there exists a unique map $\beta\colon H\times H\to I$ such that
\begin{equation}\label{construccion de beta}
\iota(y)+s(h) + \iota(y')+s(h')= \iota(y) + \iota(y')+ s(h) + s(h')= \iota(y+y' + \beta(h,h')) + s(h+h').
\end{equation}
A direct computation shows that
\begin{align}
& \beta(h,h')+\beta(h+h',h'') = \beta(h',h'')+\beta(h,h'+h''),\label{condicion de cociclo}\\
&\beta(h,0) = \beta(0,h) = 0\qquad\text{and}\qquad \beta(h,h') = \beta(h',h),\label{normalidad y cociclo abeliano}
\end{align}	
for all $h,h',h''\in H$. Conversely, given a map $\beta\colon H\times H\to I$ satisfying~\eqref{condicion de cociclo} and~\eqref{normalidad y cociclo abeliano}, we have an extension
\begin{equation}\label{sucesion exacta corta de grupos abelianos 2}
\begin{tikzpicture}
\begin{scope}[yshift=0cm,xshift=0cm, baseline]
			\matrix(BPcomplex) [matrix of math nodes, row sep=0em, text height=1.5ex, text
			depth=0.25ex, column sep=2.5em, inner sep=0pt, minimum height=5mm, minimum width =6mm]
			{0 & I & I\times_{\beta} H & H & 0,\\};
			\draw[->] (BPcomplex-1-1) -- node[above=1pt,font=\scriptsize] {} (BPcomplex-1-2);
			\draw[->] (BPcomplex-1-2) -- node[above=1pt,font=\scriptsize] {$\iota$} (BPcomplex-1-3);
			\draw[->] (BPcomplex-1-3) -- node[above=1pt,font=\scriptsize] {$\pi$} (BPcomplex-1-4);
			\draw[->] (BPcomplex-1-4) -- node[above=1pt,font=\scriptsize] {} (BPcomplex-1-5);
\end{scope}
\end{tikzpicture}
\end{equation}
of $H$ by $I$, in which  $I\times_{\beta} H$ is $I\times H$, endowed with the sum
\begin{equation*}
(y,h)+(y',h')\coloneqq (y+y'+\beta(h,h'),h+h'),
\end{equation*}
and the maps $\iota$ and $\pi$ are the canonical ones. For the sake of simplicity, here and subsequently, we will write $y$ instead of $(y,0)$ and we set $w_h\coloneqq (0,h)$. So, $(y,h)=(y,0)+(0,h)=y+w_h$. With this notation $\iota(y) = y$, $\pi(y+w_h) =h$ and the sum in $I\times_{\beta} H$ reads
\begin{equation}\label{formula para suma en I times H}
(y+w_h) + (y'+w_{h'}) = y+y'+\beta(h,h')+ w_{h+h'}.
\end{equation}
Each extension of abelian groups $(\iota,B,\pi)$, endowed with a set theoretic section $s$ of $\pi$ such that $s(0)=0$, is equivalent to the extension~\eqref{sucesion exacta corta de grupos abelianos 2} associated with the map $\beta$ obtained in~\eqref{construccion de beta}. Moreover, it is well known that two extensions $(\iota,I\times_{\beta} H,\pi)$ and $(\iota,I\times_{\beta'} H,\pi)$ are equivalent if and only if there exists a map $\varphi\colon H\to I$, such that
\begin{equation}\label{equivalencia a nivel de qrupos}
\varphi(0) = 0\qquad\text{and}\qquad \varphi(h)-\varphi(h+h')+\varphi(h')=\beta(h,h') - \beta'(h,h')\quad\text{for all $h,h'\in H$.}
\end{equation}
Concretely, the map $\phi\colon I\times_{\beta} H\to I\times_{\beta'} H$ performing the equivalence is given by $\phi(y+w_h)\coloneqq y+\varphi(h)+w_h$.

\begin{definition}\label{extensiones de linear cycle sets} Let $I$ and $H$ be linear cycle sets. An {\em extension} $(\iota,B, \pi)$, of $H$ by $I$, is a short exact sequence like~\eqref{sucesion exacta corta de grupos abelianos}, in which $B$ is a linear cycle set and both $\iota$ are $\pi$ are linear cycle sets morphisms. Two extensions $(\iota,B,\pi)$ and $(\iota',B',\pi')$, of $H$ by $I$, are {\em equivalent} if there exists a linear cycle set morphism  $\phi\colon B\to B'$ such that $\pi'\xcirc \phi=\pi$ and $\phi\xcirc \iota=\iota'$. As in the case of extension of groups, necessarily $\phi$ is an isomorphism. We let $\EXT(H;I)$ denote the set of equivalence classes of extensions $(\iota,B,\pi)$, of $H$ by $I$.
\end{definition}

Assume that~\eqref{sucesion exacta corta de grupos abelianos} is an extension of linear cycle sets and let $s$ be a section of $\pi$ with $s(0)=0$. For the sake of simplicity, in the rest of this section we identify $I$ with $\iota(I)$ and we write $y$ instead of $\iota(y)$. Let
$$
\blackdiamond\colon H\times I\to I\quad\text{and}\quad \Yleft \colon I\times H\to I
$$
be the maps defined by $h\blackdiamond y \coloneqq s(h)\cdot y$ and $y\Yleft h\coloneqq y\Yleft s(h) = y\cdot s(h)-s(h)$, respectively. From now on, we assume that $I$ is trivial. By~\cite{GGV}*{Proposition~3.3}, this implies that $\blackdiamond$ and $\Yleft$ do not depend on~$s$. Since $\pi\colon B\to H$ is a linear cycle set morphism, there is a map $f\colon H\times H\to I$, such that:
\begin{equation*}
(y+s(h))\cdot (y'+s(h'))= h\blackdiamond y' + f(h,h') + h\blackdiamond y \Yleft h\cdot h' + s(h\cdot h').
\end{equation*}

\begin{notation}\label{notacion ext} Fix maps $\blackdiamond\colon H\times I\to I$ and $\Yleft \colon I\times H\to I$. We let $\Ext_{\blackdiamond,\Yleft}(H;I)$ denote the set of equivalence classes of  extensions $(\iota,B,\pi)$, of $H$ by $I$, such that $s(h)\cdot y = h\blackdiamond y$ and $y\cdot s(h)-s(h) = y\Yleft h$, for every section $s$ of $\pi$ satisfying $s(0) = 0$. When $\Yleft$ is the zero map, we write $\Ext_{\blackdiamond}(H;I)$ instead of $\Ext_{\blackdiamond,0}(H;I)$.
\end{notation}

\subsection{Building extensions of linear cycle sets}\label{Building extensions of linear cycle sets}
Let $(\iota, I\times_{\beta} H, \pi)$ be the short exact sequence of abelian groups~\eqref{sucesion exacta corta de grupos abelianos 2}. We assume that $H$ is a linear cycle set and we fix maps $\blackdiamond\colon H\times I\to I$, $\Yleft \colon I\times H\to I$ and $f\colon H\times H\to I$. We let $I\times_{\beta,f}^{\blackdiamond,\Yleft} H$ denote $I\times_{\beta} H$ endowed with the binary operation $\cdot$ defined by
\begin{equation}\label{formula para cdot en I times H'}
(y+w_h)\cdot (y'+w_{h'})= h\blackdiamond y' + f(h,h') + h\blackdiamond y \Yleft h\cdot h' + w_{h\cdot h'}.
\end{equation}

\begin{proposition}\label{condiciones para extension} Let $\triangleleft\colon I \times H\to I$ be the map defined by $y\triangleleft h\coloneqq  h \blackdiamond (y - y\Yleft h)$. For each $y\in I$ and $h\in H$, set~$y^h\coloneqq y-y\Yleft h$. By \cite{GGV}*{Theorem~5.6 and Remark~5.12} we know that $(\iota, I\times_{\beta,f}^{\blackdiamond,\Yleft} H, \pi)$ is an extension of linear cycle sets (where $I$ has the trivial structure) if and only if the following conditions are fulfilled:
\begin{align}
& y \triangleleft h\times h' = (y \triangleleft h) \triangleleft h',\quad (y + y')\triangleleft h = y\triangleleft h + y'\triangleleft h,\quad y \triangleleft 0 = y,\label{eqq 1}\\
& h'\times h\blackdiamond y  = h\blackdiamond (h' \blackdiamond y),\quad h\blackdiamond (y + y') = h\blackdiamond y + h\blackdiamond y', \quad 0\blackdiamond y = y\label{eqq 2}\\
& y^{h+h'} + y = y^h + y^{h'}\quad \text{and}\quad 0^h = 0,\label{eqq 3}\\
& h\blackdiamond  \beta(h',h'') + f(h,h'+h'') = f(h,h')+f(h,h'')+\beta(h\cdot h',h\cdot h'')\label{eqq 4}
\shortintertext{and}
&\begin{aligned}\label{eqq5}
f(h+h',h'') + (h+h')\blackdiamond \beta(h,h')\Yleft (h+h')\cdot h'' & = (h\cdot h')\blackdiamond f(h,h'') + f(h\cdot h',h\cdot h'')\\
&+ (h\cdot h')\blackdiamond f(h,h') \Yleft (h\cdot h')\cdot(h\cdot h'').
\end{aligned}
\end{align}
for all $h,h'\in H$ and $y,y'\in I$.
\end{proposition}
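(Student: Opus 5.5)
The plan is a direct verification on $B\coloneqq I\times_{\beta,f}^{\blackdiamond,\Yleft} H$, using the normal form $y+w_h$. The additive group is already abelian by~\eqref{condicion de cociclo} and~\eqref{normalidad y cociclo abeliano}. The maps $\iota$ and $\pi$ are group morphisms. By~\eqref{formula para cdot en I times H'}, $\pi$ respects $\cdot$. The map $\iota$ respects $\cdot$ exactly when $y\cdot y'=y'$, which reduces to $0\blackdiamond y'=y'$, $f(0,0)=0$ and $0\blackdiamond y\Yleft 0=0$. So everything comes down to the three linear cycle set axioms for $B$: left distributivity, the identity $(a+b)\cdot c=(a\cdot b)\cdot(a\cdot c)$, and bijectivity of left translations.

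The first step is left distributivity, $a\cdot(b+c)=a\cdot b+a\cdot c$.
\begin{itemize}
\item Take $a=w_h$ and $b,c\in I$. This gives additivity of $h\blackdiamond-$, the second identity in~\eqref{eqq 2}.
\item Take $a=w_h$ and $b=w_{h'}$, $c=w_{h''}$. Expanding with~\eqref{formula para suma en I times H} and using that $h\cdot-$ is additive in $H$ gives exactly~\eqref{eqq 4}.
\item For general $a=y+w_h$, the extra term $h\blackdiamond y\Yleft h\cdot h'$ appears once on the left and twice on the right. This forces $f(h,0)=0$ together with the normalization $0^h=0$ appearing in~\eqref{eqq 3}, after using~\eqref{eqq 4} with $h'=h''=0$.
\end{itemize}

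The second step is the identity $(a+b)\cdot c=(a\cdot b)\cdot(a\cdot c)$. I split it by the $I$-components of $a$, $b$, $c$.
\begin{itemize}
\item Take $a,b\in I$ and $c=w_h$. This gives additivity of $-\Yleft h$ and, by rewriting, the relation~\eqref{eqq 3}, in the form $y^{h+h'}+y=y^h+y^{h'}$ after exchanging roles.
\item Take $a=w_{h'}$, $b=w_h$ and $c\in I$. Here $(h'+h)$ acting on $y$ must agree with $(h'\cdot h)\blackdiamond(h'\blackdiamond y)$. Using the brace relation~\eqref{equivalencia linear cycle set braza}, with $h'\times h = h'+{}^{h'}h$, I reparametrize this as $h'\times h\blackdiamond y=h\blackdiamond(h'\blackdiamond y)$, the first identity in~\eqref{eqq 2}.
\item Mixed terms with $a\in I$ and $b,c$ in $s(H)$ produce the compatibility of $\triangleleft$ with $\times$, which is~\eqref{eqq 1}. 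The reason is that $y\cdot w_h=y\Yleft h+w_h$, and composing with the $\blackdiamond$ action converts $y\Yleft h$ into $y\triangleleft h$.
\item Finally, $a,b,c\in s(H)$ gives~\eqref{eqq5} after collecting the $\beta$ and $f$ terms.
\end{itemize}

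The third step is bijectivity. The map $y'+w_{h'}\mapsto (y+w_h)\cdot(y'+w_{h'})$ is triangular: it is bijective on the $H$-coordinate, and on the fiber it is $y'\mapsto h\blackdiamond y'+\text{const}$. So it is bijective exactly when $h\blackdiamond-$ is bijective. That follows from~\eqref{eqq 2}, since $h\blackdiamond-$ has inverse $h^{\times^{-1}}\blackdiamond-$. Conversely, bijectivity in $B$ forces this.

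Every step above is an equivalence: each axiom specialized to the indicated elements yields the condition, and the conditions recombine to give the axiom in general because $\cdot$ and $+$ are determined on normal forms. This yields the ``if and only if'' statement. Since the statement cites \cite{GGV}*{Theorem~5.6 and Remark~5.12}, this is the argument I would follow.

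I expect the main obstacle to be the second step: translating the $\blackdiamond$ and $\Yleft$ conditions that come out of the cycle set axiom into the $\times$-action form of~\eqref{eqq 1} and~\eqref{eqq 2}. This requires careful use of~\eqref{equivalencia linear cycle set braza}. The bookkeeping in~\eqref{eqq5} is lengthy but mechanical.
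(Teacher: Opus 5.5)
The paper gives no proof of its own here; it quotes Theorem~5.6 and Remark~5.12 of [GGV]. A direct verification on normal forms is a sensible route, and several of your specializations are right:
\begin{itemize}
\item $a=w_h$, $b,c\in I$ gives additivity of $h\blackdiamond-$;
\item $a,b,c\in s(H)$ gives \eqref{eqq 4} from the first axiom and \eqref{eqq5} from the second;
\item $a=w_{h'}$, $b=w_h$, $c\in I$ gives $(h'+h)\blackdiamond y=(h'\cdot h)\blackdiamond(h'\blackdiamond y)$, hence the first identity of \eqref{eqq 2} after $h\mapsto{}^{h'}h$;
\item the triangular bijectivity argument is fine.
\end{itemize}
But your treatment of \eqref{eqq 3} is wrong in both places where it appears.

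The specialization $a,b\in I$, $c=w_h$ of $(a+b)\cdot c=(a\cdot b)\cdot(a\cdot c)$ only gives $(y+y')\Yleft h=y\Yleft h+y'\Yleft h$. That is additivity in the $I$-slot, which together with additivity of $h\blackdiamond-$ is the second identity of \eqref{eqq 1}. Only one element of $H$ occurs, so no exchange of roles can yield $y^{h+h'}+y=y^h+y^{h'}$, which is additivity in the $H$-slot. That identity comes from the computation you dismissed as a normalization. For $a=y+w_h$, $b=w_{h'}$, $c=w_{h''}$, left distributivity has $(h\blackdiamond y)\Yleft\bigl(h\cdot(h'+h'')\bigr)$ on the left and $(h\blackdiamond y)\Yleft(h\cdot h')+(h\blackdiamond y)\Yleft(h\cdot h'')$ on the right. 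These are not one versus two copies of the same term, and their equality for all $y,h,h',h''$ is exactly \eqref{eqq 3}. This matters beyond labelling. The case $a=y$, $b=w_h$, $c=w_k$ gives $(y\triangleleft h)\Yleft(h\cdot k)=h\blackdiamond(y\Yleft k)$. Identifying this with $y\triangleleft(h\times h')=(y\triangleleft h)\triangleleft h'$ requires $y\Yleft(h\times h')=y\Yleft h+y\Yleft{}^{h}h'$, i.e.\ it uses \eqref{eqq 3}.

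Second, the phrase ``the conditions recombine \dots\ because $\cdot$ and $+$ are determined on normal forms'' does not justify the ``if'' direction for the second axiom. The identity $(a+b)\cdot c=(a\cdot b)\cdot(a\cdot c)$ is additive in $c$ but not in $a$ or $b$, so the cases $a,b\in I\cup s(H)$ do not by themselves cover $a=y+w_h$, $b=y'+w_{h'}$. You must expand with general $a,b$ and $c\in\{y'',w_{h''}\}$.
\begin{itemize}
\item For $c=y''$ the two sides are $(h+h')\blackdiamond y''$ and $(h\cdot h')\blackdiamond(h\blackdiamond y'')$, equal by the action identity.
\item For $c=w_{h''}$, subtract \eqref{eqq5} and separate terms using additivity in the $I$-slots. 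The $y'$-terms cancel by the same action identity together with $(h+h')\cdot h''=(h\cdot h')\cdot(h\cdot h'')$. The $y$-terms reduce to $(z\triangleleft k)\Yleft(k\cdot k'')=k\blackdiamond(z\Yleft k'')$ with $z=h\blackdiamond y$, $k=h\cdot h'$, $k''=h\cdot h''$, which is an instance of the $a\in I$ condition.
\end{itemize}

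Finally, in the ``only if'' direction the normalizations are not forced by $B$ alone. Take $H=I=\mathds{Z}_2$ trivial, $\beta=0$, $h\blackdiamond y=y+1$, $y\Yleft k=1$, $f=0$. Then $(y+w_h)\cdot(y'+w_{h'})=y'+w_{h'}$, so $B$ is an extension, yet $0\blackdiamond y\neq y$. The statement tacitly assumes that $\blackdiamond$ and $\Yleft$ are the maps induced by the extension, $w_h\cdot y=h\blackdiamond y$ and $y\cdot w_h-w_h=y\Yleft h$, as in Notation~\ref{notacion ext}, and you must use this. With these repairs your direct verification goes through.
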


\begin{remark}\label{otra def de y^h} Note that if~\eqref{eqq 2} is satisfied, then $y^h = h^{\times -1}\blackdiamond (y \triangleleft h)$.
\end{remark}

\begin{remark}\label{linealidad de Yleft} Since $y\Yleft h = y-y^h = y - h^{\times -1}\blackdiamond (y\triangleleft h)$, by~\eqref{eqq 1}, \eqref{eqq 2} and~\eqref{eqq 3} we obtain that $\Yleft$ is bilinear. Consequently, $0\Yleft h = y\Yleft 0 = 0$. Moreover, by~\eqref{eqq 3} we have $y^0 = y$.
\end{remark}

\begin{remark}\label{equivalecia entre extensiones producto, caso I trivial} Two extensions $(\iota,I\times_{\beta,f}^{\blackdiamond,\Yleft} H,\pi)$ and $(\iota,I\times_{\beta',f'}^{\blackdiamond,\Yleft} H,\pi)$, of $H$ by $I$, are equivalent if and only if there exists a map $\varphi\colon H\to I$ satisfying~\eqref{equivalencia a nivel de qrupos} and
\begin{equation}\label{tercera condicion caso I trivial}
\varphi(h\cdot h') + f(h,h') = h\blackdiamond \varphi(h') + f'(h,h') + h\blackdiamond \varphi(h)\Yleft h\cdot h'\quad\text{for all $h,h'\in H$.}
\end{equation}
\end{remark}

\begin{remark} Let $I$ be a trivial linear cycle set, and let $\iota\colon I\to B$ and $\pi\colon B\to H$ be morphisms of linear cycle sets. Assume that $(\iota,B,\pi)$ is an extension of linear cycle sets, and let $s$ be a set-theoretic section of $\pi$ such that $s(0)=0$. Let $\beta$ be the map obtained in~\eqref{construccion de beta}, and let $\blackdiamond$, $\Yleft$ and $f$ be the maps defined above Notation~\ref{notacion ext}. Consider the short exact sequence of abelian groups~\eqref{sucesion exacta corta de grupos abelianos 2}, where $I\times_{\beta} H$ is endowed with the operation $\cdot$ given in~\eqref{formula para cdot en I times H'}. The map
$$
\phi\colon I\times_{\beta} H\to B,
$$
defined by $\phi(y+w_h)\coloneqq \iota(y)+s(h)$, is a bijection compatible with both $+$ and~$\cdot$. Hence,~\eqref{sucesion exacta corta de grupos abelianos 2} is an extension of~lin\-ear cycle sets, and $\phi$ is an equivalence of extensions.
\end{remark}

\begin{remark}\label{condicion para I incluido en Soc} Since $I$ is trivial, $I\subseteq \Soc\bigl(I\times_{\beta,f}^{\blackdiamond,\Yleft} H\bigr)$ if and only if $y\Yleft h= y\cdot w_h-w_h = 0$, for all $y\in I$ and~$h\in H$. Since $y^h=y-y\Yleft h$, this is equivalent to $y^h=y$, for all $y\in I$ and $h\in H$. Moreover, in this case $y\triangleleft h=h\blackdiamond y$, and condition~\eqref{eqq5} becomes
\begin{equation}\label{condicion equivalente compatibilidad suma cdot con hmassimp3 caso simple}
f(h+h',h'')  = (h\cdot h')\blackdiamond f(h,h'') + f(h\cdot h',h\cdot h''),
\end{equation}
while condition~\eqref{tercera condicion caso I trivial} becomes
$$
\varphi(h\cdot h') + f(h,h') = h\blackdiamond \varphi(h') + f'(h,h').
$$
Consequently, conditions~\eqref{eqq 1}--\eqref{eqq5} reduce to~\eqref{eqq 2}, \eqref{eqq 4} and~\eqref{condicion equivalente compatibilidad suma cdot con hmassimp3 caso simple}.
\end{remark}

\subsection[Cohomology of linear cycle sets]{Cohomology of linear cycle sets}\label{Cohomology of linear cycle sets}

Let $H$ be a linear cycle set. For each $s\ge 1$, we let $\mathds{Z}[H^s]$ denote the free abelian group with basis $H^s$. Let $\sh(\mathds{Z}[H^s])$ be the subgroup of $\mathds{Z}[H^s]$, generated by the shuffles
\begin{equation*}
\sum_{\sigma\in \sh_{l,s-l}} \sg(\sigma)(h_{\sigma^{-1}(1)},\dots, h_{\sigma^{-1}(s)}),
\end{equation*}
taken for all $1\le l<s$. Here $\sh_{l,s-l}$ is the subset of all the permutations $\sigma$ of $s$ elements satisfying $\sigma(1)<\cdots<\sigma(l)$ and $\sigma(l+1)<\cdots<\sigma(s)$. For each $r\ge 0$ and $s\ge 1$, let $C^N_{rs}(H)\coloneqq \mathds{Z}[H^{r}]\ot \ov{M}(s)$, where $\ov{M}(s)\coloneqq \frac{\mathds{Z}[H^s]}{\sh(\mathds{Z}[H^s)]}$. Given $h_1,\dots,h_s\in H$, we let $[h_1,\dots,h_s]$ denote the class of $(h_1,\dots,h_s)$ in $\ov{M}(s)$. Let $I$ be an abelian group endowed with a map $\blackdiamond \colon H\times I \to I$ satisfying conditions~\eqref{eqq 2}. In this subsection we consider~$I$ as a linear cycle set via the trivial operation $a\cdot b\coloneqq b$. Let $\wh{C}_N^{rs}(H,I)$ be the subgroup of $\Hom(C^N_{rs}(H),I)$, con\-sisting of the morphisms $f\colon C^N_{rs}(H)\to I$, such that $f(h_1,\dots, h_r\ot [h_{r+1},\dots,h_{r+s}]) = 0$, if some $h_i=0$. Notice that $\wh{C}_N^{rs}(H,I)$ is isomorphic to the subgroup of $\Hom(\mathds{Z}[H^{r+s}], I)$, consisting of all the morphisms $f$ that vanish on all the $(r+s)$-tuples with some coordinate equal to zero, and such that
\begin{equation*}
\sum_{\sigma\in \sh_{l,s-l}} \sg(\sigma) f(h_1,\dots,h_r, h_{r+\sigma^{-1}(1)},\dots, h_{r+\sigma^{-1}(s)})=0,
\end{equation*}
for all $h_1,\dots,h_{r+s}\in H$ and $1\le l<s$. Consider the diagram $\bigl(\wh{C}_N^{**}(H,I),\partial_{\mathrm{h}}^{**}, \partial_{\mathrm{v}}^{**}\bigr)$, where
\begin{equation*}
\partial_{\mathrm{h}}^{r+1,s}\colon \wh{C}_N^{rs}(H,I)\to \wh{C}_N^{r+1,s}(H,I)\quad\text{and}\quad \partial_{\mathrm{v}}^{r,s+1}\colon \wh{C}_N^{rs}(H,I)\to \wh{C}_N^{r,s+1}(H,I)
\end{equation*}
are the maps defined by
\begin{align*}
& \partial_{\mathrm{h}}^{r+1,s}(f)(h_1,\dots, h_{r+1},h_{r+2},\dots,h_{r+s+1}) \coloneqq f(h_1\cdot h_2,\dots,h_1\cdot h_{r+1},h_1\cdot h_{r+2},\dots,h_1\cdot h_{r+s+1})\\
&\phantom{\partial_{\mathrm{h}}^{r+1,s}(f)(h_1,\dots,h_{r+1}} + \sum_{\jmath=1}^{r} (-1)^{\jmath} f(h_1,\dots,h_{\jmath-1},h_{\jmath}+h_{\jmath+1}, h_{\jmath+2},\dots,h_{r+1},h_{r+2},\dots, h_{r+s+1})\\
& \phantom{\partial_{\mathrm{h}}^{r+1,s}(f)(h_1,\dots,h_{r+1}} + (-1)^{r+1} \bigl((h_1+\cdots + h_r)\cdot h_{r+1}\bigr) \blackdiamond f(h_1,\dots,h_r,h_{r+2},\dots, h_{r+s+1})
\shortintertext{and}
& \partial_{\mathrm{v}}^{r,s+1}(f)(h_1,\dots,h_r,h_{r+1},\dots,h_{r+s+1}) \coloneqq (-1)^r f(h_1,\dots,h_r,h_{r+2},\dots,h_{r+s+1})\\
& \phantom{\partial_{\mathrm{v}}^{r,s+1}(f)(h_1,\dots,h_r} + \sum_{\jmath=r+1}^{r+s} (-1)^{\jmath}f(h_1,\dots,h_r,h_{r+1},\dots,h_{\jmath-1}, h_{\jmath}+h_{\jmath+1},h_{\jmath+2},\dots, h_{r+s+1})\\
& \phantom{\partial_{\mathrm{v}}^{r,s+1}(f)(h_1,\dots,h_r} + (-1)^{r+s+1} f(h_1,\dots,h_r,h_{r+1},\dots,h_{r+s}).
\end{align*}
Note that $\bigl(\wh{C}_N^{**}(H,I),\partial_{\mathrm{h}}^{**}, \partial_{\mathrm{v}}^{**}\bigr)$ is the diagram obtained from Figure~\ref{ddiagonales}, by deleting the arrows $D$.

\begin{theorem}[\cite{GGV}*{Theorem 6.1}]\label{principal} The diagram $\bigl(\wh{C}_N^{**}(H,I),\partial_{\mathrm{h}}^{**},\partial_{\mathrm{v}}^{**}\bigr)$ is a double cochain complex.
\end{theorem}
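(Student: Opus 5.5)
The claim is that the diagram is a double cochain complex. Three identities are needed: $\partial_{\mathrm{v}}\xcirc\partial_{\mathrm{v}}=0$, $\partial_{\mathrm{h}}\xcirc\partial_{\mathrm{h}}=0$, and the two squares commute, i.e. $\partial_{\mathrm{h}}^{r+1,s+1}\xcirc\partial_{\mathrm{v}}^{r,s+1}=\partial_{\mathrm{v}}^{r+1,s+1}\xcirc\partial_{\mathrm{h}}^{r+1,s}$. (If the convention requires anticommutation, the signs $(-1)^r$ already built into $\partial_{\mathrm{v}}$ should produce it; I will check which holds.) Before that I must check that both maps land in $\wh{C}_N$. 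That is, the image must vanish when some argument is $0$, and must kill shuffles in the last block.

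\textbf{Well-definedness.} For normalization I use $0\cdot h=h$, $h\cdot 0=0$ and $0\blackdiamond y=y$, together with the fact that $f$ vanishes on tuples containing $0$. If $h_1=0$, the first and last terms of $\partial_{\mathrm{h}}$ cancel against the $\jmath=1$ term; other zeros cancel adjacent pairs as usual.

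For shuffles I use two facts. First, $h\mapsto h_1\cdot h$ is additive and bijective, so it maps shuffle sums to shuffle sums. Second, the other terms of $\partial_{\mathrm{h}}$ act only on the first $r+1$ coordinates, or delete $h_{r+1}$, which lies outside the $\ov M$ block. Hence $\partial_{\mathrm{h}}$ descends. For $\partial_{\mathrm{v}}$, the last block carries the standard bar (Hochschild-type) coboundary of the trivial group cohomology of $(H,+)$ with trivial coefficients. There the shuffle ideal is stable because the bar differential is a derivation for the shuffle product, which is the classical Harrison/Eilenberg--Mac Lane fact.

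\textbf{Vertical and horizontal squares.} $\partial_{\mathrm{v}}^2=0$ is the usual simplicial identity computation, since the first $r$ variables act as inert parameters. For $\partial_{\mathrm{h}}^2=0$, I will view $\partial_{\mathrm{h}}$ as the coboundary of a cosimplicial abelian group with coface maps $\delta_0,\dots,\delta_{r+1}$. Here $\delta_0$ is the twist by $h_1\cdot$, the middle cofaces are the sums $h_\jmath+h_{\jmath+1}$, and $\delta_{r+1}$ is the $\blackdiamond$-action by $(h_1+\cdots+h_r)\cdot h_{r+1}$ with $h_{r+1}$ deleted. I will then verify the cosimplicial identities $\delta_j\delta_i=\delta_i\delta_{j-1}$ for $i<j$. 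Those not involving $\delta_0$ or the last face are trivial.

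\begin{itemize}
\item $\delta_1\delta_0=\delta_0\delta_0$ uses the linear cycle set axiom $(a+b)\cdot c=(a\cdot b)\cdot(a\cdot c)$.
\item Identities between $\delta_0$ and the middle faces use the left distributivity $a\cdot(b+c)=a\cdot b+a\cdot c$.
\item The identity involving two last faces uses $h'\times h\blackdiamond y=h\blackdiamond(h'\blackdiamond y)$ from \eqref{eqq 2}, and requires rewriting products $\times$ through \eqref{equivalencia linear cycle set braza}.
\item The identity $\delta_{r+2}\delta_0=\delta_0\delta_{r+1}$ uses again $(a+b)\cdot c=(a\cdot b)\cdot(a\cdot c)$. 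It is needed to move $h_1\cdot$ inside $(h_1\cdot h_2+\cdots)\cdot(h_1\cdot h_{r+2})$, which equals $(h_1+h_2+\cdots)\cdot h_{r+2}$ after combining with $h_1$.
\end{itemize}

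\textbf{Mixed square.} For commutation, the middle cofaces of $\partial_{\mathrm{h}}$ and $\partial_{\mathrm{v}}$ act on disjoint sets of variables. The twist by $h_1\cdot$ commutes with summing later variables by additivity of $h_1\cdot$. The $\blackdiamond$ term commutes with $\partial_{\mathrm{v}}$ because $\blackdiamond$ is additive in $I$ and $\partial_{\mathrm{v}}$ does not touch $h_1,\dots,h_{r+1}$. The only delicate term is the first term $(-1)^r f(h_1,\dots,h_r,h_{r+2},\dots)$ of $\partial_{\mathrm{v}}$, which deletes $h_{r+1}$. It must be matched with the last face of $\partial_{\mathrm{h}}$, where both delete variables adjacent to the block boundary. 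The sign bookkeeping there determines commutation versus anticommutation.

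\textbf{Main obstacle.} I expect the hardest step to be the identity for $\partial_{\mathrm{h}}^2$ mixing $\delta_0$ and the last face, since it is the only place where the brace and $\blackdiamond$ structures genuinely interact, and I would compute it first on $r=1$ to fix signs.
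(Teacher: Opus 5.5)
Your plan is correct and complete in outline. The paper gives no proof of this statement; it quotes it from \cite{GGV}. So there is no internal argument to compare with, and your direct verification is a legitimate self-contained route. It consists of three parts: well-definedness on $\wh{C}_N$, the observation $\partial_{\mathrm{v}}^{r,*}=(-1)^r b$ with $b$ the unsigned bar coboundary of $(H,+)$ on the last block, and the cosimplicial identities for the cofaces $\delta_0,\dots,\delta_{r+1}$ of $\partial_{\mathrm{h}}$.

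The identities you single out do hold.
\begin{itemize}
\item $\delta_1\delta_0=\delta_0\delta_0$ and $\delta_{r+2}\delta_0=\delta_0\delta_{r+1}$ follow from $(a+b)\cdot c=(a\cdot b)\cdot(a\cdot c)$ and left distributivity.
\item In $\delta_{r+2}\delta_{r+1}=\delta_{r+1}\delta_{r+1}$, the outer map on the right is the middle coface adding $h_{r+1}+h_{r+2}$, not a last coface. Put $a=h_1+\cdots+h_r$, $b=h_{r+1}$, $c=h_{r+2}$. What is needed is $\bigl((a+b)\cdot c\bigr)\blackdiamond\bigl((a\cdot b)\blackdiamond y\bigr)=\bigl(a\cdot(b+c)\bigr)\blackdiamond y$. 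This follows from \eqref{eqq 2} together with $u+v=u\times(u\cdot v)$, applied to $u=a\cdot b$ and $v=a\cdot c$.
\end{itemize}

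Two things should be fixed when you write it out.

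\textbf{The mixed squares anticommute, and there is nothing delicate about them.} Every coface of $\partial_{\mathrm{h}}$ commutes with $b$:
\begin{itemize}
\item the twist, by additivity of $h_1\cdot{-}$;
\item the middle cofaces, because they involve disjoint variables;
\item the last coface, because its coefficient $(h_1+\cdots+h_r)\cdot h_{r+1}$ involves only the first block and $\blackdiamond$ is additive in $I$.
\end{itemize}
In particular, the first term of $\partial_{\mathrm{v}}$ removes the first variable of the second block, and $\delta_{r+1}$ removes the last variable of the first block. These two commute term by term, so no cross-matching occurs. Hence $\partial_{\mathrm{h}}\xcirc\partial_{\mathrm{v}}=-\partial_{\mathrm{v}}\xcirc\partial_{\mathrm{h}}$, purely because of the signs $(-1)^r$ versus $(-1)^{r+1}$. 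This is exactly what is needed for $\partial=\partial_{\mathrm{h}}+\partial_{\mathrm{v}}$, as used in the paper, to square to zero.

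\textbf{The normalization check needs more care.} When $h_1=0$ and $r\ge 1$, the first term cancels the $\jmath=1$ term, and the last term vanishes because $f$ is normalized. The first term cancels the last one only when $r=0$. The case that genuinely uses $a\cdot 0=0$ and $0\blackdiamond y=y$ is $h_{r+1}=0$. There the last term becomes $(-1)^{r+1}f(h_1,\dots,h_r,h_{r+2},\dots)$ and cancels the $\jmath=r$ term.
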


Let $\blackdiamond \colon H\times I \to I$ and $\Yleft \colon I\times H\to I$ be maps satisfying conditions~\eqref{eqq 1}--\eqref{eqq 3}. According to~\cite{GGV}*{Sec\-tion~7}, there exist morphisms
\begin{equation*}
D_{rs}^{r+s,1}\colon \wh{C}_N^{rs}(H,I) \to \wh{C}_N^{r+s,1}(H,I),
\end{equation*}
that define a degree~$1$ endomorphism $D$ of $\wh{C}_N^*(H,I)\coloneqq \Tot\bigl(\wh{C}_N^{**}(H,I)\bigl)$ such that $\bigl(\wh{C}_N^*(H,I),\partial+D\bigr)$ is a cochain complex, where $\partial\coloneqq \partial_{\mathrm{h}}+\partial_{\mathrm{v}}$. Next we recall the definition of $D$.

\begin{notation} Given elements $h_1,\dots,h_{r+s}$, we set $\mathrm{h}_{1,r+s}\coloneqq (h_1,\dots,h_{r+s})$.
\end{notation}

Then, for all $r\ge0$ and $s\ge 1$, we define  $D_{rs}^{r+s,1}\colon \wh{C}_N^{rs}(H,I)\longrightarrow \wh{C}_N^{r+s,1}(H,I)$, by
\begin{equation*}
D_{rs}^{r+s,1}(f)(\mathrm{h}_{1,r+s+1})=(-1)^{r+s} \bigl(\bigl((h_1+\cdots+h_r)\cdot (h_{r+1}+\cdots+h_{r+s})\bigr) \blackdiamond f(\mathrm{h}_{1,r+s})\bigr) \Yleft (h_1+\cdots+h_{r+s})\cdot h_{r+s+1}.
\end{equation*}
Finally, for $n\ge 1$, we define $D_n^{n+1}\colon \wh{C}_N^n(H,I)\longrightarrow \wh{C}_N^{n+1}(H,I)$ by
\begin{equation*}
D_n^{n+1}(f_{0n},\dots,f_{n-1,1})\coloneqq \sum_{k=1}^n D_{n-k,k}^{n1}(f_{n-k,k}).
\end{equation*}
Figure~\ref{ddiagonales} shows in black the diagram $\bigl(\wh{C}_N^{**}(H,I),\partial_{\mathrm{h}}^{**}, \partial_{\mathrm{v}}^{**}\bigr)$, and in green the maps $D_{rs}^{r+s,1}$. For the sake of legibility, in that diagram we write $\wh{C}_N^{rs}$ instead of $\wh{C}_N^{rs}(H,I)$.

\begin{figure}[H]
\begin{center}
\adjustbox{scale=1.4,center}{%
\begin{tikzcd}[row sep=2em, column sep=2 * 1.61803398874988em, scale=2]
\vdots \arrow[rrrrddd, green(ncs), shift left=0.2ex, end anchor={[yshift=0.4ex]}, end anchor={[xshift=-0.1ex]}, start anchor={[xshift=0.5ex]}, start anchor={[yshift=0.3ex]}, outer sep=-1.6pt, "D" pos=0.07] & \vdots \arrow[rrrrddd, green(ncs), shift left=0.2ex, end anchor={[yshift=1.4ex]north west}, end anchor={[xshift=-1.6ex]}, start anchor={[xshift=0.5ex]}, start anchor={[yshift=0.3ex]}, outer sep=-1.6pt, "D" pos=0.07] & \vdots \arrow[rrrdd, dash, green(ncs), shift left=0.3ex, end anchor={[yshift=-1.25ex]north west}, start anchor={[xshift=0.3ex]}, start anchor={[yshift=0.05ex]}, outer sep=-1.6pt, "D" pos=0.105] & \vdots \arrow[rrd, dash, green(ncs), shift left=0.2ex, end anchor={[yshift=-3.2ex]north west}, start anchor={[xshift=0.5ex]}, start anchor={[yshift=0.05ex]}, outer sep=-1.6pt, "D" pos=0.18]  & \vdots \arrow[rd, dash, green(ncs), shift left= 0.4ex, end anchor={[yshift=2.4ex]north west}, outer sep=-1.6pt, "D" pos=0.35] & \\
\wh{C}_N^{03} \arrow[r] \arrow[u] \arrow[rrrdd, green(ncs), shift left=0ex, end anchor={[yshift=0.6ex]}, start anchor={[xshift=-0.3ex]}, start anchor={[yshift=0.3ex]}, outer sep=-1.6pt, "D" pos=0.08] & \wh{C}_N^{13} \arrow[r, crossing over] \arrow[u, crossing over] \arrow[rrrdd, green(ncs), shift left=0ex, end anchor={[yshift=0.6ex]}, start anchor={[xshift=-0.3ex]}, start anchor={[yshift=0.3ex]}, outer sep=-1.6pt, "D" pos=0.08]& \wh{C}_N^{23} \arrow[r, crossing over] \arrow[u, crossing over] \arrow[rrrdd, green(ncs), shift left=0ex, end anchor={[yshift=1.45ex]north west}, start anchor={[xshift=-0.3ex]}, start anchor={[yshift=0.3ex]}, outer sep=-1.6pt, "D" pos=0.08] & \wh{C}_N^{33} \arrow[r, crossing over] \arrow[u, crossing over] \arrow[rrd, dash, green(ncs), shift left=0ex, end anchor={[yshift=-1.25ex]north west}, start anchor={[xshift=-0.3ex]}, start anchor={[yshift=0.05ex]}, outer sep=-1.8pt, "D" pos=0.14] & \wh{C}_N^{43} \arrow[r, crossing over] \arrow[u, crossing over] \arrow[r, dash, green(ncs), shift left=-0.5ex, end anchor={[yshift=-3.4ex]north west}, outer sep=-1.8pt, "D" pos=0.35] & \dots \\
\wh{C}_N^{02} \arrow[r] \arrow[u] \arrow[rrd, green(ncs), shift left=0ex, end anchor={[yshift=0.5ex]}, outer sep=-1.6pt, "D" pos=0.11] & \wh{C}_N^{12} \arrow[r, crossing over] \arrow[u, crossing over]  \arrow[rrd, green(ncs), shift left=0ex, end anchor={[yshift=0.5ex]}, outer sep=-1.6pt, "D" pos=0.12]& \wh{C}_N^{22} \arrow[r, crossing over] \arrow[u, crossing over]  \arrow[rrd, green(ncs), shift left=0ex, end anchor={[yshift=0.5ex]}, outer sep=-1.6pt, "D" pos=0.12]& \wh{C}_N^{32} \arrow[r, crossing over] \arrow[u, crossing over] \arrow[rrd, green(ncs), shift left=0ex, end anchor={[yshift=1.1ex]north west}, outer sep=-1.6pt, "D" pos=0.12] & \wh{C}_N^{42} \arrow[r, crossing over] \arrow[u, crossing over] \arrow[r, dash, green(ncs), shift left=-0.5ex, end anchor={[yshift=-2.5ex]north west}, outer sep=-1.6pt, "D" pos=0.36] & \dots \\
\wh{C}_N^{01} \arrow[r] \arrow[u] \arrow[r, green(ncs), shift left=0.5ex, outer sep=-1.4pt, "D" pos=0.41]  & \wh{C}_N^{11} \arrow[r] \arrow[u, crossing over] \arrow[r, green(ncs), shift left=0.5ex, outer sep=-1.4pt, "D" pos=0.43] & \wh{C}_N^{21} \arrow[r] \arrow[u, crossing over] \arrow[r, green(ncs), shift left=0.5ex, outer sep=-1.4pt, "D" pos=0.42] & \wh{C}_N^{31} \arrow[r] \arrow[u, crossing over] \arrow[r, green(ncs), shift left=0.5ex, outer sep=-1.4pt, "D" pos=0.42] & \wh{C}_N^{41} \arrow[r] \arrow[u, crossing over, outer sep=-1.6pt, "D" pos=0.11] \arrow[r, green(ncs), shift left=0.5ex, outer sep=-1.4pt, "D" pos=0.42] & \dots
\end{tikzcd}
}
\end{center}
\caption{$\bigl(\wh{C}_N^{**}(H,I),\partial_{\mathrm{h}},\partial_{\mathrm{v}}, D\bigr)$}\label{ddiagonales}
\end{figure}

\begin{proposition}[\cite{GGV}*{Proposition~7.9}]\label{significado de cociclos caso general} A pair $(\beta,f)\in \wh{C}_N^2(H,I)=\wh{C}_N^{02}(H,I)\oplus \wh{C}_N^{11}(H,I)$ is a cocycle of the~com\-plex $(\wh{C}_N^*(H,I),\partial+D)$ if and only if $(\beta,-f)$ satisfies conditions~\eqref{condicion de cociclo},~\eqref{normalidad y cociclo abeliano},~\eqref{eqq 4} and~\eqref{eqq5}.
\end{proposition}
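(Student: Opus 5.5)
The plan is to compute the two components of $(\partial+D)(\beta,f)$ in total degree $3$ and read off each component of the equation $(\partial+D)(\beta,f)=0$. Here $\beta\in\wh{C}_N^{02}(H,I)$ and $f\in\wh{C}_N^{11}(H,I)$. The image lands in
$$
\wh{C}_N^3(H,I)=\wh{C}_N^{03}(H,I)\oplus\wh{C}_N^{12}(H,I)\oplus\wh{C}_N^{21}(H,I).
$$
The components are
$$
\partial_{\mathrm{v}}^{03}(\beta)\in \wh{C}_N^{03},\qquad \partial_{\mathrm{h}}^{12}(\beta)+\partial_{\mathrm{v}}^{12}(f)\in\wh{C}_N^{12},\qquad \partial_{\mathrm{h}}^{21}(f)+D_{02}^{21}(\beta)+D_{11}^{21}(f)\in \wh{C}_N^{21}.
$$
Signs depend on the convention for the total differential. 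I will use the formulas exactly as given, since they already carry the signs. The cocycle condition is then the vanishing of these three maps. The normalization conditions in~\eqref{normalidad y cociclo abeliano}, namely $\beta(h,0)=\beta(0,h)=0$ and $\beta(h,h')=\beta(h',h)$, come for free from membership in $\wh{C}_N^{02}$. Vanishing on zero arguments is built in, and the shuffle relation for $s=2$, $l=1$ says $\beta(h,h')-\beta(h',h)=0$.

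\emph{The $(0,3)$ component.} With $r=0$, the formula gives
$$
\partial_{\mathrm{v}}^{03}(\beta)(h,h',h'')=\beta(h',h'')-\beta(h+h',h'')+\beta(h,h'+h'')-\beta(h,h').
$$
Its vanishing is exactly~\eqref{condicion de cociclo}, and it is unaffected by replacing $f$ with $-f$.

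\emph{The $(1,2)$ component.} The formulas give
$$
\partial_{\mathrm{h}}^{12}(\beta)(h,h',h'')=\beta(h\cdot h',h\cdot h'')-(0\cdot h)\blackdiamond\beta(h',h'').
$$
Here $0\cdot h=h$, since left translation by $0$ is the identity: from $0\cdot b=(0+0)\cdot b=(0\cdot 0)\cdot(0\cdot b)$ and the linearity $0\cdot 0=0$, we get $0\cdot b=b$ by bijectivity. Also
$$
\partial_{\mathrm{v}}^{12}(f)(h,h',h'')=-f(h,h'')+f(h,h'+h'')-f(h,h').
$$
Setting the sum to zero and substituting $f\mapsto -f$ yields~\eqref{eqq 4} after rearranging.

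\emph{The $(2,1)$ component.} This is where the real bookkeeping happens. First,
$$
\partial_{\mathrm{h}}^{21}(f)(h,h',h'')=f(h\cdot h',h\cdot h'')-f(h+h',h'')+((h\cdot h')\blackdiamond f(h,h''))\cdot[\text{sign}],
$$
where the last sign is $(-1)^{2}=+1$ and the $\blackdiamond$ factor is $(h\cdot h')$, coming from $(h_1)\cdot h_2$. Next,
$$
D_{02}^{21}(\beta)(h,h',h'')=(-1)^2\bigl((0\cdot(h+h'))\blackdiamond\beta(h,h')\bigr)\Yleft(h+h')\cdot h''=\bigl((h+h')\blackdiamond\beta(h,h')\bigr)\Yleft(h+h')\cdot h''.
$$
Finally,
$$
D_{11}^{21}(f)(h,h',h'')=\bigl((h\cdot h')\blackdiamond f(h,h')\bigr)\Yleft (h+h')\cdot h''.
$$
I must check that $(h+h')\cdot h''=(h\cdot h')\cdot(h\cdot h'')$, which is the second identity in~\eqref{compatibilidades cdot suma y suma cdot}. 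This matches the $\Yleft$ argument in~\eqref{eqq5}.

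Summing the three terms, replacing $f$ by $-f$, and using bilinearity of $\Yleft$ together with additivity of $h\blackdiamond-$, the vanishing becomes
$$
-f(h\cdot h',h\cdot h'')+f(h+h',h'')-(h\cdot h')\blackdiamond f(h,h'')+(h+h')\blackdiamond\beta(h,h')\Yleft(h+h')\cdot h''-(h\cdot h')\blackdiamond f(h,h')\Yleft(h\cdot h')\cdot(h\cdot h'')=0.
$$
This is precisely~\eqref{eqq5}. Bilinearity of $\Yleft$ is Remark~\ref{linealidad de Yleft}, and additivity of $h\blackdiamond-$ is~\eqref{eqq 2}.

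Since each step is an equivalence, both directions follow. The main obstacle is the sign in this last component: the overall $(-1)$ convention for $\partial+D$ on $\Tot$, and the need for $f$ and $\beta$ to enter with opposite relative signs so that $-f$ appears. I would first verify, in the $(1,2)$ component, that the sign conventions force the replacement $f\mapsto -f$, and then confirm that the same replacement makes the $(2,1)$ component match~\eqref{eqq5}.
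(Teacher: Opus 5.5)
Your component-by-component computation is correct and is the direct verification this result rests on (the paper does not reprove it, it only cites \cite{GGV}). The $(0,3)$, $(1,2)$ and $(2,1)$ components of $(\partial+D)(\beta,f)$ yield \eqref{condicion de cociclo}, \eqref{eqq 4} and \eqref{eqq5} after replacing $f$ by $-f$, and \eqref{normalidad y cociclo abeliano} is built into $\wh{C}_N^{02}(H,I)$; your closing hedge about sign conventions is unnecessary, because the paper takes the total differential to be literally $\partial_{\mathrm{h}}+\partial_{\mathrm{v}}+D$, with all signs already in the formulas you used.
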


\begin{corollary}[\cite{GGV}*{Corollary~7.10}]\label{equiv caso general} There is a bijective correspondence $\Ext_{\blackdiamond,\Yleft}(H,I) \longleftrightarrow \ho_{\blackdiamond,\Yleft}^2(H,I)$. Concretely, this correspondence sends the class of a $2$-cocycle $(\beta,-f)\in \wh{C}_N^2(H,I)=\wh{C}_N^{02}(H,I)\oplus \wh{C}_N^{11}(H,I)$ module coboundaries into the class of the extension $(\iota, I\times_{\beta,f}^{\blackdiamond,\Yleft} H,\pi)$.
\end{corollary}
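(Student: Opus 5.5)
The plan is to combine Proposition~\ref{significado de cociclos caso general} with the description of equivalences in Remark~\ref{equivalecia entre extensiones producto, caso I trivial}, and to use the fact that every extension in $\Ext_{\blackdiamond,\Yleft}(H,I)$ is equivalent to one of the form $(\iota, I\times_{\beta,f}^{\blackdiamond,\Yleft} H,\pi)$. For this last fact, take any extension $(\iota,B,\pi)$ with the prescribed actions. Choose a section $s$ with $s(0)=0$, and let $\beta$ be given by~\eqref{construccion de beta} and $f$ by the formula above Notation~\ref{notacion ext}. The last remark of Subsection~\ref{Building extensions of linear cycle sets} then shows that $\phi(y+w_h)=\iota(y)+s(h)$ is an equivalence $I\times_{\beta,f}^{\blackdiamond,\Yleft}H\simeq B$.

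The map $\Phi$ from cocycles to extensions will send a cocycle $(\beta,-f)$ to the class of $I\times_{\beta,f}^{\blackdiamond,\Yleft}H$.
\begin{enumerate}
\item Normalization: the conditions $\beta(h,0)=\beta(0,h)=0$ and the symmetry of $\beta$ hold by~\eqref{normalidad y cociclo abeliano}, so $\beta$ belongs to $\wh{C}_N^{02}(H,I)$. I also need $f(0,h)=f(h,0)=0$. Setting $h=0$ in~\eqref{eqq5} and $h'=h''=0$ in~\eqref{eqq 4} forces these identities, using $0\cdot h=h$, $h\cdot 0=0$ and Remark~\ref{linealidad de Yleft}. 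Hence every pair $(\beta,f)$ coming from an extension, after the sign change, lies in $\wh{C}_N^2(H,I)$.
\item Well-definedness: by Proposition~\ref{significado de cociclos caso general}, the cocycles are exactly the pairs $(\beta,-f)$ satisfying the conditions of Proposition~\ref{condiciones para extension} that involve $\beta$ and $f$. The remaining conditions \eqref{eqq 1}--\eqref{eqq 3} are the standing hypotheses on $\blackdiamond,\Yleft$. So cocycles correspond to extension structures on $I\times_\beta H$.
\item Surjectivity: this is the realization fact from the first paragraph.
\end{enumerate}

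The key step is to identify coboundaries with equivalences. Compute $(\partial+D)(\varphi)$ for $\varphi\in\wh{C}_N^{01}(H,I)$, that is, $\varphi\colon H\to I$ with $\varphi(0)=0$. The vertical part is $\partial_{\mathrm v}(\varphi)(h,h')=\varphi(h')-\varphi(h+h')+\varphi(h)$, which is exactly the left side of~\eqref{equivalencia a nivel de qrupos}. The horizontal part is $\partial_{\mathrm h}(\varphi)(h,h')=\varphi(h\cdot h')-(h\cdot h')\blackdiamond\ldots$. Here I have to be careful: with $r=0$ the formula gives $\varphi(h\cdot h')-(0\cdot h)\blackdiamond\varphi(h')$, a term of the form $h\blackdiamond\varphi(h')$ up to the convention in the definition. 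The diagonal part is $D_{01}^{11}(\varphi)(h,h')=-\bigl((0\cdot h)\blackdiamond\varphi(h)\bigr)\Yleft h\cdot h'$. Comparing the $\wh{C}_N^{11}$-component with~\eqref{tercera condicion caso I trivial}, written for $-f$ and $-f'$, should give precisely the condition $(\beta,-f)-(\beta',-f')=(\partial+D)(\pm\varphi)$.

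The main obstacle is exactly this sign and indexing bookkeeping. I have to check that the $\blackdiamond$-arguments $h$ appearing in~\eqref{tercera condicion caso I trivial} match those produced by $\partial_{\mathrm h}$ and $D$ in low degree, and that a single sign choice for $\varphi$ works simultaneously in both components. I would first test this in the case $\Yleft=0$ using Remark~\ref{condicion para I incluido en Soc}, and then add the $D$-term. Once this matches, two cocycles give equivalent extensions if and only if they differ by a coboundary. This makes $\Phi$ injective on $\ho^2_{\blackdiamond,\Yleft}(H,I)$ and well defined on classes, which yields the bijection.
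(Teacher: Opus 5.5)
Your proposal is correct and follows essentially the argument behind the cited result. Proposition~\ref{significado de cociclos caso general} identifies cocycles with extension data, the realization remark gives surjectivity, and Remark~\ref{equivalecia entre extensiones producto, caso I trivial} identifies equivalences with coboundaries; your normalization $f(h,0)=f(0,h)=0$ also gives $w_h\cdot y=h\blackdiamond y$ and $y\cdot w_h-w_h=y\Yleft h$, so the constructed extension really lies in $\Ext_{\blackdiamond,\Yleft}(H,I)$, a check you should state explicitly.

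The bookkeeping you flag closes with the sign $+\varphi$. Since $0\cdot h=h$, one has $(\partial+D)(\varphi)=\bigl(\varphi(h)-\varphi(h+h')+\varphi(h'),\,\varphi(h\cdot h')-h\blackdiamond\varphi(h')-(h\blackdiamond\varphi(h))\Yleft h\cdot h'\bigr)$, and by \eqref{equivalencia a nivel de qrupos} and \eqref{tercera condicion caso I trivial} this equals $(\beta,-f)-(\beta',-f')$.
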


When $\Yleft=0$, we will write ${\mathcal{C}}_{\blackdiamond}(H,I)$ instead of ${\mathcal{C}}_{\blackdiamond,\Yleft}(H,I)$ and $\ho_{\blackdiamond}^2(H,I)$ instead of $\ho_{\blackdiamond,\Yleft}^2(H,I)$.

\section[The linear cycle set \texorpdfstring{$H^{d_1d_2}_{\lambda}$}{H{d1d2}a}]{The linear cycle set \texorpdfstring{$\pmb{H^{d_1d_2}_{\lambda}}$}{Hd1d2a}}\label{section 2}
Let $\mathrm{Ab}_{d_1d_2}\coloneqq \mathds{Z}_{d_1}\oplus \mathds{Z}_{d_2}$, where $d_1\mid d_2$, let $\mathrm{U}_{d_1d_2}$ be the underlying set of $\mathrm{Ab}_{d_1d_2}$, and let $\lambda\in \mathds{Z}_{d_1}$. A direct~compu\-tation proves that $H^{d_1d_2}_{\lambda}\coloneqq (\mathrm{Ab}_{d_1d_2},\cdot)$, where $\cdot$ is the binary operation of $\mathrm{U}_{d_1d_2}$, defined by
\begin{equation}\label{def de punto}
(x,y)\cdot (u,v)\coloneqq (u-\lambda yv,v) = (u,v)- (\lambda yv,0)
\end{equation}
is a linear cycle set (here the juxtaposition denotes the multiplication in the ring $\mathds{Z}_{d_1}$ and, in the first coordi\-nate,~$y$ and $v$ denote the canonical image of these elements in $\mathds{Z}_{d_1}$).

\begin{proposition}\label{isomorfos} Let $\lambda_1, \lambda_2\in \mathds{Z}_{d_1}$. The linear cycle sets $H^{d_1d_2}_{\lambda_1}$ and $H^{d_1d_2}_{\lambda_2}$ are isomorphic if and only if there exists a unit $a \in \mathds{Z}_{d_1}$ such that $a\lambda_1=\lambda_2$.
\end{proposition}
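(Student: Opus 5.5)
The plan is to prove sufficiency by an explicit automorphism that rescales the first coordinate, and necessity by an ideal-theoretic argument. The main tool is the bilinear "defect" map
$$
\delta_\lambda\colon H^{d_1d_2}_{\lambda}\times H^{d_1d_2}_{\lambda}\to H^{d_1d_2}_{\lambda},\qquad \delta_\lambda(h,k)\coloneqq k-h\cdot k,
$$
which by~\eqref{def de punto} is $\delta_\lambda\bigl((x,y),(u,v)\bigr)=(\lambda yv,0)$.

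\emph{Sufficiency.} Suppose $a\in\mathds{Z}_{d_1}$ is a unit with $a\lambda_1=\lambda_2$. Define $\phi(x,y)\coloneqq(ax,y)$. It is an automorphism of $\mathrm{Ab}_{d_1d_2}$, with inverse $(x,y)\mapsto(a^{-1}x,y)$. Using~\eqref{def de punto}:
$$
\phi\bigl((x,y)\cdot(u,v)\bigr)=(au-a\lambda_1yv,v)=(au-\lambda_2yv,v)=\phi(x,y)\cdot\phi(u,v),
$$
where the last product is computed in $H^{d_1d_2}_{\lambda_2}$. So $\phi$ is an isomorphism of linear cycle sets.

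\emph{Necessity.} Let $\phi\colon H^{d_1d_2}_{\lambda_1}\to H^{d_1d_2}_{\lambda_2}$ be an isomorphism. Since $\phi$ preserves $+$ and $\cdot$, we get $\phi(\delta_{\lambda_1}(h,k))=\delta_{\lambda_2}(\phi(h),\phi(k))$.

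Write $\phi(x,y)=(\alpha(x,y),\psi(x,y))$, with $\alpha$ and $\psi$ the coordinate homomorphisms. Let $\bar\psi\colon\mathrm{Ab}_{d_1d_2}\to\mathds{Z}_{d_1}$ be $\psi$ followed by reduction mod $d_1$. The identity above then reads
$$
\alpha(\lambda_1 yv,0)=\lambda_2\,\bar\psi(x,y)\,\bar\psi(u,v)\quad\text{in }\mathds{Z}_{d_1}.
$$
Since $\alpha(\,\cdot\,,0)$ is additive on $\mathds{Z}_{d_1}$, it is multiplication by $c\coloneqq\alpha(1,0)$. Hence the left side lies in the ideal $\lambda_1\mathds{Z}_{d_1}$.

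\emph{Key step (the expected obstacle): $\bar\psi$ is surjective.} Suppose not. Then its image lies in $p\mathds{Z}_{d_1}$ for some prime $p\mid d_1$. Consider $\phi$ modulo $p\,\mathrm{Ab}_{d_1d_2}$: the quotient is $\mathds{Z}_p^2$, and the induced map is an automorphism. But its second coordinate is $\psi$ mod $p$, which would then vanish identically, a contradiction. Care is needed because $\psi$ lands in $\mathds{Z}_{d_2}$ and $d_1\mid d_2$, but reducing mod $p$ is compatible with both coordinates.

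\emph{Conclusion.} Pick $h$ with $\bar\psi(h)=1$ and take $k=h$. Then $\lambda_2\in\lambda_1\mathds{Z}_{d_1}$. Applying the same argument to $\phi^{-1}$ gives $\lambda_1\in\lambda_2\mathds{Z}_{d_1}$, so $\lambda_1$ and $\lambda_2$ generate the same ideal of $\mathds{Z}_{d_1}$.

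Finally, we use the standard fact that in $\mathds{Z}_n$ two elements generating the same ideal are associates. Both generate the ideal $g\mathds{Z}_n$, where $g$ is their common gcd with $n$. Writing $\lambda_i=g\mu_i$, the $\mu_i$ are units modulo $n/g$. Units lift from $\mathds{Z}_{n/g}$ to $\mathds{Z}_n$ (by CRT, adjusting by multiples of $n/g$ to avoid primes dividing $n$ but not $n/g$). This yields a unit $a\in\mathds{Z}_{d_1}$ with $a\lambda_1=\lambda_2$.
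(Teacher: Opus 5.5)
Your proof is correct, and the sufficiency half is the same as the paper's. For necessity, your ``key step'' is not actually an obstacle. The map $\psi$ is the second coordinate of the bijection $\phi$, so $h\coloneqq\phi^{-1}(0,1)$ already satisfies $\psi(h)=1$, hence $\bar\psi(h)=1$. The reduction modulo a prime $p\mid d_1$ is valid but unnecessary.

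With that simplification, your route differs from the paper's and is shorter. The paper writes $\varphi$ as a matrix $A=(a_{ij})$. The first coordinate of your intertwining identity for the defect map is exactly its equation $\lambda_1a_{11}yv=\lambda_2(a_{21}x+a_{22}y)(a_{21}u+a_{22}v)$, with $c=a_{11}$ and $\psi(x,y)=a_{21}x+a_{22}y$. The paper puts the point $A^{-1}(0,1)^{T}$ into only one slot, with $(x,y)=(1,0)$ in the other, to get $a_{21}\lambda_2=0$, and then derives $\lambda_1a_{11}=\lambda_2a_{22}^2$. It then runs the same analysis for $B=A^{-1}$ to get $b_{21}\lambda_1=0$, which yields $b_{11}a_{11}\lambda_1=\lambda_1$ and hence $\lambda_1=\lambda_2b_{11}a_{22}^2$. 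You instead put that point into both slots and read off $\lambda_2=c\lambda_1y^2$ in $\mathds{Z}_{d_1}$ at once. You also prove the associates lemma in $\mathds{Z}_{d_1}$, which the paper only quotes as well known.

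What the paper's longer computation buys is a complete description of the isomorphisms. An additive automorphism $A$ is a linear cycle set isomorphism $H^{d_1d_2}_{\lambda_1}\to H^{d_1d_2}_{\lambda_2}$ exactly when $a_{21}\lambda_1=a_{21}\lambda_2=0$ and $\lambda_1a_{11}=\lambda_2a_{22}^2$. Your argument does not record this.
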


\begin{proof} $\Leftarrow$)\enspace A simple computation proves that the map $\varphi_a\colon H^{d_1d_2}_{\lambda_1}\to H^{d_1d_2}_{\lambda_2}$, defined by $\varphi_a(x,y)\coloneqq (ax,y)$, is~an~iso\-morphism.

\smallskip
	
\noindent $\Rightarrow$)\enspace Let $\varphi\colon H^{d_1d_2}_{\lambda_1}\to H^{d_1d_2}_{\lambda_2}$ be an isomorphism of linear cycle sets and let
\begin{equation}
A=\begin{pmatrix}
a_{11}&a_{12} \\
a_{21}&a_{22}
\end{pmatrix}\in\begin{pmatrix} \mathds{Z}_{d_1}&\mathds{Z}_{d_1}\\
\frac{d_2}{d_1} \mathds{Z}_{d_2}&\mathds{Z}_{d_2}
\end{pmatrix}
\end{equation}
be such that $\varphi \begin{psmallmatrix}x \\ y \end{psmallmatrix}= A\begin{psmallmatrix}x \\ y \end{psmallmatrix}$, where we are writing the elements of  $H^{d_1d_2}_{\lambda_1}$ and $ H^{d_1d_2}_{\lambda_2}$ as ``column vectors''. Note that~$A$ is invertible, since $\varphi$ is an isomorphism. On one hand, we have
\begin{equation*}
A\begin{pmatrix}x\\y\end{pmatrix}\cdot A\begin{pmatrix} u\\v\end{pmatrix} =\begin{pmatrix} a_{11}x+a_{12}y\\ a_{21}x+a_{22}y \end{pmatrix}\cdot \begin{pmatrix} a_{11}u+a_{12}v\\ a_{21}u+a_{22}v \end{pmatrix} = \begin{pmatrix} a_{11}u+a_{12}v-\lambda_2(a_{21}x+a_{22}y)(a_{21}u+a_{22}v)\\ a_{21}u+a_{22}v\end{pmatrix},
\end{equation*}
and, on the other hand, we have
\begin{equation*}
A\left(\begin{pmatrix} x\\y\end{pmatrix}\cdot \begin{pmatrix}u\\v \end{pmatrix}\right) = A \begin{pmatrix} u-\lambda_1 yv\\v\end{pmatrix} =\begin{pmatrix} a_{11}u-\lambda_1 a_{11}yv+a_{12}v\\ a_{21}u-a_{21}\lambda_1 yv+a_{22}v\end{pmatrix}.
\end{equation*}
Since $A\begin{psmallmatrix}x\\y\end{psmallmatrix}\cdot A\begin{psmallmatrix}u\\v\end{psmallmatrix}=A\bigl(\begin{psmallmatrix}x\\y \end{psmallmatrix} \cdot \begin{psmallmatrix}u\\v\end{psmallmatrix}\bigr)$, we conclude that for all $x,u\in \mathds{Z}_{d_1}$ and  $y,v\in \mathds{Z}_{d_2}$,
\begin{equation*}
\lambda_1 a_{11}yv=\lambda_2(a_{21}x+a_{22}y)(a_{21}u+a_{22}v)\quad\text{and}\quad a_{21}\lambda_1 yv=0.
\end{equation*}
The second equality, which takes place in $\mathds{Z}_{d_2}$, holds if and only if $a_{21}\lambda_1=0$; and the first equality, which takes place in $\mathds{Z}_{d_1}$, with $x = 1$, $y = 0$ and $\begin{psmallmatrix}u\\v\end{psmallmatrix}= A^{-1} \begin{psmallmatrix} 0 \\ 1 \end{psmallmatrix}$, yields $a_{21}\lambda_2=0$. A direct computation using this, shows that the first equality holds if and only if
\begin{equation}\label{igualdad}
a_{21}\lambda_2=0\quad\text{and}\quad\lambda_1 a_{11}=\lambda_2 a_{22}^2.
\end{equation}
Let $B = \begin{psmallmatrix} b_{11}&b_{12}\\b_{21}&b_{22}\end{psmallmatrix} = A^{-1}$. By symmetry, $b_{21}\lambda_2=0$ and  $b_{21}\lambda_1=0$. Hence,
\begin{equation*}
\begin{pmatrix}\lambda_1\\0\end{pmatrix} = BA\begin{pmatrix}\lambda_1\\0\end{pmatrix} = B\begin{pmatrix}a_{11}\lambda_1\\0\end{pmatrix}=\begin{pmatrix}b_{11}a_{11}\lambda_1\\0\end{pmatrix},
\end{equation*}
which says that $b_{11}a_{11}\lambda_1=\lambda_1$. Therefore, by~\eqref{igualdad},
\begin{equation*}
\lambda_1=b_{11}a_{11}\lambda_1=\lambda_2 b_{11}a_{22}^2,
\end{equation*}	
which shows that~$\lambda_1$ is a multiple of~$\lambda_2$  in $\mathds{Z}_{d_1}$. By symmetry, also~$\lambda_2$ is a multiple of~$\lambda_1$. But it is well known that if $\lambda_1\mid \lambda_2$ and $\lambda_2\mid \lambda_1$, then $\lambda_2=a\lambda_1$ with $a\in \mathds{Z}_{d_1}^{\times}$. \end{proof}

\begin{remark}\label{lambda divide a d1} If $\lambda \neq 0$, we may assume that $0 < \lambda < d_1$ and that $\lambda \mid d_1$. Indeed, let $g \coloneqq \gcd(\lambda, d_1)$, with $0 < g < d_1$. Clearly, $g \mid \lambda$ in $\mathds{Z}_{d_1}$, and the existence of $b, c \in \mathds{Z}$ such that $g = b\lambda + c d_1$ shows that $\lambda \mid g$ in $\mathds{Z}_{d_1}$. Consequently, there exists a unit $a \in \mathds{Z}_{d_1}$ such that $a\lambda = g$. By Proposition~\ref{isomorfos}, this implies that the linear cycle sets $H^{d_1 d_2}_{\lambda}$ and~$H^{d_1 d_2}_{g}$ are isomorphic. Therefore, we may replace $\lambda$ by $g$.
\end{remark}
A direct computation using~\eqref{equivalencia linear cycle set braza} shows that the brace associated with $H^{d_1d_2}_{\lambda}$ is $(\mathrm{Ab}_{d_1d_2}, \times)$, where
\begin{equation}\label{producto}
(x,y)\times (x',y') = (x+x' + \lambda y y',y+y').
\end{equation}
This formula implies that
\begin{equation}\label{potencia}
(x,y)^{\times a}=\left(ax+\lambda \binom{a}{2}y^2,ay\right),\qquad\text{for each $(x,y)\in H^{d_1d_2}_{\lambda}$ and $a\in \mathds{Z}$.}	
\end{equation}
Combining~\eqref{producto} with~\eqref{potencia}, we obtain that
\begin{equation}\label{combinacion}
(a,a') = (1,0)^{\times a-\lambda \binom{a'}{2}}\times (0,1)^{\times a'},\qquad\text{for all $a,a'\in \mathds{Z}$.}
\end{equation}
For all $(x,y)\in H^{d_1d_2}_{\lambda}$, we let $|(x,y)|$ denote the order of $(x,y)$ in $(\mathrm{U}_{d_1d_2},\times)$.

\begin{remark}\label{caso d2 impar} Since $\binom{y+y'}{2}=\binom{y}{2}+\binom{y'}{2}+yy'$, the surjective map $\phi\colon \mathds{Z}\times \mathds{Z}\to (\mathrm{U}_{d_1d_2},\times)$, given by
\begin{equation*}
\phi(x,y)\coloneqq \left(x+\lambda\binom{y}{2},y\right),
\end{equation*}
is a morphism. Note that $\phi(d_1,0) = 0$, and that $\phi(0,d_2) = 0$ if and only if $d_1\mid \lambda \binom{d_2}{2}$. Hence, in this case, $\phi$ induces an isomor\-phism $\ov{\phi}\colon \mathrm{Ab}_{d_1d_2}\to (\mathrm{U}_{d_1d_2},\times)$, and so $(\mathrm{U}_{d_1d_2},\times)$ is generated by the elements $a_1\coloneqq (1,0)$ and $a_2\coloneqq (0,1)$, of orders $r_1 = d_1$ and $r_2 = d_2$, respectively.
\end{remark}

For a natural number $d$, we define $v_2(d)$ as the multiplicity of $2$ in the prime factorization of $d$. As usual, we also set $v_2(0)\coloneqq +\infty$.

\smallskip

Next, we examine the case $d_1\nmid \lambda \binom{d_2}{2}$, which occurs precisely when $v_2(d_1)=v_2(d_2)>0$ and $\lambda$ is odd.

\begin{remark}\label{caso v_2=1} Assume that $v_2(d_1)=v_2(d_2)=1$ and that $\lambda$ is odd. Then $(0,1)$ has order $2d_2$ in $(\mathrm{U}_{d_1d_2},\times)$. Indeed
$$
(0,1)^{\times d_2}=\left(\lambda\binom{d_2}{2},0\right)\ne 0\quad\text{and}\quad (0,1)^{\times 2d_2}=\left(\lambda\binom{2d_2}{2},0\right) = 0,
$$
since $\lambda\binom{d_2}{2}$ is odd, $d_1$ is even, and $d_1\mid \lambda d_2(d_2-1) = \lambda\binom{2d_2}{2}$. We have two subcases:

\begin{description}
\item[$d_1=2$] In this subcase $(\mathrm{U}_{d_1d_2},\times)$ is cyclic generated by $a_1\coloneqq (0,1)$, of order $r_1 = 2d_2$.

\item[$d_1>2$] In this subcase
$$
(\mathrm{U}_{d_1d_2},\times)\cong \mathds{Z}_{\frac{d_1}{2}}\oplus \mathds{Z}_{2d_2},
$$
with generators $a_1\coloneqq (2,0)$ and $a_2\coloneqq (0,1)$, of or\-ders $r_1 = d_1/2$ and $r_2 = 2d_2$, respectively.
\end{description}
\end{remark}

\begin{proposition}\label{caso v2(d1)=v2(d2) y alfa impar} Assume that $v_2(d_1)=v_2(d_2)\ge2$ and that $\lambda$ is odd. Let $q\coloneqq d_2/d_1$ and let $0\le t <d_1/2$ be such that
\begin{equation}\label{paco}
\lambda q(2q - 1) - 2t= 1 \text{ in } \mathds{Z}_{d_1} \quad\text{($t$ exists since $\lambda$ and $q$ are odd)}.
\end{equation}
Then $|(0,1)|=2d_2$, $|(2t,2q)| = d_1/2$ and $\langle(0,1)\rangle\cap \langle(2t,2q)\rangle=0$; which implies $(\mathrm{U}_{d_1d_2},\times)\simeq \mathds{Z}_{\frac{d_1}{2}}\oplus \mathds{Z}_{2d_2}$.
\end{proposition}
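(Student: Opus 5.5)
The plan is to work entirely in the group $(\mathrm{U}_{d_1d_2},\times)$, using the product formula~\eqref{producto} and the power formula~\eqref{potencia}. First record the arithmetic facts that come from the hypothesis $v_2(d_1)=v_2(d_2)\ge 2$:
\begin{itemize}
\item $q=d_2/d_1$ is odd;
\item $d_1$ is even, and in fact divisible by $4$;
\item the element $(1,0)$ is central for $\times$, since $\lambda\cdot 0\cdot y'=0$;
\item $(1,0)^{\times a}=(a,0)$ for all $a\in\mathds{Z}$.
\end{itemize}

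For $|(0,1)|$, use $(0,1)^{\times a}=\bigl(\lambda\binom{a}{2},a\bigr)$. Vanishing of the second coordinate forces $a=kd_2$. The first coordinate is then $\lambda\, kq\,(kd_2-1)\,d_1/2$ modulo $d_1$.
\begin{itemize}
\item For $k=1$ it equals $d_1/2\ne 0$, because $\lambda$, $q$ and $d_2-1$ are odd.
\item For $k=2$ it vanishes.
\end{itemize}
Hence $|(0,1)|=2d_2$. The same computation shows that the elements of $\langle(0,1)\rangle$ with second coordinate $0$ are exactly $0$ and $(d_1/2,0)$.

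For $|(2t,2q)|$, use $(2t,2q)^{\times a}=\bigl(2ta+4\lambda q^2\binom{a}{2},2qa\bigr)$. The second coordinate vanishes in $\mathds{Z}_{d_2}$ if and only if $d_1\mid 2a$, that is, if and only if $(d_1/2)\mid a$. For $a=d_1/2$ the first coordinate is $td_1+\lambda q^2 d_1(d_1/2-1)\equiv 0 \pmod{d_1}$. Therefore $|(2t,2q)|=d_1/2$.

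The key step, and the only real obstacle, is the intersection. Here I use~\eqref{paco}, which gives
\[
(0,1)^{\times 2q}=\bigl(\lambda q(2q-1),2q\bigr)=(1+2t,2q)=(1,0)\times(2t,2q).
\]
Since $(1,0)$ is central, this yields
\[
(2t,2q)^{\times a}=(0,1)^{\times 2qa}\times(-a,0)\quad\text{for all } a.
\]
Suppose $(2t,2q)^{\times a}\in\langle(0,1)\rangle$. Then $(-a,0)\in\langle(0,1)\rangle$. By the description above of the elements of $\langle(0,1)\rangle$ with zero second coordinate, $a\equiv 0$ or $d_1/2 \pmod{d_1}$. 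In either case $(d_1/2)\mid a$, so $(2t,2q)^{\times a}=0$. Hence $\langle(0,1)\rangle\cap\langle(2t,2q)\rangle=0$.

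Finally, the group is abelian, because $\lambda yy'$ is symmetric in $y,y'$. The internal product of the two cyclic subgroups therefore has order $(d_1/2)(2d_2)=d_1d_2=|\mathrm{U}_{d_1d_2}|$. Thus the group is their direct sum, giving $(\mathrm{U}_{d_1d_2},\times)\simeq\mathds{Z}_{\frac{d_1}{2}}\oplus\mathds{Z}_{2d_2}$.
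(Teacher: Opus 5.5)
Your proof is correct. For the two order computations it follows the paper, and you are slightly more careful in two places. The paper only checks $(0,1)^{\times d_2}\neq 0$ and $(0,1)^{\times 2d_2}=0$, which by itself shows the order divides $2d_2$ but not $d_2$; your observation that the second coordinate forces the exponent to be a multiple of $d_2$ closes this. You also spell out the final counting argument, which the paper leaves implicit.

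The genuine difference is the intersection step. The paper argues by brute force. It writes $(0,1)^{\times k}=(2t,2q)^{\times r}$ with $0\le r<d_1/2$ and uses the second coordinate to get $k\equiv 2rq \pmod{d_2}$. Next it expands $\binom{2rq}{2}$ explicitly to show it agrees with $\binom{k}{2}$ up to a multiple of $d_1/2$ modulo $d_1$, and substitutes this into the first coordinate. Condition~\eqref{paco} then yields $r\equiv 0 \pmod{d_1/2}$, hence $r=0$.

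You instead read~\eqref{paco} as the group identity $(0,1)^{\times 2q}=(1,0)\times(2t,2q)$. This gives $(2t,2q)^{\times a}\equiv(-a,0)$ modulo $\langle(0,1)\rangle$, so everything reduces to the fact you had already computed: the only elements of $\langle(0,1)\rangle$ with zero second coordinate are $0$ and $(d_1/2,0)$.

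Your route avoids the binomial bookkeeping. It also explains why~\eqref{paco} is the right normalization of $t$: it makes $(2t,2q)$ map to a generator of $\mathrm{U}_{d_1d_2}/\langle(0,1)\rangle$, which has order $d_1/2$, the image of $(1,0)$ generating this quotient by~\eqref{combinacion}. The paper's route is more mechanical but needs no auxiliary identity to be spotted.
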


\begin{proof} Note that $(0,1)^{\times d_2}=\bigl(\lambda \binom{d_2}{2},0\bigr)\ne 0$ and that $(0,1)^{\times 2d_2}=0$. Hence, $|(0,1)| = 2d_2$, as we want. On the other hand, if $1\le n<d_1/2$, then $(2t,2q)^{\times n}\ne 0$, since $d_2\nmid 2qn$; while
\begin{equation*}
(2t,2q)^{\times d_1/2}=\left(d_1 t+4\lambda q^2 \binom{d_1/2}{2},d_1 q\right)=(0,0).
\end{equation*}
So, $|(2t,2q)| = d_1/2$ as desired. It remains to check that $\langle (2t,2q)\rangle \cap \langle (0,1)\rangle=\{0\}$. Suppose that $(0,1)^{\times k}=(2t,2q)^{\times r}$ with $0\le r<d_1/2$. Then
\begin{equation}\label{paco1}
\left(\lambda \binom{k}{2},k\right)= (0,1)^{\times k}=(2t,2q)^{\times r} = \left(2rt+4\lambda q^2 \binom{r}{2},2rq\right),
\end{equation}
which implies that $k=2rq$ in $\mathds{Z}_{d_2}$. Let $l\in \mathds{Z}$ be such that $k + d_2l = 2rq$. Since $d_2$ is even, $k$ is also. Write $d_2 = 2d_2'$ and $k = 2k'$. Computing we obtain
$$
\binom{2rq}{2} = rq(2rq-1) = (k'+d_2'l)(k+d_2l-1) = k'(k-1) + k'd_2l + d_2'lk + d_2'd_2l^2 - d_2'l = \binom{k}{2} + \frac{d_1}{2} q,
$$
for some $q\in \mathds{Z}$. Combining this with~\eqref{paco1}, we obtain that, in $\mathds{Z}_{d_1}$,
\begin{equation*}
2\lambda r^2q^2-\lambda rq = \lambda \binom{2rq}{2} =\lambda \binom{k}{2} + \frac{d_1}{2}\lambda q =2rt+4\lambda q^2 \binom{r}{2} + \frac{d_1}{2}\lambda q = 2rt+2\lambda r^2q^2-2\lambda r q^2 + \frac{d_1}{2}\lambda q,
\end{equation*}
or, equivalently,
$$
2\lambda r q^2- \lambda rq-2rt  = \frac{d_1}{2}\lambda q.
$$
By~\eqref{paco} this implies that  $r = r(\lambda q(2q-1)-2t)=\frac{d_1}{2}\lambda q\pmod{d_1}$. Since $0\le r<d_1/2$, this implies that $r=0$, which finishes the proof.
\end{proof}

\begin{remark}\label{caso final} Under the hypotheses of Proposition~\ref{caso v2(d1)=v2(d2) y alfa impar}, the group $(\mathrm{U}_{d_1d_2},\times)$ is isomorphic to $\mathds{Z}_{\frac{d_1}{2}}\oplus \mathds{Z}_{2d_2}$ and has generators $a_1\coloneqq (2t,2q)$ and $a_2\coloneqq (0,1)$, of orders $r_1=d_1/2$ and~$r_2=2d_2$, respectively.
\end{remark}

\section[The actions \texorpdfstring{$\Yleft$}{<} and \texorpdfstring{$\blackdiamond$}{*} for \texorpdfstring{$H^{d_1d_2}_{\lambda}$}{Hd1d2a}] {The actions \texorpdfstring{$\pmb\Yleft$}{<} and \texorpdfstring{$\blackdiamond$}{*} for \texorpdfstring{$\pmb{H^{d_1d_2}_{\lambda}}$}{Ha}}

Let $I$ be an abelian group. In this section we characterize the actions $\blackdiamond$ and $\Yleft$ of $H^{d_1d_2}_{\lambda}$ on $I$, subject to  conditions~\eqref{eqq 1}--\eqref{eqq 3}. The analysis is carried out through a sequence of propositions:

\begin{itemize}
\item[-] In the first proposition we treat the case $d_2$ odd, or $v_2(d_1)<v_2(d_2)$, or $v_2(d_1)=v_2(d_2)\ge 1$ with $\lambda$ even.

\item[-] In the second one, we examine the situation $\lambda=1$, $d_1=2$, and $v_2(d_2)=1$.

\item[-] In the third one, we consider $\lambda$ odd, $d_1>2$, and $v_2(d_1)=v_2(d_2)=1$.

\item[-] Finally, in the fourth proposition we address the case  $\lambda$ odd with $v_2(d_1)=v_2(d_2)\ge 2$.
\end{itemize}
Note that the conditions in the first item are satisfied if and only if $d_1\mid \lambda\binom{d_2}{2}$.

\begin{proposition}\label{caracterizacion primer caso} Assume that $d_2$ is odd, or $v_2(d_1)<v_2(d_2)$, or $v_2(d_1)=v_2(d_2)\ge 1$ and $\lambda$ is even. Let $A_1$, $A_2$,~$B_1$ and $B_2$ be endomorphisms of $I$ such that
\begin{align}
&A_1^{d_1}=A_2^{d_2}=\Ide, \quad [A_1,A_2]=0, \quad d_1B_1=d_2B_2=0,\label{Propiedades de A1 y A2, B1, B2 primer caso}\\
&(A_1 -A_1B_1)^{d_1}=\Ide,\quad (A_2 -A_2\xcirc B_2)^{d_2}=\Ide,\quad [A_1 -A_1\xcirc B_1,A_2 -A_2\xcirc B_2]=0,\label{primeras propiedades de A-AB primer caso}\\
& [B_1,A_1]=B_1\xcirc A_1\xcirc B_1,\quad [B_1,A_2]=B_1\xcirc A_2\xcirc B_2\quad\text{and}\quad [B_2,A_2]=B_2\xcirc A_2\xcirc B_2 + \lambda A_2\xcirc B_1.\label{condiciones para [1.14] primer caso}
\end{align}
Then, the  maps $\blackdiamond \colon  H^{d_1d_2}_{\lambda}\times I\to I$ and $\Yleft \colon  I \times H^{d_1d_2}_{\lambda}\to I$, defined by
\begin{equation*}
(1,0)^{\times a}\times (0,1)^{\times a'} \blackdiamond y=A_1^a\xcirc A_2^{a'} y\quad\text{and}\quad	y\Yleft (a,a')\coloneqq aB_1y+a'B_2y,
\end{equation*}
satisfy the conditions~\eqref{eqq 1}--\eqref{eqq 3}. Conversely, if $\blackdiamond \colon  H^{d_1d_2}_{\lambda}\times I\to I$ and $\Yleft \colon  I \times H^{d_1d_2}_{\lambda}\to I$ are maps satisfying these conditions, then the endomorphisms $A_1$, $A_2$, $B_1$ and $B_2$, of $I$, given by
\begin{equation*}
A_1 y \coloneqq (1,0)\blackdiamond y,\quad A_2 y \coloneqq (0,1)\blackdiamond y,\quad B_1 y  \coloneqq y \Yleft (1,0)\quad\text{and}\quad B_2 y  \coloneqq y \Yleft (0,1),
\end{equation*}
satisfy~\eqref{Propiedades de A1 y A2, B1, B2 primer caso}--\eqref{condiciones para [1.14] primer caso}.
\end{proposition}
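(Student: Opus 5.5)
Conditions \eqref{eqq 1}--\eqref{eqq 3} split naturally, and I will translate each into the language of the endomorphisms $A_i$, $B_i$.

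\emph{Condition \eqref{eqq 2}.} It says that $h\mapsto (y\mapsto h\blackdiamond y)$ is a right action of the group $(\mathrm{U}_{d_1d_2},\times)$ on $I$ by additive automorphisms. Since this group is abelian, it is simply a group homomorphism $(\mathrm{U}_{d_1d_2},\times)\to \mathrm{Aut}(I)$. By Remark~\ref{caso d2 impar}, under our hypothesis $d_1\mid\lambda\binom{d_2}{2}$ the group $(\mathrm{U}_{d_1d_2},\times)$ is isomorphic to $\mathds{Z}_{d_1}\oplus\mathds{Z}_{d_2}$, freely generated in this sense by $a_1=(1,0)$ and $a_2=(0,1)$ of orders $d_1,d_2$. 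Hence such actions correspond exactly to pairs of commuting automorphisms $A_1,A_2$ with $A_1^{d_1}=A_2^{d_2}=\Ide$, via $(1,0)^{\times a}\times(0,1)^{\times a'}\blackdiamond y=A_1^aA_2^{a'}y$. This gives the $A$-part of \eqref{Propiedades de A1 y A2, B1, B2 primer caso}.

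\emph{Condition \eqref{eqq 3}.} Write $\Yleft$ in the form $y\mapsto y-y^h$. Then \eqref{eqq 3} says precisely that $h\mapsto y\Yleft h$ is additive in $h$ for the additive structure of $H$. Combined with Remark~\ref{linealidad de Yleft}, $\Yleft$ is determined by $B_1=\,\cdot\Yleft(1,0)$ and $B_2=\,\cdot\Yleft(0,1)$, with $y\Yleft(a,a')=aB_1y+a'B_2y$. Well-definedness on $\mathds{Z}_{d_1}\oplus\mathds{Z}_{d_2}$ forces $d_1B_1=d_2B_2=0$, and conversely these relations suffice.

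\emph{Condition \eqref{eqq 1}.} Here $y\triangleleft h=h\blackdiamond(y-y\Yleft h)$, and \eqref{eqq 1} says $\triangleleft$ is a right action of $(\mathrm{U},\times)$ by endomorphisms; additivity in $y$ and $y\triangleleft0=y$ are automatic. Set $C_1=A_1(\Ide-B_1)$ and $C_2=A_2(\Ide-B_2)$, which are $y\triangleleft(1,0)$ and $y\triangleleft(0,1)$. First I would show that $\triangleleft$ is an action if and only if $C_1,C_2$ commute, satisfy $C_1^{d_1}=C_2^{d_2}=\Ide$, and
\[
y\triangleleft\bigl((1,0)^{\times a}\times(0,1)^{\times a'}\bigr)=C_2^{a'}C_1^{a}y .
\]
(As before, this rests on the presentation of $(\mathrm{U},\times)$ from Remark~\ref{caso d2 impar}; invertibility of the $C_i$ follows from the order conditions.) Second, I must check that this formula agrees with the \emph{defined} value $h\blackdiamond(y-y\Yleft h)$ for every $h$. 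Via \eqref{combinacion}, $h=(a,a')$ equals $(1,0)^{\times(a-\lambda\binom{a'}{2})}\times(0,1)^{\times a'}$, so the requirement becomes the identity
\[
A_1^{a-\lambda\binom{a'}{2}}A_2^{a'}\bigl(\Ide-aB_1-a'B_2\bigr)=C_1^{a}C_2^{a'}\qquad\text{for all }a,a'.
\]

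\emph{Main obstacle.} Reducing this identity to the commutator relations \eqref{condiciones para [1.14] primer caso} is where I expect the real work. My plan is an induction on $a$ and $a'$, comparing the cases $(a+1,a')$ and $(a,a'+1)$ with $(a,a')$. Expanding, the equation $C_1=A_1(\Ide-B_1)$ at the base case is the definition. The inductive step in $a$ amounts to $(\Ide-aB_1-a'B_2)A_1(\Ide-B_1)=A_1(\Ide-(a+1)B_1-a'B_2)$. Separating the coefficients of $a$, $a'$ and $1$ yields exactly $[B_1,A_1]=B_1A_1B_1$ together with a relation between $B_2$ and $A_1$. I still have to derive that $B_2$--$A_1$ relation from the listed ones, or recover it from commutativity of $C_1,C_2$. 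The step in $a'$ similarly produces $[B_1,A_2]=B_1A_2B_2$ and $[B_2,A_2]=B_2A_2B_2+\lambda A_2B_1$. The term $\lambda A_2B_1$ comes from the shift $\binom{a'+1}{2}-\binom{a'}{2}=a'$ multiplying $A_1^{-\lambda}$; I will handle it using $A_1^{-\lambda a'}$ and the nilpotent-type relation $B_1A_1B_1$ to linearize.

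\emph{Assembly.} For the converse direction, I will read off all the stated relations from the cases $a,a'\in\{0,1\}$ together with the order and commutation requirements. For the direct direction, I will run the induction forwards.
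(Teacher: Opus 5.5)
Your treatment of \eqref{eqq 2} and \eqref{eqq 3}, and your reduction of \eqref{eqq 1} to ``$C_1,C_2$ define an action, plus a compatibility identity'', follow the paper. However, the identity you isolate is misstated, and your plan for its hardest step depends on the misstatement. By \eqref{combinacion}, $h=(a,a')=(1,0)^{\times k}\times(0,1)^{\times a'}$ with $k\coloneqq a-\lambda\binom{a'}{2}$, so your own formula gives $y\triangleleft h=C_2^{a'}C_1^{k}y=C_1^{k}C_2^{a'}y$, not $C_1^{a}C_2^{a'}y$. As written, the identity is false. Take $B_1=B_2=0$, where \eqref{eqq 1}--\eqref{eqq 3} hold trivially because then $\triangleleft=\blackdiamond$. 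Your identity then reads $A_1^{\lambda\binom{a'}{2}}=\Ide$, hence $A_1^{\lambda}=\Ide$, which fails for instance when $d_1=d_2=3$, $\lambda=1$ and $A_1$ has order $3$. For the same reason, your step $a'\to a'+1$ leaves an unmatched factor $A_1^{-\lambda a'}$. No hypothesis lets you ``linearize'' a bare power of $A_1$ into a $B_1$-term.

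With the correct exponent, that factor is matched by $C_1^{-\lambda a'}$ on the other side. The linearization you want is then $C_1^{j}=A_1^{j}(\Ide-jB_1)$ for all $j\in\mathds{Z}$. It follows by induction from the first relation in \eqref{condiciones para [1.14] primer caso}, rewritten as $B_1C_1=A_1B_1$; negative $j$ are handled via $A_1^{d_1}=C_1^{d_1}=\Ide$ and $d_1B_1=0$. Keep $C_1^{k}$ on the left and cancel $A_1^{k}$. The identity becomes $A_2^{a'}(\Ide-aB_1-a'B_2)=(\Ide-kB_1)C_2^{a'}$. This is immediate from $B_1C_2^{j}=A_2^{j}B_1$ and $C_2^{j}=A_2^{j}\bigl(\Ide-\lambda\binom{j}{2}B_1-jB_2\bigr)$, both proved by induction from the other two relations. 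So the $\lambda$-term enters through the $\lambda A_2B_1$ in $B_2C_2=A_2B_2+\lambda A_2B_1$ and the first coordinate of $h$, not through a power of $A_1$. This is exactly the paper's argument.

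There are two further points.

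First, in your step in $a$ you multiply by $C_1$ on the right, which requires $[B_2,A_1]=B_2A_1B_1$. This relation does follow from the hypotheses. Using $B_1C_2=A_2B_1$ one gets $C_1C_2=A_1A_2(\Ide-B_1-B_2)$, while $C_2C_1=A_2(A_1-A_1B_1-B_2A_1+B_2A_1B_1)$. Comparing the two, with $[C_1,C_2]=[A_1,A_2]=0$ and $A_2$ invertible, yields it. If you keep $C_1^{k}$ on the left as above, you never need it.

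Second, for the converse the cases $a,a'\in\{0,1\}$ do not suffice. The elements $h=(1,0)$ and $h=(0,1)$ give tautologies. Among the three listed relations, $h=(1,1)$ gives only $[B_1,A_2]=B_1A_2B_2$. The other two come from $h=(1,0)^{\times 2}=(2,0)$ and $h=(0,1)^{\times 2}=(\lambda,2)$, as in the paper. They cannot be recovered from the order and commutation conditions alone. Take $d_1=d_2=p$ an odd prime, $I=\mathds{Z}_p^3$, $A_1=A_2=\Ide$, $B_2=0$, and $B_1$ a nilpotent Jordan block of size $3$. All those conditions and the $(1,1)$ relation hold, yet $[B_1,A_1]=0\ne B_1^2=B_1A_1B_1$.
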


\begin{proof} $\Rightarrow)$\enspace Since $d_1\mid \lambda\binom{d_2}{2}$, from Remark~\ref{caso d2 impar} it follows that $(\mathrm{U}_{d_1d_2},\times)$ is generated, as an abelian group, by~$(1,0)$ and $(0,1)$, subject to the relations $(1,0)^{\times d_1} = (0,1)^{\times d_2} = (0,0)$. Hence, since $A_1^{d_1}=A_2^{d_2}=\Ide$ and $A_1\xcirc A_2=A_2\xcirc A_1$, the map $\blackdiamond$ is a well defined $\mathds{Z}$-linear left group action of $(\mathrm{U}_{d_1d_2},\times)$ on~$I$ (in other words conditions~\eqref{eqq 2} are satisfied). Similarly, since conditions~\eqref{primeras propiedades de A-AB primer caso} are satisfied, there is a well defined $\mathds{Z}$-linear right group action $\triangleleft$, of $(\mathrm{U}_{d_1d_2},\times)$ on~$I$ (in other words $\triangleleft$ satisfy conditions~\eqref{eqq 1}), given by
\begin{equation}\label{formula de triangleleft}
y\triangleleft (1,0)^{\times a}\times (0,1)^{\times a'} \coloneqq (A_1 -A_1\xcirc B_1)^a\xcirc (A_2 -A_2\xcirc B_2)^{a'}y.
\end{equation}
Moreover, the map $\Yleft$ is well defined, because $d_1B_1=d_2B_2=0$. Recall, from Proposition~\ref{condiciones para extension}, that
\begin{equation*}
y^{(a,a')} \coloneqq y - y\Yleft (a,a')= (\Ide - aB_1 -a'B_2)y.
\end{equation*}
A direct computation proves that this operation satisfies~\eqref{eqq 3}. To finish the proof it remains to check that
\begin{equation*}
(a,a')\blackdiamond (y - y\Yleft (a,a')) =y\triangleleft (a,a')\qquad\text{for all $(a,a')\in \mathrm{U}_{d_1d_2}$ and $y\in I$.}
\end{equation*}
Concretely, by~\eqref{combinacion}, we must show that
\begin{equation}\label{equivalencia entre las dos definiciones}
A_1^{a-\lambda \binom{a'}{2}}A_2^{a'}(\Ide - aB_1 -a'B_2)= (A_1 -A_1B_1)^{a-\lambda \binom{a'}{2}}(A_2 -A_2B_2)^{a'} \quad\text{for all $(a,a')\in \mathrm{U}_{d_1d_2}$.}
\end{equation}
Clearly, the first condition in~\eqref{condiciones para [1.14] primer caso} can be rewritten as
\begin{equation}\label{otra forma de condicion para (2,0)}
B_1(A_1-A_1B_1)=A_1B_1.
\end{equation}
On the basis of this identity, we shall now prove that
\begin{equation}\label{para k}
(A_1-A_1B_1)^k= A_1^k-k A_1^kB_1\quad\text{for all $k\in \mathds{Z}$.}
\end{equation}
For $k=0$ and $k=1$, this is trivial. Assume that it is true for some $k\ge 1$. Then
\begin{align*}
(A_1-A_1B_1)^{k+1}&=\left(A_1^k-kA_1^k B_1\right)(A_1-A_1B_1)\\
&=A_1^{k+1}-A_1^{k+1}B_1-kA_1^k B_1(A_1-A_1B_1)\\
&= A_1^{k+1}-A_1^{k+1}B_1-kA_1^k A_1B_1 && \text{by~\eqref{otra forma de condicion para (2,0)}}\\
&= A_1^{k+1}-(k+1) A_1^{k+1}B_1,
\end{align*}
which establishes equality~\eqref{para k} for all $k\in \mathds{N}_0$, and hence for $k\in \mathds{Z}$, since
$$
(A_1-A_1B_1)^{d_1} = \Ide = A_1^{d_1} = A_1^{d_1}-d_1 A_1^{d_1}B_1.
$$
By~\eqref{para k}, equality~\eqref{equivalencia entre las dos definiciones} is equivalent to
\begin{equation*}
A_1^{a-\lambda \binom{a'}{2}}A_2^{a'}(\Ide-aB_1-a'B_2) = A_1^{a-\lambda\binom{a'}{2}}\left(\Ide - \left(a-\lambda\binom{a'}{2}\right)B_1\right)(A_2 -A_2B_2)^{a'}.
\end{equation*}
Cancelling $A_1^{a-\lambda \binom{a'}{2}}$, we obtain the equivalent condition
\begin{equation}\label{paquito}
A_2^{a'}(\Ide - aB_1 -a'B_2) = \left(\Ide - \left(a-\lambda\binom{a'}{2}\right)B_1\right)(A_2 -A_2B_2)^{a'}.
\end{equation}
On the other hand, the second condition in~\eqref{condiciones para [1.14] primer caso} states that $B_1(A_2-A_2B_2)=A_2B_1$, which immediately implies
\begin{equation}\label{otra forma de condicion para (1,1)}
B_1(A_2-A_2B_2)^k=A_2^kB_1 \quad\text{for all $k\in \mathds{N}_0$.}
\end{equation}
We claim that
\begin{equation}\label{condicion mas dificil}
(A_2 -A_2B_2)^k = A_2^k-\lambda \binom{k}{2}A_2^kB_1 -kA_2^kB_2\qquad\text{for all $k\in \mathds{N}_0$.}
\end{equation}
For $k=0$, this is trivially true. Assume it is true for some $k\ge 0$. Then,
\begin{align*}
(A_2-A_2B_2)^{k+1}&=\left(A_2^k-\lambda\binom{k}{2} A_2^k B_1-kA_2^k B_2\right)(A_2-A_2B_2)\\
&=A_2^{k+1}-A_2^{k+1}B_2-\lambda\binom{k}{2} A_2^k B_1(A_2-A_2B_2)-kA_2^k B_2(A_2-A_2B_2)\\
&=A_2^{k+1}-A_2^{k+1}B_2-\lambda\binom{k}{2} A_2^k A_2B_1-kA_2^k A_2(B_2+\lambda B_1) &&\text{by~\eqref{condiciones para [1.14] primer caso}}\\
&= A_2^{k+1}-\lambda\left(\binom{k}{2} +k\right) A_2^{k+1} B_1-(k+1) A_2^{k+1}B_2\\
&= A_2^{k+1}-\lambda\binom{k+1}{2} A_2^{k+1} B_1-(k+1) A_2^{k+1}B_2,
\end{align*}
as desired. Finally, in view of~\eqref{otra forma de condicion para (1,1)} and~\eqref{condicion mas dificil}, equality~\eqref{paquito} becomes
\begin{equation*}
A_2^{a'}-a A_2^{a'}\xcirc B_1-{a'}A_2^{a'}\xcirc B_2 = A_2^{a'}-\lambda\binom{a'}{2} A_2^{a'}\xcirc  B_1-a'A_2^{a'}\xcirc  B_2-\left(a-\lambda\binom{a'}{2}\right)A_2^{a'}\xcirc  B_1,
\end{equation*}
which is trivially true.

\smallskip
	
\noindent $\Leftarrow)$\enspace By equality~\eqref{potencia} we have $(1,0)^{\times d_1} = (d_1,0) = (0,0)$ and $(0,1)^{\times d_2} = \bigl(\lambda\binom{d_2}{2},d_2\bigr) = (0,0)$, because $d_1\mid \lambda\binom{d_2}{2}$. Since,~more\-over $(\mathrm{U}_{d_1d_2},\times)$ is an abelian group,
by~\eqref{eqq 2}, we have
\begin{equation*}
A_1^{d_1}=A_2^{d_2}= \Ide\quad\text{and}\quad A_1A_2=A_2A_1.
\end{equation*}
Furthermore, by the bilinearity of $\Yleft$ and the definitions of $B_1$ and $B_2$, we have
\begin{equation*}
d_1B_1 y= d_1 (y\Yleft (1,0))=  y\Yleft (d_1,0)=  0 \quad\text{and}\quad d_2B_2 y= d_2 (y\Yleft (0,1)) =  y\Yleft (0,d_2)= 0\qquad\text{for all $y\in I$.}
\end{equation*}	
Thus, the identities in~\eqref{Propiedades de A1 y A2, B1, B2 primer caso} hold. On the other hand, the identities in~\eqref{primeras propiedades de A-AB primer caso} follow directly from~\eqref{eqq 1}, together with the facts that
$$
(1,0)^{\times d_1} = (0,1)^{\times d_2} = (0,0)\quad\text{and}\quad (1,0)\times (0,1) = (0,1)\times (1,0)
$$
and that, by definition we have
$$
y\triangleleft (1,0) = (A_1 -A_1B_1)y\quad\text{and}\quad y\triangleleft (0,1) = (A_2 -A_2B_2)y.
$$
It remains to verify the three identities in~\eqref{condiciones para [1.14] primer caso}. Since these follow directly from equality~\eqref{equivalencia entre las dos definiciones} with $(a,a') = (2,0)$, $(a,a') = (1,1)$, and $(a,a') = (\lambda,2)$, the problem reduces to proving~\eqref{equivalencia entre las dos definiciones}. But this holds, because
\begin{align*}
A_1^{a-\lambda \binom{a'}{2}}A_2^{a'}(y - aB_1 y -a'B_2 y) & = (a,a') \blackdiamond (y - aB_1 y -a'B_2 y)\\
& = (a,a') \blackdiamond (y - y\Yleft (a,a'))\\
& = y \triangleleft (1,0)^{\times a-\lambda \binom{a'}{2}}\times (0,1)^{\times a'}\\
& = (A_1 -A_1B_1)^{a-\lambda \binom{a'}{2}}(A_2 -A_2B_2)^{a'} y,
\end{align*}
where the first equality holds by the first condition in~\eqref{eqq 2} and equality~\eqref{combinacion}; the second one, since $\Yleft$ is bilinear; the third one, by the definition of~$\triangleleft$ and equality~\eqref{combinacion}; and the last one, by the first equal\-ity in~\eqref{eqq 1}.
\end{proof}

\begin{proposition}\label{caracterizacion segundo caso} Assume that $\lambda=1$, $d_1=2$ and $v_2(d_2)=1$. Let $A$, $B_1$ and $B_2$ be endomorphisms of $I$ such that
\begin{align}
& A^{2d_2}=\Ide,\quad 2B_1=d_2B_2=0,\quad (A- AB_2)^{2d_2}=\Ide,\label{primeras condiciones A, B1 y B2 segundo caso}\\
& [B_2,A]= AB_1+ B_2AB_2\quad\text{and}\quad [B_1,A]=B_1AB_2.\label{segundas condiciones A, B1 y B2 segundo caso}
\end{align}	
Then, the  maps $\blackdiamond \colon  H^{d_1d_2}_{\lambda}\times I\to I$ and $\Yleft \colon  I \times H^{d_1d_2}_{\lambda}\to I$, defined by
\begin{equation*}
(0,1)^{\times{a}}\blackdiamond y\coloneqq A^a y \quad\text{and}\quad	y\Yleft (a,a')= (aB_1+ a'B_2)y,
\end{equation*}
satisfy the conditions~\eqref{eqq 1}--\eqref{eqq 3}. Conversely, if $\blackdiamond \colon  H^{d_1d_2}_{\lambda}\times I\to I$ and $\Yleft \colon  I \times H^{d_1d_2}_{\lambda}\to I$ are maps satisfying these conditions, then the endomorphisms $A$, $B_1$ and $B_2$, of $I$, given by	
\begin{equation*}
A y \coloneqq (0,1)\blackdiamond y, \quad B_1 y \coloneqq y\Yleft (1,0)\quad\text{and}\quad B_2 y \coloneqq y\Yleft (0,1),
\end{equation*}	
satisfy~\eqref{primeras condiciones A, B1 y B2 segundo caso} and~\eqref{segundas condiciones A, B1 y B2 segundo caso}.
\end{proposition}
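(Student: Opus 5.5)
I will mirror the proof of Proposition~\ref{caracterizacion primer caso}, replacing the presentation of $(\mathrm{U}_{d_1d_2},\times)$ by the one given in Remark~\ref{caso v_2=1}. Here $d_1=2$ and $\lambda=1$, so $(\mathrm{U}_{2d_2},\times)$ is cyclic of order $2d_2$ generated by $(0,1)$. By~\eqref{potencia}, $(0,1)^{\times a'}=\bigl(\binom{a'}{2},a'\bigr)$. Hence every $(a,a')$ equals $(0,1)^{\times n}$ with $n\equiv a'\pmod{d_2}$ and $\binom{n}{2}\equiv a\pmod 2$. Since $d_2\equiv 2\pmod 4$, the element $(0,1)^{\times d_2}=(1,0)$, so $n$ is determined modulo $2d_2$. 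In particular $(1,0)=(0,1)^{\times d_2}$.

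For the first implication, $A^{2d_2}=\Ide$ gives a well defined linear left action $\blackdiamond$, so~\eqref{eqq 2} holds. The condition $(A-AB_2)^{2d_2}=\Ide$ gives a well defined right action $y\triangleleft (0,1)^{\times n}\coloneqq (A-AB_2)^n y$, so~\eqref{eqq 1} holds. The relations $2B_1=d_2B_2=0$ make $\Yleft$ well defined and bilinear, and~\eqref{eqq 3} is immediate, as before. It remains to check $(a,a')\blackdiamond(y-y\Yleft(a,a'))=y\triangleleft(a,a')$. Writing $(a,a')=(0,1)^{\times n}$ and using bilinearity, this is the identity
\begin{equation*}
A^n\Bigl(\Ide-\tbinom{n}{2}B_1-nB_2\Bigr)=(A-AB_2)^n\qquad\text{for all $n\ge 0$.}
\end{equation*}
I will prove it by induction on $n$, exactly as in~\eqref{condicion mas dificil}. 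The second relation of~\eqref{segundas condiciones A, B1 y B2 segundo caso} gives $B_1(A-AB_2)=AB_1$, and the first gives $B_2(A-AB_2)=A(B_2+B_1)$. The inductive step then reads
\begin{equation*}
\Bigl(A^n-\tbinom n2A^nB_1-nA^nB_2\Bigr)(A-AB_2)=A^{n+1}-A^{n+1}B_2-\tbinom n2A^{n+1}B_1-nA^{n+1}(B_1+B_2),
\end{equation*}
which is the formula for $n+1$. Because both sides are periodic of period $2d_2$, the identity holds for all $n\in\mathds{Z}$. The one step I must check separately is consistency: the left-hand side must depend only on $(a,a')$, not on the chosen representative $n$. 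This follows because $\binom{n}{2}B_1$ depends only on $\binom n2 \bmod 2$, since $2B_1=0$, and $nB_2$ depends only on $n\bmod d_2$.

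For the converse, define $A$, $B_1$, $B_2$ as in the statement. Then $A^{2d_2}=\Ide$ because $|(0,1)|=2d_2$. Bilinearity of $\Yleft$ (Remark~\ref{linealidad de Yleft}), together with $2\cdot(1,0)=0$ and $d_2\cdot(0,1)=0$ in $\mathrm{Ab}_{2d_2}$, gives $2B_1=d_2B_2=0$. Since $y\triangleleft(0,1)=(A-AB_2)y$ and $\triangleleft$ is a right action of a group of order $2d_2$, we get $(A-AB_2)^{2d_2}=\Ide$. Finally, the compatibility $y\triangleleft h=h\blackdiamond(y-y\Yleft h)$ gives the displayed identity for every $n$.
\begin{itemize}
\item Taking $n=2$, with $(0,1)^{\times 2}=(1,2)$, yields $(A-AB_2)^2=A^2(\Ide-B_1-2B_2)$. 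Expanding, $-AB_2A+AB_2AB_2=-A^2B_1-A^2B_2$, and cancelling one $A$ (invertible) gives $[B_2,A]=AB_1+B_2AB_2$.
\item For the relation involving $B_1$, apply the identity at $n=d_2$ and $n=d_2+1$; recall $(1,0)=(0,1)^{\times d_2}$. Combined with the already established $B_2$-relation, these should force $B_1(A-AB_2)=AB_1$, that is, $[B_1,A]=B_1AB_2$.
\end{itemize}

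I expect this last extraction to be the main obstacle. My first attempt will be to compare the $n$ and $n+1$ instances of the identity: multiply the $n$ identity by $(A-AB_2)$ on the right, substitute the $B_2$-relation, and subtract the $n+1$ identity. This isolates $\binom n2\bigl(A^nB_1(A-AB_2)-A^{n+1}B_1\bigr)=0$. Choosing $n=2$ makes $\binom n2=1$, and cancelling $A^2$ gives the required relation.
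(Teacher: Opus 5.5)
Your proposal is correct and is essentially the paper's proof: everything reduces to the identity $A^n\bigl(\Ide-\binom{n}{2}B_1-nB_2\bigr)=(A-AB_2)^n$ for powers of the generator $(0,1)$. You prove this identity by induction for the forward direction, and for the converse you read off the two commutator relations from the instances $n=2$ and $n=3$, exactly as the paper does; the $n=d_2$, $n=d_2+1$ aside in your bullet is not needed. The only difference is organizational. You run the forward induction in a single step, multiplying on the right, so you need only the closed form $(0,1)^{\times n}=\bigl(\binom{n}{2},n\bigr)$ and the one-step relations $B_1(A-AB_2)=AB_1$ and $B_2(A-AB_2)=A(B_1+B_2)$. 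The paper instead multiplies on the left and therefore nests three inductions: one for $(A-AB_2)^n$, one for $B_2(A-AB_2)^n$ and one for $B_1(A-AB_2)^n$.
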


\begin{proof} $\Rightarrow)$\enspace By Remark~\ref{caso v_2=1}, the group $(\mathrm{U}_{d_1d_2},\times)$ is generated by $(0,1)$, subject to the relation $(0,1)^{\times 2d_2} = (0,0)$.~Con\-sequently, since $A^{2d_2}=\Ide$, the map $\blackdiamond$ is a well defined $\mathds{Z}$-linear left group action of $(\mathrm{U}_{d_1d_2},\times)$ on $I$ (that is, con\-ditions~\eqref{eqq 2} are satisfied). Similarly, since $(A-AB_2)^{2d_2}=\Ide$, there is a well defined $\mathds{Z}$-linear right group action~$\triangleleft$, of $(\mathrm{U}_{d_1d_2},\times)$ on $I$ (in other words $\triangleleft$ satisfy conditions~\eqref{eqq 1}), given by
\begin{equation}\label{formula de triangleleft'}
y\triangleleft (0,1)^{\times a} \coloneqq (A-AB_2)^ay.
\end{equation}
Moreover, the map $\Yleft$ is well defined, because $2B_1=d_2B_2=0$. Recall, from Proposition~\ref{condiciones para extension}, that
\begin{equation*}
y^{(a,a')} \coloneqq y - y\Yleft (a,a')= (\Ide - aB_1 -a'B_2)y.
\end{equation*}
A direct computation proves that this operation satisfies~\eqref{eqq 3}. To finish the proof it remains to check that
\begin{equation*}
(a,a')\blackdiamond (y - y\Yleft (a,a')) =y\triangleleft (a,a')\qquad\text{for all $(a,a')\in \mathrm{U}_{d_1d_2}$ and $y\in I$.}
\end{equation*}
Since $(0,1)$ is a generator of $(\mathrm{U}_{d_1d_2},\times)$, of order $|(0,1)| = 2d_2$; given $(a,a')\in H_{\lambda}^{d_1d_2}$, there exists a unique $n(a,a')\in \mathds{N}$, such that $1\le n(a,a')\le 2d_2$ and $(0,1)^{\times n(a,a')}=(a,a')$. With this notation the previous conditions becomes
\begin{equation}\label{equivalencia entre las dos definiciones segundo caso}
A^{n(a,a')}(\Ide - aB_1-a'B_2)=(A-AB_2)^{n(a,a')} \quad\text{for all $(a,a')\in H_{\lambda}^{d_1d_2}$.}
\end{equation}
For $(a,a')=(0,1)$ this condition is trivial. Assume it is true for some $(a,a')\coloneqq (0,1)^{\times n}$ with $1\le n = n(a,a')<2d_2$. We want to prove that~\eqref{equivalencia entre las dos definiciones segundo caso} is true for $(0,1)^{\times n+1}$. Since, by equality~\eqref{producto},
\begin{equation}\label{paquito1}
(0,1)^{\times n+1} = (a+a',a'+1),
\end{equation}
we must prove that
\begin{equation*}
A^{n+1}(\Ide -(a+a')B_1 - (a'+1)B_2)=(A-AB_2)^{n+1}.
\end{equation*}
But since, on the one hand,
\begin{equation*}
A^{n+1}(\Ide -(a+a')B_1 - (a'+1)B_2)=A^{n+1}(\Ide -aB_1 - a'B_2) - A^{n+1}(a'B_1+B_2)
\end{equation*}
and, on the other hand,
\begin{equation*}
(A-AB_2)^{n+1}= A(A-AB_2)^n- AB_2(A-AB_2)^n,
\end{equation*}
by the inductive hypothesis, we are reduce to prove that
\begin{equation}\label{paso necesario}
A^n(a'B_1+B_2)= B_2(A-AB_2)^n.
\end{equation}
We prove this, for $1\le n\le 2d_2$, by induction on $n$. For $(a,a')=(0,1)$, this equality holds by the first condition in~\eqref{segundas condiciones A, B1 y B2 segundo caso}. Assume it is true for some $(a,a')\coloneqq (0,1)^{\times n}$ with $1\le n = n(a,a')<2d_2$. By~\eqref{paquito1}, we must prove that
\begin{equation*}
A^{n+1}((a'+1)B_1+B_2)= B_2(A-AB_2)^{n+1}.
\end{equation*}
But, since
\begin{equation*}
A^{n+1}((a'+1)B_1+B_2) =  A^{n+1}(a'B_1+B_2)+ A^{n+1}B_1
\end{equation*}
and
\begin{align*}
B_2(A-AB_2)^{n+1}& = (B_2A-B_2AB_2)(A-AB_2)^n\\
& = (AB_2+AB_1)(A-AB_2)^n &&\text{by~\eqref{segundas condiciones A, B1 y B2 segundo caso}}\\
&= AB_2(A-AB_2)^n + AB_1(A-AB_2)^n,
\end{align*}
by the inductive hypothesis, it suffices to prove that 	
\begin{equation*}
A^nB_1=B_1(A-AB_2)^n\quad\text{for all $n\in \mathds{N}$}.
\end{equation*}
Again, we proceed by induction on $n$. For $n=1$ this equality follows from the second condition in~\eqref{segundas condiciones A, B1 y B2 segundo caso}. Assume it is true for some $n\ge 1$. Then, by the inductive hypothesis and the second condition in~\eqref{segundas condiciones A, B1 y B2 segundo caso},
\begin{equation*}
A^{n+1}B_1= AB_1(A-AB_2)^n= B_1(A-AB_2)^{n+1},
\end{equation*}
as desired.
\smallskip

\noindent $\Leftarrow)$\enspace Since $(0,1)^{\times 2d_2} =  (0,0)$, by~\eqref{eqq 2}, we have $A^{2d_2}=\Ide$. Moreover, by the bilinearity of $\Yleft$ and the definitions of~$B_1$ and $B_2$,
\begin{equation*}
2B_1 y= 2 (y\Yleft (1,0)) =  y\Yleft (2,0) = 0 \quad\text{and}\quad d_2B_2 y = d_2 (y\Yleft (0,1)) =  y\Yleft (0,d_2) = 0\quad\text{for all $y\in I$.}
\end{equation*}
Furthermore, the identity $(A-AB_2)^{2d_2}=\Ide$ follows from the first condition in~\eqref{eqq 1}, together with the facts that $(0,1)^{\times 2d_2} = (0,0)$ and, by def\-i\-ni\-tion, $y\triangleleft (0,1) = (A -AB_2)y$. Thus, the identities in~\eqref{primeras condiciones A, B1 y B2 segundo caso} hold. It remains to verify the identities in~\eqref{segundas condiciones A, B1 y B2 segundo caso}. We claim that these follow from~\eqref{equivalencia entre las dos definiciones segundo caso}, which holds because
\begin{align*}
A^{n(a,a')}(y - aB_1 y -a'B_2 y) & = (a,a') \blackdiamond (y - aB_1 y -a'B_2 y) &&\text{by~\eqref{eqq 2}, since $(0,1)^{\times n(a,a')} =(a,a')$}\\
& = (a,a') \blackdiamond (y - y\Yleft (a,a')) &&\text{since $\Yleft$ is bilinear}\\
& = y \triangleleft (a,a') && \text{by the definition of $\triangleleft$}\\
& = (A-AB_2)^{n(a,a')}y. &&\text{by~\eqref{eqq 1}, since $(0,1)^{\times n(a,a')} =(a,a')$}
\end{align*}
In fact, applying~\eqref{equivalencia entre las dos definiciones segundo caso} with $(a,a')=(0,1)^{\times 2} =(1,2)$, and using the invertibility of $A$, we obtain
$$
A (\Ide - B_1 - 2B_2) = (\Ide-B_2)(A-AB_2) = A-AB_2-B_2A+B_2AB_2,
$$
which proves the first equality in~\eqref{segundas condiciones A, B1 y B2 segundo caso}. Then, by~\eqref{equivalencia entre las dos definiciones segundo caso} with $(a,a')=(0,1)^{\times 3} =(3,3)$, we have
\begin{align*}
A^3(\Ide - 3B_1 - 3B_2) = (A -AB_2)^3 & \Leftrightarrow A^3(\Ide - B_1 - 3B_2) = A^2 (\Ide - B_1 - 2B_2) (A-AB_2)\\
& \Leftrightarrow A - AB_1 - 3AB_2 = A-AB_2 - B_1A+B_1AB_2 - 2B_2A + 2B_2AB_2\\
& \Leftrightarrow B_1A - AB_1 = B_1AB_2  + 2AB_2- 2B_2A + 2B_2AB_2\\
& \Leftrightarrow B_1A - AB_1 = B_1AB_2  - 2AB_1\\
& \Leftrightarrow B_1A - AB_1 = B_1AB_2,
\end{align*}
where the first equivalence follows from $2B_1 = 0$ and the identity $A^2(\Ide - B_1 - 2B_2) = (A - AB_2)^2$; the second equivalence uses the invertibility of $A$; the fourth one relies on the fact that, by the first equality in~\eqref{segundas condiciones A, B1 y B2 segundo caso},
$$
2AB_2 - 2B_2A + 2B_2AB_2 = -2AB_1;
$$
and the last equivalence holds since $2B_1=0$.
\end{proof}

\begin{remark}\label{tres20} Let $A,B_1,B_2$ be such that $[B_1,A] = B_1AB_2$. An inductive argument using $AB_1=B_1(A-AB_2)$,~shows that $A^nB_1 = B_1(A-AB_2)^n$, for all $n\in \mathds{N}$.
\end{remark}

\begin{proposition}\label{caracterizacion tercer caso} Assume that $\lambda$ is odd, $v_2(d_1)=v_2(d_2)=1$ and $d_1>2$. Let $A_1,A_2,B_1,B_2 \in\End(I)$ be such that
\begin{align}
& A_1^{d_1/2} = A_2^{2d_2}=\Ide,\quad [A_1,A_2]=0,\quad d_1B_1=d_2B_2=0,\label{primeras condiciones A_1, A_2, B1 y B2 tercer caso}\\
&(A_1 - 2A_1B_1)^{d_1/2}=(A_2 -A_2B_2)^{2d_2}=\Ide,\quad [A_1 - 2A_1B_1,A_2 -A_2B_2]=0,\label{segundas condiciones A_1, A_2, B1 y B2 tercer caso}\\
& [B_2,A_2]=B_2A_2B_2+\lambda A_2B_1,\quad [B_1,A_2] = B_1A_2B_2\quad\text{and}\quad 2[B_1,A_1] = 4B_1A_1B_1.\label{terceras condiciones A_1, A_2, B1 y B2 tercer caso}
\end{align}	
Then, the maps $\blackdiamond\colon H^{d_1d_2}_{\lambda}\times I\to I$ and $\Yleft\colon I\times H^{d_1d_2}_{\lambda}\to I$, defined by
\begin{equation*}
(2,0)^{\times a}\times (0,1)^{\times a'}\blackdiamond y\coloneqq A_1^a A_2^{a'}y\quad\text{and}\quad y\Yleft (a,a')\coloneqq (aB_1+ a'B_2)y,
\end{equation*}
satisfy the conditions~\eqref{eqq 1}--\eqref{eqq 3}. Conversely, if $\blackdiamond \colon H^{d_1d_2}_{\lambda}\times I\to I$ and $\Yleft \colon I\times H^{d_1d_2}_{\lambda}\to I$ are maps satisfying these conditions, then the endomorphisms $A_1$, $A_2$, $B_1$ and $B_2$, of $I$, given by
\begin{equation*}
A_1 y \coloneqq (2,0)\blackdiamond y,\quad A_2 y \coloneqq (0,1)\blackdiamond y,\quad B_1 y\coloneqq y \Yleft (1,0)\quad\text{and}\quad B_2 y\coloneqq y \Yleft (0,1),
\end{equation*}
satisfy~\eqref{primeras condiciones A_1, A_2, B1 y B2 tercer caso}--\eqref{terceras condiciones A_1, A_2, B1 y B2 tercer caso}.
\end{proposition}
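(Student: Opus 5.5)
The proof follows the same two-step pattern as Propositions~\ref{caracterizacion primer caso} and~\ref{caracterizacion segundo caso}, now using the presentation of $(\mathrm{U}_{d_1d_2},\times)$ from Remark~\ref{caso v_2=1}. For $d_1>2$ this group is $\mathds{Z}_{d_1/2}\oplus\mathds{Z}_{2d_2}$, with free generators $a_1=(2,0)$ and $a_2=(0,1)$ of orders $d_1/2$ and $2d_2$.

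\emph{Direct implication.} Since $A_1^{d_1/2}=A_2^{2d_2}=\Ide$ and $[A_1,A_2]=0$, the operation $\blackdiamond$ is a well defined linear left action, so~\eqref{eqq 2} holds. By~\eqref{segundas condiciones A_1, A_2, B1 y B2 tercer caso}, the formula
\[
y\triangleleft (2,0)^{\times a}\times(0,1)^{\times a'}\coloneqq (A_1-2A_1B_1)^a(A_2-A_2B_2)^{a'}y
\]
defines a right action; the correct image of $(2,0)$ is $A_1-2A_1B_1$ because $y\Yleft(2,0)=2B_1y$. The map $\Yleft$ is well defined because $d_1B_1=d_2B_2=0$, and~\eqref{eqq 3} is immediate from linearity. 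It remains to prove $(a,a')\blackdiamond y^{(a,a')}=y\triangleleft(a,a')$.

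To reduce this, I write each element in terms of the generators. By~\eqref{potencia}, $(0,1)^{\times a'}=(\lambda\binom{a'}{2},a')$, so $(a,a')=(2,0)^{\times k}\times(0,1)^{\times a'}$ whenever $2k\equiv a-\lambda\binom{a'}{2}\pmod{d_1}$. I take $a'$ as an integer representative in $\mathds{Z}_{2d_2}$, chosen so that $a-\lambda\binom{a'}{2}$ is even; the parity of $\binom{a'}{2}$ depends on $a'$ mod $4$, and replacing $a'$ by $a'+d_2$ changes $\binom{a'}{2}$ by an odd amount since $d_2\equiv 2\pmod 4$, which makes this choice possible. The required identity then becomes
\[
A_1^{k}A_2^{a'}(\Ide-aB_1-a'B_2)=(A_1-2A_1B_1)^k(A_2-A_2B_2)^{a'},\qquad 2k+\lambda\tbinom{a'}{2}=a.
\]
The two remaining ingredients are proved by inductions parallel to~\eqref{para k} and~\eqref{condicion mas dificil}:
\begin{itemize}
\item From $2[B_1,A_1]=4B_1A_1B_1$ one gets $2B_1(A_1-2A_1B_1)=2A_1B_1$, and hence $(A_1-2A_1B_1)^k=A_1^k-2kA_1^kB_1$.
\item The first two relations in~\eqref{terceras condiciones A_1, A_2, B1 y B2 tercer caso} give, as before, $B_1(A_2-A_2B_2)^n=A_2^nB_1$ and $(A_2-A_2B_2)^n=A_2^n-\lambda\binom{n}{2}A_2^nB_1-nA_2^nB_2$ for $n\ge0$.
\end{itemize}
Substituting these, the identity reduces to $-aB_1=-2kB_1-\lambda\binom{a'}{2}B_1$, which holds exactly, with no modular issue.

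\emph{Converse.} The relations in~\eqref{primeras condiciones A_1, A_2, B1 y B2 tercer caso} and~\eqref{segundas condiciones A_1, A_2, B1 y B2 tercer caso} follow from~\eqref{eqq 1} and~\eqref{eqq 2}, the group relations, bilinearity of $\Yleft$ (Remark~\ref{linealidad de Yleft}), and the formulas $y\triangleleft(2,0)=(A_1-2A_1B_1)y$ and $y\triangleleft(0,1)=(A_2-A_2B_2)y$. The relations in~\eqref{terceras condiciones A_1, A_2, B1 y B2 tercer caso} come from evaluating the displayed identity, which holds by~\eqref{eqq 1} and~\eqref{eqq 2}, at suitable elements:
\begin{itemize}
\item $(0,1)^{\times 2}=(\lambda,2)$, with $k=0,a'=2$, gives the relation for $[B_2,A_2]$.
\item $(2,0)\times(0,1)=(2,1)$ and $(0,1)^{\times 3}=(3\lambda,3)$ together give $[B_1,A_2]=B_1A_2B_2$. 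Alternatively, $(1,1)$ can be written using $a'=1+d_2$, but that exponent is large, so $(3\lambda,3)$ combined with the $a'=2$ relation should be cleaner; I would mimic the computation in Proposition~\ref{caracterizacion segundo caso}.
\item $(4,0)=(2,0)^{\times 2}$ gives $(A_1-2A_1B_1)^2=A_1^2(\Ide-4B_1)$. Expanding and cancelling $A_1$ yields $2[B_1,A_1]=4B_1A_1B_1$, which explains why only twice the commutator is determined.
\end{itemize}

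\emph{Main obstacle.} I expect the hardest part to be extracting $[B_1,A_2]=B_1A_2B_2$ in the converse. The element $(1,1)$ requires $a'$ odd with $1-\lambda\binom{a'}{2}$ even, which forces $a'\equiv 3\pmod 4$. I would first try $a'=3$, where $(a,a')=(3\lambda,3)$ gives $k=0$; combined with the $a'=2$ relation and $2B_1$-type cancellations, this should yield the commutator identity. The parity bookkeeping in choosing representatives $a'$ is also where care is needed in the direct implication.
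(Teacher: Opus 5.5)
Your proposal is correct. The converse is essentially the paper's argument, with the same test elements $(\lambda,2)$, $(3\lambda,3)$, $(2,1)$ and $(4,0)$.

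On the obstacle you anticipate: $(3\lambda,3)$ combined with the $(\lambda,2)$ relation only gives $\lambda\bigl([B_1,A_2]-B_1A_2B_2\bigr)=0$, while $(2,1)$ gives $2\bigl([B_1,A_2]-B_1A_2B_2\bigr)=0$. So both elements are needed, and you conclude because $\lambda$ is odd. Copying Proposition~\ref{caracterizacion segundo caso} alone works there only because $\lambda=1$.

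You differ from the paper in the direct implication.
\begin{itemize}
\item You prove the closed formulas $(A_1-2A_1B_1)^k=A_1^k-2kA_1^kB_1$ and $(A_2-A_2B_2)^n=A_2^n-\lambda\binom{n}{2}A_2^nB_1-nA_2^nB_2$, as the paper does in Proposition~\ref{caracterizacion primer caso}. The compatibility identity then reduces to the congruences $a\equiv 2k+\lambda\binom{n}{2}\pmod{d_1}$ and $a'\equiv n\pmod{d_2}$, which are harmless since $d_1B_1=d_2B_2=0$.
\item The paper instead runs a nested induction. It first inducts on the exponent of $(0,1)$, using the step identity $A_2^n(\lambda a'B_1+B_2)=B_2(A_2-A_2B_2)^n$. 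It then inducts on the exponent of $(2,0)$, using $2A_1^{n_1}A_2^{n_2}B_1=2B_1(A_1-2A_1B_1)^{n_1}(A_2-A_2B_2)^{n_2}$.
\end{itemize}
The underlying computations coincide, but your packaging is shorter and makes it evident why no modular bookkeeping is required.

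Your parity argument for choosing the exponent of $(0,1)$ is correct but redundant. Remark~\ref{caso v_2=1} already shows that every element has the form $(2,0)^{\times k}\times(0,1)^{\times n}$ with $k,n\ge 0$.
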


\begin{proof} $\Rightarrow)$\enspace By Remark~\ref{caso v_2=1}, the abelian group $(\mathrm{U}_{d_1d_2},\times)$ is generated by the elements $(2,0)$ and $(0,1)$, subject to the relations $(2,0)^{\times d_1/2} = (0,1)^{\times 2d_2} = (0,0)$. Thus, since $A_1^{d_1/2} = A_2^{2d_2} = \Ide$ and $[A_1, A_2] = 0$, the map~$\blackdiamond$ defines a well–posed $\mathds{Z}$-linear left group action of $(\mathrm{U}_{d_1d_2}, \times)$ on $I$. Therefore, conditions~\eqref{eqq 2} are satisfied.
Similarly, since conditions~\eqref{segundas condiciones A_1, A_2, B1 y B2 tercer caso} are satisfied, there exists a well–defined $\mathds{Z}$-linear right group action $\triangleleft$ of $(\mathrm{U}_{d_1d_2},\times)$ on $I$, given by
\begin{equation}\label{formula de triangleleft''}
y\triangleleft (2,0)^{\times a}\times (0,1)^{\times a'} \coloneqq (A_1 - 2A_1B_1)^a(A_2 -A_2B_2)^{a'} y.
\end{equation}
Thus, $\triangleleft$ satisfies conditions~\eqref{eqq 1}. Moreover, the map $\Yleft$ is well–defined, since $d_1B_1 = d_2B_2 = 0$. Recall that
\begin{equation*}
y^{(a,a')} \coloneqq y - y\Yleft (a,a')= (\Ide - aB_1 -a'B_2)y.
\end{equation*}
A direct computation proves that this operation satisfies~\eqref{eqq 3}. To finish the proof it remains to check that
\begin{equation}\label{pepe}
(a,a')\blackdiamond (y - y\Yleft (a,a')) =y\triangleleft (a,a')\qquad\text{for all $(a,a')\in \mathrm{U}_{d_1d_2}$ and $y\in I$.}
\end{equation}
As follows from the discussion at the beginning of this proof, for each $(a,a') \in \mathrm{U}_{d_1d_2}$ there exist unique non-negative integers $n_1(a,a')$ and $n_2(a,a')$ such that
$$
0 \leq n_1(a,a') < d_1/2,\quad 1 \leq n_2(a,a') \leq 2d_2\quad\text{and}\quad (2,0)^{\times n_1(a,a')} \times (0,1)^{\times n_2(a,a')} = (a,a').
$$
Then, condition~\eqref{pepe} becomes
\begin{equation}\label{equivalencia entre las dos definiciones3}
A_1^{n_1(a,a')}A_2^{n_2(a,a')}(\Ide - aB_1-a'B_2)= (A_1 - 2A_1 B_1)^{n_1(a,a')}(A_2 - A_2 B_2)^{n_2(a,a')}.
\end{equation}
First we prove this for $(a,a')\coloneqq (0,1)^{\times n}$ with $1\le n = n_2(a,a')\le 2d_2$, by induction on $n$. When $n=1$ this is trivially true. Assume it~is true for some $(a,a')\coloneqq (0,1)^{\times n}$ with $1\le n <2d_2$. Since, by equality~\eqref{producto},
\begin{equation*}
(0,1)^{\times n+1} = \left(a+\lambda a',a'+1\right),
\end{equation*}
we must verify that
\begin{equation*}
A_2^{n+1}\left(\Ide-(a+\lambda a')B_1-(a'+1)B_2\right) = (A_2-A_2B_2)^{n+1}.
\end{equation*}
But, since
\begin{equation*}
A_2^{n+1}\left(\Ide-(a+\lambda a')B_1-(a'+1)B_2\right) = A_2^{n+1}\left(\Ide-aB_1-a'B_2\right) - A_2^{n+1}(\lambda a'B_1+B_2)
\end{equation*}
and
\begin{equation*}
(A_2-A_2B_2)^{n+1} = A_2(A_2-A_2B_2)^n - A_2B_2(A_2-A_2B_2)^n,
\end{equation*}
by the inductive hypothesis and the invertibility of $A_2$, it suffices to prove that 	
\begin{equation*}
A_2^n(\lambda a'B_1+B_2) = B_2(A_2-A_2B_2)^n.
\end{equation*}
We proceed by induction on $n$. For $n=1$, we have $(a,a') = (0,1)$, and the equality follows from the first identity in~\eqref{terceras condiciones A_1, A_2, B1 y B2 tercer caso}. Assume it holds for some $(a,a') \coloneqq (0,1)^{\times n}$ with $1\leq n < 2d_2$. Then, on the one hand, we have
\begin{equation*}
A_2^{n+1}\bigl(\lambda (a'+1)B_1 + B_2\bigr) = A_2^{n+1}\bigl(\lambda a'B_1 + B_2\bigr) + \lambda A_2^{n+1}B_1 = A_2B_2(A_2 - A_2B_2)^n + \lambda A_2^{n+1}B_1,
\end{equation*}
where the last equality follows from the inductive hypothesis; while, on the other hand,
\begin{equation*}
B_2(A_2-A_2B_2)^{n+1} = (B_2A_2-B_2A_2B_2)(A_2-A_2B_2)^n = (A_2B_2+\lambda A_2B_1) (A_2-A_2B_2)^n,
\end{equation*}
where the last equality holds again by the first identity in~\eqref{terceras condiciones A_1, A_2, B1 y B2 tercer caso}. Hence, it suffices to prove that
\begin{equation}\label{caso con alpha}
A_2^nB_1 = B_1(A_2-A_2B_2)^n\quad\text{for all $n\in \mathds{N}$.}
\end{equation}
But this follows from  the second identity in~\eqref{terceras condiciones A_1, A_2, B1 y B2 tercer caso} together with Remark~\ref{tres20}. Consequently, equality~\eqref{equivalencia entre las dos definiciones3} holds, for $(a,a') = (0,1)^{\times n}$ with $1\le n = n(a,a')\le 2d_2$. In order to complete the proof, we must verify
 equality~\eqref{equivalencia entre las dos definiciones3} for~$(a,a') = (2,0)^{\times n_1} \times (0,1)^{\times n_2}$, with $0 \leq n_1 < d_1/2$ and $1 \leq n_2 \leq 2d_2$. We
 proceed by induction on $n_1$. We already know that the statement holds when $n_1=0$ and $1\le n_2\le 2d_2$. Assume it holds for some
 $(a,a') = (2,0)^{\times n_1}\times (0,1)^{\times n_2}$ with $0\le n_1<\frac{d_1}{2}-1$ and~$1\le n_2\le 2d_2$. Since, by equality~\eqref{producto},
$$
(2,0)^{\times n_1+1}\times (0,1)^{\times n_2}=\left(a+2,a'\right),
$$
we must verify that
\begin{equation}\label{ecua4}
A_1^{n_1+1}A_2^{n_2}\left(\Ide-(a+2)B_1-a'B_2\right) = (A_1-2A_1B_1)^{n_1+1}(A_2-A_2B_2)^{n_2}.
\end{equation}
But, since
\begin{equation*}
A_1^{n_1+1}A_2^{n_2}\left(\Ide-\left(a+2\right)B_1-a'B_2\right) = A_1^{n_1+1}A_2^{n_2}\left(\Ide-aB_1-a'B_2 - 2B_1\right)
\end{equation*}
and
\begin{equation*}
(A_1\!-\!2A_1B_1)^{n_1+1}(A_2\!-\!A_2B_2)^{n_2} = A_1(A_1\!-\!2A_1B_1)^{n_1}(A_2\!-\!A_2B_2)^{n_2} - 2A_1B_1(A_1\!-\!2A_1B_1)^{n_1}(A_2\!-\!A_2B_2)^{n_2},
\end{equation*}
to prove~\eqref{ecua4}, it suffices, by the inductive hypothesis, to show that
\begin{equation}\label{paso necesario2''}
2A_1^{n_1}A_2^{n_2}B_1 = 2B_1(A_1-2A_1B_1)^{n_1}(A_2-A_2B_2)^{n_2} \quad\text{for all $n_1\in \mathds{N}_0$ and $n_2\in \mathds{N}$.}
\end{equation}
When $n_1= 0$, this follows from~\eqref{caso con alpha}. Assume that~\eqref{paso necesario2''} holds for some $n_1\in \mathds{N}_0$. Then,
\begin{equation*}
2A_1^{n_1+1}A_2^{n_2}B_1 = 2A_1B_1(A_1\!-\!2A_1B_1)^{n_1}(A_2\!-\!A_2B_2)^{n_2} = 2B_1(A_1\!-\!2A_1B_1)^{n_1+1}(A_2\!-\!A_2B_2)^{n_2},
\end{equation*}
where the last equality follows from the final identity in~\eqref{terceras condiciones A_1, A_2, B1 y B2 tercer caso}.

\smallskip

\noindent $\Leftarrow)$\enspace Since $(2,0)^{\times d_1/2} = (0,1)^{\times 2d_2} = (0,0)$ and $(\mathrm{U}_{d_1d_2},\times)$ is an abelian group, it follows from~\eqref{eqq 2}, that
\begin{equation*}
A_1^{d_1/2}=A_2^{2d_2}= \Ide\quad\text{and}\quad A_1A_2=A_2A_1.
\end{equation*}
Furthermore, by the bilinearity of $\Yleft$ and the definitions of $B_1$ and $B_2$, we obtain
\begin{equation*}
d_1B_1 y= d_1 (y\Yleft (1,0))=  y\Yleft (d_1,0)=  0 \quad\text{and}\quad d_2B_2 y= d_2 (y\Yleft (0,1)) =  y\Yleft (0,d_2)= 0\quad\text{for all $y\in I$.}
\end{equation*}	
Thus, the identities in~\eqref{primeras condiciones A_1, A_2, B1 y B2 tercer caso} are satisfied. On the other hand, the identities in~\eqref{segundas condiciones A_1, A_2, B1 y B2 tercer caso} follow directly from~\eqref{eqq 1}, since, by definition, $y\triangleleft (2,0)\coloneqq (A_1 - 2A_1B_1)y$ and $y\triangleleft (0,1) \coloneqq(A_2 -A_2B_2)y$. It remains to verify the identities in~\eqref{terceras condiciones A_1, A_2, B1 y B2 tercer caso}. We claim that these identities follow from condition~\eqref{equivalencia entre las dos definiciones3}, which holds since
\begin{align*}
A_1^{n_1(a,a')}A_2^{n_2(a,a')}(y - aB_1 y -a'B_2 y) & = (a,a') \blackdiamond (y - aB_1 y -a'B_2 y) &&\text{by~\eqref{eqq 2}}\\
& = (a,a') \blackdiamond (y - y\Yleft (a,a')) &&\text{since $\Yleft$ is bilinear}\\
& = y \triangleleft (2,0)^{\times n_1(a,a')}\times (0,1)^{\times n_2(a,a')} && \text{by the definition of $\triangleleft$}\\
& = (A_1-2A_1B_1)^{n_1(a,a')}(A_2-A_2B_2)^{n_2(a,a')} y. &&\text{by~\eqref{eqq 1}}
\end{align*}
In fact, applying~\eqref{equivalencia entre las dos definiciones3} with $(a,a')=(0,1)^{\times 2}=(\lambda,2)$, and using the invertibility of $A_2$, we obtain
$$
A_2(\Ide - \lambda B_1 - 2B_2) = (\Ide-B_2)(A_2-A_2B_2) = A_2-A_2B_2 - B_2A_2 + B_2A_2B_2,
$$
which establishes the first equality in~\eqref{terceras condiciones A_1, A_2, B1 y B2 tercer caso}. Applying condition~\eqref{equivalencia entre las dos definiciones3} with $(a,a')=(0,1)^{\times 3}=(3\lambda,3)$, we obtain
$$
A_2^3(\Ide - 3\lambda B_1 - 3B_2) = (A_2-A_2B_2)^3 = A_2^2(\Ide - \lambda B_1 - 2B_2)(A_2-A_2B_2),
$$
where the last equality follows from $A_2^2(\Ide - \lambda B_1 - 2B_2) = (A_2-A_2B_2)^2$. Consequently,
$$
\lambda B_1A_2- 3\lambda A_2B_1 +2B_2A_2 - 2A_2B_2 = \lambda B_1A_2B_2 +2B_2A_2B_2.
$$
Combining this with the first equality in~\eqref{terceras condiciones A_1, A_2, B1 y B2 tercer caso}, we obtain that
\begin{equation}\label{con alfa'}
\lambda B_1A_2-\lambda A_2B_1 = \lambda B_1A_2B_2.
\end{equation}
Moreover, since $A_1$ is invertible, condition~\eqref{equivalencia entre las dos definiciones3} with $(a,a')=(2,0)\times (0,1)=(2,1)$, implies
$$
A_2(\Ide - 2 B_1 - B_2) = (\Ide-2B_1)(A_2-A_2B_2) = A_2-A_2B_2 - 2B_1A_2 + 2B_1A_2B_2,
$$
which in turns yields
\begin{equation}\label{con dos}
2 B_1A_2 - 2A_2B_1 = 2B_1A_2B_2.
\end{equation}
Since $\lambda$ is odd, combining~\eqref{con alfa'} and~\eqref{con dos}, we obtain the second equality in condition~\eqref{terceras condiciones A_1, A_2, B1 y B2 tercer caso}. Finally, since $A_1$ is invertible, by condition~\eqref{equivalencia entre las dos definiciones3} with $(a,a')=(2,0)^{\times 2}=(4,0)$, we have
$$
A_1(\Ide - 4 B_1) = (\Ide-2B_1)(A_1-2A_1B_1) = A_1-2A_1B_1 - 2B_1 A_1+4 B_1A_1B_1,
$$
which gives the last equality in condition~\eqref{terceras condiciones A_1, A_2, B1 y B2 tercer caso}.
\end{proof}

\begin{proposition}\label{caracterizacion cuarto caso} Assume that $\lambda$ is odd and $v_2(d_1)=v_2(d_2)\ge 2$. Let $A_1,A_2,B_1,B_2 \in \End(I)$ be such that
\begin{align}
& A_1^{d_1/2} = A_2^{2d_2}=\Ide,\quad [A_1,A_2]=0,\quad  d_1B_1=d_2B_2=0,\label{primeras condiciones A_1, A_2, B1 y B2 cuarto caso}\\
& (A_1-2tA_1B_1-2qA_1B_2)^{d_1/2} = (A_2-A_2B_2)^{2d_2}=\Ide,\quad [A_1-2tA_1B_1-2qA_1B_2,A_2-A_2B_2]=0,\label{segundas condiciones A_1, A_2, B1 y B2 cuarto caso}\\
& [B_2,A_2]=B_2A_2B_2+\lambda A_2B_1,\quad [B_1,A_2] = B_1A_2B_2, \quad [B_1,A_1] = 2 tB_1A_1B_1+2 qB_1A_1B_2,\label{terceras condiciones A_1, A_2, B1 y B2 cuarto caso}\\
&\text{and}\quad [B_2,A_1]= 2\lambda qA_1B_1 + 2tB_2A_1B_1+2qB_2A_1B_2,\label{cuartas condiciones A_1, A_2, B1 y B2 cuarto caso}
\end{align}	
where $q$ and $t$ are as in Proposition~\ref{caso v2(d1)=v2(d2) y alfa impar}. Then, the maps $\blackdiamond\colon  H^{d_1d_2}_{\lambda}\times I\to I$ and $\Yleft \colon I\times H^{d_1d_2}_{\lambda}\to I$, defined by
\begin{equation*}
(2t,2q)^{\times a}\times (0,1)^{\times a'}\blackdiamond y\coloneqq A_1^a A_2^{a'} y \quad\text{and}\quad y\Yleft (a,a')\coloneqq (aB_1+ a'B_2)y,
\end{equation*}
satisfy the conditions~\eqref{eqq 1}--\eqref{eqq 3}. Conversely, if $\blackdiamond \colon  H^{d_1d_2}_{\lambda}\times I\to I$ and $\Yleft \colon I\times H^{d_1d_2}_{\lambda}\to I$ are maps satisfying these conditions, then the endomorphisms $A_1$, $A_2$, $B_1$ and $B_2$, of $I$, given by
\begin{equation*}
A_1 y \coloneqq (2t,2q)\blackdiamond y,\quad A_2 y\coloneqq (0,1)\blackdiamond y,\quad B_1 y\coloneqq y \Yleft (1,0)\quad\text{and}\quad B_2 y\coloneqq y \Yleft (0,1),
\end{equation*}
satisfy~\eqref{primeras condiciones A_1, A_2, B1 y B2 cuarto caso}--\eqref{cuartas condiciones A_1, A_2, B1 y B2 cuarto caso}.
\end{proposition}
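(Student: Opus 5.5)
We follow the template of Propositions~\ref{caracterizacion primer caso} and~\ref{caracterizacion tercer caso}, now using the presentation of $(\mathrm{U}_{d_1d_2},\times)$ from Remark~\ref{caso final}. It is generated by $a_1=(2t,2q)$ and $a_2=(0,1)$, of orders $d_1/2$ and $2d_2$, with trivial intersection.

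For the direct implication, the conditions $A_1^{d_1/2}=A_2^{2d_2}=\Ide$ and $[A_1,A_2]=0$ make $\blackdiamond$ a well defined $\mathds{Z}$-linear left action, which gives \eqref{eqq 2}. By bilinearity, $y\triangleleft a_1$ must equal $a_1\blackdiamond(y-y\Yleft(2t,2q))=(A_1-2tA_1B_1-2qA_1B_2)y$. Likewise $y\triangleleft a_2=(A_2-A_2B_2)y$. So conditions~\eqref{segundas condiciones A_1, A_2, B1 y B2 cuarto caso} define a right action $\triangleleft$, which gives \eqref{eqq 1}. The map $\Yleft$ is well defined because $d_1B_1=d_2B_2=0$, and \eqref{eqq 3} holds trivially since $y^{(a,a')}=(\Ide-aB_1-a'B_2)y$.

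It then remains to prove the compatibility
\[
A_1^{n_1}A_2^{n_2}(\Ide-aB_1-a'B_2)=(A_1-2tA_1B_1-2qA_1B_2)^{n_1}(A_2-A_2B_2)^{n_2},
\]
where $(a,a')=a_1^{\times n_1}\times a_2^{\times n_2}$. We argue by double induction, first on $n_2$ with $n_1=0$ and then on $n_1$.

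The $n_2$-induction is literally the one in the proof of Proposition~\ref{caracterizacion tercer caso}. It uses $(0,1)^{\times n+1}=(a+\lambda a',a'+1)$, the first two identities of~\eqref{terceras condiciones A_1, A_2, B1 y B2 cuarto caso}, and Remark~\ref{tres20}.

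For the step $n_1\to n_1+1$ we have $a_1\times(a,a')=(a+2t+2\lambda qa',a'+2q)$. Set $C\coloneqq A_1-2tA_1B_1-2qA_1B_2$ and $D\coloneqq A_2-A_2B_2$. The inductive step then reduces to the identity
\[
A_1^{n_1}A_2^{n_2}\bigl((2t+2\lambda q a')B_1+2qB_2\bigr)=(2tB_1+2qB_2)\,C^{n_1}D^{n_2}.
\]
This holds whenever $(a,a')=a_1^{\times n_1}\times a_2^{\times n_2}$. We prove it by a secondary induction, splitting it into two auxiliary identities:
\[
A_1^{n_1}A_2^{n_2}B_1=B_1C^{n_1}D^{n_2}
\quad\text{and}\quad
A_1^{n_1}A_2^{n_2}(\lambda a'B_1+B_2)=B_2C^{n_1}D^{n_2}.
\]
The first uses the third identity in~\eqref{terceras condiciones A_1, A_2, B1 y B2 cuarto caso}, rewritten as $A_1B_1=B_1C$, together with \eqref{caso con alpha}. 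The second uses~\eqref{cuartas condiciones A_1, A_2, B1 y B2 cuarto caso}, rewritten as $B_2C=A_1B_2+2\lambda qA_1B_1$. Under the step $n_1\to n_1+1$ the coordinate $a'$ increases by $2q$, and the resulting extra term $2\lambda qA_1B_1$ is exactly what this rewritten relation produces. Combining the two auxiliary identities with coefficients $2t$ and $2q$ yields the needed one.

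For the converse, \eqref{primeras condiciones A_1, A_2, B1 y B2 cuarto caso} and~\eqref{segundas condiciones A_1, A_2, B1 y B2 cuarto caso} follow from \eqref{eqq 1}, \eqref{eqq 2}, bilinearity of $\Yleft$ (Remark~\ref{linealidad de Yleft}) and the relations in the group. As before, the compatibility identity holds automatically, and we specialize it to small elements, cancelling the invertible $A_i$.
\begin{itemize}
\item With $a_2^{\times 2}=(\lambda,2)$ we get the first identity of~\eqref{terceras condiciones A_1, A_2, B1 y B2 cuarto caso}.
\item With $a_2^{\times 3}$ we get $\lambda([B_1,A_2]-B_1A_2B_2)=0$.
\item With $a_1\times a_2=(2t+2\lambda q,2q+1)$, compared against the already known formula for $a_1$ and $a_2$ separately, we extract the remaining $[B_1,A_2]$ and $[B_2,A_1]$ relations.
\item With $a_1^{\times 2}$ we obtain the $[B_1,A_1]$ relation.
\end{itemize}

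The main obstacle is this converse extraction. Unlike the third case, the relations come mixed with coefficients $2t$ and $2q$. Isolating $[B_1,A_2]=B_1A_2B_2$ requires combining the factor $\lambda$ (odd) with the factor $1-2t$ or $2q\cdot(\text{odd})$ via \eqref{paco}, which gives $\lambda q(2q-1)-2t=1$, to produce a coefficient that is a unit times $1$. We would first try to obtain $[B_1,A_2]$ from $a_2^{\times 3}$ and $a_1\times a_2$ by a Bézout combination of the odd $\lambda$ and the even coefficients. We would then use the element $a_2^{\times 2q}\times a_1^{\times -1}$-type relations, that is, $a_1$ expressed through \eqref{combinacion}, to separate $[B_1,A_1]$ from $[B_2,A_1]$.
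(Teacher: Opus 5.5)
Your direct implication is correct and follows the paper. Splitting the reduced $n_1$-step into
\[
A_1^{n_1}A_2^{n_2}B_1=B_1C^{n_1}D^{n_2}\qquad\text{and}\qquad A_1^{n_1}A_2^{n_2}(\lambda a'B_1+B_2)=B_2C^{n_1}D^{n_2}
\]
(with your $C=A_1-2tA_1B_1-2qA_1B_2$, $D=A_2-A_2B_2$) is a tidier packaging of the paper's combined induction, and both inductions go through.

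The gap is in the converse, at the third identity of \eqref{terceras condiciones A_1, A_2, B1 y B2 cuarto caso}. Put $Y\coloneqq[B_1,A_1]-2tB_1A_1B_1-2qB_1A_1B_2$. Expanding the compatibility at $a_1^{\times 2}=(4t+4\lambda q^2,4q)$ and subtracting $2q$ times \eqref{cuartas condiciones A_1, A_2, B1 y B2 cuarto caso} leaves only $2tY=0$. A priori you know only $d_1Y=0$, and $2t$ is not a unit modulo the even number $d_1$. So your fourth bullet is false as stated: $a_1^{\times 2}$ does not give the relation, and you need a multiple of $Y$ with an odd coefficient.

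The paper obtains $\lambda Y=0$ from $a_1\times a_2^{\times 2}=(2t+\lambda(4q+1),2q+2)$. This uses $[A_1,A_2]=[C,D]=0$, the identity $D^2=A_2^2(\Ide-\lambda B_1-2B_2)$, and \eqref{cuartas condiciones A_1, A_2, B1 y B2 cuarto caso}. It then concludes $Y=(\lambda q(2q-1)-2t)Y=0$ by \eqref{paco}.

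Your closing paragraph does not supply this step. It frames the difficulty as separating $[B_1,A_1]$ from $[B_2,A_1]$, which is already done once \eqref{cuartas condiciones A_1, A_2, B1 y B2 cuarto caso} is known. It also leaves the remedy as a tentative ``type of relation''. Likewise, for $[B_1,A_2]$ the B\'ezout partner of $\lambda$ is $2t$ (not $1-2t$), coming from expanding $a_1\times a_2$ as $CD$. The identity \eqref{cuartas condiciones A_1, A_2, B1 y B2 cuarto caso} comes from expanding the same element as $DC$.

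Your hint can be made into a complete and cleaner converse. It is true that $a_1\times(1,0)=(2t+1,2q)=a_2^{\times 2q}$ by \eqref{paco}, but you need not express $(1,0)$ through generators at all. Write $L(h)\coloneqq h\blackdiamond-$ and $B(g)\coloneqq g_1B_1+g_2B_2$ for $g=(g_1,g_2)$. Then $y\triangleleft g=L(g)(\Ide-B(g))y$ by definition. By \eqref{producto} and bilinearity of $\Yleft$, $B(g\times h)=B(g)+B(h)+\lambda g_2h_2B_1$. Since the group is abelian, \eqref{eqq 1} and \eqref{eqq 2} give $L(g)L(h)(\Ide-B(g\times h))=L(g)(\Ide-B(g))L(h)(\Ide-B(h))$. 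Cancelling $L(g)$ and expanding yields
\[
[B(g),L(h)]=B(g)L(h)B(h)+\lambda g_2h_2\,L(h)B_1\qquad\text{for all $g,h$.}
\]
Taking $g\in\{(1,0),(0,1)\}$ and $h\in\{a_1,a_2\}$ gives exactly the four identities in \eqref{terceras condiciones A_1, A_2, B1 y B2 cuarto caso} and \eqref{cuartas condiciones A_1, A_2, B1 y B2 cuarto caso}, with no B\'ezout step at all.
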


\begin{proof} $\Rightarrow)$\enspace By Proposition~\ref{caso v2(d1)=v2(d2) y alfa impar}, the abelian group $(\mathrm{U}_{d_1d_2},\times)$ is generated by the elements $(2t,2q)$ and $(0,1)$, subject to the relations $(2t,2q)^{\times d_1/2} = (0,1)^{\times 2d_2} = (0,0)$. Thus, since $A_1^{d_1/2} = A_2^{2d_2}=\Ide$ and $[A_1,A_2] = 0$, the map~$\blackdiamond$ defines a well-posed $\mathds{Z}$-linear left group action of $(\mathrm{U}_{d_1d_2},\times)$ on $I$. Therefore, conditions~\eqref{eqq 2} are satisfied. Similarly, since conditions~\eqref{segundas condiciones A_1, A_2, B1 y B2 cuarto caso} are satisfied, there exists a well-defined $\mathds{Z}$-linear right group action $\triangleleft$, of $(\mathrm{U}_{d_1d_2},\times)$ on $I$, given by
\begin{equation}\label{formula de triangleleft'''}
y\triangleleft (2t,2q)^{\times a}\times (0,1)^{\times a'} \coloneqq (A_1-2tA_1B_1-2qA_1B_2)^a (A_2 -A_2B_2)^{a'}y.
\end{equation}
Thus, $\triangleleft$ satisfy conditions~\eqref{eqq 1}. Moreover, the map $\Yleft$ is well-defined, since $d_1B_1=d_2B_2=0$. Recall that
\begin{equation*}
y^{(a,a')}\coloneqq y - y\Yleft (a,a')= (\Ide - aB_1 -a'B_2)y.
\end{equation*}
A direct computation proves that this operation satisfies~\eqref{eqq 3}. To finish the proof it remains to check that
\begin{equation}\label{a probar}
(a,a')\blackdiamond (y - y\Yleft (a,a')) =y\triangleleft (a,a')\qquad\text{for all $(a,a')\in \mathrm{U}_{d_1d_2}$ and $y\in I$.}
\end{equation}
As follows from the discussion at the beginning of this proof, for each $(a,a') \in \mathrm{U}_{d_1d_2}$ there exist unique non-negative integers $n_1(a,a')$ and $n_2(a,a')$ such that
$$
0\le n_1(a,a')<d_1/2,\quad 1\le n_2(a,a')\le 2d_2\quad\text{and}\quad (2t,2q)^{\times n_1(a,a')}\times (0,1)^{\times n_2(a,a')} =(a,a').
$$
Then, condition~\eqref{a probar} becomes
\begin{equation}\label{equivalencia entre las dos definiciones4}
A_1^{n_1(a,a')}A_2^{n_2(a,a')}(\Ide - aB_1-a'B_2)= (A_1 - 2tA_1 B_1 - 2q A_1B_2)^{n_1(a,a')}(A_2 - A_2 B_2)^{n_2(a,a')}.
\end{equation}
Arguing as in the proof of the preceding proposition, we verify that this equality holds for $(a,a') \coloneqq (0,1)^{\times n}$, with $1\leq n = n_2(a,a') \leq 2d_2$.

Consequently, equality~\eqref{equivalencia entre las dos definiciones4} holds, for $(a,a') = (0,1)^{\times n}$ with $1\le n = n(a,a')\le 2d_2$. In order to complete the proof, we must verify equality~\eqref{equivalencia entre las dos definiciones4} for $(a,a') = (2t,2q)^{\times n_1}\times (0,1)^{\times n_2}$ with $0\le n_1< d_1/2$ and $1\le n_2\le 2d_2$. We proceed by induction on $n_1$. We already know that the statement holds when $n_1=0$ and $1\le n_2\le 2d_2$. Assume it holds for some $(a,a') = (2t,2q)^{\times n_1}\times (0,1)^{\times n_2}$ with $0\le n_1 = n_1(a,a')< \frac{d_1}{2}-1$ and~$1\le n_2\le 2d_2$. Since, by equality~\eqref{producto},
\begin{equation}\label{exponencial de (2t,2s),(0,1)}
(2t,2q)^{\times n_1+1}\times (0,1)^{\times n_2}= \left(2t+a+2\lambda q a',2q+a'\right) = (a,a') + \left(2t+2\lambda q a',2q\right),
\end{equation}
we must check that
\begin{equation}\label{ecua5}
A_1^{n_1+1}A_2^{n_2}\left(\Ide-\left(2t+a+2\lambda q a'\right)B_1-(2q+a')B_2\right) = (A_1-2tA_1B_1-2qA_1B_2)^{n_1+1}(A_2-A_2B_2)^{n_2}.
\end{equation}
But since
\begin{align*}
A_1^{n_1+1}A_2^{n_2}\left(\Ide-\left(2t+a+2\lambda q a'\right)B_1-(2q+a')B_2\right) & = A_1^{n_1+1}A_2^{n_2}\left(\Ide- a B_1- a'B_2\right)\\
& - A_1^{n_1+1}A_2^{n_2}\left(\left(2t+2\lambda q a'\right)B_1+2qB_2\right)
\end{align*}
and
\begin{align*}
(A_1 - 2tA_1 B_1 - 2qA_1 B_2)^{n_1+1}& (A_2 - A_2 B_2)^{n_2} = A_1 (A_1 - 2tA_1 B_1 - 2qA_1 B_2)^{n_1} (A_2 - A_2 B_2)^{n_2}\\
& -  (2tA_1 B_1 + 2qA_1 B_2) (A_1 - 2tA_1 B_1 - 2qA_1 B_2)^{n_1} (A_2 - A_2 B_2)^{n_2},
\end{align*}
by the inductive hypothesis and the invertibility of $A_1$, it suffices to verify that, for all $n_1,n_2\in \mathds{N}_0$,
\begin{equation}\label{paso necesario2'''}
A_1^{n_1}A_2^{n_2}\left(\left(2t+2\lambda q a'\right)B_1+2qB_2\right) = (2t B_1 + 2qB_2) (A_1 - 2tA_1 B_1 - 2qA_1 B_2)^{n_1} (A_2 - A_2 B_2)^{n_2}.
\end{equation}
We proceed by induction on $n_1$. Since, by~\eqref{potencia}, we have $(0,1)^{\times n_2}\! = \bigl(\lambda\binom{n_2}{2},n_2\bigr)$, when $n_1\!=0$, equality~\eqref{paso necesario2'''} becomes
\begin{equation}\label{paso necesario2''''}
A_2^{n_2}\left(\left(2t + 2\lambda q n_2\right)B_1 + 2qB_2\right) = (2t B_1 + 2q B_2)(A_2 - A_2 B_2)^{n_2}.
\end{equation}
For $n_2 = 0$, this is trivial. Assume it is true for some $n_2\ge 0$. Since
\begin{equation*}
A_2^{n_2+1}\left(\left(2t + 2\lambda q(n_2+1) \right)B_1 + 2qB_2\right) = A_2^{n_2+1}\left(\left(2t + 2\lambda qn_2 \right)B_1 + 2qB_2\right) + 2\lambda q A_2^{n_2+1}B_1
\end{equation*}
and, by the first two conditions of~\eqref{terceras condiciones A_1, A_2, B1 y B2 cuarto caso},
$$
(2t B_1 + 2q B_2) (A_2 - A_2 B_2)^{n_2+1} = (2t A_2B_1 + 2q A_2B_2) (A_2 - A_2 B_2)^{n_2} + 2\lambda q A_2B_1 (A_2 - A_2 B_2)^{n_2},
$$
by the inductive hypothesis, to conclude the proof of~\eqref{paso necesario2''''}, we must check that
$$
2\lambda q A_2^{n_2}B_1 = 2\lambda q B_1 (A_2 - A_2 B_2)^{n_2}.
$$
But this holds by Remark~\ref{tres20}. We now return to the proof of~\eqref{paso necesario2'''}. Assume that it holds for some $n_1 \ge 0$.~By~equal\-ity~\eqref{exponencial de (2t,2s),(0,1)}, to prove the case $n_1+1$, we must show that
\begin{equation*}
A_1^{n_1+1}A_2^{n_2}\left(\left(2t+2\lambda q a'+ 4\lambda q^2\right)B_1+2qB_2\right) = (2t B_1 + 2qB_2) (A_1 - 2tA_1 B_1 - 2qA_1 B_2)^{n_1+1} (A_2 - A_2 B_2)^{n_2}.
\end{equation*}
Since,
\begin{equation*}
A_1^{n_1+1}A_2^{n_2}\left(\left(2t+2\lambda q a'+ 4\lambda q^2\right)B_1+2qB_2\right) = A_1^{n_1+1}A_2^{n_2}\left(\left(2t + 2\lambda q a' \right)B_1+ 2qB_2\right) + 4\lambda q^2 A_1^{n_1+1}A_2^{n_2} B_1
\end{equation*}
and, by~\eqref{cuartas condiciones A_1, A_2, B1 y B2 cuarto caso} and the third condition in~\eqref{terceras condiciones A_1, A_2, B1 y B2 cuarto caso},
$$
(2t B_1 + 2q B_2)(A_1 - 2tA_1 B_1 - 2qA_1 B_2) = A_1(2tB_1+2qB_2) + 4\lambda q^2 A_1B_1,
$$
by the inductive hypothesis, to conclude the proof of~\eqref{paso necesario2'''}, it suffices to verify that
\begin{equation*}
4\lambda q^2 A_1^{n_1}A_2^{n_2}B_1 = 4\lambda q^2 B_1(A_1 - 2tA_1 B_1 - 2qA_1 B_2)^{n_1} (A_2 - A_2 B_2)^{n_2}.
\end{equation*}
We prove this by induction on $n_1$. For $n_1=0$, the statement follows from the second identity in~\eqref{terceras condiciones A_1, A_2, B1 y B2 cuarto caso} together with Remark~\ref{tres20}. Assume it holds for some $n_1 \ge 0$. Then, by the inductive hypothesis and the last condition in~\eqref{terceras condiciones A_1, A_2, B1 y B2 cuarto caso},
\begin{align*}
4\lambda q^2 A_1^{n_1+1}A_2^{n_2}B_1 & = 4\lambda q^2 A_1B_1(A_1 - 2tA_1 B_1 - 2qA_1 B_2)^{n_1} (A_2 - A_2 B_2)^{n_2}\\
& = 4\lambda q^2 B_1(A_1 - 2tA_1 B_1 - 2qA_1 B_2)^{n_1+1} (A_2 - A_2 B_2)^{n_2},
\end{align*}
as desired.

\smallskip

\noindent $\Leftarrow)$\enspace Since $(2t,2q)^{\times d_1/2} = (0,1)^{\times 2d_2} = (0,0)$ and $(\mathrm{U}_{d_1d_2},\times)$ is an abelian group, by~\eqref{eqq 2}, necessarily
\begin{equation*}
A_1^{d_1/2}=A_2^{2d_2}= \Ide\quad\text{and}\quad A_1A_2=A_2A_1.
\end{equation*}
Furthermore, by the bilinearity of $\Yleft$ and the definitions of $B_1$ and $B_2$, we obtain
\begin{equation*}
d_1B_1 y= d_1 (y\Yleft (1,0)) = y\Yleft (d_1,0)=  0 \quad\text{and}\quad d_2B_2 y= d_2 (y\Yleft (0,1)) =  y\Yleft (0,d_2)= 0\quad\text{for all $y\in I$.}
\end{equation*}	
Thus, the identities in~\eqref{primeras condiciones A_1, A_2, B1 y B2 cuarto caso} hold. Moreover, the identities in~\eqref{segundas condiciones A_1, A_2, B1 y B2 cuarto caso} follow directly from~\eqref{eqq 1}, since by definition,
$$
y\triangleleft (2t,2q) = (A_1 - 2tA_1B_1-2qA_1B_2)y\quad\text{and}\quad y\triangleleft (0,1) = (A_2 -A_2B_2)y.
$$
It remains to check~\eqref{terceras condiciones A_1, A_2, B1 y B2 cuarto caso} and~\eqref{cuartas condiciones A_1, A_2, B1 y B2 cuarto caso}. We are going to derive them from~\eqref{equivalencia entre las dos definiciones4}, which holds since
\begin{align*}
A_1^{n_1(a,a')}A_2^{n_2(a,a')}(y - aB_1 y -a'B_2 y) & = (a,a') \blackdiamond (y - aB_1 y -a'B_2 y) &&\text{by~\eqref{eqq 2}}\\
& = (a,a') \blackdiamond (y - y\Yleft (a,a')) &&\text{since $\Yleft$ is bilinear}\\
& = y \triangleleft (2t,2q)^{\times n_1(a,a')}\times (0,1)^{\times n_2(a,a')} && \text{by the def. of $\triangleleft$}\\
& = (A_1 - 2tA_1B_1-2qA_1B_2)^{n_1(a,a')}(A_2-A_2B_2)^{n_2(a,a')} y. &&\text{by~\eqref{eqq 1}}
\end{align*}
Since $A_2$ is invertible, by condition~\eqref{equivalencia entre las dos definiciones4} with $(a,a')=(0,1)^{\times 2}=(\lambda,2)$, we obtain
\begin{equation}\label{che}
A_2(\Ide - \lambda B_1 - 2B_2) = (\Ide-B_2)(A_2-A_2B_2) = A_2-A_2B_2 - B_2A_2 + B_2A_2B_2,
\end{equation}
which establishes the first equality in~\eqref{terceras condiciones A_1, A_2, B1 y B2 cuarto caso}. Applying condition~\eqref{equivalencia entre las dos definiciones3}
with $(a,a')=(0,1)^{\times 3}=(3\lambda,3)$, we obtain
$$
A_2^3(\Ide - 3\lambda B_1 - 3B_2) = (A_2-A_2B_2)^3 = A_2^2(\Ide - \lambda B_1 - 2B_2)(A_2-A_2B_2),
$$
where the last equality follows from $A_2^2(\Ide - \lambda B_1 - 2B_2) = (A_2-A_2B_2)^2$. Consequently,
$$
\lambda B_1A_2- 3\lambda A_2B_1 +2B_2A_2 - 2A_2B_2 = \lambda B_1A_2B_2 +2B_2A_2B_2.
$$
Combining this with the first equality in~\eqref{terceras condiciones A_1, A_2, B1 y B2 cuarto caso}, we obtain
\begin{equation}\label{con alfa''}
\lambda B_1A_2-\lambda A_2B_1 = \lambda B_1A_2B_2.
\end{equation}
Moreover, by condition~\eqref{equivalencia entre las dos definiciones4} with $(a,a')=(2t,2q)\times (0,1)=(2t+2\lambda q,2q+1)$, we have
\begin{equation}\label{para usar en un rato}
A_1A_2(\Ide-(2t+2\lambda q)B_1-(2q+1)B_2) = (A_1-2tA_1B_1-2qA_1B_2)(A_2-A_2B_2),
\end{equation}
which implies
$$
B_2A_2-A_2B_2-\lambda A_2B_1=B_2A_2B_2
$$
since $A_1$ is invertible. Combining this with the first equality in~\eqref{terceras condiciones A_1, A_2, B1 y B2 cuarto caso}, we obtain
\begin{equation}\label{sin alfa''}
2tB_1A_2-2tA_2B_1 = 2tB_1A_2B_2.
\end{equation}
Since $d_1B_1 = 0$ and, by Proposition~\ref{caso v2(d1)=v2(d2) y alfa impar}, we have $\lambda q(2q - 1) - 2t \equiv 1 \pmod{d_1}$, it follows from~\eqref{con alfa''} and~\eqref{sin alfa''} that the second equality in~\eqref{terceras condiciones A_1, A_2, B1 y B2 cuarto caso} holds. Combining~\eqref{para usar en un rato} with the second equalities in~\eqref{primeras condiciones A_1, A_2, B1 y B2 cuarto caso} and~\eqref{segundas condiciones A_1, A_2, B1 y B2 cuarto caso}, we obtain that
\begin{equation*}
A_2A_1(\Ide-(2t+2\lambda q)B_1-(2q+1)B_2) = (A_2-A_2B_2)(A_1-2tA_1B_1-2qA_1B_2),
\end{equation*}
which implies~\eqref{cuartas condiciones A_1, A_2, B1 y B2 cuarto caso}, since $A_2$ is invertible. It remains to verify the last equality in~\eqref{terceras condiciones A_1, A_2, B1 y B2 cuarto caso}. Since $A_1$ is invertible, by condition~\eqref{equivalencia entre las dos definiciones4} with $(a,a')=(2t,2q)^{\times 2}=(4t+4\lambda q^2,4q)$, we obtain
$$
A_1(\Ide-(4t+4\lambda q^2)B_1-4qB_2) = (\Ide-2tB_1-2qB_2)(A_1-2tA_1B_1-2qA_1B_2),
$$
which implies that
$$
2t[B_1,A_1]+2q[B_2,A_1]-4\lambda q^2A_1B_1 = 4t^2B_1A_1B_1+4tqB_1A_1B_2+4qtB_2A_1B_1+4q^2B_2A_1B_2.
$$
Using this together with the equality in~\eqref{cuartas condiciones A_1, A_2, B1 y B2 cuarto caso}, we obtain
\begin{equation}\label{condicion 6 caso 4'}
2t[B_1,A_1] = 4t^2B_1A_1B_1+4tqB_1A_1B_2.
\end{equation}
Condition~\eqref{equivalencia entre las dos definiciones4} for $(a,a') = (2t,2q)\times(0,1)^{\times 2} = (2t+\lambda(4q+1),\,2q+2)$, combined with the second equalities in~\eqref{primeras condiciones A_1, A_2, B1 y B2 cuarto caso} and~\eqref{segundas condiciones A_1, A_2, B1 y B2 cuarto caso}, yields
\begin{equation*}
A_2^2A_1\bigl(\Ide - (2t+\lambda(4q+1))B_1-(2q+2)B_2\bigr) = (A_2-A_2B_2)^2(A_1-2tA_1B_1-2qA_1B_2),
\end{equation*}
which, together with equality~\eqref{che}, gives
\begin{equation*}
A_2^2A_1\bigl(\Ide - (2t+\lambda(4q+1))B_1-(2q+2)B_2\bigr) = A_2^2\bigl(\Ide - \lambda B_1 - 2B_2\bigr)(A_1-2tA_1B_1-2qA_1B_2).
\end{equation*}
Set $R\coloneqq A_1-2t A_1 B_1-2qA_1B_2$. Since $A_2$ is invertible, we obtain that
$$
R- \lambda (4q+1)A_1B_1-2A_1B_2 = R-\lambda B_1A_1+2t\lambda B_1A_1 B_1+2q\lambda B_1A_1B_2-2B_2A_1+4tB_2A_1B_1+4qB_2A_2B_2.
$$
Hence
$$
\lambda[B_1,A_1]+2[B_2,A_1] = 2t\lambda B_1A_1 B_1+2q\lambda B_1A_1B_2+4\lambda qA_1B_1+4tB_2A_1B_1+4qB_2A_2B_2,
$$
which combined with~\eqref{cuartas condiciones A_1, A_2, B1 y B2 cuarto caso}, yields
$$
\lambda[B_1,A_1] = 2t\lambda B_1A_1 B_1+2q\lambda B_1A_1B_2.
$$
Since $d_1B_1 = 0$ and, by Proposition~\ref{caso v2(d1)=v2(d2) y alfa impar}, we have $\lambda q(2q - 1) - 2t \equiv 1 \pmod{d_1}$, from this and~\eqref{condicion 6 caso 4'}, we get the last equality in~\eqref{terceras condiciones A_1, A_2, B1 y B2 cuarto caso}, as desired.
\end{proof}

\begin{proposition} Let $H_{\lambda}^{d_1d_2}$ be as defined at the beginning of this section, and let $I$ be an abelian group. The~following facts hold:

\begin{enumerate}[label=\arabic*)]

\item Assume that $d_2$ is odd, or that $v_2(d_1)<v_2(d_2)$, or that $v_2(d_1)=v_2(d_2)\geq 1$ with $\lambda$ even. Let $A_1$, $A_2$, $B_1$ and $B_2$ be commuting endomorphisms of $I$. Then the conditions~\eqref{Propiedades de A1 y A2, B1, B2 primer caso}--\eqref{condiciones para [1.14] primer caso} in Proposition~\ref{caracterizacion primer caso} are satisfied if and only if
    \begin{equation*}
    A_1^{d_1} = A_2^{d_2}=\Ide,\quad d_1B_1=d_2B_2=0,\quad B_1^2=B_1B_2=0\quad\text{and}\quad B_2^2=-\lambda B_1.
    \end{equation*}

\item Assume that $\lambda=1$, $d_1=2$ and $v_2(d_2)=1$. Let $A$, $B_1$ and $B_2$ be commuting endomorphisms of $I$. Then the conditions~\eqref{primeras condiciones A, B1 y B2 segundo caso} and~\eqref{segundas condiciones A, B1 y B2 segundo caso} in Proposition~\ref{caracterizacion segundo caso} are satisfied if and only if
    \begin{equation*}
    A^{2d_2} =\Ide,\quad 2B_1=d_2B_2=0,\quad B_1B_2=0\quad\text{and}\quad B_1 = - B_2^2.
    \end{equation*}

\item Assume that $\lambda$ is odd, $v_2(d_1)=v_2(d_2)=1$ and $d_1>2$. Let $A_1$, $A_2$, $B_1$ and $B_2$ be commuting~endomor\-phisms of $I$. Then the conditions~\eqref{primeras condiciones A_1, A_2, B1 y B2 tercer caso}--\eqref{terceras condiciones A_1, A_2, B1 y B2 tercer caso} in Proposition~\ref{caracterizacion tercer caso} are satisfied if and only if
    \begin{equation*}
    A_1^{d_1/2} = A_2^{2d_2} =\Ide,\quad d_1B_1=d_2B_2=0,\quad B_1^2=B_1B_2=0\quad\text{and}\quad B_2^2 = -\lambda B_1.
    \end{equation*}

\item Assume that $\lambda$ is odd and $v_2(d_1)=v_2(d_2)\geq 2$. Let $A_1$, $A_2$, $B_1$ and $B_2$ be commuting endomorphisms of $I$. Then the conditions~\eqref{primeras condiciones A_1, A_2, B1 y B2 cuarto caso}--\eqref{terceras condiciones A_1, A_2, B1 y B2 cuarto caso} in Proposition~\ref{caracterizacion cuarto caso} are satisfied if and only if
    \begin{equation}\label{a usar}
    A_1^{d_1/2} = A_2^{2d_2} =\Ide,\quad d_1B_1=d_2B_2=0,\quad B_1^2 = B_1B_2=0\quad\text{and}\quad B_2^2 = -\lambda B_1.
    \end{equation}

\end{enumerate}
\end{proposition}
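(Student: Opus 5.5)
The approach is the same in all four items. Since all endomorphisms commute, every commutator $[X,Y]$ vanishes, and each condition of the form $[X,Y]=Z$ becomes $Z=0$. Moreover, since $A_1,A_2$ (resp.\ $A$) are invertible, they can be cancelled from products. So in each case I will rewrite the listed conditions as polynomial identities in the $B_i$, and show that the conditions involving powers of $A_i-A_iB_i$ (or of $A_1-2tA_1B_1-2qA_1B_2$) follow automatically once those identities hold.

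\emph{Item 1).} With commutativity, \eqref{condiciones para [1.14] primer caso} reads $A_1B_1^2=0$, $A_2B_1B_2=0$ and $A_2B_2^2+\lambda A_2B_1=0$. Cancelling $A_1$, $A_2$ gives exactly $B_1^2=B_1B_2=0$ and $B_2^2=-\lambda B_1$. Conversely, assume these together with $A_1^{d_1}=A_2^{d_2}=\Ide$ and $d_1B_1=d_2B_2=0$. The condition $[A_1,A_2]=0$ holds trivially, and so does the commutator condition in \eqref{primeras propiedades de A-AB primer caso}. It remains to check $(A_1-A_1B_1)^{d_1}=\Ide$ and $(A_2-A_2B_2)^{d_2}=\Ide$. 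For the first, use $B_1^2=0$ and the binomial theorem: $(A_1(\Ide-B_1))^{d_1}=A_1^{d_1}(\Ide-d_1B_1)=\Ide$. For the second, $B_2^3=-\lambda B_1B_2=0$, so
\[
(\Ide-B_2)^{d_2}=\Ide-d_2B_2+\tbinom{d_2}{2}B_2^2=\Ide-\lambda\tbinom{d_2}{2}B_1 .
\]
This equals $\Ide$ because $d_1B_1=0$ and $d_1\mid\lambda\binom{d_2}{2}$ in the present case (Remark~\ref{caso d2 impar}). I expect this to be the only delicate point of item 1. Alternatively, it follows from \eqref{condicion mas dificil}.

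\emph{Items 2) and 3).} These are analogous. In item 2, \eqref{segundas condiciones A, B1 y B2 segundo caso} becomes $AB_1+AB_2^2=0$ and $AB_1B_2=0$, i.e.\ $B_1=-B_2^2$ and $B_1B_2=0$. For the converse, $B_2^3=-B_1B_2=0$, so
\[
(A-AB_2)^{2d_2}=\Ide-2d_2B_2+\tbinom{2d_2}{2}B_2^2=\Ide-d_2(2d_2-1)B_1 .
\]
This is $\Ide$ because $d_2$ is even and $2B_1=0$. In item 3, the third condition $2[B_1,A_1]=4B_1A_1B_1$ gives only $4B_1^2=0$, while the first two give $B_2^2=-\lambda B_1$ and $B_1B_2=0$. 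I then recover $B_1^2=0$ from $B_1^2=-\lambda^{-1}B_1B_2^2$. Here I use that $\lambda$ is odd: it is invertible modulo $d_1$, and $d_1B_1=0$ (or simply $B_1^2=-\lambda^{-1}B_1B_2\,B_2=0$). For the converse, $(A_1-2A_1B_1)^{d_1/2}=\Ide-d_1B_1=\Ide$. Also
\[
(A_2-A_2B_2)^{2d_2}=\Ide-\lambda d_2(2d_2-1)B_1 ,
\]
which is $\Ide$ since $d_1\mid d_2$.

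\emph{Item 4), the main obstacle.} This item requires the most bookkeeping. Note that \eqref{cuartas condiciones A_1, A_2, B1 y B2 cuarto caso} is not among the listed conditions, so I will check it too and show it is implied. The first two equalities of \eqref{terceras condiciones A_1, A_2, B1 y B2 cuarto caso} give $B_2^2=-\lambda B_1$ and $B_1B_2=0$. The third gives $2tB_1^2+2qB_1B_2=0$, which is automatic, and $B_1^2=0$ again follows from $B_1B_2=0$ and $B_2^2=-\lambda B_1$ with $\lambda$ invertible mod $d_1$. For the converse I put $N\coloneqq 2tB_1+2qB_2$. Then
\[
N^2=4q^2B_2^2=-4\lambda q^2B_1,\qquad N^3=0 ,
\]
so
\[
(A_1(\Ide-N))^{d_1/2}=\Ide-\tfrac{d_1}{2}N+\tbinom{d_1/2}{2}N^2
=\Ide-d_1tB_1-d_1qB_2-\tfrac{d_1}{2}\bigl(\tfrac{d_1}{2}-1\bigr)2\lambda q^2B_1 .
\]
Each term vanishes: the first two since $d_1B_1=0$ and $d_1q=d_2$ with $d_2B_2=0$, and the last since $d_1/2-1$ is an integer, so the coefficient of $B_1$ is a multiple of $d_1$. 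The $A_2$-power is handled as in item 3, and \eqref{cuartas condiciones A_1, A_2, B1 y B2 cuarto caso} reduces to $2\lambda qB_1+2tB_2B_1+2qB_2^2=2\lambda qB_1-2\lambda qB_1=0$.
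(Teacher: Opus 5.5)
Your overall strategy is the paper's. Commutativity turns each commutator identity into a polynomial identity in the $B_i$ once the invertible $A_i$ are cancelled, and the power conditions follow from a truncated binomial expansion using $B_2^3=0$. The paper writes out only item 4, in exactly this way. Your converse computations in all four items, and your forward direction in items 1 and 2, are correct.

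\textbf{The gap: the forward direction of items 3 and 4.} There you derive $B_1^2=0$ from $B_1B_2=0$ and $B_2^2=-\lambda B_1$ by asserting that an odd $\lambda$ is invertible modulo $d_1$. That is false. $d_1$ has odd prime factors in general, and after the normalization of Remark~\ref{lambda divide a d1} one even has $\lambda\mid d_1$, so $\lambda$ is a unit only when $\lambda=1$. From those two identities you only get $\lambda B_1^2=-B_1B_2^2=0$, and this is not enough. A counterexample for item 4:
\begin{itemize}
\item Take $d_1=d_2=12$ and $\lambda=3$, so $q=1$ and $t=1$.
\item Take $I=\mathds{Z}_3$, $A_1=A_2=\Ide$, $B_2=0$ and $B_1=\Ide$.
\item Then $d_1B_1=d_2B_2=0$, $B_1B_2=0$ and $B_2^2=-\lambda B_1$ all hold, yet $B_1^2\neq 0$.
\end{itemize}
For item 3, the same choice with $d_1=d_2=6$ and $\lambda=3$ works.

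What rules these examples out is precisely the third equality of the last group of conditions. In item 4 you dismissed it as automatic, and in item 3 you wrote down its consequence $4B_1^2=0$ but never used it. In the examples it fails: in item 4 it reads $0=2tB_1^2=2\,\Ide\neq 0$ on $\mathds{Z}_3$, and in item 3 it reads $0=4B_1^2=\Ide\neq0$.

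\textbf{The repair.}
\begin{itemize}
\item In item 3, combine $\lambda B_1^2=0$ with $4B_1^2=0$. Since $\gcd(\lambda,4)=1$, this gives $B_1^2=0$.
\item In item 4, the third equality in \eqref{terceras condiciones A_1, A_2, B1 y B2 cuarto caso}, together with $B_1B_2=0$, gives $2tB_1^2=0$. Then
\[
B_1^2=\bigl(\lambda q(2q-1)-2t\bigr)B_1^2=q(2q-1)\,\lambda B_1^2-2tB_1^2=0,
\]
because $d_1B_1=0$ and $\lambda q(2q-1)-2t\equiv 1\pmod{d_1}$. This is exactly how the paper argues.
\end{itemize}
With this correction your proof is complete.
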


\begin{proof} We prove item~(4) and we left the other ones, which are easier, to the reader. By the hypothesis, the conditions in~\eqref{primeras condiciones A_1, A_2, B1 y B2 cuarto caso}, become
$$
A_1^{d_1/2} = A_2^{2d_2} =\Ide\quad\text{and}\quad d_1B_1=d_2B_2=0,
$$
while the conditions in~\eqref{terceras condiciones A_1, A_2, B1 y B2 cuarto caso}, become
$$
2 tB_1^2 = B_1B_2=0\quad\text{and}\quad B_2^2 = -\lambda B_1.
$$
But then, $\lambda B_1^2 = - B_1B_2^2 = 0$. Since, by Pro\-position~\ref{caso v2(d1)=v2(d2) y alfa impar}, we have $\lambda q(2q - 1) - 2t\equiv 1 \pmod{d_1}$ and  $d_1B_1 = 0$, we obtain $B_1^2 = 0$. Thus conditions~\eqref{primeras condiciones A_1, A_2, B1 y B2 cuarto caso} and~\eqref{terceras condiciones A_1, A_2, B1 y B2 cuarto caso} imply~\eqref{a usar}. On the other hand, it is easy to see that~\eqref{a usar} implies~\eqref{primeras condiciones A_1, A_2, B1 y B2 cuarto caso}, \eqref{terceras condiciones A_1, A_2, B1 y B2 cuarto caso}, \eqref{cuartas condiciones A_1, A_2, B1 y B2 cuarto caso} and the last equality in~\eqref{segundas condiciones A_1, A_2, B1 y B2 cuarto caso}. So, in order to finish the proof it suffices to check that~\eqref{a usar} implies
\begin{equation*}
(A_1-2tA_1B_1-2qA_1B_2)^{d_1/2} = (A_2-A_2B_2)^{2d_2} = \Ide.
\end{equation*}
For this note first that $B_2^3 = -\lambda B_1B_2 = 0$, which combined with~\eqref{a usar}, imply
$$
(2tB_1+2qB_2)^2 = 4t^2B_1^2 + 8qtB_1B_2 + 4q^2B^2_2 = 4q^2B^2_2\quad\text{and}\quad (2tB_1+2qB_2)^3 = (2tB_1+2qB_2)4q^2B^2_2 = 0.
$$
Hence, we have
\begin{equation*}
(A_1-2tA_1B_1-2qA_1B_2)^{d_1/2} = A_1^{d_1/2}(\Ide-2tB_1-2qB_2)^{d_1/2} = \Ide - \frac{d_1}{2}(2tB_1+2qB_2) + \binom{d_1/2}{2} 4q^2B^2_2 = \Ide,
\end{equation*}
where the last equality holds, since
$$
d_1B_1 = 0,\quad d_1qB_2=d_2B_2=0\quad\text{and}\quad \frac{\frac{d_1}{2}\bigl(\frac{d_1}{2}-1\bigr)}{2}4q^2B_2 = (\frac{d_1}{2}-1\bigr)qd_2B_2 = 0.
$$
Final\-ly,
\begin{equation*}
(A_2-A_2B_2)^{2d_2}=A_2^{2d_2} (\Ide-B_2)^{2d_2} = \Ide - 2d_2B_2 + \binom{2d_2}{2} B_2^2 = \Ide,
\end{equation*}
because $d_2B_2=0$ and $\binom{2d_2}{2} B_2^2 = d_2(2d_2-1)B_2^2=0$.
\end{proof}

\section[Computation of the cohomology]{Computation of the cohomology}\label{seccion 4}

Let $H^{d_1d_2}_{\lambda}$ be as in Section~\ref{section 2} and let $I$ be an abelian group. Let $n$ and $s$ be as at beginning of \cite{GGV2}*{Subsection~2.2}. Hence, in our case, $n=2$, and by Re\-mark~\ref{caso d2 impar}, we know that $s\in\{1,2\}$. Recall that in \cite{GGV2}*{Remark~2.6 and~2.7}, for each $\gamma_1,\gamma_2\in I$ a map $\alpha_{\gamma_1,\gamma_2}\colon H^{d_1d_2}_{\lambda}\times H^{d_1d_2}_{\lambda}\to I$ is defined by
\begin{equation}\label{alpha_{gamma_1,gamma_2}}
\alpha_{\gamma_1,\gamma_2}((h_1,h_2),(l_1,l_2))\coloneqq \alpha_1(\gamma_1)(h_1,l_1) + \alpha_2(\gamma_2)(h_2,l_2),
\end{equation}
where
$$
\alpha_i(\gamma)(h,l)\coloneqq \begin{cases} \gamma & \text{if $0\le h,l<d_i$ and $h+l\ge d_i$,}\\ 0 & \text{if $0\le h,l<d_i$ and $h+l<d_i$.}\end{cases}
$$
Let $\blackdiamond\colon H^{d_1d_2}_{\lambda}\times I\to I$ and $\Yleft \colon I\times H^{d_1d_2}_{\lambda}\to I$ two maps such that Conditions~\eqref{eqq 1}--\eqref{eqq 3} are satisfied. Let
$$
T_1,T_2\colon I^2\oplus I^{2s}\to I^{2s},\quad T_3\colon I^2\oplus I^{2s}\to I^{2\binom{s}{2}}\quad\text{and}\quad S\colon I^2\to I^2\oplus I^{2s},
$$
be as in~\cite{GGV2}*{Subsubsection~2.2.3} (note that if $s=1$, then $\binom{s}{2}=0$ and $T_3$ is the zero map). Recall from the discussion above  ~\cite{GGV2}*{Corollary~2.35} that each family $\bigl((\gamma_i),(\mathfrak{f}_{ij})\bigr)\in \ker(T_1)\cap \ker(T_2)\cap \ker(T_3)$ determines a map
$$
f_{(\mathfrak{f}_{ij}),(\gamma_i)} \colon H^{d_1d_2}_{\lambda}\times H^{d_1d_2}_{\lambda}\to I
$$
This map satisfies the equalities $f_{(\mathfrak{f}_{ij}),(\gamma_i)}(a_i,e_j) = \mathfrak{f}_{ij}$ and can be computed using~\cite{GGV2}*{Formula~(2.32) and Proposi\-tions~2.12 and~2.37}. Moreover $\bigl(\alpha_{\gamma_1,\gamma_2},-f_{(\mathfrak{f}_{ij}),(\gamma_i)}\bigr)$ is a $2$-cocycle of $\bigl(\wh{C}_N^*(H^{d_1d_2}_{\lambda},I),\partial+D\bigr)$ and the map
\begin{equation*}
T\colon \ker(T_1)\cap \ker(T_2)\cap \ker(T_3)\longrightarrow \wh{C}^{02}_N(H^{d_1d_2}_{\lambda},I)\oplus \wh{C}^{11}_N(H^{d_1d_2}_{\lambda},I),
\end{equation*}
given by
\begin{equation}\label{formula0}
T((\gamma_i),(\mathfrak{f}_{ij})) \coloneqq \bigl(\alpha_{\gamma_1,\gamma_2},-f_{(\mathfrak{f}_{ij}),(\gamma_i)}\bigr),
\end{equation}
induces an isomorphim
\begin{equation}\label{formula00}
\ov{T}\colon \frac{\ker(T_1)\cap \ker(T_2)\cap \ker(T_3)}{\ima(S)}\longrightarrow \Ho^2_{\blackdiamond,\Yleft}(H^{d_1d_2}_{\lambda},I).
\end{equation}
Note that the domain of $\ov{T}$ is a subquotient of $I^6$. If we can show that it is isomorphic to a direct sum of subquotients of $I$, then the co\-homology classes, and thus the extensions, can be parameterized by the elements of this de\-com\-position, as is done for example in Section~4 of~\cite{GGV3}.

Let $H$ be a finite linear cycle set and let $I$ be an abelian group. Assume that, as an additive group,
$$
H=\mathds{Z}_{d_1}\oplus \dots \oplus \mathds{Z}_{d_n}.
$$
For each $i$, let $e_i \in H$ denote the element whose only non-zero coordinate is the $i$-th one, equal to $1$. Suppose further that $H$ is abelian under the operation $\times$. Then
\[
(H,\times) = \langle a_1 \rangle \oplus \cdots \oplus \langle a_s \rangle.
\]
Define $Y\coloneqq \{a_1,\dots,a_s\}$ and $Z\coloneqq \{e_1,\dots,e_n\}$.  Let $AS_Z$ be the free abelian semigroup genetared by $Z$. For each family $(\mathfrak{f}_{ij})_{1\le i\le s,1\le j\le s,}$, of elements of $I$, Corollary~2.19 in \cite{GGV2}, provides a map
$$
f_1\colon Y\times AS_Z\to I
$$
such that $f_1(a_i,e_j) = \mathfrak{f}_{ij}$. Throughout this paper we denote this map by $f_1^{\mathfrak{f}_{11},\mathfrak{f}_{12},\dots,\mathfrak{f}_{sn}}$.

\subsection[Computation of the cohomology when \texorpdfstring{$d_1\mid \lambda\binom{d_2}{2}$}{d1|lambda d2(d2-1)/2}]{Computation of the cohomology when \texorpdfstring{$\pmb{d_1\mid \lambda\binom{d_2}{2}}$}{d1|lambda d2(d2-1)/2}}\label{Seccion 4}
By Remark~\ref{lambda divide a d1}, we can suppose that $\lambda=0$ or~$\lambda \mid d_1$, and we do it. In this section we assume that $d_1\mid \lambda\binom{d_2}{2}$ and we com\-pute~$\ho_{\blackdiamond,\Yleft}^2(H^{d_1d_2}_{\lambda},I)$, under suitable hypotheses. By Re\-mark~\ref{caso d2 impar}, we know that $s=n=2$, $r_1=d_1$, $r_2=d_2$, and we can take $a_1=e_1\coloneqq (1,0)$ and $a_2=e_2\coloneqq (0,1)$. Therefore, by~\eqref{def de punto},
\begin{equation}\label{a_j cdot e_i}
a_i\cdot e_j = \begin{cases} e_j & \text{if $(i,j)\ne (2,2)$,}\\ (-\lambda,1) & \text{if $(i,j)=(2,2)$.}\end{cases}
\end{equation}
Hence, if we write $a_i=\lambda_{1i}e_1 + \lambda_{2i}e_2$ and  $a_i\cdot e_j=\Gamma^1_{ji}e_1 + \Gamma^2_{ji}e_2$ with $0\le \lambda_{ji}< d_i$ and $0\le \Gamma^k_{ji}< d_k$, then
$$
\lambda_{11} = \lambda_{22} = 1,\quad \lambda_{12} = \lambda_{21} = 0\quad\text{and}\quad \Gamma_{ji}^k = \begin{cases} d_1-\lambda & \text{if $(i,j,k) = (2,2,1)$ and $\lambda\ne 0$,}\\ \delta_{jk} & \text{otherwise.}\end{cases}
$$
A straightforward computation, using~\cite{GGV2}*{Subsubsection~2.2.3} and these facts, shows that
$$
T_1,T_2\colon I^6\to I^4,\quad T_3\colon I^6\to I^2\quad\text{and}\quad S\colon I^2\to I^6,
$$
are given by
\begin{align}
& T_1(\gamma_1,\gamma_2, \mathfrak{f}_{11}, \mathfrak{f}_{12}, \mathfrak{f}_{21}, \mathfrak{f}_{22})= \left(d_1 \mathfrak{f}_{11} + \mathcal{U}_{11}, d_2 \mathfrak{f}_{12} + \mathcal{U}_{12}, d_1 \mathfrak{f}_{21} + \mathcal{U}_{21}, d_2 \mathfrak{f}_{22} + \mathcal{U}_{22}\right),\label{T_1}\\
& T_2(\gamma_1,\gamma_2, \mathfrak{f}_{11}, \mathfrak{f}_{12}, \mathfrak{f}_{21}, \mathfrak{f}_{22})= \left(\mathcal{V}_{11} + \mathcal{W}_{11}, \mathcal{V}_{12} + \mathcal{W}_{12}, \mathcal{V}_{21} + \mathcal{W}_{21}, \mathcal{V}_{22} + \mathcal{W}_{22}\right),\label{T_2}\\
& T_3(\gamma_1,\gamma_2, \mathfrak{f}_{11}, \mathfrak{f}_{12}, \mathfrak{f}_{21}, \mathfrak{f}_{22})= (F_1(a_2,a_1,e_1)-F_1(a_1,a_2,e_1),F_1(a_2,a_1,e_2)- F_1(a_1,a_2,e_2))\label{T_3}
\shortintertext{and}
& S(t_1,t_2)=\left(d_1t_1,d_2t_2,\mathcal{X}_{11},\mathcal{X}_{12},\mathcal{X}_{21},\mathcal{X}_{22} + \mathcal{Y}_{22}\right)\label{S},
\end{align}
in which
\begin{itemize}
	
\item[-] $\mathcal{U}_{ij}\coloneqq \sum_{k=1}^{d_j-1}\alpha_{\gamma_1,\gamma_2}(a_i\cdot e_j,k (a_i\cdot e_j))-a_i\blackdiamond \gamma_j$,
	
\item[-] $\mathcal{V}_{ij}\coloneqq \sum_{\ell=0}^{d_i-1} a_i^{\times (d_i-1-\ell)}\blackdiamond f_1(a_i,a_i^{\times \ell}\cdot e_j)$,
	
\item[-] $\mathcal{W}_{ij}\coloneqq \sum_{\ell=1}^{d_i-1} a_i^{\times \ell}\blackdiamond f_1(a_i,\bar{a}_{i\ell})\Yleft e_j -\sum_{\ell=1}^{d_i-1} a_i^{\times (\ell+1)}\blackdiamond \alpha_{\gamma_1,\gamma_2}(a_i,\bar{a}_{i\ell})\Yleft e_j$, where $\bar{a}_{i\ell}\coloneqq a_i^{\times(\ell+1)}-a_i$,
	
\item[-] $\mathcal{X}_{ij}\coloneqq -t_j + a_i\blackdiamond t_j + e_j\blackdiamond t_i \Yleft e_j$,
	
\item[-] $\mathcal{Y}_{22}\coloneqq 0$ if $\lambda=0$ and $\mathcal{Y}_{22}\coloneqq (\lambda - d_1)t_1 + (d_1-\lambda)(e_2\blackdiamond t_2 \Yleft e_1)$ if $\lambda\ne 0$,

\item[-] $f_1\coloneqq f_1^{\mathfrak{f}_{11},\mathfrak{f}_{12},\mathfrak{f}_{21},\mathfrak{f}_{22}}$ and $F_1$ is as in~\cite{GGV2}*{Definition~2.25}.

\end{itemize}

\subsubsection[The case \texorpdfstring{$\lambda\mid d_1$}{lambda|d1} and \texorpdfstring{$\Yleft=0$}{-<=0}]{The case \texorpdfstring{$\pmb{\lambda\mid d_1}$}{lambda|d1} and \texorpdfstring{$\pmb{\Yleft=0}$}{-<=0}}
By Proposition~\ref{caracterizacion primer caso}, the actions $\blackdiamond$ and $\Yleft$ are determined by endomorphisms $A_1$, $A_2$, $B_1$ and $B_2$ of $I$ satisfying~condi\-tions~\eqref{Propiedades de A1 y A2, B1, B2 primer caso}--~\eqref{condiciones para [1.14] primer caso}. Next, we set $C_i\coloneqq \Ide - A_i$ and we assume that $B_1=B_2=0$. Under this hypothesis,~\eqref{Propiedades de A1 y A2, B1, B2 primer caso}--~\eqref{condiciones para [1.14] primer caso} reduce to
$$
A_1^{d_1}=A_2^{d_2}=\Ide\quad \text{and} \quad [A_1,A_2]=0.
$$

\begin{proposition}\label{caso lambda no nulo  pero Yleft cero} Let $x\coloneqq (\gamma_1,\gamma_2,\mathfrak{f}_{11},\mathfrak{f}_{12},\mathfrak{f}_{21}, \mathfrak{f}_{22})$ and let $T_1$, $T_2$, $T_3$ and $S$ be as in~\eqref{T_1}--\eqref{S}. The following facts hold:
\begin{align*}
& T_1(x)=\Bigl(d_1 \mathfrak{f}_{11} + C_1\gamma_1,d_2 \mathfrak{f}_{12} + C_1\gamma_2, d_1 \mathfrak{f}_{21} + C_2\gamma_1, d_2 \mathfrak{f}_{22}+(d-1) \frac{d_2}{d}\gamma_1 + C_2\gamma_2\Bigr), \\
& T_2(x)=\Biggl(\sum_{\ell=0}^{d_1-1} A_1^{\ell}\mathfrak{f}_{11},\sum_{\ell=0}^{d_1-1} A_1^{\ell}\mathfrak{f}_{12},\sum_{\ell=0}^{d_2-1} A_2^{\ell}\mathfrak{f}_{21}, \sum_{\ell=0}^{d_2-1} A_2^{\ell} \mathfrak{f}_{22} + \sum_{\ell=0}^{d_2-1} \mathrm{r}_{d_1}(\lambda \ell) A_2^{\ell-1} \mathfrak{f}_{21}+ \sum_{\substack{\ell=0\\ d \nmid \ell}}^{d_2-1} A_2^{d_2-1-\ell} \gamma_1 \Biggr), \\
&  T_3(x)= (C_2\mathfrak{f}_{11} - C_1 \mathfrak{f}_{21}, (d_1-\lambda) \mathfrak{f}_{11} + C_2\mathfrak{f}_{12} - C_1 \mathfrak{f}_{22})
\shortintertext{and}
& S(t_1,t_2)=(d_1 t_1, d_2 t_2,-C_1 t_1,- C_1t_2, -C_2 t_1,-(d_1-\lambda) t_1- C_2 t_2),
\end{align*}
where $d\coloneqq d_1/\lambda$.
\end{proposition}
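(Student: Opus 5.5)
The proof is a direct specialization of the general formulas \eqref{T_1}--\eqref{S}. Under our hypotheses $B_1=B_2=0$, so $\Yleft=0$, $a_1=e_1=(1,0)$, $a_2=e_2=(0,1)$, and $a_i\blackdiamond y=A_iy$. I would compute each block $\mathcal{U}_{ij}$, $\mathcal{V}_{ij}$, $\mathcal{W}_{ij}$, $\mathcal{X}_{ij}$, $\mathcal{Y}_{22}$ and the two entries of $T_3$ in turn, using \eqref{a_j cdot e_i}. Two simplifications are immediate. First, $\mathcal{W}_{ij}=0$, since every summand ends with $\Yleft e_j$. Second, $\mathcal{X}_{ij}=-t_j+A_it_j=-C_it_j$, and $\mathcal{Y}_{22}=(\lambda-d_1)t_1$. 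This already gives the stated formula for $S$.

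For $T_1$, if $(i,j)\neq(2,2)$ then $a_i\cdot e_j=e_j$. In that case $\alpha_{\gamma_1,\gamma_2}(e_j,ke_j)$ equals $\gamma_j$ exactly when $k=d_j-1$, so $\mathcal{U}_{ij}=\gamma_j-A_i\gamma_j=C_i\gamma_j$. For $(i,j)=(2,2)$ we have $a_2\cdot e_2=(d_1-\lambda,1)$. The second coordinate again contributes exactly $\gamma_2$. The first coordinate contributes $\gamma_1$ once for each $k\in\{1,\dots,d_2-1\}$ such that $r_{d_1}(k(d_1-\lambda))+(d_1-\lambda)\ge d_1$. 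Since $\lambda\mid d_1$, the residues $r_{d_1}(-k\lambda)$ run through multiples of $\lambda$, and the condition fails exactly when $r_{d_1}(-k\lambda)=0$, that is, when $d\mid k$. Over one period of length $d$ there are $d-1$ carries. Since $d\mid d_1\mid d_2$, the range $k=1,\dots,d_2-1$ contains $d_2/d$ full periods minus the terminal $k=d_2$, which is not a carry, so the count is $(d-1)d_2/d$. This gives the fourth entry.

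For $T_2$, since $(H,\times)$ here has $a_i^{\times\ell}=\ell a_i$ up to the $\lambda\binom{\ell}{2}$ correction in the first coordinate, I need $a_i^{\times\ell}\cdot e_j$. For $i=1$ it is $e_j$. For $i=2$, $\ell a_2$ has second coordinate $\ell$, so $a_2^{\times\ell}\cdot e_1=e_1$ and $a_2^{\times\ell}\cdot e_2=(-\lambda\ell,1)$. Then $f_1(a_2,(-\lambda\ell,1))$ has to be expanded using \cite{GGV2}*{Corollary~2.19}, writing $(-\lambda\ell,1)$ as a word in $AS_Z$: namely $e_2$ plus $r_{d_1}(-\lambda\ell)$ copies of $e_1$. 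The cocycle-type recursion defining $f_1$ on sums then produces $\mathfrak{f}_{22}$, together with a sum of $\mathfrak{f}_{21}$'s weighted by $A_2$-powers, and $\gamma_1$-terms coming from the $\alpha$ carries. Reindexing with $\sum_\ell A_2^{d_2-1-\ell}$ should give the terms $\mathrm{r}_{d_1}(\lambda\ell)A_2^{\ell-1}\mathfrak{f}_{21}$ and $\sum_{d\nmid\ell}A_2^{d_2-1-\ell}\gamma_1$, the latter arising from the same carry count as in $T_1$. \textbf{This is the main obstacle}: getting the precise form of $f_1$ on non-generator arguments and the reindexing/sign conventions ($\ell$ versus $d_2-\ell$, $\mathrm{r}_{d_1}(\lambda\ell)$ versus $\mathrm{r}_{d_1}(-\lambda\ell)$) exactly right. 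I would first do a sanity check with small values, for example $d_1=d_2=2$ and $\lambda=1$ (excluded by the hypothesis, but useful for the bookkeeping), or $\lambda=d_1$, so that the formula collapses to the diagonal one.

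Finally, for $T_3$ I would use \cite{GGV2}*{Definition~2.25} for $F_1(a_i,a_j,e_k)$. This is the defect of commutativity of $f_1$ along the two generators, which here reduces to $A_2f_1(a_1,\cdot)+f_1(a_2,a_1\cdot e_k)$ minus the symmetric expression. For $k=1$ this yields $C_2\mathfrak{f}_{11}-C_1\mathfrak{f}_{21}$. For $k=2$, the extra term comes from $a_2\cdot e_2=(d_1-\lambda,1)$, which contributes $(d_1-\lambda)$ copies of $\mathfrak{f}_{11}=f_1(a_1,e_1)$. Since $a_1$ acts trivially on $e_1$, no $\gamma$ terms survive, and the stated formula follows.
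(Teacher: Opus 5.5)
Your plan is the paper's: specialize \eqref{T_1}--\eqref{S} via \eqref{a_j cdot e_i}, with $\mathcal{W}_{ij}=0$ because $\Yleft=0$. Your formulas for $S$, for $T_1$ (including the count $(d-1)\frac{d_2}{d}$ of carries in $\mathcal{U}_{22}$), for $T_3$, and for the first three entries of $T_2$ are correct and are obtained essentially as in the paper.

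The gap is the fourth entry of $T_2$, which is the only entry that needs an actual computation. You stop at ``should give'', and your sketch is inaccurate: the recursion for $f_1(a_2,-)$ does \emph{not} produce $A_2$-weighted copies of $\mathfrak{f}_{21}$. By \cite{GGV2}*{Identities~(2.17) and~(2.18)}, $f_1(a_2,-)$ is additive up to $\alpha$-carries. Writing $r\coloneqq\mathrm{r}_{d_1}(-\lambda\ell)$,
\[
f_1(a_2,-\lambda\ell e_1+e_2)=r\mathfrak{f}_{21}+\mathfrak{f}_{22}+\sum_{k=1}^{r-1}\alpha_{\gamma_1,\gamma_2}(e_1,ke_1)+\alpha_{\gamma_1,\gamma_2}(-\lambda\ell e_1,-\lambda e_1+e_2).
\]
The sum vanishes because $k+1\le r<d_1$. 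The last term is $\alpha_1(\gamma_1)(r,d_1-\lambda)$. Since $r$ is a multiple of $\lambda$, this equals $\gamma_1$ exactly when $r\ne 0$, i.e.\ when $d\nmid\ell$, and is $0$ otherwise. So $\mathfrak{f}_{21}$ enters with the integer coefficient $r$. The powers of $A_2$ come only from the outer action of $a_2^{\times(d_2-1-\ell)}$, which is $A_2^{d_2-1-\ell}$.

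Summing over $0\le\ell<d_2$ gives
\[
\sum_{\ell}A_2^{\ell}\mathfrak{f}_{22}+\sum_{\ell}\mathrm{r}_{d_1}(-\lambda\ell)A_2^{d_2-1-\ell}\mathfrak{f}_{21}+\sum_{d\nmid\ell}A_2^{d_2-1-\ell}\gamma_1.
\]
To reach the stated form, substitute $\ell\mapsto d_2-\ell$ in the middle sum. Since $d_1\mid d_2$, the terms become $\mathrm{r}_{d_1}(\lambda\ell)A_2^{\ell-1}\mathfrak{f}_{21}$ for $1\le\ell\le d_2$. This range may be replaced by $0\le\ell\le d_2-1$, because the terms with $\ell=0$ and $\ell=d_2$ both vanish. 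That settles both bookkeeping issues you flagged: $\ell$ versus $d_2-\ell$, and $\mathrm{r}_{d_1}(\lambda\ell)$ versus $\mathrm{r}_{d_1}(-\lambda\ell)$.

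There is also a small slip in your description of $T_3$. Taken literally, ``$A_2f_1(a_1,e_k)+f_1(a_2,a_1\cdot e_k)$ minus the symmetric expression'' gives $-C_2\mathfrak{f}_{11}+C_1\mathfrak{f}_{21}$ for $k=1$. The correct expression, from \cite{GGV2}*{Remark~2.36}, is $f_1(a_1,a_2\cdot e_k)+a_1\blackdiamond f_1(a_2,e_k)-f_1(a_2,a_1\cdot e_k)-a_2\blackdiamond f_1(a_1,e_k)$. Your final formulas for $T_3$ are the ones this expression gives, so only the wording is off.
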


\begin{proof} We first compute $T_1$. By~\eqref{a_j cdot e_i} and the very definition of $\alpha_{\gamma_1,\gamma_2}$ (see \cite{GGV2}*{Remarks~2.6 and~2.7}), we have
$$
\mathcal{U}_{11}  = C_1\gamma_1,\quad \mathcal{U}_{12}  = C_1\gamma_2  \quad\text{and}\quad \mathcal{U}_{21}  = C_2\gamma_1.
$$
Moreover, using again~\eqref{a_j cdot e_i} and the definition of $\alpha_{\gamma_1,\gamma_2}$, we obtain that
\begin{align*}
\sum_{k=1}^{d_2-1}\alpha_{\gamma_1,\gamma_2}(a_2\cdot e_2,k (a_2\cdot e_2)) &=\sum_{k=1}^{d_2-1}\alpha_{\gamma_1,\gamma_2}((-\lambda,1),(-k \lambda,k))\\
&=\sum_{k=1}^{d_2-1}\alpha_1(\gamma_1)(-\lambda,-k \lambda)+\sum_{k=1}^{d_2-1}\alpha_2(\gamma_2)(1,k).
\end{align*}
We claim that
\begin{equation}\label{suma de alfas}
\sum_{k=1}^{d_2-1}\alpha_2(\gamma_2)(1,k)=\gamma_2\quad\text{and}\quad \sum_{k=1}^{d_2-1}\alpha_1(\gamma_1)(-\lambda,-k \lambda)=(d-1)\frac{d_2}{d}\gamma_1.
\end{equation}
The first assertion is clear. To prove the second one, we note that $-k\lambda$ takes the values $0,\lambda,\dots, \bigl(\frac{d_1}{\lambda}-1\bigr)\lambda$ modulo~$d_1$, and so
\begin{equation}\label{formula4}
\alpha_1(\gamma_1)(-\lambda,-k \lambda)=\alpha_1(\gamma_1)(d_1-\lambda,-k\lambda)=\begin{cases} 0 & \text{if $d_1\mid k\lambda$,}\\ \gamma_1 & \text{otherwise}.\end{cases}
\end{equation}
Using~\eqref{suma de alfas} and the definition of $\mathcal{U}_{22}$, we arrive immediately at the desired formula for $T_1(x)$. Next, we compute $T_2$. Since $\mathcal{W}_{ij}$ for all $i,j$, for this it suffices to calculate
$$
\mathcal{V}_{ij} = \sum_{\ell=0}^{d_i-1} a_i^{\times (d_i-1-\ell)}\blackdiamond f_1(a_i,a_i^{\times \ell}\cdot e_j)\quad \text{for all $i,j$.}
$$
By~\eqref{potencia}, we have
\begin{equation}\label{formula1}
a_1^{\times \ell} = \ell a_1\qquad\text{and}\qquad a_2^{\times \ell} = \lambda \binom{\ell}{2} a_1 + \ell a_2.
\end{equation}
Hence, by~\eqref{def de punto},
\begin{equation}\label{formula2}
a_i^{\times \ell}\cdot e_j = \begin{cases} e_j & \text{if $(i,j)\ne (2,2)$,} \\ (-\lambda\ell,1) & \text{if $(i,j)=(2,2)$.}\end{cases}
\end{equation}
Employing this fact and, noting that $f_1(a_i,e_j) = \mathfrak{f}_{ij}$ by definition, we deduce
\begin{align}
\sum_{\ell=0}^{d_1-1} a_1^{\times (d_1-1-\ell)}\blackdiamond f_1(a_1,a_1^{\times \ell}\cdot e_1) & = \sum_{\ell=0}^{d_1-1} A_1^{\ell}\mathfrak{f}_{11},\label{primera para T2} \\
\sum_{\ell=0}^{d_1-1} a_1^{\times (d_1-1-\ell)}\blackdiamond f_1(a_1,a_1^{\times \ell}\cdot e_2) & = \sum_{\ell=0}^{d_1-1} A_1^{\ell}\mathfrak{f}_{12},\label{segunda para T2} \\
\sum_{\ell=0}^{d_2-1} a_2^{\times (d_2-1-\ell)}\blackdiamond f_1(a_2,a_2^{\times \ell}\cdot e_1) & = \sum_{\ell=0}^{d_2-1} A_2^{\ell}\mathfrak{f}_{21}.\label{tercera para T2}
\end{align}
Moreover, by \cite{GGV2}*{Identities~(2.17) and~(2.18)} and identity~\eqref{formula2}, we have
$$
f_1(a_2,a_2^{\times \ell}\cdot e_2)=f_1(a_2,-\lambda \ell e_1+e_2)= r\mathfrak{f}_{21}+ \mathfrak{f}_{22} + \sum_{k=1}^{r-1} \alpha_{\gamma_1,\gamma_2}(e_1,ke_1) + \alpha_{\gamma_1,\gamma_2}(-\lambda\ell e_1,-\lambda e_1+e_2),
$$
where we write $r\coloneqq \mathrm{r}_{d_1}(-\lambda \ell)$. Consequently, since, by~\eqref{formula4},
$$
\alpha_{\gamma_1,\gamma_2}(e_1,ke_1) = 0\quad\text{and}\quad \alpha_{\gamma_1,\gamma_2}(-\lambda \ell e_1,-\lambda e_1+e_2) = \alpha_1(\gamma_1)(-\lambda \ell,-\lambda) = \begin{cases} 0 & \text{if $d \mid \ell$,}\\ \gamma_1  & \text{otherwise,}\end{cases}
$$
we obtain
\begin{equation}\label{ultima para T2}
\sum_{\ell=0}^{d_2-1} a_2^{\times (d_2-1-\ell)}\blackdiamond f_1(a_2,a_2^{\times \ell}\cdot e_2)  = \sum_{\ell=0}^{d_2-1} A_2^{\ell} \mathfrak{f}_{22}
+
\sum_{\ell=0}^{d_2-1} \mathrm{r}_{d_1}(-\lambda \ell) A_2^{d_2-1-\ell} \mathfrak{f}_{21} + \sum_{\substack{\ell=0\\ d \nmid \ell}}^{d_2-1} A_2^{d_2-1-\ell} \gamma_1.
\end{equation}
The formula for $T_2(x)$ follows immediately from~\eqref{primera para T2}--\eqref{ultima para T2}. Next, we compute the $kth$ coordinate $T_3(x)_k$ of $T_3(x)$. By \cite{GGV2}*{Remark~2.36}, we have
$$
T_3(x)_k= f_1(a_1,a_2\cdot e_k) + a_1\blackdiamond f_1(a_2,e_k)- f_1(a_2,a_1\cdot e_k) - a_2\blackdiamond f_1(a_1,e_k).
$$
From this we obtain
$$
T_3(x)_k=\begin{cases} \mathfrak{f}_{11}+A_1 \mathfrak{f}_{21}-\mathfrak{f}_{21}-A_2 \mathfrak{f}_{11} & \text{when $k=1$}\\ f_1(a_1,a_2\cdot e_2) + A_1 \mathfrak{f}_{22}- \mathfrak{f}_{22} - A_2\mathfrak{f}_{12} & \text{when $k=2$.}\end{cases}
$$
Moreover, by \cite{GGV2}*{identities~(2.17) and~(2.18)}
$$
f_1(a_1,a_2\cdot e_2)=f_1(a_1,-\lambda e_1+e_2)=f_1(a_1,-\lambda e_1)+ \mathfrak{f}_{12}+\alpha_{\gamma_1,\gamma_2}(-\lambda e_1,e_2)
= (d_1-\lambda) \mathfrak{f}_{11}+\mathfrak{f}_{12}.
$$
The formula for $T_3(x)$ follows immediately from these facts. Finally, the formula for $S$ is straightforward.
\end{proof}

\begin{proposition}\label{lambda mid d_1, Yleft=0 and that A_1=A_2=Ide} Assume that $A_1=A_2=\Ide$. Then
$$
\ho_{\blackdiamond}^2(H^{d_1d_2}_{\lambda},I)\simeq I_{\lambda}\oplus I_{d_1}\oplus I_{d_1}\oplus \frac{I}{\lambda I}\oplus I_{\frac{d_2}{d}} \oplus \frac{I}{d_2 I},
$$
where $d\coloneqq d_1/\lambda$.
\end{proposition}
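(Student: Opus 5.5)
The approach is to specialize Proposition~\ref{caso lambda no nulo  pero Yleft cero} to $A_1=A_2=\Ide$, so that $C_1=C_2=0$. Then I compute $\ker(T_1)\cap\ker(T_2)\cap\ker(T_3)$ explicitly as a subgroup of $I^6$, divide by $\ima(S)$, and transport the result through the isomorphism $\ov{T}$ of~\eqref{formula00}.

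With $C_1=C_2=0$ the maps read as follows.
\begin{itemize}
\item $T_1(x)=\bigl(d_1\mathfrak{f}_{11},\,d_2\mathfrak{f}_{12},\,d_1\mathfrak{f}_{21},\,d_2\mathfrak{f}_{22}+(d-1)\tfrac{d_2}{d}\gamma_1\bigr)$.
\item $T_2(x)=\bigl(d_1\mathfrak{f}_{11},\,d_1\mathfrak{f}_{12},\,d_2\mathfrak{f}_{21},\,d_2\mathfrak{f}_{22}+c\,\mathfrak{f}_{21}+(d_2-\tfrac{d_2}{d})\gamma_1\bigr)$. Here $c\coloneqq\sum_{\ell=0}^{d_2-1}\mathrm{r}_{d_1}(\lambda\ell)$, and $d_2-\tfrac{d_2}{d}$ counts the $\ell$ with $d\nmid\ell$, because $d\mid d_2$.
\item $T_3(x)=\bigl(0,(d_1-\lambda)\mathfrak{f}_{11}\bigr)$.
\item $S(t_1,t_2)=\bigl(d_1t_1,\,d_2t_2,\,0,\,0,\,0,\,(\lambda-d_1)t_1\bigr)$.
\end{itemize}

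Reading off the kernel coordinate by coordinate gives the following.
\begin{itemize}
\item $\mathfrak{f}_{11}$: the conditions are $d_1\mathfrak{f}_{11}=0$ and $(d_1-\lambda)\mathfrak{f}_{11}=0$, which together say $\lambda\mathfrak{f}_{11}=0$. This gives the summand $I_\lambda$.
\item $\mathfrak{f}_{12}$: the conditions are $d_1\mathfrak{f}_{12}=d_2\mathfrak{f}_{12}=0$. Since $d_1\mid d_2$ this is $I_{d_1}$.
\item $\mathfrak{f}_{21}$: likewise it ranges over $I_{d_1}$.
\item $\gamma_2$: it does not occur in any $T_i$, and $S$ moves it by $d_2t_2$, giving $I/d_2I$.
\end{itemize}
None of $\mathfrak{f}_{11},\mathfrak{f}_{12},\mathfrak{f}_{21}$ is moved by $S$.

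Subtracting the fourth coordinates of $T_1$ and $T_2$ leaves the condition $c\,\mathfrak{f}_{21}=0$. Since $\lambda\ell$ runs $d_2/d$ times over $0,\lambda,\dots,(d-1)\lambda$ modulo $d_1$, we get $c=\tfrac{d_2}{d}\lambda\binom{d}{2}$. I must check that this is a multiple of $d_1=\lambda d$, using the standing hypothesis $d_1\mid\lambda\binom{d_2}{2}$, which amounts to $d\mid\binom{d_2}{2}$; this is a parity check, since $c/d_1=\tfrac{d_2}{d}\cdot\tfrac{d-1}{2}$. If it holds, the condition is automatic on $I_{d_1}$, and $\mathfrak{f}_{21}$ really contributes a full $I_{d_1}$.

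The remaining data is the pair $(\gamma_1,\mathfrak{f}_{22})\in I^2$. It is subject to the single relation $\tfrac{d_2}{d}\bigl((d-1)\gamma_1+d\,\mathfrak{f}_{22}\bigr)=0$, and it is taken modulo the subgroup $\{(\lambda d\,t,\lambda t)\}$, after rewriting $(\lambda-d_1)t_1$ as $\lambda t'$ with $t'=-t_1$ and $\lambda d\,t=-\lambda d\,t'$. I expect this block to be the main obstacle. One must find an integral unimodular change of variables in $(\gamma_1,\mathfrak{f}_{22})$ that simultaneously diagonalizes the relation and the image of $S$, a Smith normal form computation over $\mathds{Z}$ applied functorially to $I$. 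I would first try the coordinates $u=\gamma_1-d\,\mathfrak{f}_{22}$ and $\mathfrak{f}_{22}$. The image of $S$ then becomes $\{(0,\lambda t)\}$, so $\mathfrak{f}_{22}$ contributes $I/\lambda I$. The relation should then reduce, after using the parity facts above and possibly a further correction of $u$ by a multiple of $\mathfrak{f}_{22}$, to $\tfrac{d_2}{d}u=0$, giving $I_{d_2/d}$. I would verify by hand that this decoupling is legitimate, that is, that the cross term in $\mathfrak{f}_{22}$ can be absorbed. Collecting the summands and applying $\ov{T}$ yields the stated decomposition.
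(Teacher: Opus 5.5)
Your reduction follows the paper's proof. With $C_1=C_2=0$ your formulas for $T_1,T_2,T_3,S$ are right, and the coordinates $\mathfrak{f}_{11},\mathfrak{f}_{12},\mathfrak{f}_{21},\gamma_2$ do give $I_\lambda$, $I_{d_1}$, $I_{d_1}$ and $I/d_2I$. The parity check you left conditional does hold: $d\mid\binom{d_2}{2}$ and $\frac{d_2}{d}\cdot\frac{d-1}{2}\in\mathds{Z}$ are each equivalent to ``$d$ odd or $v_2(d)<v_2(d_2)$''. This is the fact $d_1\mid\frac{d_2}{d}\binom{d}{2}\lambda$ that the paper invokes.

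The gap is the $(\gamma_1,\mathfrak{f}_{22})$ block, which is the only nontrivial step.
\begin{itemize}
\item The image of $S$ there is $\{(\lambda d\,t,\lambda(1-d)t): t\in I\}$, not $\{(\lambda d\,t,\lambda t)\}$. The rewriting $(\lambda-d_1)t_1=\lambda t'$ with $t'=-t_1$ is false. You can detect the error because the relation $F(\gamma_1,\mathfrak{f}_{22})=\frac{d_2}{d}\bigl((d-1)\gamma_1+d\,\mathfrak{f}_{22}\bigr)$ must vanish on $\ima S$, but $F(\lambda d\,t,\lambda t)=d_1d_2t\neq 0$ in general.
\item Your substitution $u=\gamma_1-d\,\mathfrak{f}_{22}$ does not decouple the block. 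The true image becomes $\{(\lambda d^2t,\lambda(1-d)t)\}$, and the relation becomes $\frac{d_2}{d}\bigl((d-1)u+d^2\mathfrak{f}_{22}\bigr)$.
\item No further correction $u\mapsto u+k\mathfrak{f}_{22}$ that keeps $\mathfrak{f}_{22}$ as a coordinate can remove the $\mathfrak{f}_{22}$ term. That would need $(d-1)\mid d^2$, which forces $d=2$. The parity facts play no role in this block.
\end{itemize}

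What is missing is the correct unimodular change of variables. The relation is $\frac{d_2}{d}$ times the primitive row $(d-1,d)$. The image of $S$ is $\lambda$ times the primitive column $(d,1-d)^T$, and the row and column are orthogonal. So you should complete that column to a basis of $\mathds{Z}^2$, moving both coordinates, not only $\gamma_1$. Concretely, set $y_1=\gamma_1+\mathfrak{f}_{22}$ and $y_2=(1-d)\gamma_1-d\,\mathfrak{f}_{22}$. This change has determinant $-1$, with inverse $\gamma_1=dy_1+y_2$ and $\mathfrak{f}_{22}=(1-d)y_1-y_2$. In these coordinates $\ima S=\{(\lambda t,0)\}$ and the relation reads $-\frac{d_2}{d}y_2=0$. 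Hence the block is $I/\lambda I\oplus I_{d_2/d}$. This is exactly the paper's map $\Phi$, written in the order $(\mathfrak{f}_{22},\gamma_1)$; it turns $G_{45}$ into $t\mapsto(\lambda t,0)$ and $F_{45}$ into $(y_1,y_2)\mapsto-\frac{d_2}{d}y_2$. With this step inserted, your argument becomes the paper's proof.
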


\begin{proof} Let $\sigma\colon I^6 \to I^6$ be the map given by
$$
\sigma(\mathfrak{f}_{11},\mathfrak{f}_{12},\mathfrak{f}_{21},\mathfrak{f}_{22},
\gamma_1,\gamma_2)=(\gamma_1,\gamma_2,\mathfrak{f}_{11},\mathfrak{f}_{12},\mathfrak{f}_{21},\mathfrak{f}_{22}).
$$
Set $\widetilde{T}_i\coloneqq T_i\xcirc \sigma$ and $\widetilde{S}\coloneqq \sigma^{-1} S$, where the $T_i$'s and $S$ are as in the previous proposition. By~\eqref{formula00}, we have
$$
\ho_{\blackdiamond}^2(H^{d_1d_2}_{\lambda},I) \simeq \frac{\ker T_1\cap \ker T_2\cap \ker T_3}{\ima S}\simeq \frac{\ker \wt{T}_1\cap \ker \wt{T}_2\cap \ker \wt{T}_3}{\ima \wt{S}},
$$
where the second isomorphism is induced by $\sigma^{-1}$. Using Proposition~\ref{caso lambda no nulo  pero Yleft cero} and taking into account that
$$
\sum_{\ell=0}^{d_2-1} A_2^{\ell-1} \mathrm{r}_{d_1}(\lambda \ell) =\frac{d_2}{d} \sum_{\ell=0}^{d} \mathrm{r}_{d_1}(\lambda \ell)
=\frac{d_2}{d}\binom{d}{2}\lambda=\frac{d_2(d-1)\lambda}{2},
$$
we obtain
\begin{align*}
& \wt{T}_1(\mathfrak{f}_{11},\mathfrak{f}_{12},\mathfrak{f}_{21},\mathfrak{f}_{22},\gamma_1,\gamma_2) = \left(d_1\mathfrak{f}_{11},d_2\mathfrak{f}_{12},d_1\mathfrak{f}_{21},d_2\mathfrak{f}_{22}+ (d-1)\frac{d_2}{d}\gamma_1\right), \\
& \wt{T}_2(\mathfrak{f}_{11},\mathfrak{f}_{12},\mathfrak{f}_{21},\mathfrak{f}_{22},\gamma_1,\gamma_2) = \left(d_1\mathfrak{f}_{11},d_1\mathfrak{f}_{12}, d_2\mathfrak{f}_{21},d_2\mathfrak{f}_{22}+\frac{d_2(d-1)\lambda}{2}\mathfrak{f}_{21}+(d-1)\frac{d_2}{d}\gamma_1\right), \\
& \wt{T}_3(\mathfrak{f}_{11},\mathfrak{f}_{12},\mathfrak{f}_{21},\mathfrak{f}_{22},\gamma_1,\gamma_2) = (0,(d_1-\lambda)\mathfrak{f}_{11})
\shortintertext{and}
& \wt{S}(t_1,t_2)  = (0,0,0,-(d_1-\lambda)t_1,d_1 t_1,d_2 t_2).
\end{align*}
Let $F_{45}\colon I^2\to I$ and $G_{45}\colon I\to I^2$ be the maps given by
$$
F_{45}(\mathfrak{f}_{22},\gamma_1)\coloneqq d_2 \mathfrak{f}_{22}+ (d-1)\frac{d_2}{d} \gamma_1\quad\text{and}\quad G_{45}(t)\coloneqq (-(d_1-\lambda)t,d_1 t).
$$	
Let $I_r\coloneqq \ker(r \Ide_I) = \{y\in I:ry=0\}$. A direct computation using that~$d_1\mid \frac{d_2}{d}\binom{d}{2}\lambda$, proves that
$$
\frac{\ker \wt{T}_1\cap \ker \wt{T}_2\cap \ker \wt{T}_3}{\ima \wt{S}} = I_{\lambda}\oplus I_{d_1}\oplus I_{d_1}\oplus\frac{\ker(F_{45})}{\ima(G_{45})} \oplus \frac{I}{d_2 I}.	
$$
We define the map $\Phi\colon I\oplus I\to I\oplus I$, by $\Phi(y_1,y_2)\coloneqq (y_1-y_2-d y_1, y_2+d y_1)$. Note that $\Phi$ is invertible, with inverse given by $\Phi^{-1}(z_1,z_2)=(z_1+z_2,z_2-d(z_1+z_2))$. We set $\wt{G}_{45}\coloneqq \Phi^{-1}\xcirc G_{45}$ and $\wt{F}_{45}\coloneqq F_{45}\xcirc \Phi$. Clearly $\Phi$ induces an isomorphism $\ov{\Phi}\colon \frac{\ker \wt{F}_{45}}{\ima \wt{G}_{45}}\to \frac{\ker F_{45}}{\ima G_{45}}$. A direct computation shows that
\begin{equation*}
\wt{G}_{45}(t) = (\lambda t,0) \quad \text{and}\quad \wt{F}_{45}(y_1,y_2) =  -\frac{d_2}{d} y_2,
\end{equation*}
and so we have $\frac{\ker(F_{45})}{\ima(G_{45})} \simeq \frac{I}{\lambda I}\oplus I_{\frac{d_2}{d}}$, which gives
$$
\ho_{\blackdiamond}^2(H^{d_1d_2}_{\lambda},I)\simeq I_{\lambda}\oplus I_{d_1}\oplus I_{d_1}\oplus \frac{I}{\lambda I}\oplus I_{\frac{d_2}{d}} \oplus \frac{I}{d_2 I},
$$
as desired.
\end{proof}

\begin{remark}\label{remark1} Assume that we are under the hypothesis of Proposition~\ref{lambda mid d_1, Yleft=0 and that A_1=A_2=Ide}. The map $\Psi\colon I^6\to I^6$, given by
$$
\Psi(y_1,y_2,y_3,y_4,y_5,y_6)\coloneqq (dy_4+y_5,y_6,y_1,y_2,y_3,y_4-dy_4-y_5),
$$
induces an isomorphism
$$
\ov{\Psi}\colon I_{\lambda}\oplus I_{d_1}\oplus I_{d_1}\oplus \frac{I}{\lambda I}\oplus I_{\frac{d_2}{d}} \oplus \frac{I}{d_2 I}\longrightarrow \frac{\ker T_1\cap \ker T_2\cap \ker T_3}{\ima S}.
$$
Let $Z\subseteq I_{\lambda}\oplus I_{d_1}\oplus I_{d_1}\oplus I\oplus I_{\frac{d_2}{d}} \oplus I$ be such that the natural map $Z\to I_{\lambda}\oplus I_{d_1}\oplus I_{d_1}\oplus \frac{I}{\lambda I}\oplus I_{\frac{d_2}{d}} \oplus \frac{I}{d_2 I}$ is a bijection and let~$T$ be as in~\eqref{formula0}, so that $$
T(\gamma_1,\gamma_2,\mathfrak{f}_{11},\mathfrak{f}_{12},\mathfrak{f}_{21},\mathfrak{f}_{22}) = (\alpha_{\gamma_1,\gamma_2}, f_{\mathfrak{f}_{11}, \mathfrak{f}_{12},\mathfrak{f}_{21},\mathfrak{f}_{22},\gamma_1,\gamma_2}).
$$
Clearly $T(\Psi(Z))$ is a complete family of representatives of~$2$-cocycles modulo coboundaries in $\wh{\mathcal{C}}_N^*(H^{d_1d_2}_{\lambda},I)$. Hence, computing $T$ becomes essential. Since $\alpha_{\gamma_1,\gamma_2}$ is already known from~\eqref{alpha_{gamma_1,gamma_2}}, the determination of $T$ reduces to establishing the following proposition.
\end{remark}

\begin{proposition}\label{calculo de f} Assume that we are under the hypothesis of Proposition~\ref{lambda mid d_1, Yleft=0 and that A_1=A_2=Ide}, and let $T_1$, $T_2$ and $T_3$ be as in~Pro\-position~\ref{caso lambda no nulo  pero Yleft cero}. For
$(\gamma_1,\gamma_2,\mathfrak{f}_{11}, \mathfrak{f}_{12},\mathfrak{f}_{21},\mathfrak{f}_{22})\in \ker T_1\cap \ker T_2\cap \ker T_3$,
let $f\coloneqq f_{\mathfrak{f}_{11},\mathfrak{f}_{12},\mathfrak{f}_{21},\mathfrak{f}_{22},\gamma_1, \gamma_2}$. We have
$$
f(a_1^{\times \ell_1}\times a_2^{\times \ell_2},(u,v)) = \ell_1 u \mathfrak{f}_{11}+ \ell_1 v\mathfrak{f}_{12} + \sum_{j=0}^{\ell_2-1} \mathrm{r}_{d_1}(u\!-\!\lambda jv) \mathfrak{f}_{21} + \ell_2 v \mathfrak{f}_{22} + \left(v-\left\lceil \frac{v}{d} \right\rceil\right)\ell_2 \gamma_1 + C^{\ell_2}_{uv} \gamma_1,
$$
where $0\le\ell_1,u<d_1$, $0\le\ell_2,v<d_2$ and $C^{\ell_2}_{uv} = \#\{0\le j < \ell_2: \mathrm{r}_{d_1}(u-\lambda jv)+\mathrm{r}_{d_1}(-\lambda_v)\ge d_1\}$.
\end{proposition}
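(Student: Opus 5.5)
The plan is to compute $f$ from its values on generators, using only the cocycle identities that $f$ satisfies when $A_1=A_2=\Ide$ and $\Yleft=0$. By Remark~\ref{condicion para I incluido en Soc}, $\blackdiamond$ is trivial and $I\subseteq \Soc$ in this situation. So $-f$, together with $\beta=\alpha_{\gamma_1,\gamma_2}$, satisfies~\eqref{eqq 4} and~\eqref{condicion equivalente compatibilidad suma cdot con hmassimp3 caso simple} with trivial $\blackdiamond$ (Proposition~\ref{significado de cociclos caso general}). I will fix the sign convention once, matching the one used for $f_1$ in \cite{GGV2}*{Identities~(2.17) and~(2.18)}. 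The outcome should be two rules. The first is the ``almost additivity'' in the second variable:
$$
f(h,h'+h'')=f(h,h')+f(h,h'')+\alpha_{\gamma_1,\gamma_2}(h\cdot h',h\cdot h'')-\alpha_{\gamma_1,\gamma_2}(h',h'').
$$
The second is the multiplicativity rule in the first variable, $f(a\times b,c)=f(a,c)+f(b,a\cdot c)$. This follows from~\eqref{condicion equivalente compatibilidad suma cdot con hmassimp3 caso simple}, applied with $h=a$ and $h'={}^ab$, since $a\times b=a+{}^ab$ by~\eqref{equivalencia linear cycle set braza}.

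\emph{Step 1: values on the generators $a_1,a_2$.} Since $a_1\cdot(u,v)=(u,v)$ by~\eqref{def de punto}, the two $\alpha$-terms cancel in the additivity rule. Starting from $f(a_1,e_j)=\mathfrak{f}_{1j}$, this gives $f(a_1,(u,v))=u\mathfrak{f}_{11}+v\mathfrak{f}_{12}$.

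For $a_2$ I will split $(u,v)=ue_1+ve_2$. Because $a_2\cdot (ue_1)=ue_1$, the term $f(a_2,ue_1)$ equals $u\mathfrak{f}_{21}$.

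For $f(a_2,ve_2)$ I will induct on $v$. Each step adds $\mathfrak{f}_{22}$ plus the difference $\alpha_{\gamma_1,\gamma_2}((-\lambda,1),(-k\lambda,k))-\alpha_{\gamma_1,\gamma_2}(e_2,ke_2)$. The second-coordinate carries cancel, so only the first-coordinate carries $\alpha_1(\gamma_1)(-\lambda,-k\lambda)$ survive. By~\eqref{formula4} they contribute $\gamma_1$ exactly when $d\nmid k$. Summing over $1\le k<v$ gives
$$
\#\{1\le k<v: d\nmid k\}\,\gamma_1=\Bigl(v-\Bigl\lceil \tfrac vd\Bigr\rceil\Bigr)\gamma_1 .
$$
Finally, the cross term $\alpha(a_2\cdot ue_1,a_2\cdot ve_2)-\alpha(ue_1,ve_2)=\alpha_1(\gamma_1)(u,-\lambda v)$ contributes $\gamma_1$ exactly when $\mathrm r_{d_1}(u)+\mathrm r_{d_1}(-\lambda v)\ge d_1$. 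Altogether
$$
f(a_2,(u,v))=u\mathfrak{f}_{21}+v\mathfrak{f}_{22}+\Bigl(v-\Bigl\lceil \tfrac vd\Bigr\rceil\Bigr)\gamma_1+[\mathrm r_{d_1}(u)+\mathrm r_{d_1}(-\lambda v)\ge d_1]\gamma_1 .
$$

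\emph{Step 2: powers and products.} The multiplicativity rule, together with $a_1^{\times\ell}\cdot h=h$, gives $f(a_1^{\times\ell_1},h)=\ell_1 f(a_1,h)$. By~\eqref{formula2}, $a_2^{\times j}\cdot(u,v)=(u-\lambda jv,v)$, so
$$
f(a_2^{\times\ell_2},(u,v))=\sum_{j=0}^{\ell_2-1}f\bigl(a_2,(\mathrm r_{d_1}(u-\lambda jv),v)\bigr).
$$
Applying the rule once more to $a_1^{\times\ell_1}\times a_2^{\times\ell_2}$, and using that $a_1^{\times\ell_1}$ acts trivially, gives
$$
f(a_1^{\times\ell_1}\times a_2^{\times\ell_2},(u,v))=f(a_1^{\times\ell_1},(u,v))+f(a_2^{\times\ell_2},(u,v)).
$$
Substituting Step~1 into these two formulas gives exactly the claimed expression, with the indicator terms adding up to $C^{\ell_2}_{uv}\gamma_1$.

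\emph{Main obstacle.} I expect the main difficulty to be bookkeeping rather than ideas. First, the sign convention between $f$ and $-f$ in Proposition~\ref{significado de cociclos caso general} must be matched consistently with the definition of $f_1$ in \cite{GGV2}. Second, every $\alpha$ must be evaluated on reduced representatives in $[0,d_i)$, which is why $\mathrm r_{d_1}$ appears. Third, one must check that the formula is compatible with the chosen ranges $0\le\ell_1<d_1$ and $0\le\ell_2<d_2$. That compatibility is guaranteed by the membership of the data in $\ker T_1\cap\ker T_2\cap\ker T_3$, since then $f$ is a well-defined cocycle, so no separate verification should be needed. I would first double-check the carry count in $f(a_2,ve_2)$ against the computation already made in the proof of Proposition~\ref{caso lambda no nulo  pero Yleft cero}.
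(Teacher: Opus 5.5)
Your proposal is correct and follows the paper's route. You split $f(a_1^{\times \ell_1}\times a_2^{\times \ell_2},(u,v))$ into $f(a_1^{\times \ell_1},(u,v))+f(a_2^{\times \ell_2},(u,v))$, reduce the powers to sums over $a_i^{\times j}\cdot(u,v)$, and evaluate $f(a_1,\cdot)$ and $f(a_2,\cdot)$ by almost-additivity in the second variable. The only difference is that you derive these reduction rules directly from~\eqref{eqq 4} and~\eqref{condicion equivalente compatibilidad suma cdot con hmassimp3 caso simple}, whereas the paper quotes \cite{GGV2}*{Propositions~2.12 and~2.37 and Identity~(2.32)}.

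One small slip: $\bigl(\alpha_{\gamma_1,\gamma_2},-f\bigr)$ is the cocycle, so Proposition~\ref{significado de cociclos caso general} says that $f$ itself, not $-f$, satisfies~\eqref{eqq 4}. That is exactly the additivity rule you go on to use, so your computation of the $\gamma_1$-terms stands.
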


\begin{proof} By~\eqref{def de punto} and~\eqref{formula1}, we obtain
\begin{equation}\label{formula3}
a_i^{\times j}\cdot (u,v) = \begin{cases} (u,v) & \text{if $i=1$,}\\ (u-\lambda jv,v) & \text{if $i=2$.}\end{cases}
\end{equation}
Hence, by~\cite{GGV2}*{Proposition~2.37},
$$
f(a_1^{\times \ell_1}\times a_2^{\times \ell_2},(u,v)) = f(a_1^{\times \ell_1},(u,v)) + f(a_2^{\times \ell_2},(u,v)).
$$
Consequently, by~\cite{GGV2}*{Proposition~2.12},
\begin{equation}\label{pepitito1}
f(a_1^{\times \ell_1}\times a_2^{\times \ell_2},(u,v)) = \ell_1 f(a_1,(u,v)) + \sum_{j=0}^{\ell_2-1} f(a_2,(u-\lambda jv,v)).
\end{equation}
Moreover, by~\eqref{formula3} and~\cite{GGV2}*{Identity~(2.32)},
\begin{equation*}
f(a_1,(u,v)) = u\mathfrak{f}_{11}+v\mathfrak{f}_{12}+\sum_{h=1}^{u-1} \alpha_{\gamma_1,\gamma_2}(e_1,he_1)+\sum_{h=1}^{v-1} \alpha_{\gamma_1,\gamma_2}(e_2,he_2) + \alpha_{\gamma_1,\gamma_2}(ue_1,ve_2) = u \mathfrak{f}_{11}+ v\mathfrak{f}_{12}
\end{equation*}
and
\begin{align*}
f(a_2,(u-\lambda jv,v)) & = \mathrm{r}_{d_1}(u-\lambda jv)\mathfrak{f}_{21}+v \mathfrak{f}_{22} + \sum_{h=1}^{\mathrm{r}_{d_1}(u-\lambda jv)-1} \alpha_{\gamma_1,\gamma_2}(e_1,he_1)\\
& + \sum_{h=1}^{v-1} \alpha_{\gamma_1,\gamma_2}(-\lambda e_1+e_2,-h\lambda e_1+he_2) + \alpha_{\gamma_1,\gamma_2}((u-\lambda jv)e_1,-\lambda v e_1+ve_2)\\
& = \mathrm{r}_{d_1}(u-\lambda jv)\mathfrak{f}_{21}+v\mathfrak{f}_{22} + \sum_{h=1}^{v-1} \alpha_1(\gamma_1)(-\lambda,-h\lambda) + \alpha_1(\gamma_1)(u-\lambda jv,-\lambda v).
\end{align*}
By combining these facts with~\eqref{pepitito1} and taking into account that, by~\eqref{formula4},
$$
\sum_{h=1}^{v-1} \alpha_1(\gamma_1)(-\lambda,-h\lambda) = \left(v-\left\lceil \frac{v}{d} \right\rceil\right)\gamma_1,
$$
we obtain the formula in the statement.
\end{proof}

In the following proposition we study a case where $H^{d_1d_2}_{\lambda}$ and $I$ are p-groups.

\begin{proposition}\label{caso particular 1} Let $p$ be an odd prime, and let $d_1\coloneqq p^{\eta_1}$, $d_2\coloneqq p^{\eta_2}$ and $I\coloneqq \mathds{Z}_{p^r}$ with $0<r\le \eta_1 \le \eta_2$. If $\lambda= p^a$ with $0\le a<\eta_1$, then there exist $k_1,k_2\in \mathds{Z}_{p^r}$ such that
\begin{align*}
& \text{$A_i$ is the multiplication by~$1 +p k_i$,}\\
& T_1(\gamma_1,\gamma_2,\mathfrak{f}_{11}, \mathfrak{f}_{12}, \mathfrak{f}_{21}, \mathfrak{f}_{22}) = \Bigl(-pk_1 \gamma_1,-pk_1 \gamma_2,-pk_2 \gamma_1, -\frac{d_2}{d}\gamma_1-pk_2\gamma_2\Bigr), \\
& T_2(\gamma_1,\gamma_2,\mathfrak{f}_{11}, \mathfrak{f}_{12}, \mathfrak{f}_{21},\mathfrak{f}_{22}) = \Biggl(0,0,0, -\sum_{\ell=0}^{\frac{d_2}{d}-1} (1+pk_2)^{d_2-1+\ell d}  \gamma_1 \biggr), \\
&  T_3(\gamma_1,\gamma_2,\mathfrak{f}_{11}, \mathfrak{f}_{12}, \mathfrak{f}_{21},\mathfrak{f}_{22})= \bigl(-pk_2 \mathfrak{f}_{11} + pk_1 \mathfrak{f}_{21}, -p^a \mathfrak{f}_{11} - pk_2 \mathfrak{f}_{12}+pk_1 \mathfrak{f}_{22}\bigr)
\shortintertext{and}
& S(t_1,t_2)=\bigl(0,0,pk_1 t_1,pk_1 t_2,pk_2 t_1,p^a t_1+pk_2 t_2\bigr),
\end{align*}
where $d\coloneqq d_1/\lambda=p^{\eta_1-a}$ and $T_1$, $T_2$ and $T_3$ are as in~Pro\-position~\ref{caso lambda no nulo  pero Yleft cero}.
\end{proposition}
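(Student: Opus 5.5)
Throughout I work in the setting of Subsubsection on $\lambda\mid d_1$ and $\Yleft=0$, so Proposition~\ref{caso lambda no nulo  pero Yleft cero} applies. I would specialize its formulas to $I=\mathds{Z}_{p^r}$; every endomorphism of $I$ is then multiplication by some element of $\mathds{Z}_{p^r}$.

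\textbf{Step 1: the actions.} By the hypotheses, $A_1$ and $A_2$ are multiplication by units $c_1,c_2$ with $c_1^{p^{\eta_1}}=c_2^{p^{\eta_2}}=1$. The unit group of $\mathds{Z}_{p^r}$ is cyclic of order $(p-1)p^{r-1}$, since $p$ is odd. An element whose order is a power of $p$ therefore lies in the Sylow $p$-subgroup, which is $1+p\mathds{Z}_{p^r}$. Hence $c_i=1+pk_i$ and $C_i=-pk_i$.

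\textbf{Step 2: $T_1$ and $S$.} Since $r\le\eta_1\le\eta_2$, we have $d_1y=d_2y=0$ for all $y\in I$. Substituting into Proposition~\ref{caso lambda no nulo  pero Yleft cero} kills the terms $d_i\mathfrak{f}_{ij}$ and $d_it_i$. It also turns $(d-1)\frac{d_2}{d}\gamma_1$ into $-\frac{d_2}{d}\gamma_1$, because $d\cdot\frac{d_2}{d}=d_2$. In $T_3$ and $S$, $(d_1-\lambda)$ becomes $-p^a$. Writing $C_i=-pk_i$ then gives the stated $T_1$, $T_3$ and $S$ directly.

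\textbf{Step 3: $T_2$, the main obstacle.} The sums $\sum_{\ell=0}^{p^{\eta}-1}(1+pk)^\ell$ must be shown to vanish in $\mathds{Z}_{p^r}$. I would write the sum as $\sum_\ell\sum_m\binom{\ell}{m}p^mk^m=\sum_m\binom{p^\eta}{m+1}p^mk^m$. Then I check that $v_p\bigl(\binom{p^\eta}{m+1}p^m\bigr)\ge \eta-v_p(m+1)+m\ge\eta\ge r$, which holds since $v_p(m+1)\le m$ for odd $p$. This disposes of the first three coordinates. The same argument removes $\sum_{\ell=0}^{d_2-1}A_2^\ell\mathfrak{f}_{22}$ from the fourth.

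Two terms of the fourth coordinate remain. The first is $\sum_\ell \mathrm{r}_{d_1}(\lambda\ell)A_2^{\ell-1}\mathfrak{f}_{21}$. I would group $\ell$ by residue modulo $d$, writing $\ell=j+md$, so that $\mathrm{r}_{d_1}(\lambda\ell)=\lambda j$. The inner sum over $m$ is $\sum_{m=0}^{d_2/d-1}(1+pk_2)^{md}$. Applying the same binomial estimate with base $(1+pk_2)^d$ and exponent range $p^{\eta_2-\eta_1+a}$ gives valuation at least $\eta_2-\eta_1+a$ plus the extra $p$-power from $(1+pk_2)^d-1$. The total is at least $\eta_2-\eta_1+a+(\eta_1-a)+1>r$, so this term vanishes.

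The second term is $\sum_{d\nmid\ell}A_2^{d_2-1-\ell}\gamma_1$. It equals the full sum minus the terms with $d\mid\ell$. The full sum is $0$, so what is left is $-\sum_{m=0}^{d_2/d-1}(1+pk_2)^{d_2-1-md}\gamma_1$. Since $(1+pk_2)^{d_2}=1$, this matches the stated $-\sum_{\ell}(1+pk_2)^{d_2-1+\ell d}\gamma_1$ after reindexing $m\mapsto -\ell$ modulo $d_2/d$. The care needed here is in the valuation bookkeeping, particularly the case $a=0$, where $d=d_1$.
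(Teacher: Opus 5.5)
Steps~1 and~2, the vanishing of $\sum_{\ell<d_i}(1+pk_i)^\ell$, and the reindexing of the $\gamma_1$-term are all fine. Your Sylow argument in Step~1 is an equivalent substitute for the paper's use of $a_i^{p^{\eta_i}}\equiv a_i \pmod p$.

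The gap is in the term $\sum_{\ell=0}^{d_2-1}\mathrm{r}_{d_1}(\lambda\ell)A_2^{\ell-1}\mathfrak{f}_{21}$. Your decomposition $\ell=j+md$ is correct and factors this term as
\[
\lambda\Bigl(\sum_{j=0}^{d-1} j(1+pk_2)^{j-1}\Bigr)\Bigl(\sum_{m=0}^{N-1} b^{m}\Bigr)\mathfrak{f}_{21},\qquad N\coloneqq \tfrac{d_2}{d}=p^{\eta_2-\eta_1+a},\quad b\coloneqq(1+pk_2)^d .
\]
However, the valuation bound you claim for the second factor is false. Write $\sum_{m<N}b^m=\sum_{s\ge 0}\binom{N}{s+1}(b-1)^s$. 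The $s=0$ term is $N$ itself and carries no factor of $b-1$. So the second factor is $N$ times a unit, and its valuation is $\eta_2-\eta_1+a$, not at least $\eta_2+1$. This can be smaller than $r$. For example, if $\eta_1=\eta_2=r$ and $a=0$ (the case you flagged), then $N=1$ and the second factor equals $1$. So the inner sum does not kill the term.

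The vanishing comes from the first factor, together with $\lambda$. With $x=1+pk_2$ one has
\[
\sum_{j=0}^{d-1}jx^{j-1}=\sum_{s\ge1}s\binom{d}{s+1}(pk_2)^{s-1}.
\]
The $s=1$ term contributes $\lambda\binom{d}{2}=d_1\frac{d-1}{2}$. This is $0$ in $\mathds{Z}_{p^r}$ because $p$ is odd (so $d$ is odd) and $r\le\eta_1$. For $s\ge2$ one checks that $v_p\bigl(s\binom{d}{s+1}p^{s-1}\bigr)\ge\eta_1-a$. Hence every term of $\lambda\sum_j jx^{j-1}$ has valuation at least $\eta_1\ge r$.

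This is where the oddness of $p$ genuinely enters. That your version never needed it at this point was a warning sign.

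The paper argues more directly. Since $p^r\mid d_1$, it replaces $\mathrm{r}_{d_1}(\lambda\ell)\mathfrak{f}_{21}$ by $\lambda\ell\,\mathfrak{f}_{21}$. It then uses
\[
\sum_{\ell=0}^{d_2-1}\ell(1+pk_2)^\ell=\sum_j\Bigl((d_2-1)\binom{d_2}{j+1}-\binom{d_2}{j+2}\Bigr)p^jk_2^j,
\]
where each term is divisible by $p^r$; oddness of $p$ is again needed for the terms $\binom{d_2}{j+2}p^j$. With either repair, the rest of your argument goes through.
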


\begin{proof} Let $a_i\in \mathds{Z}_{p^r}$ be such that $A_i$ is the multiplication by $a_i$. Since $A_i^{p^{\eta_i}}=\Ide_I$,
$$
1=a_i^{p^{\eta_i}}\equiv a_i \mod p.
$$
Hence, there exist $k_1,k_2\in \mathds{Z}_{p^r}$ such that $a_i=1+pk_i$. The identities for $T_1$, $T_3$ and $S$ follow immediately from Propo\-sition~\ref{caso lambda no nulo  pero Yleft cero}. Next we prove the formula for $T_2$. We claim that
\begin{equation}\label{suma de potencias de A}
\sum_{\ell=0}^{d_i-1} a_i^{\ell}=0.
\end{equation}
In order to prove this we first note that
$$
\sum_{\ell=0}^{d_i-1} (1+ pk_i)^{\ell}=\sum_{\ell=0}^{d_i-1} \sum_{j=0}^{\ell}\binom{\ell}{j}p^j k_i^j =\sum_{j=0}^{d_i-1} \sum_{\ell=j}^{d_i-1} \binom{\ell}{j}p^j k_i^j= \sum_{j=0}^{d_i-1}  \binom{d_i }{j+1}p^j k_i^j.
$$
Write $j+1 = p^tv$ with $p\nmid v$ and $t\ge 0$. It is well known that $p^{\eta_i-t}\mid \binom{d_i }{j+1}$. Hence, the exponent $w$, of $p$ in $\binom{d_i}{j+1} p^j$, is at least $\eta_i-t+j\ge \eta_i$, since $j\ge t$. Consequently
$$
\sum_{\ell=0}^{d_i-1} a_i^{\ell}=\sum_{\ell=0}^{d_i-1} (1+ pk_i)^{\ell}=0,
$$
because $r\le \eta_i$. From~\eqref{suma de potencias de A} and Proposition~\ref{caso lambda no nulo  pero Yleft cero}, it follows that
\begin{equation}\label{T2}
T_2((\gamma_i),(\mathfrak{f}_{ij}))=\left(0,0,0,\sum_{\ell=0}^{d_2-1} \mathrm{r}_{d_1}(\lambda \ell) a_2^{\ell-1} \mathfrak{f}_{21}+\sum_{\substack{\ell=0\\ d \nmid \ell}}^{d_2-1} a_2^{d_2-1-\ell} \gamma_1 \right).
\end{equation}
We claim that
$$
\sum_{\ell=0}^{d_2-1} \mathrm{r}_{d_1}(\lambda \ell) a_2^{\ell-1} \mathfrak{f}_{21} = 0.
$$
To prove this, first observe that
$$
\sum_{\ell=0}^{d_2-1} \ell (1+ pk_2)^{\ell}=\sum_{\ell=0}^{d_2-1} \sum_{j=0}^{\ell}\ell\binom{\ell}{j}p^j k_2^j=\sum_{j=0}^{d_2-1} \sum_{\ell=j}^{d_2-1} \ell\binom{\ell}{j}p^j k_2^j= \sum_{j=0}^{d_2-1}  \left( (d_2-1)\binom{d_2}{j+1}-\binom{d_2}{j+2} \right)p^j k_2^j,
$$
where the last equality hold by~\cite{Spivey}*{Identity~73}. From our earlier divisibility argument, we know that $p^r\mid \binom{d_2}{j+1} p^j$ and, since $p$ is odd, a similar reasoning shows that $p^r\mid \binom{d_2}{j+2} p^j$. Consequently,
$$
\sum_{\ell=0}^{d_2-1} a_2^{\ell-1} \mathrm{r}_{d_1}(\lambda \ell) \mathfrak{f}_{21}= \lambda a^{-1}\sum_{\ell=0}^{d_2-1} \ell a_2^{\ell}  \mathfrak{f}_{21} = \lambda a^{-1} \sum_{\ell=0}^{d_2-1} \ell (1+ pk_2)^{\ell} \mathfrak{f}_{21}=0,
$$
as claimed. The identity for $T_2$ now follows from this, equality~\eqref{T2} and the fact that
$$
\sum_{\substack{\ell=0\\ d \nmid \ell}}^{d_2-1} a_2^{d_2-1-\ell} \gamma_1=\sum_{\ell=0}^{d_2-1} a_2^{d_2-1-\ell} \gamma_1- \sum_{\substack{\ell=0\\ d \mid \ell}}^{d_2-1} a_2^{d_2-1-\ell} \gamma_1= \sum_{\ell=0}^{d_2-1} a_2^{\ell} \gamma_1 -\sum_{j=0}^{\frac{d_2}{d}-1} a_2^{d_2-1-jd}  \gamma_1 = - \sum_{\ell=0}^{\frac{d_2}{d}-1} a_2^{d_2-1+\ell d}  \gamma_1,
$$
where the last equality follows from~\eqref{suma de potencias de A}.
\end{proof}

\begin{corollary}\label{primer corolario} Assume that we are under the hypothesis of Proposition~\ref{caso particular 1}.  Let $k_1,k_2\in \mathds{Z}_{p^r}$ be such that $A_i$ is the multiplication by~$1 +p k_i$. Write $k_1 = p^{u_1}v_1$ and~$k_2 = p^{u_2}v_2$ with $0\le u_1,u_2< r$ and $v_1,v_2$ units of $\mathds{Z}_{p^r}$. The following assertions hold:

\begin{enumerate}[label=\arabic*)]

\item If $a\le u_1+1\le u_2+1$, then
\begin{align*}
\ho_{\blackdiamond}^2(H^{d_1d_2}_{\lambda},I) & \simeq p^{r-\epsilon_1} \mathds{Z}_{p^r} \oplus p^{r-u_1-1} \mathds{Z}_{p^r}\oplus p^{r-a} \mathds{Z}_{p^r} \oplus \frac{\mathds{Z}_{p^r}}{p^{u_1+1} \mathds{Z}_{p^r}}\oplus p^{r-u_1-1} \mathds{Z}_{p^r}\oplus \frac{\mathds{Z}_{p^r}}{p^a \mathds{Z}_{p^r}}\\
&\simeq\mathds{Z}_{p^{\epsilon_1}}\oplus\mathds{Z}_{p^{u_1+1}}\oplus \mathds{Z}_{p^a}\oplus\mathds{Z}_{p^{u_1+1}}\oplus\mathds{Z}_{p^{u_1+1}}\oplus \mathds{Z}_{p^a},
\end{align*}
where $\epsilon_1\coloneqq \min(\eta_2-\eta_1+a,u_1+1)$.

\item If $a\le u_2+1\le u_1+1$ and $u_2-a +1\le  u_1-u_2$, then
\begin{align*}
\ho_{\blackdiamond}^2(H^{d_1d_2}_{\lambda},I) & \simeq p^{r-\epsilon_1} \mathds{Z}_{p^r} \oplus p^{r-u_2-1} \mathds{Z}_{p^r}\oplus p^{r-a} \mathds{Z}_{p^r} \oplus p^{r-\epsilon_2} \mathds{Z}_{p^r}\oplus \frac{\mathds{Z}_{p^r}}{p^{\epsilon_2} \mathds{Z}_{p^r}}\oplus \frac{\mathds{Z}_{p^r}}{p^a \mathds{Z}_{p^r}}\\
& \simeq \mathds{Z}_{p^{\epsilon_1}}\oplus\mathds{Z}_{p^{u_2+1}}\oplus\mathds{Z}_{p^a}\oplus \mathds{Z}_{p^{\epsilon_2}}\oplus \mathds{Z}_{p^{\epsilon_2}} \oplus \mathds{Z}_{p^a},
\end{align*}
where $\epsilon_1\coloneqq \min(\eta_2-\eta_1+a,u_2+1)$ and $\epsilon_2\coloneqq 2u_2+2-a$.

\item If $a\le u_2+1\le u_1+1$ and $u_1-u_2\le u_2-a+1$, then
\begin{align*}
\ho_{\blackdiamond}^2(H^{d_1d_2}_{\lambda},I) & \simeq p^{r-\epsilon_1} \mathds{Z}_{p^r} \oplus p^{r-u_2-1} \mathds{Z}_{p^r}\oplus p^{r-a}\mathds{Z}_{p^r} \oplus \frac{\mathds{Z}_{p^r}}{p^{u_1+1} \mathds{Z}_{p^r}}\oplus p^{r-u_1-1} \mathds{Z}_{p^r}\oplus \frac{\mathds{Z}_{p^r}}{p^a \mathds{Z}_{p^r}}\\
&\simeq\mathds{Z}_{p^{\epsilon_1}}\oplus\mathds{Z}_{p^{u_2+1}}\oplus\mathds{Z}_{p^a}\oplus\mathds{Z}_{p^{u_1+1}}\oplus\mathds{Z}_{p^{u_1+1}}\oplus \mathds{Z}_{p^a},
\end{align*}
where $\epsilon_1\coloneqq \min(\eta_2-\eta_1+a,u_2+1)$.

\item If $u_1\le u_2$ and $u_1<a$, then
\begin{align*}
\ho_{\blackdiamond}^2(H^{d_1d_2}_{\lambda},I) & \simeq p^{r-\epsilon_1} \mathds{Z}_{p^r} \oplus p^{r-u_1-1} \mathds{Z}_{p^r}\oplus \frac{\mathds{Z}_{p^r}}{p^{u_1+1}\mathds{Z}_{p^r}} \oplus \frac{\mathds{Z}_{p^r}}{p^{u_1+1} \mathds{Z}_{p^r}}\oplus p^{r-u_1-1} \mathds{Z}_{p^r}\oplus p^{r-u_1-1} \mathds{Z}_{p^r}\\
& \simeq \mathds{Z}_{p^{\epsilon_1}}\oplus \mathds{Z}_{p^{u_1+1}}\oplus \mathds{Z}_{p^{u_1+1}}\oplus \mathds{Z}_{p^{u_1+1}}\oplus \mathds{Z}_{p^{u_1+1}} \oplus \mathds{Z}_{p^{u_1+1}},
\end{align*}
where $\epsilon_1\coloneqq \min(\eta_2-\eta_1+a,u_1+1)$.

\item If $u_2\le u_1$ and $u_2<a$, then
\begin{align*}
\ho_{\blackdiamond}^2(H^{d_1d_2}_{\lambda},I) & \simeq p^{r-\epsilon_1} \mathds{Z}_{p^r} \oplus p^{r-u_2-1} \mathds{Z}_{p^r}\oplus p^{r-u_2-1} \mathds{Z}_{p^r}\oplus p^{r-u_2-1} \mathds{Z}_{p^r}\oplus\frac{\mathds{Z}_{p^r}}{p^{u_2+1} \mathds{Z}_{p^r}}\oplus \frac{\mathds{Z}_{p^r}}{p^{u_2+1} \mathds{Z}_{p^r}}\\
& \simeq \mathds{Z}_{p^{\epsilon_1}} \oplus \mathds{Z}_{p^{u_2+1}}\oplus \mathds{Z}_{p^{u_2+1}} \oplus \mathds{Z}_{p^{u_2+1}} \oplus \mathds{Z}_{p^{u_2+1}} \oplus \mathds{Z}_{p^{u_2+1}},
\end{align*}
where $\epsilon_1\coloneqq \min(\eta_2-\eta_1+a,u_2+1)$.
\end{enumerate}
\end{corollary}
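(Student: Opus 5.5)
The plan is to exploit the fact that, in the formulas of Proposition~\ref{caso particular 1}, the six variables split into two groups. $T_1$ and $T_2$ involve only $(\gamma_1,\gamma_2)$, while $T_3$ involves only $(\mathfrak{f}_{11},\mathfrak{f}_{12},\mathfrak{f}_{21},\mathfrak{f}_{22})$. Moreover $S$ has zero $\gamma$-component. Hence, by~\eqref{formula00},
$$
\ho_{\blackdiamond}^2(H^{d_1d_2}_{\lambda},I)\simeq K_\gamma\oplus \frac{\ker T_3'}{\ima S'},
$$
where:
\begin{itemize}
\item $K_\gamma\subseteq \mathds{Z}_{p^r}^2$ is the common kernel of the $\gamma$-equations;
\item $T_3'\colon \mathds{Z}_{p^r}^4\to\mathds{Z}_{p^r}^2$ and $S'\colon\mathds{Z}_{p^r}^2\to\mathds{Z}_{p^r}^4$ are given by the matrices
\[
T_3'=\begin{pmatrix}-pk_2&0&pk_1&0\\ -p^a&-pk_2&0&pk_1\end{pmatrix},\qquad S'=\begin{pmatrix}pk_1&0\\0&pk_1\\pk_2&0\\p^a&pk_2\end{pmatrix}.
\]
\end{itemize}
Throughout I replace $pk_i$ by $p^{u_i+1}v_i$ and rescale coordinates by the units $v_i$, so only valuations matter.

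First I compute $K_\gamma$. The $T_1$-equations give $p^{u_1+1}\gamma_1=p^{u_1+1}\gamma_2=p^{u_2+1}\gamma_1=0$ and $p^{\eta_2-\eta_1+a}\gamma_1=-p^{u_2+1}v_2\gamma_2$, since $d_2/d=p^{\eta_2-\eta_1+a}$. For $T_2$, I show that $\sum_{\ell=0}^{d_2/d-1}(1+pk_2)^{d_2-1+\ell d}$ equals $p^{\eta_2-\eta_1+a}$ times a unit. Indeed, it is a sum of $p^{\eta_2-\eta_1+a}$ terms, each $\equiv 1\pmod p$, and the binomial-valuation argument already used for~\eqref{suma de potencias de A} (with $p$ odd) applies. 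So $T_2$ gives $p^{\eta_2-\eta_1+a}\gamma_1=0$, and then $p^{u_2+1}\gamma_2=0$ as well. Therefore $\gamma_1$ ranges over the elements killed by $p^{\epsilon_1}$ with $\epsilon_1=\min(\eta_2-\eta_1+a,u_1+1,u_2+1)$, and $\gamma_2$ over those killed by $p^{\min(u_1,u_2)+1}$. This yields the summands $p^{r-\epsilon_1}\mathds{Z}_{p^r}\oplus p^{r-\min(u_1,u_2)-1}\mathds{Z}_{p^r}$ appearing in every item.

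The main step, and the one I expect to be the obstacle, is computing the homology $\ker T_3'/\ima S'$. One should note that $T_3'S'=0$, which can be checked directly. I will diagonalize simultaneously: an invertible change of basis $P$ of $\mathds{Z}_{p^r}^4$ replaces $T_3'$ by $T_3'P$ and $S'$ by $P^{-1}S'$, as with $\Phi$ in the proof of Proposition~\ref{lambda mid d_1, Yleft=0 and that A_1=A_2=Ide}. The case split is dictated by which of $a$, $u_1+1$, $u_2+1$ is the smallest valuation, since that entry is used as pivot.
\begin{itemize}
\item If $u_1<a$ and $u_1\le u_2$ (item~4), or $u_2<a$ and $u_2\le u_1$ (item~5), the pivot is $p^{u_i+1}$. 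Eliminating against it, the term $p^a$ becomes a multiple of the pivot, and everything decouples into $\ker(p^{u+1})$ and $\operatorname{coker}(p^{u+1})$ summands.
\item If $a\le u_1+1\le u_2+1$ (item~1), I pivot on $p^a$ and $p^{u_1+1}$. This produces one $p^{r-a}$, one $\mathds{Z}_{p^r}/p^a$, and the $u_1+1$ pieces.
\item If $a\le u_2+1\le u_1+1$, pivoting on $p^a$ forces a second-order term $p^{2(u_2+1)-a}$, which comes from $(pk_2)^2/p^a$. This term must then be compared with $p^{u_1+1}$. The comparison is exactly the dichotomy $u_2-a+1\le u_1-u_2$ (item~2, giving $\epsilon_2=2u_2+2-a$) versus the reverse inequality (item~3, giving $u_1+1$).
\end{itemize}

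In each case I would write $P$ explicitly, verify that $T_3'P$ and $P^{-1}S'$ are "diagonal up to columns", and read off the homology as a sum of $\ker(p^e)\simeq\mathds{Z}_{p^e}$ and $\mathds{Z}_{p^r}/p^e\mathds{Z}_{p^r}\simeq\mathds{Z}_{p^e}$ terms. In these identifications I use $e\le r$, which follows from $u_i<r$, $a<\eta_1$ and $r\le\eta_1$; when an exponent exceeds $r$ it is capped, which is consistent with the stated formulas. Combining with $K_\gamma$ gives the five items.
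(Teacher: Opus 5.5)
Your proposal is correct and follows the paper's proof. The paper likewise leaves $\gamma_1,\gamma_2$ untouched, so that $\widetilde{T}_1=T_1$ and $\widetilde{T}_2=T_2$, and in each item applies an explicit invertible change $\phi$ of the $\mathfrak{f}$-coordinates that diagonalizes $T_3$ and $S$ (plus a harmless reparametrization $\psi$ of the source of $S$ in items 2) and 3)), with the pivots $p^a$, $pk_i$ and the second-order term $pk_2h_2=p^{2u_2+2-a}v_2^2$ exactly as you anticipate.

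Your check that the $\gamma_1$-coefficient of $T_2$ is $p^{\eta_2-\eta_1+a}$ times a unit supplies a step the paper uses only tacitly. The one slip is that $a\le r$ does not follow from $a<\eta_1$ and $r\le\eta_1$. It does hold wherever $a$ survives in the answer, namely in items 1)--3), where $a\le u_i+1\le r$, and likewise $\epsilon_2\le u_1+1\le r$ in item 2).
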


\begin{proof} 1)\enspace Clearly $h_1\coloneqq p^{u_1+1-a}v_1$ and $h_2\coloneqq p^{u_2-u_1}v_2v_1^{-1}$ satisfy $pk_1=h_1 p^a$ and $k_2=h_2 k_1$. Define $\phi\colon I^6\to I^6$ by
$$
\phi(\gamma_1,\gamma_2,\mathfrak{f}_{11},\mathfrak{f}_{12},\mathfrak{f}_{21},\mathfrak{f}_{22}) \coloneqq (\gamma_1,\gamma_2, \mathfrak{f}_{11}+h_1 \mathfrak{f}_{22}, \mathfrak{f}_{12},\mathfrak{f}_{21}+h_2 \mathfrak{f}_{11}+h_1 h_2 \mathfrak{f}_{22}, \mathfrak{f}_{22}+ h_2 \mathfrak{f}_{12}).
$$
Note that $\phi$ is invertible and that
$$
\phi^{-1}(\gamma_1,\gamma_2,x_1,\mathfrak{f}_{12},x_2,x_3) \coloneqq (\gamma_1,\gamma_2,x_1-h_1 x_3+h_1 h_2 \mathfrak{f}_{12}, \mathfrak{f}_{12}, x_2-h_2 x_1, x_3-h_2 \mathfrak{f}_{12}).
$$
Set $\wt{T}_i\coloneqq T_i\xcirc \phi$ and $\wt{S}\coloneqq \phi^{-1}\xcirc S$, where the $T_i$'s and $S$ are as in the previous proposition. By~\eqref{formula00}, we have
\begin{equation}\label{homo0}
\ho_{\blackdiamond}^2(H^{d_1d_2}_{\lambda},I) \simeq \frac{\ker T_1\cap\ker T_2\cap\ker T_3 }{\ima S} \simeq \frac{\ker \wt{T}_1\cap\ker \wt{T}_2\cap\ker \wt{T}_3 }{\ima \wt{S}},
\end{equation}
where the last isomorphism is induced by $\phi^{-1}$. A direct computation shows that
\begin{align*}
&\wt{T}_1=T_1,\quad \wt{T}_2=T_2,\quad \wt{T}_3(\gamma_1,\gamma_2,\mathfrak{f}_{11},\mathfrak{f}_{12},\mathfrak{f}_{21},\mathfrak{f}_{22})   =(pk_1 \mathfrak{f}_{21},-p^a \mathfrak{f}_{11})
\shortintertext{and}
&\wt{S}(t_1,t_2)= (0,0,0,pk_1 t_2,0,p^a t_1).
\end{align*}
Combining this with~\eqref{homo0}, we obtain
\begin{equation}\label{homo1}
\ho_{\blackdiamond}^2(H^{d_1d_2}_{\lambda},I) \simeq (I_{pk_1}\cap I_{\frac{d_2}{d}}) \oplus I_{p k_1} \oplus I_{p^a} \oplus \frac{I}{pk_1 I} \oplus I_{pk_1} \oplus \frac{I}{p^a I},
\end{equation}
where $d=d_1/\lambda = p^{\eta_1-a}$. The result follows from this fact using that $I=\mathds{Z}_{p^r}$.

\smallskip

\noindent 2)\enspace Clearly $h_1\coloneqq p^{u_1-u_2}v_1v_2^{-1}$, $h_2\coloneqq p^{u_2-a+1}v_2$ and $h_3\coloneqq p^{u_1-2u_2+a-1}v_1v_2^{-2}$ satisfy $pk_2=h_2 p^a$, $k_1=h_1k_2$ and $h_1=h_2 h_3$. Define $\phi\colon I^6\to I^6$ by and $\psi\colon I^2\to I^2$, by
\begin{align*}
& \phi(\gamma_1,\gamma_2,\mathfrak{f}_{11},\mathfrak{f}_{12},\mathfrak{f}_{21},\mathfrak{f}_{22}) \coloneqq (\gamma_1,\gamma_2, \mathfrak{f}_{11}-h_2 \mathfrak{f}_{12}+h_2 h_3 \mathfrak{f}_{21}+h_1 h_2 \mathfrak{f}_{22}, \mathfrak{f}_{12}-h_3 \mathfrak{f}_{21},\mathfrak{f}_{21}+h_2 \mathfrak{f}_{22},\mathfrak{f}_{22})
\shortintertext{and}
&\psi(t_1,t_2)\coloneqq (t_1-h_2 t_2,t_2).
\end{align*}
Note that $\phi$ and $\psi$ are invertible and that
$$
\phi^{-1}(\gamma_1,\gamma_2,x_1,x_2,x_3,\mathfrak{f}_{22}) = (\gamma_1,\gamma_2,x_1+h_2 x_2 - h_1 h_2 \mathfrak{f}_{22}, x_2+h_3 x_3-h_1 \mathfrak{f}_{22}, x_3-h_2 \mathfrak{f}_{22}, \mathfrak{f}_{22}).
$$
Set $\wt{T}_i\coloneqq T_i\xcirc \phi$ and $\wt{S}\coloneqq \phi^{-1}\xcirc S\xcirc \psi$, where the $T_i$'s and $S$ are as in Proposition~\ref{caso particular 1}. By~\eqref{formula00}, we have
\begin{equation}\label{homo2}
\ho_{\blackdiamond}^2(H^{d_1d_2}_{\lambda},I) \simeq \frac{\ker T_1\cap\ker T_2\cap\ker T_3 }{\ima S} \simeq \frac{\ker \wt{T}_1\cap\ker \wt{T}_2\cap\ker \wt{T}_3 }{\ima \wt{S}},
\end{equation}
where the last isomorphism is induced by $\phi^{-1}$. A direct computation shows that
\begin{align*}
&\wt{T}_1=T_1,\quad \wt{T}_2=T_2,\quad \wt{T}_3(\gamma_1,\gamma_2,\mathfrak{f}_{11},\mathfrak{f}_{12},\mathfrak{f}_{21},\mathfrak{f}_{22})   =(-pk_2 \mathfrak{f}_{11}-p k_2 h_2 \mathfrak{f}_{12},-p^a \mathfrak{f}_{11})
\shortintertext{and}
&\wt{S}(t_1,t_2)=(0,0,0,0,-pk_2h_2t_2,p^at_1).
\end{align*}
Combining this with~\eqref{homo2}, we obtain
\begin{equation}\label{homo3}
\ho_{\blackdiamond}^2(H^{d_1d_2}_{\lambda},I)\simeq (I_{pk_2}\cap I_{\frac{d_2}{d}}) \oplus I_{p k_2} \oplus I_{p^a} \oplus I_{pk_2 h_2} \oplus \frac{I}{pk_2 h_2 I} \oplus \frac{I}{p^a I},
\end{equation}
where $d\coloneqq d_1/\lambda = p^{\eta_1-a}$. The result follows from this fact using that $I=\mathds{Z}_{p^r}$ and $2u_2+2-a\le u_1+1\le r$.

\smallskip

\noindent 3)\enspace Clearly $h_1\coloneqq p^{u_1-u_2}v_1v_2^{-1}$, $h_2\coloneqq p^{u_2-a+1}v_2$ and $h_3\coloneqq p^{2u_2-u_1-a+1}v_1^{-1}v_2^2$ satisfy $pk_2=h_2 p^a$, $k_1=h_1k_2$ and $h_2=h_1 h_3$. Define $\phi\colon I^6\to I^6$ and $\psi\colon I^2\to I^2$, by
\begin{align*}
&\phi(\gamma_1,\gamma_2,\mathfrak{f}_{11},\mathfrak{f}_{12},\mathfrak{f}_{21},\mathfrak{f}_{22}) \coloneqq (\gamma_1,\gamma_2, \mathfrak{f}_{11}+h_1 h_2 \mathfrak{f}_{22}-h_2 \mathfrak{f}_{12}, \mathfrak{f}_{12}, \mathfrak{f}_{21}+h_2 \mathfrak{f}_{22}-h_3 \mathfrak{f}_{12},\mathfrak{f}_{22})
\shortintertext{and}
&\psi(t_1,t_2)\coloneqq (t_1-h_2 t_2,t_2).
\end{align*}
Note that $\phi$ and $\psi$ are invertible and that
$$
\phi^{-1}(\gamma_1,\gamma_2,x_1,\mathfrak{f}_{12},x_2,\mathfrak{f}_{22}) = (\gamma_1,\gamma_2,x_1- h_1 h_2 \mathfrak{f}_{22}+h_2 \mathfrak{f}_{12}, \mathfrak{f}_{12}, x_2-h_2 \mathfrak{f}_{22}+h_3 \mathfrak{f}_{12}, \mathfrak{f}_{22}).
$$
Set $\wt{T}_i\coloneqq T_i\xcirc \phi$ and $\wt{S}\coloneqq \phi^{-1}\xcirc S\xcirc \psi$, where the $T_i$'s and $S$ are as in Proposition~\ref{caso particular 1}. By~\eqref{formula00}, we have
\begin{equation}\label{homo4}
\ho_{\blackdiamond}^2(H^{d_1d_2}_{\lambda},I) \simeq \frac{\ker T_1\cap\ker T_2\cap\ker T_3 }{\ima S} \simeq \frac{\ker \wt{T}_1\cap\ker \wt{T}_2\cap\ker \wt{T}_3 }{\ima \wt{S}},
\end{equation}
where the last isomorphism is induced by $\phi^{-1}$. A direct computation shows that
\begin{align*}
& \wt{T}_1=T_1,\quad \wt{T}_2=T_2,\quad \wt{T}_3(\gamma_1,\gamma_2,\mathfrak{f}_{11},\mathfrak{f}_{12},\mathfrak{f}_{21},\mathfrak{f}_{22})   =(-pk_2 \mathfrak{f}_{11}+p k_1  \mathfrak{f}_{21},-p^a \mathfrak{f}_{11})
\shortintertext{and}
&\wt{S}(t_1,t_2)=(0,0,0,pk_1 t_2,0, p^a t_1).
\end{align*}
Combining this with~\eqref{homo4}, we obtain
\begin{equation}\label{homo5}
\ho_{\blackdiamond}^2(H^{d_1d_2}_{\lambda},I)\simeq (I_{pk_2}\cap I_{\frac{d_2}{d}})\oplus I_{p k_2}\oplus I_{p^a}\oplus \frac{I}{pk_1 I} \oplus I_{pk_1}\oplus \frac{I}{p^a I},
\end{equation}
where $d\coloneqq d_1/\lambda = p^{\eta_1-a}$. The result follows from this fact using that $I=\mathds{Z}_{p^r}$.

\smallskip

\noindent 4)\enspace Clearly $h_1\coloneqq p^{a-u_1-1}v_1^{-1}$ and $h_2\coloneqq p^{u_2-u_1}v_1^{-1}v_2$ satisfy $p^a=pk_1 h_1$ and $k_2=h_2k_1$. Define $\phi\colon I^6\to I^6$ by
$$
\phi(\gamma_1,\gamma_2,\mathfrak{f}_{11},\mathfrak{f}_{12},\mathfrak{f}_{21},\mathfrak{f}_{22}) \coloneqq (\gamma_1,\gamma_2, \mathfrak{f}_{11}, \mathfrak{f}_{12},\mathfrak{f}_{21}+h_2 \mathfrak{f}_{11}, \mathfrak{f}_{22}+h_1 \mathfrak{f}_{11}+h_2 \mathfrak{f}_{12}).
$$
Note that $\phi$ is invertible and that
$$
\phi^{-1}(\gamma_1,\gamma_2,\mathfrak{f}_{11},\mathfrak{f}_{12},x_1,x_2) = (\gamma_1,\gamma_2, \mathfrak{f}_{11}, \mathfrak{f}_{12},x_1-h_2 \mathfrak{f}_{11}, x_2-h_1 \mathfrak{f}_{11}-h_2 \mathfrak{f}_{12}).
$$
Set $\wt{T}_i\coloneqq T_i\xcirc \phi$ and $\wt{S}\coloneqq \phi^{-1}\xcirc S$, where the $T_i$'s and $S$ are as in Proposition~\ref{caso particular 1}. By~\eqref{formula00}, we have
\begin{equation}\label{homo6}
\ho_{\blackdiamond}^2(H^{d_1d_2}_{\lambda},I) \simeq \frac{\ker T_1\cap\ker T_2\cap\ker T_3 }{\ima S} \simeq \frac{\ker \wt{T}_1\cap\ker \wt{T}_2\cap\ker \wt{T}_3 }{\ima \wt{S}},
\end{equation}
where the last isomorphism is induced by $\phi^{-1}$. A direct computation shows that
\begin{align*}
&\wt{T}_1=T_1,\quad \wt{T}_2=T_2,\quad \wt{T}_3(\gamma_1,\gamma_2,\mathfrak{f}_{11},\mathfrak{f}_{12},\mathfrak{f}_{21},\mathfrak{f}_{22}) = (pk_1 \mathfrak{f}_{21},pk_1 \mathfrak{f}_{22})
\shortintertext{and}
&\wt{S}(t_1,t_2)=(0,0,p k_1 t_1,pk_1t_2,0,0).
\end{align*}
Combining this with~\eqref{homo6}, we obtain
\begin{equation}\label{homo7}
\ho_{\blackdiamond}^2(H^{d_1d_2}_{\lambda},I)\simeq (I_{pk_1}\cap I_{\frac{d_2}{d}})\oplus I_{p k_1}\oplus\frac{I}{pk_1 I}\oplus \frac{I}{pk_1 I}\oplus I_{pk_1}\oplus I_{pk_1}.
\end{equation}
The result follows from this fact using that $I=\mathds{Z}_{p^r}$.

\smallskip

\noindent 5)\enspace Clearly $h_1\coloneqq p^{u_1-u_2}v_1v_2^{-1}$ and $h_2\coloneqq p^{a-u_2-1}v_2^{-1}$ satisfy $k_1=h_1k_2$ and $p^a=pk_2 h_2$. Define $\phi\colon I^6\to I^6$ by
$$
\phi(\gamma_1,\gamma_2,\mathfrak{f}_{11},\mathfrak{f}_{12},\mathfrak{f}_{21},\mathfrak{f}_{22}) \coloneqq (\gamma_1,\gamma_2, \mathfrak{f}_{11}+h_1 \mathfrak{f}_{21}, \mathfrak{f}_{12}+h_1 \mathfrak{f}_{22},\mathfrak{f}_{21},\mathfrak{f}_{22}+h_2 \mathfrak{f}_{21}).
$$
Note that $\phi$ is invertible and that
$$
\phi^{-1}(\gamma_1,\gamma_2,x_1,x_2,\mathfrak{f}_{21},x_3) = (\gamma_1,\gamma_2,x_1-h_1 \mathfrak{f}_{21},x_2-h_1 x_3+h_1 h_2 \mathfrak{f}_{21}, \mathfrak{f}_{21},x_3-h_2 \mathfrak{f}_{21}).
$$
Set $\wt{T}_i\coloneqq T_i\xcirc \phi$ and $\wt{S}\coloneqq \phi^{-1}\xcirc S$, where the $T_i$'s and $S$ are as in Proposition~\ref{caso particular 1}. By~\eqref{formula00}, we have
\begin{equation}\label{homo8}
\ho_{\blackdiamond}^2(H^{d_1d_2}_{\lambda},I) \simeq \frac{\ker T_1\cap\ker T_2\cap\ker T_3 }{\ima S} \simeq \frac{\ker \wt{T}_1\cap\ker \wt{T}_2\cap\ker \wt{T}_3 }{\ima \wt{S}},
\end{equation}
where the last isomorphism is induced by $\phi^{-1}$. A direct computation shows that
\begin{align*}
&\wt{T}_1=T_1,\quad \wt{T}_2=T_2,\quad \wt{T}_3(\gamma_1,\gamma_2,\mathfrak{f}_{11},\mathfrak{f}_{12},\mathfrak{f}_{21},\mathfrak{f}_{22})   =(-pk_2 \mathfrak{f}_{11},-p^a \mathfrak{f}_{11}-pk_2 \mathfrak{f}_{12})
\shortintertext{and}
&\wt{S}(t_1,t_2)=(0,0,0,0,pk_2t_1,pk_2 t_2).
\end{align*}
Combining this with~\eqref{homo8}, we obtain
\begin{equation}\label{homo9}
\ho_{\blackdiamond}^2(H^{d_1d_2}_{\lambda},I)\simeq (I_{pk_2}\cap I_{\frac{d_2}{d}})\oplus I_{p k_2}\oplus I_{pk_2}\oplus I_{pk_2}\oplus\frac{I}{pk_2 I}\oplus\frac{I}{pk_2 I}.
\end{equation}
The result follows from this fact using that $I=\mathds{Z}_{p^r}$.
\end{proof}

\begin{remark}\label{remark2} Assume that we are under the hypothesis of item~1) of Corollary~\ref{primer corolario}. The map $\phi\colon I^6\to I^6$, induces an~iso\-mor\-phism
$$
\ov{\phi}\colon (I_{pk_1}\cap I_{\frac{d_2}{d}}) \oplus I_{p k_1} \oplus I_{p^a} \oplus \frac{I}{pk_1 I} \oplus I_{pk_1} \oplus \frac{I}{p^a I} \longrightarrow \frac{\ker T_1\cap \ker T_2\cap \ker T_3}{\ima S},
$$
where $T_1$, $T_2$, $T_3$ and $S$ are as in Proposition~\ref{caso particular 1}. Let $Z\subseteq (I_{pk_1}\cap I_{\frac{d_2}{d}}) \oplus I_{p k_1} \oplus I_{p^a} \oplus I \oplus I_{pk_1} \oplus I$ be such that the canonical map~$Z\to (I_{pk_1}\cap I_{\frac{d_2}{d}}) \oplus I_{p k_1} \oplus I_{p^a} \oplus \frac{I}{pk_1 I} \oplus I_{pk_1} \oplus \frac{I}{p^a I}$ is a bi\-jec\-tion and let $T$ be as in~\eqref{formula0}. Notice that there is an obvious choice for $Z$ and that
$$
T(\gamma_1,\gamma_2,\mathfrak{f}_{11},\mathfrak{f}_{12},\mathfrak{f}_{21},\mathfrak{f}_{22}) = (\alpha_{\gamma_1,\gamma_2}, f_{\mathfrak{f}_{11},\mathfrak{f}_{12},\mathfrak{f}_{21},\mathfrak{f}_{22},\gamma_1, \gamma_2}).
$$
Clearly $T(\phi(Z))$ is a complete family of representatives of~$2$-cocycles modulo coboundaries in $\wh{\mathcal{C}}_N^*(H^{d_1d_2}_{\lambda},I)$. Similar observations are pertinent for the items~2), 3), 4) and~5). We leave the task of formulating them to the reader. From the preceding discussion, it becomes clear that, in all these cases, computing $T$ is indispensable for constructing the extensions. Since $\alpha_{\gamma_1,\gamma_2}$ is already known from~\eqref{alpha_{gamma_1,gamma_2}}, the determination of $T$ reduces to establishing the following proposition.
\end{remark}

\begin{proposition}\label{1calculo de f} Assume that we are under the hypothesis of Proposition~\ref{caso particular 1}, and let $T_1$, $T_2$ and $T_3$ be~as~in~Pro\-position~\ref{caso lambda no nulo  pero Yleft cero}. For $(\gamma_1,\gamma_2,\mathfrak{f}_{11}, \mathfrak{f}_{12},\mathfrak{f}_{21}, \mathfrak{f}_{22})\in \ker T_1\cap \ker T_2\cap \ker T_3$, let $f\coloneqq f_{\mathfrak{f}_{11},\mathfrak{f}_{12},\mathfrak{f}_{21},\mathfrak{f}_{22}, \gamma_1, \gamma_2}$. We have
\begin{align*}
f(a_1^{\times \ell_1}\times a_2^{\times \ell_2},(u,v)) & = (1+pk_2)^{\ell_2} \sum_{j=0}^{\ell_1-1} (1+pk_1)^j (u \mathfrak{f}_{11}+ v\mathfrak{f}_{12}) + \sum_{j=0}^{\ell_2-1} (1+pk_2)^{\ell_2-j-1}(u\!-\!\lambda jv) \mathfrak{f}_{21}\\
& + v \sum_{j=0}^{\ell_2-1} (1+pk_2)^j \mathfrak{f}_{22} + \left(v-\left\lceil \frac{v}{p^{\eta_1-a}} \right\rceil\right) \sum_{j=0}^{\ell_2-1} (1+pk_2)^j \gamma_1 + C^{\ell_2}_{uv} \gamma_1,
\end{align*}
where $0\le\ell_1,u<p^{\eta_1}$, $0\le\ell_2,v<p^{\eta_2}$ and $C^{\ell_2}_{uv} = \#\{0\le j < \ell_2: \mathrm{r}_{p^{\eta_1}}(u-\lambda jv)+\mathrm{r}_{p^{\eta_1}}(-\lambda_v)\ge p^{\eta_1}\}$.
\end{proposition}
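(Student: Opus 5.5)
The proof will follow the same route as Proposition~\ref{calculo de f}, but now $A_i$ is multiplication by $1+pk_i$ instead of the identity. In the computations from \cite{GGV2} this means the action terms $a_i\blackdiamond(\ )$ no longer disappear.

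The first step is to express $f$ on a general first argument in terms of its values on the generators. By~\eqref{formula3}, $a_1^{\times j}$ acts trivially on $(u,v)$, while $a_2^{\times j}\cdot(u,v)=(u-\lambda jv,v)$. I will use \cite{GGV2}*{Proposition~2.37} in its twisted form,
\[
f(a_1^{\times\ell_1}\times a_2^{\times\ell_2},h)=a_2^{\times\ell_2}\blackdiamond f(a_1^{\times\ell_1},a_2^{\times\ell_2}\cdot h)+f(a_2^{\times\ell_2},h).
\]
This is the form that produces the factor $(1+pk_2)^{\ell_2}$ in front of the $\mathfrak{f}_{11},\mathfrak{f}_{12}$ terms; I must check the order of the factors against the statement in \cite{GGV2}. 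Next I will apply \cite{GGV2}*{Proposition~2.12} to unfold powers:
\[
f(a_i^{\times \ell},h)=\sum_{j=0}^{\ell-1}a_i^{\times(\ell-1-j)}\blackdiamond f\bigl(a_i,a_i^{\times j}\cdot h\bigr).
\]
For $i=1$ this gives $\sum_j(1+pk_1)^{j}f(a_1,a_2^{\times\ell_2}\cdot(u,v))$. For $i=2$ it gives $\sum_j(1+pk_2)^{\ell_2-1-j}f(a_2,(u-\lambda jv,v))$.

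The second step evaluates the values on generators with \cite{GGV2}*{Identity~(2.32)}, exactly as in the proof of Proposition~\ref{calculo de f}. Since this formula involves only $\mathfrak{f}_{ij}$ and $\alpha_{\gamma_1,\gamma_2}$, with no action, the earlier results carry over unchanged. First, $f(a_1,(u',v))=u'\mathfrak{f}_{11}+v\mathfrak{f}_{12}$ with $u'$ the first coordinate. Since $\mathfrak{f}_{11}$ is killed by $p^a=\lambda$ in the kernel of $T_3$ (after the coordinate changes), I expect the shift $u\mapsto u-\lambda\ell_2 v$ to be harmless; otherwise the $\mathfrak{f}_{11}$ term has to be read modulo $d_1$ as in the paper. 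Second,
\[
f(a_2,(u-\lambda jv,v))=\mathrm{r}_{d_1}(u-\lambda jv)\mathfrak{f}_{21}+v\mathfrak{f}_{22}+\Bigl(v-\bigl\lceil\tfrac{v}{d}\bigr\rceil\Bigr)\gamma_1+\alpha_1(\gamma_1)(u-\lambda jv,-\lambda v),
\]
using~\eqref{formula4}.

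The third step substitutes these values and collects terms. The constant-in-$j$ terms $v\mathfrak{f}_{22}$ and $(v-\lceil v/d\rceil)\gamma_1$ pick up $\sum_j(1+pk_2)^{\ell_2-1-j}$, which I reindex as $\sum_j(1+pk_2)^j$. The $\mathfrak{f}_{21}$ term keeps its weight $(1+pk_2)^{\ell_2-1-j}$, and $\mathrm{r}_{d_1}(u-\lambda jv)$ may be replaced by $u-\lambda jv$ once $d_1\mathfrak{f}_{21}=0$, which follows from $I=\mathds{Z}_{p^r}$ with $r\le\eta_1$. The last $\alpha_1$ terms are $\gamma_1$ exactly for the $j$ counted in $C^{\ell_2}_{uv}$. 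They carry weights $(1+pk_2)^{\ell_2-1-j}$, which the statement replaces by $1$.

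The main obstacle is justifying that replacement, together with the order of the twisting factors. For the weights I would use that $\gamma_1$ lies in $\ker T_1$, so $pk_1\gamma_1=pk_2\gamma_1=0$. Hence $(1+pk_2)^m\gamma_1=\gamma_1$, the $C^{\ell_2}_{uv}\gamma_1$ term needs no weights, and the power sum in front of the $(v-\lceil v/d\rceil)\gamma_1$ term could even be simplified. I will keep that term in the form stated for uniformity.
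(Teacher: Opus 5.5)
There is a genuine gap in your first step, and the check you postponed (``the order of the factors'') is exactly where it fails. With $\Yleft=0$, condition~\eqref{condicion equivalente compatibilidad suma cdot con hmassimp3 caso simple} together with $h+h'=h\times(h\cdot h')$ gives
\[
f(k\times k',x)=k'\blackdiamond f(k,x)+f(k',k\cdot x).
\]
So the summand acted on by $k'$ keeps its argument $x$, and the twist by $k$ falls on the other summand.

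With $k=a_1^{\times\ell_1}$, $k'=a_2^{\times\ell_2}$ and $a_1^{\times\ell_1}\cdot x=x$, this reads $a_2^{\times\ell_2}\blackdiamond f(a_1^{\times\ell_1},(u,v))+f(a_2^{\times\ell_2},(u,v))$, which is what the paper uses. With the roles exchanged it reads $a_1^{\times\ell_1}\blackdiamond f(a_2^{\times\ell_2},x)+f(a_1^{\times\ell_1},a_2^{\times\ell_2}\cdot x)$. Your formula lets $a_2^{\times\ell_2}$ both act on and twist the $a_1$-summand, which is a hybrid of the two. It differs from the true value by $-\lambda\ell_2 v(1+pk_2)^{\ell_2}\sum_{j=0}^{\ell_1-1}(1+pk_1)^j\mathfrak{f}_{11}$.

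Your way of disposing of this term does not work. On $\ker T_3$ you only have $p^a\mathfrak{f}_{11}=pk_1\mathfrak{f}_{22}-pk_2\mathfrak{f}_{12}$, not $p^a\mathfrak{f}_{11}=0$. The coordinate changes of Corollary~\ref{primer corolario} produce a \emph{new} variable killed by $p^a$, whereas the proposition concerns the original $\mathfrak{f}_{11}$. Reading the term modulo $d_1$ does not absorb a shift by $\lambda\ell_2 v$ either.

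Concretely, take $p=3$, $\eta_1=\eta_2=r=3$, $a=1$, $k_1=1$, $k_2=0$, $\mathfrak{f}_{11}=\mathfrak{f}_{22}=1$ and all other coordinates $0$. This tuple lies in $\ker T_1\cap\ker T_2\cap\ker T_3$. Both correct decompositions give $f(a_1\times a_2,(0,1))=1$ (as $0+1$ and $4\cdot 1+24$, respectively). Yours gives $f(a_1,(-3,1))+f(a_2,(0,1))=24+1=25$ in $\mathds{Z}_{27}$.

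Once you replace your first identity by the untwisted one, the rest of your plan is sound. That includes the unfolding of powers of $a_i$, the values on generators, the use of $d_1\mathfrak{f}_{21}=0$, and your remark that $pk_2\gamma_1=0$ (from $\ker T_1$). That remark lets you drop the weights $(1+pk_2)^{\ell_2-1-j}$ on the $C^{\ell_2}_{uv}\gamma_1$ term, a step the paper leaves implicit.
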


\begin{proof} Arguing as in the proof of Proposition~\ref{calculo de f}, we obtain
$$
f(a_1^{\times \ell_1}\times a_2^{\times \ell_2},(u,v)) = a_2^{\times \ell_2}\blackdiamond f(a_1^{\times \ell_1},(u,v)) + f(a_2^{\times \ell_2},(u,v)).
$$
Moreover, by~\cite{GGV2}*{Proposition~2.12} and~\eqref{formula3},
\begin{align*}
& f(a_1^{\times \ell_1},(u,v)) = \sum_{j=0}^{\ell_1-1} a_1^{\times (\ell_1-j-1)} \blackdiamond f(a_1,(u,v)) = \sum_{j=0}^{\ell_1-1} (1+pk_1)^{\ell_1-j-1} f(a_1,(u,v))
\shortintertext{and}
& f(a_2^{\times \ell_2},(u,v)) = \sum_{j=0}^{\ell_2-1} a_2^{\times (\ell_2-j-1)} \blackdiamond f(a_1,a_2^{\times j}\cdot (u,v)) = \sum_{j=0}^{\ell_2-1} (1+pk_2)^{\ell_2-j-1} f(a_1,(u-\lambda jv,v)).
\end{align*}
Furthermore, by~\eqref{formula3} and~\cite{GGV2}*{Identity~(2.32)},
$$
f(a_1,(u,v)) = u\mathfrak{f}_{11}+v\mathfrak{f}_{12}+\sum_{h=1}^{u-1} \alpha_{\gamma_1,\gamma_2}(e_1,he_1)+\sum_{h=1}^{v-1} \alpha_{\gamma_1,\gamma_2}(e_2,he_2) + \alpha_{\gamma_1,\gamma_2}(ue_1,ve_2)= u \mathfrak{f}_{11}+ v\mathfrak{f}_{12}
$$
and
\begin{align*}
f(a_2,(u-\lambda jv,v)) & = \mathrm{r}_{p^{\eta_1}}(u-\lambda jv)\mathfrak{f}_{21}+v\mathfrak{f}_{22} + \sum_{h=1}^{\mathrm{r}_{p^{\eta_1}}(u-\lambda jv)-1} \alpha_{\gamma_1,\gamma_2}(e_1,he_1)\\
& + \sum_{h=1}^{v-1} \alpha_{\gamma_1,\gamma_2}(-\lambda e_1+e_2,-h\lambda e_1+he_2) + \alpha_{\gamma_1,\gamma_2}((u-\lambda jv)e_1,-\lambda ve_1+ve_2)\\
& = (u-\lambda jv)\mathfrak{f}_{21}+v\mathfrak{f}_{22} + \sum_{h=1}^{v-1} \alpha_1(\gamma_1)(-\lambda,-h\lambda) + \alpha_1(\gamma_1)(u-\lambda jv,-\lambda v),
\end{align*}
where in the last equality we have used that $\mathrm{r}_{p^{\eta_1}}(u-\lambda jv)\mathfrak{f}_{21} = (u-\lambda jv)\mathfrak{f}_{21}$, because $r\le \eta_1$. Using these facts and taking into account that, by~\eqref{formula4},
$$
\sum_{h=1}^{v-1} \alpha_1(\gamma_1)(-\lambda,-h\lambda) = \left(v-\left\lceil \frac{v}{p^{\eta_1-a}} \right\rceil\right)\gamma_1,
$$
we obtain the formula in the statement.
\end{proof}

\begin{remark} For the computation of $f$ can be useful note that
$$
\sum_{j=0}^{\ell_i-1} (1+pk_i)^j = \sum_{j=0}^{\ell_i-1} \sum_{\ell=0}^j\binom{j}{\ell} p^{\ell}k^{\ell} = \sum_{\ell=0}^{\ell_i-1} \sum_{j=\ell}^{\ell_i-1} \binom{j}{\ell} p^{\ell}k^{\ell} = \sum_{\ell=0}^{\ell_i-1} \binom{\ell_i}{\ell+1} p^{\ell}k^{\ell},
$$
for $i\in\{1,2\}$.
\end{remark}

\subsubsection[The case \texorpdfstring{$\lambda=0$}{lambda=0}]{The case \texorpdfstring{$\pmb{\lambda=0}$}{lambda=0}}
In this subsubsection we assume that $I$ is an arbitrary abelian group and that $\lambda=0$, but not that $\Yleft= 0$. Recall from the beginning of Section~\ref{Seccion 4}, that $s=n=2$ and that we can take $a_1=e_1\coloneqq (1,0)$ and $a_2=e_2\coloneqq (0,1)$. Moreover, by~\eqref{def de punto}, the linear cycle $H$ is trivial. We take~$A_1$, $A_2$, $B_1$ and $B_2$ as in Proposition~\ref{caracterizacion primer caso}.

\begin{proposition}\label{prop1} Let $T_1$, $T_2$, $T_3$ and $S$ be as in~\eqref{T_1}--\eqref{S}. We have:
\begin{align*}
& T_1(\gamma_1,\gamma_2,\mathfrak{f}_{11},\mathfrak{f}_{12},\mathfrak{f}_{21},\mathfrak{f}_{22}) = \left(d_1 \mathfrak{f}_{11}+C_1\gamma_1,d_2 \mathfrak{f}_{12}+C_1\gamma_2,d_1 \mathfrak{f}_{21}+C_2\gamma_1,d_2 \mathfrak{f}_{22}+C_2\gamma_2 \right), \\
& T_2(\gamma_1,\gamma_2,\mathfrak{f}_{11},\mathfrak{f}_{12},\mathfrak{f}_{21},\mathfrak{f}_{22}) = \left(N_1\mathfrak{f}_{11}+ M_{11},N_1\mathfrak{f}_{12}+ M_{12}, N_2\mathfrak{f}_{21}+ M_{21},N_2\mathfrak{f}_{22}+ M_{22}\right),\\
& T_3(\gamma_1,\gamma_2,\mathfrak{f}_{11},\mathfrak{f}_{12},\mathfrak{f}_{21},\mathfrak{f}_{22}) = \left(C_2 \mathfrak{f}_{11}-C_1 \mathfrak{f}_{21} + B_1 A_1\mathfrak{f}_{21}-B_1 A_2\mathfrak{f}_{12}, C_2 \mathfrak{f}_{12}-C_1 \mathfrak{f}_{22} + B_2 A_1\mathfrak{f}_{21}-B_2 A_2\mathfrak{f}_{12}\right)
\shortintertext{and}
& S(t_1,t_2)  = \left(d_1 t_1, d_2t_2, -C_1t_1+B_1A_1t_1, -C_1t_2+B_2A_2t_1, -C_2t_1+B_1A_1t_2, -C_2t_2+B_2A_2t_2 \right),
\end{align*}
where
$$
C_i\coloneqq \Ide - A_i, \quad N_i\coloneqq \sum_{\ell=0}^{d_i-1} A_i^{\ell}\quad\text{and}\quad M_{ij}\coloneqq B_j \sum_{\ell=1}^{d_i-1} \ell  A_i^{\ell} \mathfrak{f}_{ii} - B_j \gamma_i.
$$
\end{proposition}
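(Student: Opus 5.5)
The plan is to specialize the general formulas \eqref{T_1}--\eqref{S} to $\lambda=0$, exactly as in the proof of Proposition~\ref{caso lambda no nulo  pero Yleft cero}, but now keeping track of the terms involving $\Yleft$. When $\lambda=0$, formula~\eqref{a_j cdot e_i} gives $a_i\cdot e_j=e_j$ for all $i,j$. Formula~\eqref{potencia} gives $a_i^{\times\ell}=\ell a_i=\ell e_i$, and hence $a_i^{\times\ell}\cdot e_j=e_j$. In addition, $\mathcal{Y}_{22}=0$, and by Proposition~\ref{caracterizacion primer caso} we have $y\Yleft(a,a')=aB_1y+a'B_2y$.

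\emph{$T_1$ and $S$.} For $T_1$, each $\mathcal{U}_{ij}=\sum_{k=1}^{d_j-1}\alpha_{\gamma_1,\gamma_2}(e_j,ke_j)-A_i\gamma_j$. Only $k=d_j-1$ contributes, so the sum is $\gamma_j$ and $\mathcal{U}_{ij}=C_i\gamma_j$. This gives the stated $T_1$. For $S$, I will compute $\mathcal{X}_{ij}=-t_j+A_it_j+(e_j\blackdiamond t_i)\Yleft e_j=-C_it_j+B_jA_jt_i$. Checking the indices against the claimed formula, the $(1,2)$ entry is $-C_1t_2+B_2A_2t_1$, which matches. The $(2,1)$ entry is $-C_2t_1+B_1A_1t_2$, which also matches.

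\emph{$T_2$.} Here $\mathcal{V}_{ij}=\sum_{\ell}A_i^{d_i-1-\ell}\mathfrak{f}_{ij}=N_i\mathfrak{f}_{ij}$. For $\mathcal{W}_{ij}$ we have $\bar a_{i\ell}=a_i^{\times(\ell+1)}-a_i=\ell e_i$. By \cite{GGV2}*{Identity~(2.32)} (or~(2.17)--(2.18)), $f_1(a_i,\ell e_i)=\ell\mathfrak{f}_{ii}+\sum_{h=1}^{\ell-1}\alpha(e_i,he_i)=\ell\mathfrak{f}_{ii}$, since $\ell<d_i$. Likewise, $\alpha_{\gamma_1,\gamma_2}(e_i,\ell e_i)$ equals $\gamma_i$ exactly when $\ell=d_i-1$. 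Therefore
$\mathcal{W}_{ij}=\sum_{\ell=1}^{d_i-1}B_jA_i^{\ell}\ell\mathfrak{f}_{ii}-B_jA_i^{d_i}\gamma_i$, and $A_i^{d_i}=\Ide$. I must be careful to apply the right action to the correct argument and to check that $\blackdiamond$ followed by $\Yleft e_j$ gives $B_jA_i^\ell$ in that order. This yields $M_{ij}$.

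\emph{$T_3$.} This is the step I expect to be the main obstacle. The proof of Proposition~\ref{caso lambda no nulo  pero Yleft cero} used \cite{GGV2}*{Remark~2.36}, which presumably assumes $\Yleft=0$. Here $T_3$ must instead be computed from the definition $F_1(a_2,a_1,e_k)-F_1(a_1,a_2,e_k)$ in \cite{GGV2}*{Definition~2.25}. The general $F_1$ contains the terms seen above, namely $f_1(a_1,a_2\cdot e_k)+a_1\blackdiamond f_1(a_2,e_k)-\cdots$, together with correction terms of the form $a\blackdiamond f_1(\cdot,\cdot)\Yleft e_k$. With $\lambda=0$ all the $\cdot$-twists are trivial, so the first part gives $C_2\mathfrak{f}_{1k}-C_1\mathfrak{f}_{2k}$. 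The $\Yleft$-corrections should give $B_kA_1\mathfrak{f}_{21}-B_kA_2\mathfrak{f}_{12}$; this comes from $f_1(a_i,a_j)=\mathfrak{f}_{ij}$ twisted by $a_j$ and then right-acted by $e_k$. I would expand the definition of $F_1$ explicitly and evaluate each summand using $a_i=e_i$ and bilinearity of $\Yleft$, keeping track of signs. As a consistency check, I would verify that $T_3\circ S=0$ holds using the relations of Proposition~\ref{caracterizacion primer caso}, namely $[B_1,A_2]=B_1A_2B_2$ and the others.
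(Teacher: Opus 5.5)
Your plan is the paper's proof. It specializes to $\lambda=0$ (trivial $\cdot$, $\times=+$, $a_i=e_i$) and obtains $\mathcal{U}_{ij}=C_i\gamma_j$, $\mathcal{V}_{ij}=N_i\mathfrak{f}_{ij}$, $\mathcal{W}_{ij}=M_{ij}$ and $\mathcal{X}_{ij}=-C_it_j+B_jA_jt_i$ exactly as you do. For $T_3$ it just records $F_1(a_i,a_j,e_k)=\mathfrak{f}_{jk}+A_j\mathfrak{f}_{ik}+B_kA_j\mathfrak{f}_{ij}$ for $i\ne j$, which is the correction $(a_j\blackdiamond f_1(a_i,a_j))\Yleft e_k$ you predicted; any $\alpha_{\gamma_1,\gamma_2}(e_i,e_j)$ term vanishes for $i\neq j$.

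One warning: the consistency check $T_3\circ S=0$ you propose will fail with the formulas as displayed.

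\emph{A counterexample.} Take $d_1=3$, $d_2=6$, $I=\mathds{Z}_9$, $A_1=\Ide$, $A_2=-\Ide$, $B_1=0$ and $B_2=3\Ide$. Everything commutes, $A_1^3=A_2^6=\Ide$, $(A_2-A_2B_2)^6=2^6=\Ide$ and $B_2A_2B_2=-9=0$, so the hypotheses of Proposition~\ref{caracterizacion primer caso} hold with $\lambda=0$. The displayed $S$ gives $S(1,0)=(3,0,0,-3,-2,0)$. Here the second coordinate of $T_3$ is $5\mathfrak{f}_{12}+3\mathfrak{f}_{21}$, which evaluates to $-21=6\neq 0$.

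\emph{The cause.} It is not your reasoning but the $\Yleft$-term in the definition of $\mathcal{X}_{ij}$ that you and the paper both use. By condition~\eqref{tercera condicion caso I trivial}, i.e.\ the $\wh{C}^{11}_N$-component $\varphi(h\cdot h')-h\blackdiamond\varphi(h')-(h\blackdiamond\varphi(h))\Yleft h\cdot h'$ of $(\partial+D)\varphi$, the coboundary of $\varphi$ with $\varphi(e_j)=t_j$ has $\mathfrak{f}_{ij}$-component
$$-C_it_j+(a_i\blackdiamond t_i)\Yleft e_j=-C_it_j+B_jA_it_i,$$
not $-C_it_j+B_jA_jt_i$. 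So the $(1,2)$ and $(2,1)$ entries of $S$ should be $-C_1t_2+B_2A_1t_1$ and $-C_2t_1+B_1A_2t_2$.

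\emph{After the correction.} Both $T_3\circ S=0$ and $T_2\circ S=0$ follow from $[B_1,A_1]=B_1A_1B_1$, $[B_1,A_2]=B_1A_2B_2$, $[B_2,A_2]=B_2A_2B_2$ and $[B_2,A_1]=B_2A_1B_1$. The last relation is forced by $[A_1-A_1B_1,A_2-A_2B_2]=0$ together with the others. The discrepancy vanishes when $A_1=A_2$, in particular for $A_1=A_2=\Ide$ as used afterwards, so later results are unaffected. Still, your check would expose this index slip rather than confirm the statement as printed.
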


\begin{proof} Let $F_1(a_i,a_j,e_k)$, $\mathcal{U}_{ij}$, $\mathcal{V}_{ij}$, $\mathcal{X}_{ij}$ and $\mathcal{Y}_{22}$ as at the beginning of Subsection~\ref{Seccion 4}. By definition $\mathcal{Y}_{22}=0$~and~a~di\-rect computation using that $\cdot$ is trivial, $\times = +$ and \cite{GGV2}*{Definition~2.25}, shows that
$$
\mathcal{U}_{ij}=C_i\gamma_j,\quad \mathcal{V}_{ij}=N_{ij},\quad \mathcal{X}_{ij}= -C_it_j + B_jA_jt_i\quad\text{and}\quad F_1(a_i,a_j,e_k)=\mathfrak{f}_{jk} + A_j\mathfrak{f}_{ik} + B_kA_j \mathfrak{f}_{ij},
$$	
where in $F_1$ we assume that $i\ne j$. Moreover, using that $\cdot$ is trivial, $\times = +$, $\bar{a}_{i\ell}=\ell a_i$ and \cite{GGV2}*{Identities~(2.17) and~(2.18)},~we~ob\-tain that
$$
f_1(a_i,\bar{a}_{i\ell})=\ell \mathfrak{f}_{ii}\quad\text{and}\quad \sum_{\ell=1}^{d_i-1} a_i^{\times(\ell+1)}\blackdiamond \alpha_{\gamma_1,\gamma_2}(a_i,\bar{a}_{i\ell})=\gamma_i,
$$	
which implies that $\mathcal{W}_{ij}=B_j \sum_{\ell=1}^{d_i-1} \ell  A_i^{\ell} \mathfrak{f}_{ii} - B_j \gamma_i= M_{ij}$. The result follows
immediately from these facts.
\end{proof}

\noindent If we additionally assume that $A_1=A_2=\Ide$, then
\begin{align*}
&T_1(\gamma_1,\gamma_2,\mathfrak{f}_{11},\mathfrak{f}_{12},\mathfrak{f}_{21},\mathfrak{f}_{22}) =(d_1 \mathfrak{f}_{11},d_2 \mathfrak{f}_{12},d_1 \mathfrak{f}_{21},d_2 \mathfrak{f}_{22})  \\
& T_2(\gamma_1,\gamma_2,\mathfrak{f}_{11},\mathfrak{f}_{12},\mathfrak{f}_{21},\mathfrak{f}_{22}) =\biggl(d_1 \mathfrak{f}_{11}+ \binom{d_1}{2} B_1 \mathfrak{f}_{11} - B_1 \gamma_1, d_1 \mathfrak{f}_{12}+ \binom{d_1}{2} B_2 \mathfrak{f}_{11} - B_2 \gamma_1,\\
& \phantom{T_2(\gamma_1,\gamma_2,\mathfrak{f}_{11},\mathfrak{f}_{12},\mathfrak{f}_{21},\mathfrak{f}_{22}) =\biggl(} d_2 \mathfrak{f}_{21}+ \binom{d_2}{2} B_1 \mathfrak{f}_{22} - B_1 \gamma_2, d_2 \mathfrak{f}_{22}+ \binom{d_2}{2} B_2 \mathfrak{f}_{22} - B_2 \gamma_2\biggr),\\
&T_3(\gamma_1,\gamma_2,\mathfrak{f}_{11},\mathfrak{f}_{12},\mathfrak{f}_{21},\mathfrak{f}_{22})   =(B_1 \mathfrak{f}_{21}-B_1 \mathfrak{f}_{12},B_2 \mathfrak{f}_{21}-B_2 \mathfrak{f}_{12})
\shortintertext{and}
&S(t_1,t_2)= (d_1t_1,d_2t_2, B_1t_1, B_2t_1, B_1t_2, B_2t_2).
\end{align*}

\smallskip

Since $d_1B_1=d_2B_2=0$ (see Proposition~\ref{caracterizacion primer caso}), the condition $\binom{d_i}{2} B_j=0$ for $i,j=1,2$, required in the following proposition, is satisfied in many cases, for example when $d_2$ is odd and~$d_1 B_2=0$.

\begin{proposition}\label{prop2} If $A_i=\Ide$ and $\binom{d_i}{2} B_j=0$ for $i,j=1,2$, then
\begin{align*}
&T_1(\gamma_1,\gamma_2,\mathfrak{f}_{11},\mathfrak{f}_{12},\mathfrak{f}_{21},\mathfrak{f}_{22}) =(d_1 \mathfrak{f}_{11},d_2 \mathfrak{f}_{12},d_1 \mathfrak{f}_{21},d_2 \mathfrak{f}_{22}),\\
&T_2(\gamma_1,\gamma_2,\mathfrak{f}_{11},\mathfrak{f}_{12},\mathfrak{f}_{21},\mathfrak{f}_{22}) =(d_1 \mathfrak{f}_{11} - B_1 \gamma_1, d_1 \mathfrak{f}_{12}- B_2 \gamma_1, d_2 \mathfrak{f}_{21} - B_1 \gamma_2, d_2 \mathfrak{f}_{22} - B_2 \gamma_2),\\
&T_3(\gamma_1,\gamma_2,\mathfrak{f}_{11},\mathfrak{f}_{12},\mathfrak{f}_{21},\mathfrak{f}_{22}) =(B_1 \mathfrak{f}_{21}-B_1 \mathfrak{f}_{12},B_2 \mathfrak{f}_{21}-B_2 \mathfrak{f}_{12})
\shortintertext{and}
&S(t_1,t_2)= (d_1 t_1,d_2 t_2, B_1t_1, B_2t_1, B_1t_2, B_2t_2).
\end{align*}
\end{proposition}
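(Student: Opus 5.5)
The plan is to specialize the displayed formulas that follow Proposition~\ref{prop1}, which already treat the case $\lambda=0$ with $A_1=A_2=\Ide$. In that case $C_1=C_2=0$, $N_i=d_i\Ide$, and $M_{ij}=\binom{d_i}{2}B_j\mathfrak{f}_{ii}-B_j\gamma_i$, because $\sum_{\ell=1}^{d_i-1}\ell=\binom{d_i}{2}$. Given these formulas, the new hypothesis $\binom{d_i}{2}B_j=0$ for $i,j\in\{1,2\}$ should kill every term in $T_2$ that contains a binomial coefficient. The formulas for $T_1$, $T_3$ and $S$ do not involve binomial coefficients, so they are copied unchanged.

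The steps are as follows.
\begin{enumerate}
\item Set $A_1=A_2=\Ide$ in Proposition~\ref{prop1}. Then $C_i=0$, so $T_1$ reduces to $(d_1\mathfrak{f}_{11},d_2\mathfrak{f}_{12},d_1\mathfrak{f}_{21},d_2\mathfrak{f}_{22})$.
\item In $T_3$, the terms $C_2\mathfrak{f}_{11}$ and $C_1\mathfrak{f}_{21}$ vanish, leaving $(B_1\mathfrak{f}_{21}-B_1\mathfrak{f}_{12},\,B_2\mathfrak{f}_{21}-B_2\mathfrak{f}_{12})$.
\item In $S$, the terms $-C_it_j$ vanish, leaving $(d_1t_1,d_2t_2,B_1t_1,B_2t_1,B_1t_2,B_2t_2)$.
\item For $T_2$, use the specialized formula displayed after Proposition~\ref{prop1} and drop each summand $\binom{d_i}{2}B_j\mathfrak{f}_{ii}$ by the hypothesis. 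The four coordinates become $d_1\mathfrak{f}_{11}-B_1\gamma_1$, $d_1\mathfrak{f}_{12}-B_2\gamma_1$, $d_2\mathfrak{f}_{21}-B_1\gamma_2$ and $d_2\mathfrak{f}_{22}-B_2\gamma_2$.
\end{enumerate}

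The only point needing care is the index bookkeeping in $M_{ij}$ in the $T_2$ step. Each coordinate of $T_2$ must pair $N_i\mathfrak{f}_{ij}$ with $M_{ij}$ for the same $i$, so that the correction term carries the right $B_j$ and the right $\gamma_i$; this agrees with the preceding display. Once that is checked, the proof is a direct substitution.
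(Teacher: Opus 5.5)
Your proposal is correct and follows the paper's own argument. The paper's proof is the single line ``this follows immediately from the previous discussion,'' meaning the display after Proposition~\ref{prop1} specialized to $A_1=A_2=\Ide$. You carry out that specialization explicitly and then use the hypothesis $\binom{d_i}{2}B_j=0$ to remove the terms $\binom{d_i}{2}B_j\mathfrak{f}_{ii}$ from $T_2$.
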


\begin{proof} This follows immediately from the previous discussion.
\end{proof}

\begin{proposition}\label{d2 divide a Bi} Assume $A_1=A_2=\Ide$ and $\binom{d_1}{2} B_2=\binom{d_2}{2} B_2=0$. If $B_1 = \frac{d_2}{d_1} B_2$ and there exists $\wt{B}_2\in \End(I)$ such that $B_2=d_2 \wt{B}_2$, then
$$
\ho_{\blackdiamond,\Yleft}^2(H^{d_1d_2}_0,I) \simeq \frac{\wt{K}_{12}}{d_1 I} \oplus \frac{K_{12}}{d_2 I} \oplus I_{d_1} \oplus I_{d_1} \oplus (K_{12} \cap I_{d_1}) \oplus I_{d_2}.
$$
where $K_{12}\coloneqq \ker B_1\cap \ker B_2$ and $\wt{K}_{12}\coloneqq \ker B_1\cap \ker \frac{d_2}{d_1}B_2$.
\end{proposition}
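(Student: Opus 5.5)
The plan is to reduce to Proposition~\ref{prop2} and then diagonalize the resulting subquotient of $I^6$ by an explicit triangular change of coordinates, as in the proofs of Proposition~\ref{lambda mid d_1, Yleft=0 and that A_1=A_2=Ide} and Corollary~\ref{primer corolario}.

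\emph{Applying Proposition~\ref{prop2}.} Since $B_1=\frac{d_2}{d_1}B_2$, the hypotheses $\binom{d_1}{2}B_2=\binom{d_2}{2}B_2=0$ give $\binom{d_i}{2}B_j=0$ for all $i,j$. So Proposition~\ref{prop2} applies, and by~\eqref{formula00} we must compute
\[
\frac{\ker T_1\cap\ker T_2\cap\ker T_3}{\ima S}.
\]
The key observation is that $B_2=d_2\wt{B}_2$, hence also $B_1=\frac{d_2^2}{d_1}\wt{B}_2$. Since $\wt{B}_2$ is additive, both $B_1$ and $B_2$ vanish on $I_{d_2}$, and $B_j y=\wt{B}_2(\text{suitable multiple of } y)$. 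With this, I would unwind the kernel conditions in turn.

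\emph{The kernel conditions.}
\begin{enumerate}
\item $\ker T_1$ gives $\mathfrak{f}_{11},\mathfrak{f}_{21}\in I_{d_1}$ and $\mathfrak{f}_{12},\mathfrak{f}_{22}\in I_{d_2}$.
\item In $\ker T_2$, the first, third and fourth coordinates reduce to $B_1\gamma_1=0$, $B_1\gamma_2=0$ and $B_2\gamma_2=0$. For the third one, $d_2\mathfrak{f}_{21}=0$ because $d_1\mid d_2$.
\item The second coordinate of $\ker T_2$ reads $d_1\mathfrak{f}_{12}=B_2\gamma_1=d_1\cdot\frac{d_2}{d_1}\wt{B}_2\gamma_1$. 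It suggests the substitution $\mathfrak{f}'_{12}\coloneqq\mathfrak{f}_{12}-\frac{d_2}{d_1}\wt{B}_2\gamma_1$, which lies in $I_{d_1}$. One then checks that the condition $d_2\mathfrak{f}_{12}=0$ becomes $B_1\gamma_1=0$.
\item In $\ker T_3$, the terms $B_j\mathfrak{f}_{21}$ and $B_j\mathfrak{f}'_{12}$ vanish, since these elements are $d_2$-torsion. What survives is a condition on $\gamma_1$ alone, namely $B_j\frac{d_2}{d_1}\wt{B}_2\gamma_1=0$.
\end{enumerate}
Together with $B_1\gamma_1=0$, this condition should assemble into $\gamma_1\in\wt{K}_{12}$. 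Similarly, $\gamma_2\in K_{12}$.

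\emph{Decoupling the image of $S$.} Next I would perform analogous shifts in the remaining variables:
\[
\mathfrak{f}''_{11}\coloneqq\mathfrak{f}_{11}-\Bigl(\tfrac{d_2}{d_1}\Bigr)^2\wt{B}_2\gamma_1,\qquad
\mathfrak{f}''_{21}\coloneqq\mathfrak{f}_{21}-\tfrac{d_2}{d_1}\wt{B}_2\gamma_2,\qquad
\mathfrak{f}''_{22}\coloneqq\mathfrak{f}_{22}-\wt{B}_2\gamma_2.
\]
These are chosen so that the $S$-translates $B_1t_1$, $B_1t_2$, $B_2t_2$ are absorbed by the corresponding shifts of $\gamma_1=d_1t_1$ and $\gamma_2=d_2t_2$. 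Likewise, $\mathfrak{f}'_{12}$ is already invariant under $S$, because $B_2t_1-\frac{d_2}{d_1}\wt{B}_2(d_1t_1)=0$.

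One checks, using $B_1\gamma_1=B_2\gamma_2=0$, that the new variables keep their torsion: $d_1\mathfrak{f}''_{11}=d_1\mathfrak{f}''_{21}=d_2\mathfrak{f}''_{22}=0$. As before, this triangular substitution is an automorphism $\phi$ of $I^6$. Setting $\wt{T}_i=T_i\xcirc\phi$ and $\wt{S}=\phi^{-1}\xcirc S$, $\wt{S}$ should become $(d_1t_1,d_2t_2,0,0,0,0)$. The kernel should become $\wt{K}_{12}\times K_{12}$ times a product of torsion subgroups, one for each $\mathfrak{f}$-variable. The quotient then splits as
\[
\frac{\wt{K}_{12}}{d_1I}\oplus\frac{K_{12}}{d_2I}\oplus(\text{torsion pieces}).
\]

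\emph{Main obstacle.} The hard step is the careful bookkeeping of $\ker T_3$. In particular, $\wt{B}_2$ need not commute with $B_1$ or $B_2$. So the $T_3$ constraint, and possibly the torsion constraints after substitution, couple $\gamma_1$ with one of the $\mathfrak{f}$'s. This coupling has to be arranged so that one summand is cut down to $K_{12}\cap I_{d_1}$ while $\gamma_1$ ranges exactly over $\wt{K}_{12}$.

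My first attempt would be to perform the substitutions in the opposite direction, for instance replacing $\gamma_1$-dependent terms by $\mathfrak{f}$-dependent ones. The aim is that the residual $T_3$ condition reads $B_j\mathfrak{f}=0$ for a single $d_1$-torsion variable. If that fails, I would compute $\ker T_3$ directly on $I_{d_1}\oplus I_{d_2}$ before substituting, and choose $\phi$ afterwards to match the stated decomposition.
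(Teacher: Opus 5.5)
\textbf{Verdict.} Your change of variables is exactly the one used in the paper. Your $\mathfrak{f}''_{11},\mathfrak{f}'_{12},\mathfrak{f}''_{21},\mathfrak{f}''_{22}$ are the coordinates of $\phi^{-1}$, where $\wt{B}_1=\frac{d_2}{d_1}\wt{B}_2$. Your reductions of $\ker T_1$, $\ker T_2$ and $\ima S$ are correct. The gap is that you leave the $T_3$ step open, and the obstacle you describe there rests on a false claim.

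\textbf{The $T_3$ step.} You assert that $\wt{B}_2$ need not commute with $B_1$ or $B_2$. But $B_2=d_2\wt{B}_2$ and $B_1=\frac{d_2^2}{d_1}\wt{B}_2$ are integer multiples of $\wt{B}_2$, so they \emph{do} commute with it. Hence the residual condition you correctly isolated in item (4) holds automatically once $B_1\gamma_1=0$:
\[
B_1\tfrac{d_2}{d_1}\wt{B}_2\gamma_1=\tfrac{d_2}{d_1}\wt{B}_2B_1\gamma_1=0,\qquad B_2\tfrac{d_2}{d_1}\wt{B}_2\gamma_1=\wt{B}_2\tfrac{d_2}{d_1}B_2\gamma_1=\wt{B}_2B_1\gamma_1=0 .
\]
This is exactly how the paper uses $B_1\wt{B}_1=\frac{d_2}{d_1}\wt{B}_1B_2$ and $B_2\wt{B}_1=\wt{B}_2B_1$ to get $\ker\wt{T}_1\cap\ker\wt{T}_2\subseteq\ker\wt{T}_3$. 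So $T_3$ couples nothing and imposes no new condition. Note also that your item (4), which correctly says the $T_3$ condition involves $\gamma_1$ alone, contradicts your later worry that it couples $\gamma_1$ with some $\mathfrak{f}_{ij}$.

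\textbf{The target summands.} There is no summand that needs to be ``cut down'' to $K_{12}\cap I_{d_1}$:
\begin{enumerate}
\item $\wt{K}_{12}=\ker B_1$, because $B_1=\frac{d_2}{d_1}B_2$.
\item $K_{12}=\ker B_2$, because $\ker B_2\subseteq\ker B_1$.
\item $K_{12}\cap I_{d_1}=I_{d_1}$, because, as you observed yourself, $B_1$ and $B_2$ vanish on $I_{d_2}\supseteq I_{d_1}$.
\end{enumerate}

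\textbf{Completing the argument.} With these remarks, the kernel in your coordinates is $\ker B_1\oplus\ker B_2\oplus I_{d_1}\oplus I_{d_1}\oplus I_{d_1}\oplus I_{d_2}$. The image is $\ima\wt{S}=d_1I\oplus d_2I\oplus 0\oplus0\oplus0\oplus0$. Moreover $d_1I\subseteq\ker B_1$ and $d_2I\subseteq\ker B_2$, since $d_1B_1=d_2B_2=0$. The stated isomorphism follows, and the fallback strategies in your last paragraph are unnecessary.
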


\begin{proof} Set $\wt{B}_1\coloneqq \frac{d_2}{d_1} \wt{B}_2$ and define $\phi \colon I^6\to I^6$ by
$$
\phi(\gamma_1,\gamma_2,\mathfrak{f}_{11},\mathfrak{f}_{12},\mathfrak{f}_{21},\mathfrak{f}_{22}) \coloneqq \left(\gamma_1,\gamma_2, \mathfrak{f}_{11}+\frac{d_2}{d_1}\wt{B}_1 \gamma_1, \mathfrak{f}_{12}+\wt{B}_1 \gamma_1, \mathfrak{f}_{21}+\wt{B}_1\gamma_2, \mathfrak{f}_{22}+\wt{B}_2 \gamma_2\right).
$$
It is clear that $\phi$ is invertible and that
$$
\phi^{-1}(\gamma_1,\gamma_2,\mathfrak{f}_{11},\mathfrak{f}_{12},\mathfrak{f}_{21},\mathfrak{f}_{22}) = \left(\gamma_1,\gamma_2, \mathfrak{f}_{11}-\frac{d_2}{d_1}\wt{B}_1 \gamma_1,\mathfrak{f}_{12}-\wt{B}_1 \gamma_1,\mathfrak{f}_{21}-\wt{B}_1\gamma_2, \mathfrak{f}_{22}-\wt{B}_2 \gamma_2\right).
$$
Set $\wt{T}_i\coloneqq T_i\xcirc \phi$ and $\wt{S}\coloneqq \phi^{-1}\xcirc S$, where the $T_i$'s and $S$ are as in Proposition~\ref{prop2}. By~\eqref{formula00}, we have
\begin{equation}\label{homo10}
\ho_{\blackdiamond}^2(H^{d_1d_2}_0,I) \simeq \frac{\ker T_1\cap\ker T_2\cap\ker T_3 }{\ima S} \simeq \frac{\ker \wt{T}_1\cap\ker \wt{T}_2\cap\ker \wt{T}_3 }{\ima \wt{S}},
\end{equation}
where the last isomorphism is induced by $\phi^{-1}$. A direct computation shows that
\begin{align*}
&\wt{T}_1(\gamma_1,\gamma_2,\mathfrak{f}_{11},\mathfrak{f}_{12},\mathfrak{f}_{21},\mathfrak{f}_{22}) = \left(d_1 \mathfrak{f}_{11}+B_1\gamma_1, d_2 \mathfrak{f}_{12}+B_1\gamma_1,d_1\mathfrak{f}_{21}+B_2\gamma_2,d_2\mathfrak{f}_{22}+ B_2\gamma_2\right),\\
&\wt{T}_2(\gamma_1,\gamma_2,\mathfrak{f}_{11},\mathfrak{f}_{12},\mathfrak{f}_{21},\mathfrak{f}_{22}) = (d_1 \mathfrak{f}_{11},d_1 \mathfrak{f}_{12}, d_2\mathfrak{f}_{21}, d_2 \mathfrak{f}_{22}),\\
&\wt{T}_3(\gamma_1,\gamma_2,\mathfrak{f}_{11},\mathfrak{f}_{12},\mathfrak{f}_{21},\mathfrak{f}_{22}) = \bigl(B_1\mathfrak{f}_{21}-B_1\mathfrak{f}_{12}+B_1\wt{B}_1\gamma_2-B_1\wt{B}_1\gamma_1,B_2\mathfrak{f}_{21}-B_2\mathfrak{f}_{12} +B_2\wt{B}_1\gamma_2-B_2\wt{B}_1\gamma_1\bigr)
\shortintertext{and}
&\wt{S}(t_1,t_2)= (d_1t_1,d_2t_2,0,0,0,0).
\end{align*}
We next compute~\eqref{homo10}. Since $d_1\mid d_2$, $B_1=d_2 \wt{B}_1$ and $B_2=d_2 \wt{B}_2$, if $(\gamma_1,\gamma_2,\mathfrak{f}_{11},\mathfrak{f}_{12},\mathfrak{f}_{21}, \mathfrak{f}_{22})\in \ker \wt{T}_2$, then
\begin{align*}
& \wt{T}_1(\gamma_1,\gamma_2,\mathfrak{f}_{11},\mathfrak{f}_{12},\mathfrak{f}_{21},\mathfrak{f}_{22}) = \left(B_1\gamma_1,B_1\gamma_1,d_1\mathfrak{f}_{21} + B_2\gamma_2,B_2\gamma_2\right)
\shortintertext{and}
& \wt{T}_3(\gamma_1,\gamma_2,\mathfrak{f}_{11},\mathfrak{f}_{12},\mathfrak{f}_{21},\mathfrak{f}_{22}) = \bigl(B_1\wt{B}_1\gamma_2-B_1\wt{B}_1\gamma_1, B_2\wt{B}_1\gamma_2-B_2\wt{B}_1\gamma_1\bigr).
\end{align*}
Since
$$
B_1\wt{B}_1 = \wt{B}_1B_1 = \frac{d_2}{d_1}\wt{B}_1B_2\quad\text{and}\quad B_2\wt{B}_1 = \wt{B}_1B_2 = \wt{B}_2B_1,
$$
this implies that if $(\gamma_1,\gamma_2,\mathfrak{f}_{11},\mathfrak{f}_{12}, \mathfrak{f}_{21},\mathfrak{f}_{22})\in \ker \wt{T}_1\cap \ker \wt{T}_2$, then
$\wt{T}_3(\gamma_1,\gamma_2,\mathfrak{f}_{11},\mathfrak{f}_{12},\mathfrak{f}_{21},\mathfrak{f}_{22}) = 0$. Hence
$$
\ho_{\blackdiamond,\Yleft}^2(H^{d_1d_2}_0,I)\simeq \frac{\ker \wt{T}_1\cap\ker \wt{T}_2\cap\ker \wt{T}_3 }{\ima \wt{S}} = \frac{\wt{K}_{12}}{d_1 I} \oplus \frac{K_{12}}{d_2 I} \oplus I_{d_1} \oplus I_{d_1} \oplus (K_{12} \cap I_{d_1}) \oplus I_{d_2},
$$
as desired.
\end{proof}

\begin{remark}\label{remark3} Assume that we are under the hypothesis of Proposition~\ref{d2 divide a Bi}. The map $\phi\colon I^6\to I^6$, induces an~iso\-mor\-phism
$$
\frac{\wt{K}_{12}}{d_1 I} \oplus \frac{K_{12}}{d_2 I} \oplus I_{d_1} \oplus I_{d_1} \oplus (K_{12} \cap I_{d_1}) \oplus I_{d_2} \longrightarrow \frac{\ker T_1\cap \ker T_2\cap \ker T_3}{\ima S},
$$
where $T_1$, $T_2$, $T_3$ and $S$ are as in Proposition~\ref{caso particular 1}. Let $Z\subseteq \wt{K}_{12} \oplus K_{12} \oplus I_{d_1} \oplus I_{d_1} \oplus (K_{12} \cap I_{d_1}) \oplus I_{d_2}$ be such that the natural map~$Z\to \frac{\wt{K}_{12}}{d_1 I} \oplus \frac{K_{12}}{d_2 I} \oplus I_{d_1} \oplus I_{d_1} \oplus (K_{12} \cap I_{d_1}) \oplus I_{d_2}$ is a bi\-jec\-tion and let $T$ be as in~\eqref{formula0}. Notice that there is an obvious choice for $Z$ and that
$$
T(\gamma_1,\gamma_2,\mathfrak{f}_{11},\mathfrak{f}_{12},\mathfrak{f}_{21},\mathfrak{f}_{22}) = (\alpha_{\gamma_1,\gamma_2}, f_{\mathfrak{f}_{11}, \mathfrak{f}_{12},\mathfrak{f}_{21},\mathfrak{f}_{22},\gamma_1, \gamma_2}).
$$
Clearly $T(\phi(Z))$ is a complete family of representatives of~$2$-cocycles modulo coboundaries in $\wh{\mathcal{C}}_N^*(H^{d_1d_2}_{\lambda},I)$. A~sim\-ilar observation is pertinent below Proposition~\ref{B2 divide a d1}. We leave the task of formulating it to the reader. From the preceding discussion, it becomes clear that, in the cases considered in Propositions~\ref{d2 divide a Bi} and~\ref{B2 divide a d1}, computing $T$ is indispensable for constructing the extensions. Since $\alpha_{\gamma_1,\gamma_2}$ is already known from~\eqref{alpha_{gamma_1,gamma_2}}, the determination of $T$ reduces to establishing the following remark.
\end{remark}

\begin{remark} For $(\gamma_1,\gamma_2,\mathfrak{f}_{11}, \mathfrak{f}_{12},\mathfrak{f}_{21}, \mathfrak{f}_{22})\in \ker T_1\cap \ker T_2\cap \ker T_3$, let $f\coloneqq f_{\mathfrak{f}_{11}, \mathfrak{f}_{12},\mathfrak{f}_{21},\mathfrak{f}_{22},\gamma_1, \gamma_2}$. By \cite{GGV}*{Theorem~3.4},  we have
$$
f(k_1a_1+k_2a_2,\lambda_1e_1+\lambda_2e_2) = \sum_{j=1}^2 \sum_{i=1}^2 k_j\lambda_i \mathfrak{f}_{ji}+ k_1k_2(\lambda_1B_1+\lambda_2B_2)\mathfrak{f}_{12} + \sum_{j=1}^2 \binom{k_j}{2}(\lambda_1B_1+\lambda_2B_2)\mathfrak{f}_{jj},
$$
where $0\le k_1,\lambda_1 <d_1$ and $0\le k_2,\lambda_2 <d_2$,
\end{remark}

\begin{example} Let $d_1$ and $d_2$ be natural numbers, let $K$ be an abelian group and let $I\coloneqq K\oplus K\oplus K$. Assume that~$d_1\mid d_2$. If $d_2^2K=\binom{d_1}{2}d_2K =\binom{d_2}{2}d_2K=0$, then the maps $B_1\colon I\to I$ and $B_2\colon I\to I$, defined by the~ mul\-ti\-pli\-ca\-tion by the matrices
$$
\begin{pmatrix}
0 & \frac{d_2^2}{d_1} & 0\\
0 & 0 & \frac{d_2^2}{d_1}\\
0 & 0 & 0	
\end{pmatrix}
\quad\text{and}\quad
\begin{pmatrix}
0 & d_2 & 0\\
0 & 0 & d_2\\
0 & 0 & 0	
\end{pmatrix},
$$
respectively, satisfy the hypotheses of Proposition~\ref{d2 divide a Bi}.
\end{example}

\begin{proposition}\label{B2 divide a d1} Assume that $A_1=A_2=\Ide$ and $\binom{d_1}{2} B_2=\binom{d_2}{2} B_2=0$. Assume furthermore that there exist morphisms $J_1,J_2\colon I\to I$ such that $J_1 B_2=B_1$,  $J_2 B_2=B_2 J_2=d_1 \Ide$ and $B_2J_1(I_{d_2})=0$. Then
$$
\ho_{\blackdiamond,\Yleft}^2(H^{d_1d_2}_0,I)\simeq  \ker B_2\oplus \ker B_2 \oplus I_{d_1} \oplus  \frac{\ker B_2}{\ima B_2} \oplus I_{d_1}  \oplus \frac{I_{d_2}}{\ima B_2}.
$$
\end{proposition}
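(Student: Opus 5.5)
We will work throughout with the explicit formulas of Proposition~\ref{prop2}. Its hypotheses hold: $A_i=\Ide$ by assumption, and $\binom{d_i}{2}B_1=\binom{d_i}{2}J_1B_2=J_1\binom{d_i}{2}B_2=0$. By~\eqref{formula00} we must compute
$$
\frac{\ker T_1\cap\ker T_2\cap\ker T_3}{\ima S},
$$
where
$$
T_1=(d_1\mathfrak{f}_{11},d_2\mathfrak{f}_{12},d_1\mathfrak{f}_{21},d_2\mathfrak{f}_{22}),
$$
$$
T_2=(d_1\mathfrak{f}_{11}-B_1\gamma_1,\,d_1\mathfrak{f}_{12}-B_2\gamma_1,\,d_2\mathfrak{f}_{21}-B_1\gamma_2,\,d_2\mathfrak{f}_{22}-B_2\gamma_2),
$$
$$
T_3=\bigl(B_1(\mathfrak{f}_{21}-\mathfrak{f}_{12}),B_2(\mathfrak{f}_{21}-\mathfrak{f}_{12})\bigr),
\qquad
S(t_1,t_2)=(d_1t_1,d_2t_2,B_1t_1,B_2t_1,B_1t_2,B_2t_2).
$$
As in the proof of Proposition~\ref{d2 divide a Bi}, the strategy is to find invertible maps $\phi\colon I^6\to I^6$ and $\psi\colon I^2\to I^2$ for which $\wt{T}_i\coloneqq T_i\xcirc\phi$ and $\wt{S}\coloneqq\phi^{-1}\xcirc S\xcirc\psi$ become ``diagonal''. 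Each coordinate should then contribute exactly one of the six summands of the statement.

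The key device is the identity $d_1\Ide=J_2B_2=B_2J_2$. It lets us rewrite every multiple $d_1x$ as $B_2(J_2x)$, and, since $d_1\mid d_2$, also every multiple $d_2x$ as $B_2\bigl(\tfrac{d_2}{d_1}J_2x\bigr)$. Moreover $B_1=J_1B_2$ factors through $B_2$.

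First, we would use $B_2=B_2\xcirc\Ide$ together with $T_3$ to change variables. Set $g\coloneqq\mathfrak{f}_{21}-\mathfrak{f}_{12}$, replacing $\mathfrak{f}_{21}$. Then $\ker T_3$ is cut out by $B_2g=0$, because $B_1g=J_1B_2g$ vanishes automatically. Second, we use shears of the form $\mathfrak{f}_{11}\mapsto\mathfrak{f}_{11}+J_1\,(\cdot)$ and $\mathfrak{f}_{12}\mapsto \mathfrak{f}_{12}+J_2(\cdot)$, adjusted to the $\gamma$'s, to absorb the terms $-B_1\gamma_1$ and $-B_2\gamma_1$ of $T_2$ into the $d_1\mathfrak{f}$ terms. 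After these shears the constraints in $T_1$ and $T_2$ should decouple. We expect $\gamma_1,\gamma_2$ to be constrained only by $\gamma_i\in\ker B_2$, which yields the two summands $\ker B_2$. The variables $\mathfrak{f}_{11}$ and $g$ should be constrained by $d_1(\cdot)=0$, which yields $I_{d_1}$ twice (for $g$, one has to check that $B_2g=0$ is implied by $d_1g=0$ and the remaining relations, or conversely). The variable $\mathfrak{f}_{12}$ should be constrained by $\ker B_2$, and $\mathfrak{f}_{22}$ by $I_{d_2}$.

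Third, we transform $S$ via $\psi$ and the inverse shears. The target is to show that, modulo the kernel, $\ima\wt{S}$ consists of $(0,0,0,B_2t_1,0,B_2t_2)$. In particular the components $d_1t_1$ and $d_2t_2$ must be killed, for instance by rewriting them as $B_2J_2t_1$ and $B_2\tfrac{d_2}{d_1}J_2t_2$ and absorbing them through the change of variables. This produces the quotients $\ker B_2/\ima B_2$ in the $\mathfrak{f}_{12}$ slot and $I_{d_2}/\ima B_2$ in the $\mathfrak{f}_{22}$ slot. The latter quotient requires $\ima B_2\subseteq I_{d_2}$, which follows from $d_2B_2=0$.

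The main obstacle is finding $\phi$ and $\psi$ explicitly. Neither commutativity of $J_1$ with $B_2$ nor $B_2J_1=B_1$ is assumed. The hypothesis $B_2J_1(I_{d_2})=0$ must be exactly what kills the cross terms $B_2J_1(\cdot)$ that appear when the shear built from $J_1$ is applied to elements of $I_{d_2}$ (the $\mathfrak{f}_{22}$ and $t_2$ directions). We would first try the shear built from $J_1$ on $\mathfrak{f}_{11}$, check directly which cross terms survive, and invoke $B_2J_1(I_{d_2})=0$ and $\binom{d_i}{2}B_2=0$ to kill them.
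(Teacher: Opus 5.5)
\textbf{Verdict: there is a genuine gap.} Your plan has the right ingredients: $d_1\Ide=J_2B_2=B_2J_2$, $B_1=J_1B_2$, shears, and the target $\wt S(t_1,t_2)=(0,0,0,B_2t_1,0,B_2t_2)$. However, the decisive step is left open, and the one mechanism you do specify runs in the wrong direction.

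\textbf{Why your second step fails.} Any shift of $\mathfrak f_{12}$ (or $\mathfrak f_{11}$) changes $d_1\mathfrak f_{12}$ only by an element of $d_1I$. So absorbing $-B_2\gamma_1$ into the $d_1\mathfrak f$ terms would need something like $X$ with $d_1X=B_2$. The hypotheses give the opposite factorization, $d_1\Ide=J_2B_2$. Concretely, take $I=\mathds{Z}_9$, $d_1=d_2=9$, $B_2=J_2=3\Ide$, $B_1=J_1=0$. This satisfies all hypotheses and the standing conditions of Proposition~\ref{caracterizacion primer caso}, yet $d_1I=0$ while $B_2\gamma_1=3\gamma_1$ need not vanish. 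Your own expectation that $\gamma_1$ ends up constrained by $\ker B_2$ is also incompatible with having absorbed $B_2\gamma_1$.

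\textbf{What the paper does instead.} It shears the $\gamma$'s by the $\mathfrak f$'s:
$\gamma_1\mapsto\gamma_1+J_2(\mathfrak f_{12}+\mathfrak f_{21})$ and $\gamma_2\mapsto\gamma_2+\frac{d_2}{d_1}J_2\mathfrak f_{22}$.
Then $B_2J_2=d_1\Ide$ cancels the $d_1\mathfrak f$ terms of $T_2$. This is compensated by $\mathfrak f_{11}\mapsto\mathfrak f_{11}+J_1(\mathfrak f_{12}+\mathfrak f_{21})$, using $B_1J_2=J_1B_2J_2=d_1J_1$. It is combined with $\mathfrak f_{12}\mapsto\mathfrak f_{12}+\mathfrak f_{21}$ and $\mathfrak f_{21}\mapsto\mathfrak f_{21}+J_1\mathfrak f_{22}$. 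Then $\phi^{-1}S(t_1,t_2)=(0,0,0,B_2t_1,0,B_2t_2)$ with no $\psi$ needed. The hypothesis $B_2J_1(I_{d_2})=0$ kills the cross terms $d_1J_1\mathfrak f_{ij}=J_2B_2J_1\mathfrak f_{ij}$ and $B_2J_1\mathfrak f_{22}$.

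\textbf{Your kernel prediction is reversed, and your route can be completed.} You miss the observation that makes your substitution $g=\mathfrak f_{21}-\mathfrak f_{12}$ work with no shear at all on the kernel side. The condition $B_2g=0$ forces $d_1g=J_2B_2g=0$. With $d_1\mathfrak f_{21}=0$ from $T_1$, this gives $d_1\mathfrak f_{12}=0$, and then $T_2$ gives $B_2\gamma_1=d_1\mathfrak f_{12}=0$. So in the coordinates $(\gamma_1,\gamma_2,\mathfrak f_{11},\mathfrak f_{12},g,\mathfrak f_{22})$ the kernel is already
$\ker B_2\oplus\ker B_2\oplus I_{d_1}\oplus I_{d_1}\oplus\ker B_2\oplus I_{d_2}$.
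Here $\mathfrak f_{12}\in I_{d_1}$ and $g\in\ker B_2$, the reverse of your guess; this uses $\ker B_2\subseteq I_{d_1}$, which follows from $J_2B_2=d_1\Ide$.

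Your route is then finished by shears acting only on $\ima S$:
$\gamma_1\mapsto\gamma_1-J_2\mathfrak f_{12}$, $\gamma_2\mapsto\gamma_2-\frac{d_2}{d_1}J_2\mathfrak f_{22}$, $\mathfrak f_{11}\mapsto\mathfrak f_{11}-J_1\mathfrak f_{12}$ and $g\mapsto g-J_1\mathfrak f_{22}$.
These preserve the kernel; the last one does so precisely because $B_2J_1(I_{d_2})=0$. They turn $\ima S$ into $\{(0,0,0,B_2t_1,-B_2t_1,B_2t_2)\}$. A final shear $(\mathfrak f_{12},g)\mapsto(\mathfrak f_{12}+g,g)$ yields the stated direct sum.

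You also need $B_2^2=0$, which follows from Proposition~\ref{caracterizacion primer caso} with $A_i=\Ide$ and $\lambda=0$; without it $\ker B_2/\ima B_2$ makes no sense. None of this is in the proposal, so as it stands it is a strategy rather than a proof.
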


\begin{proof} Let $\phi\colon I^6\to I^6$ be the morphism given by
$$
\phi(\gamma_1,\gamma_2,\mathfrak{f}_{11},\mathfrak{f}_{12},\mathfrak{f}_{21},\mathfrak{f}_{22}) \coloneqq \left(\gamma_1+J_2 \mathfrak{f}_{12}+J_2 \mathfrak{f}_{21},\gamma_2+\frac{d_2}{d_1}J_2 \mathfrak{f}_{22}, \mathfrak{f}_{11}+J_1 \mathfrak{f}_{12}+J_1 \mathfrak{f}_{21}, \mathfrak{f}_{12}+\mathfrak{f}_{21},\mathfrak{f}_{21}+J_1\mathfrak{f}_{22},\mathfrak{f}_{22}\right).
$$
It is clear that $\phi$ is invertible and that
$$
\phi^{-1}(\gamma_1,\gamma_2,\mathfrak{f}_{11},\mathfrak{f}_{12},\mathfrak{f}_{21},\mathfrak{f}_{22}) \coloneqq \left(\gamma_1-J_2 \mathfrak{f}_{12}, \gamma_2-\frac{d_2}{d_1}J_2 \mathfrak{f}_{22},\mathfrak{f}_{11}-J_1 \mathfrak{f}_{12},\mathfrak{f}_{12}-\mathfrak{f}_{21} +J_1\mathfrak{f}_{22}, \mathfrak{f}_{21}-J_1\mathfrak{f}_{22},\mathfrak{f}_{22} \right).
$$
Set $\wt{T}_i=T_i \phi$ and $\wt{S}= \phi^{-1} S$, where the $T_i$'s and $S$ are as in Proposition~\ref{prop2}. By~\eqref{formula00}, we have
\begin{equation*}
\ho_{\blackdiamond}^2(H^{d_1d_2}_0,I) \simeq \frac{\ker T_1\cap\ker T_2\cap\ker T_3 }{\ima S} \simeq \frac{\ker \wt{T}_1\cap\ker \wt{T}_2\cap\ker \wt{T}_3 }{\ima \wt{S}},
\end{equation*}
where the last isomorphism is induced by $\phi^{-1}$. A direct computation using that
$$
d_1J_1 = J_1B_2J_2 = B_1J_2,\quad B_2J_2 = d_1\Ide,\quad J_2B_2= d_1\Ide\quad\text{and}\quad J_1B_2 = B_1,
$$
shows that
\begin{align*}
&\wt{T}_1(\gamma_1,\gamma_2,\mathfrak{f}_{11},\mathfrak{f}_{12},\mathfrak{f}_{21},\mathfrak{f}_{22}) =(d_1 \mathfrak{f}_{11}+d_1 J_1 \mathfrak{f}_{21}+d_1 J_1 \mathfrak{f}_{12},d_2 \mathfrak{f}_{12}+d_2 \mathfrak{f}_{21},d_1 \mathfrak{f}_{21}+d_1J_1\mathfrak{f}_{22},d_2 \mathfrak{f}_{22}),  \\
&\wt{T}_2(\gamma_1,\gamma_2,\mathfrak{f}_{11},\mathfrak{f}_{12},\mathfrak{f}_{21},\mathfrak{f}_{22}) =(d_1 \mathfrak{f}_{11}-B_1 \gamma_1,-B_2 \gamma_1, d_2 \mathfrak{f}_{21}-B_1 \gamma_2,-B_2 \gamma_2),  \\
&\wt{T}_3(\gamma_1,\gamma_2,\mathfrak{f}_{11},\mathfrak{f}_{12},\mathfrak{f}_{21},\mathfrak{f}_{22})=(B_1J_1\mathfrak{f}_{22}-B_1 \mathfrak{f}_{12},B_2J_1\mathfrak{f}_{22}-B_2 \mathfrak{f}_{12})
\shortintertext{and}
&\wt{S}(t_1,t_2)= (0,0,0,B_2 t_1,0,B_2 t_2).
\end{align*}
We claim that
\begin{equation}\label{ida2}
\ker \wt{T}_1\cap \ker \wt{T}_2\cap \ker \wt{T}_3 = \ker B_2\oplus \ker B_2 \oplus I_{d_1} \oplus \ker B_2 \oplus I_{d_1}  \oplus I_{d_2}.
\end{equation}
Indeed, since $B_1 = J_1B_2$ we know that $(\gamma_1,\gamma_2,\mathfrak{f}_{11},\mathfrak{f}_{12},\mathfrak{f}_{21},\mathfrak{f}_{22})\in \ker \wt{T}_2$ if and only if
$$
B_2 \gamma_2 = B_2 \gamma_1 = d_1 \mathfrak{f}_{11} = d_2\mathfrak{f}_{21} = 0.
$$
If, moreover $(\gamma_1,\gamma_2,\mathfrak{f}_{11},\mathfrak{f}_{12},\mathfrak{f}_{21},\mathfrak{f}_{22})\in \ker \wt{T}_1$, then we have
$$
d_2 \mathfrak{f}_{22} = d_2 \mathfrak{f}_{12} = 0,
$$
which implies that $B_2J_1\mathfrak{f}_{12} = B_2J_1\mathfrak{f}_{22} = 0$, because $B_2J_1(I_{d_2})=0$. Since, we also have $d_1\Ide = J_2B_2$ this implies that $d_1J_1\mathfrak{f}_{12} = d_1J_1\mathfrak{f}_{22} = 0$. Hence
$$
d_1 \mathfrak{f}_{21} = 0.
$$
Since $d_1\mid d_2$, from the previous discussion we obtain that $(\gamma_1,\gamma_2,\mathfrak{f}_{11},\mathfrak{f}_{12},\mathfrak{f}_{21},
\mathfrak{f}_{22}) \in \ker \wt{T}_1\cap \ker \wt{T}_2$ if and only if
$$
B_2 \gamma_2 = B_2 \gamma_1 = d_1 \mathfrak{f}_{11} = d_2 \mathfrak{f}_{22} = d_2 \mathfrak{f}_{12} = d_1 \mathfrak{f}_{21} = 0.
$$
Since this implies that $B_2J_1\mathfrak{f}_{22} = 0$, and moreover $B_1=J_2B_2$, we obtain that equality~\eqref{ida2} holds. By the formula of $\wt{S}$, this finishes the proof of the proposition.
\end{proof}

\begin{example} Let $d$, $d'$, $d''$ and $d'''$ be odd natural numbers, let $I$ be an abelian group, and let $d_1\coloneqq dd'$, $d_2\coloneqq d_1d'''$, $B_1\coloneqq dd''\Ide_I$ and $B_2\coloneqq d\Ide_I$. If $d'd'''\mid d''$ and $d^2d'I=0$, then $B_1$ and $B_2$ satisfy the hipotheses of Proposition~\ref{B2 divide a d1}.
\end{example}

\subsection[Some examples when \texorpdfstring{$d_1\nmid \lambda\binom{d_2}{2}$}{d1 no divides lambda d2 (d2-1)/2}]{Some examples when \texorpdfstring{$\pmb{d_1\nmid \lambda\binom{d_2}{2}}$}{d1 no divides lambda d2 (d2-1)/2}}

Let $H^{d_1d_2}_{\lambda}$ be as in Section~\ref{section 2} and let $I$ be an abelian group. In this section we assume that $d_1\nmid \lambda\binom{d_2}{2}$ and~we~com\-pute~$\ho_{\blackdiamond,\Yleft}^2(H^{d_1d_2}_{\lambda},I)$, under suitable hypotheses. By Remark~\ref{lambda divide a d1}, we can suppose that $\lambda \mid d_1$, and we do it. As in Section~\ref{Seccion 4},
$$
\ho_{\blackdiamond,\Yleft}^2(H^{d_1d_2}_{\lambda},I) \simeq \frac{\ker T_1\cap \ker T_2\cap \ker T_3}{\ima S},
$$
where $T_1,T_2,T_3$ and $S$ are the maps introduced in~\cite{GGV2}*{Subsubsection~2.2.3}.

\subsubsection[Case \texorpdfstring{$\lambda = 1$}{lambda=1} \texorpdfstring{$d_1=2$}{d1=2} and \texorpdfstring{$v_2(d_2) = 1$}{v2(d2) = 1}]{Case \texorpdfstring{$\pmb{\lambda = 1}$}{lambda=1}, \texorpdfstring{$\pmb{d_1=2}$}{d1=2} and \texorpdfstring{$\pmb{v_2(d_2) = 1}$}{v2(d2) = 1}}
Assume that $\lambda=1$, $d_1=2$ and $d_2=2m$ for some odd $m\in \mathds{N}$. Let $n$ and $s$ be as at the beginning of this section. By Remark~\ref{caso v_2=1}, we have $s=1$, $n=2$, $r_1=2d_2$, and we can take $e_1\coloneqq (1,0)$ and $a_1 = e_2\coloneqq (0,1)$. So, $T_3$ is the zero map and, by~\eqref{def de punto},
\begin{equation}\label{1a_j cdot e_i'}
a_1\cdot e_i = \begin{cases} e_1 & \text{if $i=1$,}\\ e_1+e_2 & \text{if $i=2$.}\end{cases}
\end{equation}
Hence, if we write $a_1 = \lambda_{11}e_1 + \lambda_{21}e_2$ and $a_1\cdot e_i=\Gamma^1_{i1}e_1 + \Gamma^2_{i1}e_2$ with $0\le \lambda_{i1}< d_i$ and $0\le \Gamma^k_{i1}<d_k$, then
$$
\lambda_{11} = 0,\quad i-1 \lambda_{21} = 1, \quad \Gamma_{11}^1 = \Gamma_{21}^1 = \Gamma_{21}^2 = 1 \quad\text{and}\quad \Gamma_{11}^2 = 0.
$$
A straightforward computation, using these facts, shows that
$$
T_1,T_2\colon I^4\to I^2\quad\text{and}\quad S\colon I^2\to I^4,
$$
are given by
\begin{align}
& T_1(\gamma_1,\gamma_2, \mathfrak{f}_{11}, \mathfrak{f}_{12})= \left(d_1 \mathfrak{f}_{11} + \mathcal{U}_{11}, d_2 \mathfrak{f}_{12} + \mathcal{U}_{12}\right)\label{T_1'}\\
& T_2(\gamma_1,\gamma_2, \mathfrak{f}_{11}, \mathfrak{f}_{12})= \left(\mathcal{V}_{11} + \mathcal{W}_{11}, \mathcal{V}_{12} + \mathcal{W}_{12}\right)\label{T_2'}
\shortintertext{and}
& S(t_1,t_2)=\left(d_1t_1,d_2t_2,\mathcal{X}_{11},\mathcal{X}_{12}\right)\label{S'},
\end{align}
in which
\begin{itemize}
		
\item[-] $\mathcal{U}_{1j}\coloneqq \sum_{k=1}^{d_j-1}\alpha_{\gamma_1,\gamma_2}(a_1\cdot e_j,k (a_1\cdot e_j))-a_1\blackdiamond \gamma_j$,
		
\item[-] $\mathcal{V}_{1j}\coloneqq \sum_{\ell=0}^{r_1-1} a_1^{\times (r_1-1-\ell)}\blackdiamond f_1(a_1,a_1^{\times \ell}\cdot e_j)$,
		
\item[-] $\mathcal{W}_{1j}\coloneqq \sum_{\ell=1}^{r_1-1} a_1^{\times \ell}\blackdiamond f_1(a_1,\bar{a}_{1\ell})\Yleft e_j -\sum_{\ell=1}^{r_1-1} a_1^{\times(\ell+1)}\blackdiamond \alpha_{\gamma_1,\gamma_2}(a_1,\bar{a}_{1\ell})\Yleft e_j$, where $\bar{a}_{1\ell}\coloneqq a_1^{\times(\ell+1)}-a_1$, 	

\item[-] $\mathcal{X}_{1j}\coloneqq  - t_1 -\delta_{j2} t_2 + a_1\blackdiamond t_j + (e_j\blackdiamond t_2) \Yleft a_1\cdot e_j$, where $\delta_{j2}$ is the delta of Kronecker,
		
\item[-] $f_1 \coloneqq f_1^{\mathfrak{f}_{11},\mathfrak{f}_{12}}$.		

\end{itemize}
By Proposition~\ref{caracterizacion segundo caso}, the actions $\blackdiamond$ and $\Yleft$ are determined by maps $A,B_1, B_2 \in \End(I)$ satisfying conditions~\eqref{primeras condiciones A, B1 y B2 segundo caso} and~\eqref{segundas condiciones A, B1 y B2 segundo caso}. Next, we assume that $A=\Ide$. Under this hypothesis,~\eqref{primeras condiciones A, B1 y B2 segundo caso} and \eqref{segundas condiciones A, B1 y B2 segundo caso} reduce to
$$
2B_1=2mB_2=0,\quad B_1=-B_2^2 \quad \text{and} \quad B_2^3=0.
$$

\begin{proposition}\label{maps Ti caso d1=2 d202m con m impar} Under the assumptions above, the maps $T_1$, $T_2$ and $S$ are given by
\begin{align*}
& T_1(\gamma_1,\gamma_2, \mathfrak{f}_{11}, \mathfrak{f}_{12})= \left(2 \mathfrak{f}_{11}, 2m \mathfrak{f}_{12}+m \gamma_1\right)\\
& T_2(\gamma_1,\gamma_2, \mathfrak{f}_{11}, \mathfrak{f}_{12})= \left(4m \mathfrak{f}_{11}+mB_1 \gamma_1, 2m \mathfrak{f}_{11}+4m\mathfrak{f}_{12}+2m \gamma_1+mB_2\gamma_1-2B_2 \gamma_2\right)
\shortintertext{and}
& S(t_1,t_2)=\left(2 t_1,2m t_2, B_1 t_2, -t_1+B_1 t_2+B_2 t_2\right).
\end{align*}
\end{proposition}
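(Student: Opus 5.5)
The plan is to specialize the general formulas \eqref{T_1'}, \eqref{T_2'} and \eqref{S'} term by term. I would use the data $\lambda=1$, $d_1=2$, $d_2=2m$, $r_1=2d_2=4m$, $a_1=e_2=(0,1)$ and $A=\Ide$, so that $\blackdiamond$ is trivial. By bilinearity of $\Yleft$, $y\Yleft(u,v)=(uB_1+vB_2)y$, and I would use the relations $2B_1=2mB_2=0$, $B_1=-B_2^2$ and $B_2^3=0$ freely. Two formulas are used throughout. First, by \eqref{potencia}, $a_1^{\times\ell}=\bigl(\binom{\ell}{2},\ell\bigr)$. Second, by \eqref{def de punto}, $a_1^{\times\ell}\cdot e_1=e_1$ and $a_1^{\times\ell}\cdot e_2=(-\ell,1)=(\mathrm{r}_2(\ell),1)$.

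\emph{The maps $T_1$ and $S$.} For $\mathcal{U}_{11}$, since $a_1\cdot e_1=e_1$, the sum reduces to $\alpha_1(\gamma_1)(1,1)=\gamma_1$, which cancels with $a_1\blackdiamond\gamma_1=\gamma_1$. This gives the first coordinate $2\mathfrak{f}_{11}$. For $\mathcal{U}_{12}$, I would use \eqref{1a_j cdot e_i'}, so that $a_1\cdot e_2=(1,1)$ and the sum becomes $\sum_{k=1}^{2m-1}\bigl(\alpha_1(\gamma_1)(1,\mathrm{r}_2(k))+\alpha_2(\gamma_2)(1,k)\bigr)$. The second part contributes $\gamma_2$ (only $k=2m-1$), which cancels against $a_1\blackdiamond\gamma_2$. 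The first part contributes $\gamma_1$ exactly for the $m$ odd values of $k$, giving $m\gamma_1$. For $S$, I would substitute directly into the $\mathcal{X}_{1j}$: $\mathcal{X}_{11}=-t_1+t_1+t_2\Yleft e_1=B_1t_2$ and $\mathcal{X}_{12}=-t_1-t_2+t_2+t_2\Yleft(e_1+e_2)=-t_1+B_1t_2+B_2t_2$.

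\emph{The $\mathcal{V}$ part of $T_2$.} Since $\blackdiamond$ is trivial, $\mathcal{V}_{1j}=\sum_{\ell=0}^{4m-1}f_1(a_1,a_1^{\times\ell}\cdot e_j)$. For $j=1$ every summand is $\mathfrak{f}_{11}$, so $\mathcal{V}_{11}=4m\mathfrak{f}_{11}$. For $j=2$, even $\ell$ contribute $f_1(a_1,e_2)=\mathfrak{f}_{12}$ and odd $\ell$ contribute $f_1(a_1,e_1+e_2)$. I would evaluate the latter with \cite{GGV2}*{Identities~(2.17) and~(2.18)} as $\mathfrak{f}_{11}+\mathfrak{f}_{12}$ plus the relevant $\alpha_{\gamma_1,\gamma_2}$ corrections. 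This yields $2m\mathfrak{f}_{11}+4m\mathfrak{f}_{12}$ plus possible $\gamma$-terms, whose bookkeeping has to be checked together with the $\mathcal{W}$ part.

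\emph{The $\mathcal{W}$ part of $T_2$ (main obstacle).} Here $\mathcal{W}_{1j}=\bigl(\sum_{\ell=1}^{4m-1}f_1(a_1,\bar a_{1\ell})-\sum_{\ell=1}^{4m-1}\alpha_{\gamma_1,\gamma_2}(a_1,\bar a_{1\ell})\bigr)\Yleft e_j$, with $\bar a_{1\ell}=\bigl(\mathrm{r}_2\binom{\ell+1}{2},\mathrm{r}_{2m}(\ell)\bigr)$ and $\Yleft e_1=B_1$, $\Yleft e_2=B_2$. I would first compute $f_1(a_1,ue_1+ve_2)$ for $u\in\{0,1\}$ and $0\le v<2m$ from \cite{GGV2}*{Identities~(2.17), (2.18)}: it equals $u\mathfrak{f}_{11}+v\mathfrak{f}_{12}$ plus sums of values of $\alpha_2(\gamma_2)$ and $\alpha_1(\gamma_1)$, which are nonzero only at wrap-arounds. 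Next I would sum over $\ell$ using the periodicity of $\binom{\ell+1}{2}\bmod 2$ (period $4$) and of $\ell\bmod 2m$. After applying $B_1$, the coefficients of $\mathfrak{f}_{11}$ and $\mathfrak{f}_{12}$ are even, hence vanish since $2B_1=0$. After applying $B_2$, those coefficients are multiples of $2m$, hence vanish since $2mB_2=0$. The surviving $\alpha$-contributions should be $mB_1\gamma_1$ in the first coordinate and $mB_2\gamma_1-2B_2\gamma_2$ in the second. The extra $2m\gamma_1$ in the second coordinate of $T_2$ should come from the $\alpha$-corrections in the odd-$\ell$ terms of $\mathcal{V}_{12}$. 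Careful counting of the wrap-around indices, and deciding which correction lands in $\mathcal{V}$ versus $\mathcal{W}$, is the delicate step. I would organize it by splitting $\ell$ into residue classes mod $4$ and treating $\ell<2m$ and $\ell\ge 2m$ separately.
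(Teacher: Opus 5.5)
Your plan is the paper's own proof: the same term-by-term specialization of \eqref{T_1'}, \eqref{T_2'} and \eqref{S'}. Everything you compute or predict is correct, including the $2m\gamma_1$ coming from the odd-$\ell$ terms of $\mathcal{V}_{12}$ and $\mathcal{W}_{1j}=mB_j\gamma_1-2B_j\gamma_2$. The bookkeeping you flag as delicate becomes immediate once you write the closed form $f_1(a_1,(u,v))=\mathrm{r}_2(u)\mathfrak{f}_{11}+\mathrm{r}_{d_2}(v)\mathfrak{f}_{12}+\lfloor \mathrm{r}_{d_2}(v)/2\rfloor\gamma_1+\mathrm{r}_2(u)\mathrm{r}_2(v)\gamma_1$, which is the paper's \eqref{formula general para f}. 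From it: the floor terms in $\mathcal{W}_{1j}$ total $2m(m-1)\gamma_1$ and are killed by $B_j$; the cross term $\mathrm{r}_2\bigl(\binom{\ell+1}{2}\bigr)\mathrm{r}_2(\ell)$ equals $1$ exactly for $\ell\equiv 1\pmod 4$, giving $m$; and $\alpha_{\gamma_1,\gamma_2}(a_1,\bar a_{1\ell})=\alpha_2(\gamma_2)(1,\mathrm{r}_{d_2}(\ell))$ is nonzero only for $\ell=2m-1$ and $\ell=4m-1$, giving $-2B_j\gamma_2$.
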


\begin{proof} We first compute $T_1(\gamma_1,\gamma_2, \mathfrak{f}_{11}, \mathfrak{f}_{12})$. By~\eqref{1a_j cdot e_i'} and~\eqref{alpha_{gamma_1,gamma_2}}, we have $\mathcal{U}_{11}=0$ and
$$
\mathcal{U}_{12} = \sum_{k=1}^{d_2-1}\alpha_1(\gamma_1)(1,k)+\sum_{k=1}^{d_2-1}\alpha_2(\gamma_2)(1,k) - \gamma_2= m\gamma_1.
$$
The formula for $T_1(\gamma_1,\gamma_2, \mathfrak{f}_{11}, \mathfrak{f}_{12})$ follows from~\eqref{T_1'} and these facts. We  next compute $T_2(\gamma_1,\gamma_2,\mathfrak{f}_{11},\mathfrak{f}_{12})$. For~this we will need to calculate a general formula for $f_1(a_1,x)$. Let $\lambda_1,\lambda_2\in \mathbb{Z}$. Since, by~\eqref{1a_j cdot e_i'} and~\eqref{alpha_{gamma_1,gamma_2}},
$$
\sum_{\ell =1}^{\mathrm{r}_2(\lambda_1)-1} \alpha_{\gamma_1,\gamma_2} (a_1\cdot e_1,\ell (a_1\cdot e_1)) = 0\quad\text{and}\quad \sum_{\ell=1}^{\mathrm{r}_{d_2}(\lambda_2)-1} \alpha_{\gamma_1,\gamma_2}(a_1\cdot e_2,\ell (a_1\cdot e_2)) = \left\lfloor \frac{\mathrm{r}_{d_2}(\lambda_2)}{2}\right\rfloor \gamma_1,
$$
by~\cite{GGV2}*{Identities~(2.17) and~(2.18)}, we have
\begin{equation}\label{formula general para f}
\begin{aligned}
f_1(a_1,(\lambda_1,\lambda_2))&= \mathrm{r}_2(\lambda_1)\mathfrak{f}_{11}+\mathrm{r}_{d_2}(\lambda_2)\mathfrak{f}_{12}+\left\lfloor \frac{\mathrm{r}_{d_2}(\lambda_2)}{2}\right\rfloor \gamma_1+ \alpha_{\gamma_1,\gamma_2}((\lambda_1,0),(\lambda_2,\lambda_2))\\
&=\mathrm{r}_2(\lambda_1) \mathfrak{f}_{11} +\mathrm{r}_{d_2}(\lambda_2)\mathfrak{f}_{12}+\left\lfloor \frac{\mathrm{r}_{d_2}(\lambda_2)}{2}\right\rfloor \gamma_1+\mathrm{r}_2(\lambda_1)\mathrm{r}_2(\lambda_2) \gamma_1
\end{aligned}
\end{equation}
Since, by~\eqref{potencia} and~\eqref{def de punto}, we have $a_1^{\times \ell}\cdot e_1 = e_1$ and $a_1^{\times \ell}\cdot e_2 = e_2-\ell e_1$, from this it follows immediately that
\begin{align}
&\mathcal{V}_{11}=\sum_{\ell=0}^{4m-1} f_1(a_1,a_1^{\times \ell}\cdot e_1) = 4m \mathfrak{f}_{11}\label{primer sumando j 1}
\shortintertext{and}
&\mathcal{V}_{12}= \sum_{\ell=0}^{4m-1} f_1(a_1,a_1^{\times \ell}\cdot e_2)=\sum_{\ell=0}^{4m-1} f_1(a_1,-\ell e_1+e_2) =2m \mathfrak{f}_{11}+4m \mathfrak{f}_{12}+2m \gamma_1.\label{primer sumando j 2}
\end{align}
Our next aim is to determine $\mathcal{W}_{1j}$. For this we first compute $\sum_{\ell=1}^{4m-1} f_1(a_1,\bar{a}_{1\ell})\Yleft e_j$. Since $\bar{a}_{1\ell} = \binom{\ell+1}{2}e_1+\ell e_2$, by equality~\eqref{formula general para f}, we have
\begin{equation*}
\sum_{\ell=1}^{4m-1} f_1(a_1,\bar{a}_{1\ell})\Yleft e_j = \sum_{\ell=1}^{4m-1}\left(\mathrm{r}_2\left(\binom{\ell+1}{2}\right) B_j \mathfrak{f}_{11}+\mathrm{r}_{d_2}(\ell) B_j \mathfrak{f}_{12}+\left\lfloor\frac{\mathrm{r}_{d_2}(\ell)}{2}\right\rfloor B_j \gamma_1+\mathrm{r}_2\left(\binom{\ell+1}{2}\right)\mathrm{r}_2(\ell)B_j \gamma_1\right).
\end{equation*}
We claim that the sum of the first three terms vanish. To check this we note that
$$
\mathrm{r}_2\left(\binom{\ell+1}{2}\right)=\mathrm{r}_2\left(\frac{\ell(\ell+1)}{2}\right)=
\begin{cases}
0& \text{if }\ell \equiv 0 \mod 4,\\
1& \text{if }\ell \equiv 1 \mod 4,\\
1& \text{if }\ell \equiv 2 \mod 4,\\
0& \text{if }\ell \equiv 3 \mod 4,
\end{cases}
$$
from which we obtain
$$
\sum_{\ell=1}^{4m-1} \mathrm{r}_2\left(\binom{\ell+1}{2}\right) B_j \mathfrak{f}_{11}=2m B_j \mathfrak{f}_{11}=0,
$$
since $2B_1=0$ and $2mB_2=0$. For the second term we also obtain
$$
\sum_{\ell=1}^{4m-1} \mathrm{r}_{d_2}(\ell)B_j  \mathfrak{f}_{12} = 2\sum_{\ell=1}^{2m-1} \ell B_j \mathfrak{f}_{12}=2\binom{2m}{2} B_j \mathfrak{f}_{12}=2m(2m-1) B_j \mathfrak{f}_{12}=0.
$$
The third term yields
$$
\sum_{\ell=1}^{4m-1} \left\lfloor\frac{\mathrm{r}_{d_2}(\ell)}{2}\right\rfloor B_j\gamma_1 = 2 \sum_{\ell=1}^{2m-1} \left\lfloor \frac{\ell}{2}\right\rfloor B_j \gamma_1=4\sum_{k=1}^{m-1} k B_j \gamma_1=4\binom{m}{2} B_j \gamma_1=2m(m-1)B_j \gamma_1=0,
$$
which finishes the proof of the claim. Since
$$
\mathrm{r}_2\left(\binom{\ell+1}{2}\right)\mathrm{r}_2(\ell)=\begin{cases}
0& \text{if }\ell \equiv 0 \mod 4,\\
1& \text{if }\ell \equiv 1 \mod 4,\\
0& \text{if }\ell \equiv 2 \mod 4,\\
0& \text{if }\ell \equiv 3 \mod 4,
\end{cases}
$$
we obtain
\begin{equation}\label{segundo sumando en T2}
\sum_{\ell=1}^{4m-1} f_1(a_1,\bar{a}_{1\ell})\Yleft e_j = \sum_{\ell=1}^{4m-1} \mathrm{r}_2\left(\binom{\ell+1}{2}\right)\mathrm{r}_2(\ell) B_j\gamma_1 = mB_j \gamma_1.
\end{equation}
Adding \eqref{segundo sumando en T2} to
$$
- \sum_{\ell=1}^{4m-1} \alpha_{\gamma_1,\gamma_2}(a_1,\bar{a}_{1\ell})\Yleft e_j= - B_j \sum_{\ell=1}^{4m-1} \alpha_{\gamma_1,\gamma_2} \left(e_2,\binom{\ell+1}{2}e_1+\ell e_2\right) = - B_j \sum_{\ell=1}^{4m-1} \alpha_1(\gamma_2)\left(1,\ell\right) = -2B_j \gamma_2,
$$
we obtain
\begin{equation}\label{W1j}
\mathcal{W}_{1j}=mB_j \gamma_1 - 2B_j \gamma_2.
\end{equation}
The formula for $T_2(\gamma_1,\gamma_2, \mathfrak{f}_{11}, \mathfrak{f}_{12})$ follows immediately from~\eqref{T_2'}, \eqref{primer sumando j 1}, \eqref{primer sumando j 2} and~\eqref{W1j} (recall that $2B_1=0$). Finally, the formula for $S(t_1,t_2)$ is obtained straightforwardly from~\eqref{S'}.
\end{proof}
	
\begin{proposition}\label{ultimo calculo} Under the hypotheses of Proposition~\ref{maps Ti caso d1=2 d202m con m impar},
\begin{equation}\label{cohomologia caso d1=2, d2=2m caso general}
\ho_{\blackdiamond,\Yleft}^2(H^{2,d_2}_1,I)\simeq \frac{I_m \oplus \ker(2B_2) \oplus I_2}{\ima \wt{S}_1},
\end{equation}
where $\wt{S}_1\colon I\to I^3$ is the map defined by $\wt{S}_1(t)\coloneqq (2B_2t,2mt,B_1t)$.
\end{proposition}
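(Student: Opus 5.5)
The plan is to compute the quotient $\frac{\ker T_1\cap\ker T_2}{\ima S}$ (recall that $T_3=0$ since $s=1$) directly from the formulas in Proposition~\ref{maps Ti caso d1=2 d202m con m impar}, which by~\eqref{formula00} is isomorphic to $\ho_{\blackdiamond,\Yleft}^2(H^{2,d_2}_1,I)$. Throughout I will use $2B_1=2mB_2=0$, $B_1=-B_2^2$, $B_2^3=0$ and that $m$ is odd.

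\textbf{Step 1: simplify the kernel conditions.} The vanishing of $T_1$ gives $2\mathfrak{f}_{11}=0$ and $2m\mathfrak{f}_{12}+m\gamma_1=0$. Given $2\mathfrak{f}_{11}=0$, the first coordinate of $T_2$ reduces to $mB_1\gamma_1=B_1\gamma_1$, since $2B_1=0$ and $m$ is odd. So it vanishes if and only if $B_1\gamma_1=0$. For the second coordinate of $T_2$ I will rewrite it as
$$
2m\mathfrak{f}_{11}+2(2m\mathfrak{f}_{12}+m\gamma_1)+mB_2\gamma_1-2B_2\gamma_2 .
$$
On $\ker T_1$ this equals $mB_2\gamma_1-2B_2\gamma_2$.

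\textbf{Step 2: change of variables.} I will apply the invertible map $\phi\colon I^4\to I^4$ given by $(\gamma_1,\gamma_2,\mathfrak{f}_{11},\mathfrak{f}_{12})\mapsto(g,\gamma_2,\mathfrak{f}_{11},\mathfrak{f}_{12})$ with $g\coloneqq\gamma_1+2\mathfrak{f}_{12}$. In these coordinates the condition $m(2\mathfrak{f}_{12}+\gamma_1)=0$ reads $g\in I_m$. The remaining conditions then become automatic or simplify:
\begin{itemize}
\item Using $2B_1=0$, we get $B_1\gamma_1=B_1g$, and $B_1g=0$ because $mg=0$ with $m$ odd while $2B_1g=0$.
\item Using $2mB_2=0$, we get $mB_2\gamma_1=mB_2g-2mB_2\mathfrak{f}_{12}=B_2(mg)=0$.
\end{itemize}
Hence the last condition collapses to $2B_2\gamma_2=0$. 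Therefore $\phi$ identifies $\ker T_1\cap\ker T_2$ with $I_m\oplus\ker(2B_2)\oplus I_2\oplus I$, with coordinates $(g,\gamma_2,\mathfrak{f}_{11},\mathfrak{f}_{12})$.

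\textbf{Step 3: transport $\ima S$ and quotient.} Applying $\phi$ to $S(t_1,t_2)$, the new $g$-coordinate is
$$
2t_1+2(-t_1+B_1t_2+B_2t_2)=2B_2t_2 ,
$$
again using $2B_1=0$. So $\phi(S(t_1,t_2))=(2B_2t_2,\,2mt_2,\,B_1t_2,\,-t_1+(B_1+B_2)t_2)$. Since $t_1$ appears only in the last coordinate, and there with coefficient $-1$, the image of $S$ surjects onto the free factor $I$ in the $\mathfrak{f}_{12}$-slot. Quotienting that slot out by choosing $t_1$ appropriately, we obtain
$$
\frac{I_m\oplus\ker(2B_2)\oplus I_2}{\ima\wt{S}_1},\qquad \wt{S}_1(t)=(2B_2t,\,2mt,\,B_1t).
$$
This is~\eqref{cohomologia caso d1=2, d2=2m caso general}. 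As a consistency check, these image elements do lie in the kernel: $m\cdot 2B_2t=0$, $2B_2(2mt)=0$ and $2B_1t=0$.

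The main obstacle is Step 2, namely finding the substitution $g=\gamma_1+2\mathfrak{f}_{12}$ that decouples $\gamma_1$ from $\mathfrak{f}_{12}$. One must then check carefully that the conditions $B_1\gamma_1=0$ and $mB_2\gamma_1=2B_2\gamma_2$ really reduce to $2B_2\gamma_2=0$; this uses the parity of $m$ together with $2B_1=2mB_2=0$. The remaining steps are routine bookkeeping.
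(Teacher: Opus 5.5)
Your proposal is correct and is essentially the paper's proof. The paper uses $\phi(\gamma_1,\gamma_2,\mathfrak{f}_{11},\mathfrak{f}_{12})=(\gamma_1-2\mathfrak{f}_{12},\gamma_2,\mathfrak{f}_{11},\mathfrak{f}_{12})$, the inverse of your substitution $g=\gamma_1+2\mathfrak{f}_{12}$, and obtains $\ker\wt{T}_1\cap\ker\wt{T}_2=I_m\oplus\ker(2B_2)\oplus I_2\oplus I$ and $\ima\wt{S}=\ima\wt{S}_1\oplus I$ exactly as you do (a cosmetic difference: in those coordinates $mB_1\gamma_1=B_1(m\gamma_1)=0$ is immediate, so your parity argument for $B_1g=0$ is not needed).
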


\begin{proof} Define the invertible map $\phi\colon I^4\to I^4$ by
$$
\phi(\gamma_1,\gamma_2,\mathfrak{f}_{11},\mathfrak{f}_{12}) \coloneqq (\gamma_1-2\mathfrak{f}_{12},\gamma_2,\mathfrak{f}_{11},\mathfrak{f}_{12}),
$$
and set $\wt{T}_i=T_i\xcirc \phi$ and $\wt{S}= \phi^{-1}\xcirc S$, where the $T_i$'s and $S$ are as in Proposition~\ref{maps Ti caso d1=2 d202m con m impar}. By~\eqref{formula00}, we have
\begin{equation*}
\ho_{\blackdiamond,\Yleft}^2(H^{2,d_2}_1,I)\simeq \frac{\ker T_1\cap\ker T_2}{\ima S} \simeq \frac{\ker \wt{T}_1\cap\ker \wt{T}_2}{\ima \wt{S}},
\end{equation*}
where the last isomorphism is induced by $\phi^{-1}$. A straightforward computation, using that $2B_1 = 2mB_2 =0$, shows that
\begin{align*}
&\wt{T}_1(\gamma_1,\gamma_2,\mathfrak{f}_{11},\mathfrak{f}_{12})  =(2 \mathfrak{f}_{11}, m\gamma_1),  \\
&\wt{T}_2((\gamma_1,\gamma_2,\mathfrak{f}_{11},\mathfrak{f}_{12}))  =(4m  \mathfrak{f}_{11}+mB_1\gamma_1, 2m\mathfrak{f}_{11}+2m \gamma_1+mB_2 \gamma_1- 2B_2 \gamma_2)
\shortintertext{and}
&\wt{S}(t_1,t_2)= (2B_2 t_2, 2m t_2, B_1 t_2,-t_1+ B_1 t_2+B_2 t_2).
\end{align*}	
Thus
$$
\ker \wt{T}_1\cap\ker \wt{T}_2 = I_m \oplus \ker(2B_2) \oplus I_2\oplus I.
$$
Since $\ima \wt{S}=\ima \wt{S}_1\oplus I$, this gives~\eqref{cohomologia caso d1=2, d2=2m caso general}.
\end{proof}

\begin{remark} If there exists $u$ odd such that $uI = 0$, then $B_1 = 0$, because $B_1 = \gcd(2,u) B_1 = 0$.
\end{remark}

\begin{example} Let $I\coloneqq \mathds{Z}_{2m^2}$ and let $B_2$ be the multiplication by $m$.
Then
$$
B_1= - B_2^2,\quad I_m=2m I, \quad \ker(2B_2)= mI, \quad I_2=m^2 I\quad\text{and}\quad \wt{S}_1(t)=(2mt,2mt,-m^2t).
$$
Thus,
\begin{equation}\label{cohomologia parcial}
\ho_{\blackdiamond,\Yleft}^2(H^{2,d_2}_1,I)\simeq \frac{I_m \oplus \ker(2B_2) \oplus I_2}{\ima \wt{S}_1} = \frac{2mI\oplus mI\oplus m^2I}{\ima \wt{S}_1}.
\end{equation}
We claim that the map
$$
\theta\colon \mathbb{Z}_{2m}\to \frac{2mI\oplus mI\oplus m^2I}{\ima \wt{S}_1},
$$
given by $\theta(x)\coloneqq \ov{(0,mx,0)}$, where $\ov{(0,mx,0)}$ denotes the class of $(0,mx,0)$ in $\frac{2mI\oplus mI\oplus m^2I}{\ima \wt{S}_1}$, is an isomorphism. Indeed, since $|\ker \wt{S}_1| = |2mI| = m$ and $|2mI\oplus mI\oplus m^2I| = 4m^2$, we have
$$
\left|\frac{2mI\oplus mI\oplus m^2I}{\ima \wt{S}_1}\right| = 2m.
$$
Hence to finish the proof of the claim we only must prove that $\theta$ is injective, which can be easily checked.
\end{example}

\begin{remark}\label{remark1u} Assume that we are under the hypothesis of Proposition~\ref{ultimo calculo}. The map $\psi\colon I^3\to I^4$, defined by
$$
\psi (\gamma_1,\gamma_2,\mathfrak{f}_{11})\coloneqq \phi (\gamma_1,\gamma_2,\mathfrak{f}_{11},0) =(\gamma_1-2\mathfrak{f}_{12},\gamma_2,\mathfrak{f}_{11},0),
$$
where $\phi$ is as in the proof of Proposition~\ref{ultimo calculo}, induces an isomorphism
$$
\ov{\psi}\colon \frac{I_m \oplus \ker(2B_2) \oplus  I_2}{\ima \wt{S}_1}\longrightarrow \frac{\ker T_1\cap \ker T_2}{\ima S}.
$$
Let $Z\subseteq I_m \oplus \ker(2B_2) \oplus I_2$ be such that the canonical map $Z\to \frac{I_m \oplus \ker(2B_2) \oplus I_2}{\ima \wt{S}_1}$ is a bijection and let~$T$ be as in equality~\eqref{formula0}, so that
$$
T(\gamma_1,\gamma_2,\mathfrak{f}_{11},\mathfrak{f}_{12}) = (\alpha_{\gamma_1,\gamma_2}, f_{\mathfrak{f}_{11},\mathfrak{f}_{12},\gamma_1,\gamma_2}).
$$
Clearly $T(\psi(Z))$ is a complete family of representatives of~$2$-cocycles modulo coboundaries in $\wh{\mathcal{C}}_N^*(H^{2,d_2}_1,I)$. Hence, computing $T$ becomes essential. Since $\alpha_{\gamma_1,\gamma_2}$ is already known from~\eqref{alpha_{gamma_1,gamma_2}}, the determination of $T$ reduces to establishing the following proposition.
\end{remark}

\begin{proposition} Assume that we are under the hypothesis of Proposition~\ref{ultimo calculo}, and let $T_1$ and $T_2$ be as in Pro\-position~\ref{maps Ti caso d1=2 d202m con m impar}. For $(\gamma_1,\gamma_2,\mathfrak{f}_{11},\mathfrak{f}_{12})\in \ker T_1\cap \ker T_2$, let $f\coloneqq f_{\mathfrak{f}_{11},\mathfrak{f}_{12},\gamma_1,\gamma_2}$. For $0\le\ell< r_1$ and $u,v\in \mathbb{Z}$, we have
\begin{align*}
f(a_1^{\times \ell},(u,v)) & = L_u^v(\ell) \mathfrak{f}_{11} + \ell\mathrm{r}_{d_2}(v) \mathfrak{f}_{12} + \ell \left\lfloor\frac{\mathrm{r}_{d_2}(v)}{2}\right\rfloor\gamma_1 + \mathrm{r}_2(v)L_u^v(\ell) \gamma_1 + C_u^v(\ell) \left(\left\lfloor\frac{\ell+1}{4}\right\rfloor+L(\ell)\right) \mathfrak{f}_{11}\\
& + C_u^v(\ell) R(\ell) \mathfrak{f}_{12} + C_u^v(\ell) S(\ell)\gamma_1 + C_u^v(\ell) L(\ell) \gamma_1 + C_u^v(\ell)[\ell\ge d_2]\gamma_2,
\end{align*}
where
\begin{alignat*}{2}
& C_u^v(\ell)\coloneqq (u-\ell v)B_1+vB_2,&&\qquad L_u^v(\ell) \coloneqq \begin{cases} \ell \mathrm{r}_2(u) & \text{if $v$ is even,}\\ \left\lfloor \frac{\ell+\mathrm{r}_2(u)}{2}\right\rfloor & \text{if $v$ is odd,} \end{cases}\\
& L(\ell)\coloneqq \left\lfloor\frac{\ell+1}{4} \right\rfloor +\left\lfloor\frac{\mathrm{r}_4(\ell+1)}{3}\right\rfloor,&&\qquad R(\ell)\coloneqq \begin{cases} \binom{\ell}{2} & \text{if $\ell\le d_2$,}\\ \binom{d_2}{2} + \binom{\ell-d_2}{2}& \text{if $\ell>d_2$,}\end{cases}\\
\end{alignat*}
and
$$
S(\ell) \coloneqq \begin{cases} \left\lfloor \frac{\ell-1}{2}\right\rfloor \left\lceil \frac{\ell-1}{2}\right\rceil & \text{if $\ell\le d_2$,}\\  \frac{(d_2-2)d_2}{4}+\left\lfloor \frac{\ell-d_2-1}{2}\right\rfloor \left\lceil\frac{\ell-d_2-1}{2}\right\rceil & \text{if $\ell>d_2$.}\end{cases}
$$
\end{proposition}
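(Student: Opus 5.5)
The plan is to follow the pattern of Propositions~\ref{calculo de f} and~\ref{1calculo de f}, adapted to the cyclic case $s=1$. Since $A=\Ide$, the action $\blackdiamond$ is trivial. By \cite{GGV2}*{Proposition~2.12}, the value $f(a_1^{\times\ell},(u,v))$ will then be a sum over $0\le j<\ell$ of $f(a_1,a_1^{\times j}\cdot(u,v))$, plus correction terms of the form $(\cdots)\Yleft$ applied to the appropriate arguments. The $\Yleft$ is nonzero here, and this is what produces the factor $C_u^v(\ell)=(u-\ell v)B_1+vB_2$. Indeed, $y\Yleft(a,a')=(aB_1+a'B_2)y$, and by \eqref{potencia} and \eqref{def de punto} we have $a_1^{\times\ell}\cdot(u,v)=(u-\ell v,v)$. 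So the first step is to write down the exact recursion from \cite{GGV2} for $f(a_1^{\times(\ell+1)},x)$ in terms of $f(a_1^{\times\ell},a_1\cdot x)$, $f(a_1,x)$, $f_1(a_1,\bar a_{1\ell})$ and $\alpha_{\gamma_1,\gamma_2}(a_1,\bar a_{1\ell})$, with the last two twisted by $\Yleft(a_1^{\times\ell}\cdot x)$.

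The second step evaluates the ``main'' part $\sum_{j<\ell} f_1(a_1,(u-jv,v))$ using formula \eqref{formula general para f}. The summand is
$$\mathrm{r}_2(u-jv)\mathfrak{f}_{11}+\mathrm{r}_{d_2}(v)\mathfrak{f}_{12}+\left\lfloor\tfrac{\mathrm{r}_{d_2}(v)}{2}\right\rfloor\gamma_1+\mathrm{r}_2(u-jv)\mathrm{r}_2(v)\gamma_1.$$
Summing over $j$, the quantity $\sum_{j<\ell}\mathrm{r}_2(u-jv)$ is $\ell\,\mathrm{r}_2(u)$ when $v$ is even and $\lfloor(\ell+\mathrm{r}_2(u))/2\rfloor$ when $v$ is odd. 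This is $L_u^v(\ell)$, and it gives the first four terms of the statement.

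The third step handles the $\Yleft$-corrections. Using $\bar a_{1\ell}=\binom{\ell+1}{2}e_1+\ell e_2$ and \eqref{formula general para f} again, the correction terms telescope into $C_u^v(\ell)$ multiplied by the partial sums
$$\sum_{k<\ell} f_1(a_1,\bar a_{1k})-\sum_{k<\ell}\alpha_{\gamma_1,\gamma_2}(a_1,\bar a_{1k}).$$
Componentwise, these partial sums are:
\begin{itemize}
\item $\sum_k \mathrm{r}_2\binom{k+1}{2}$, which by the mod~$4$ table in the proof of Proposition~\ref{maps Ti caso d1=2 d202m con m impar} equals $\lfloor(\ell+1)/4\rfloor+L(\ell)$;
\item $\sum_k \mathrm{r}_{d_2}(k)$, which equals $R(\ell)$, split at $\ell=d_2$ because $\mathrm{r}_{d_2}$ wraps around;
\item $\sum_k\lfloor \mathrm{r}_{d_2}(k)/2\rfloor$, which equals $S(\ell)$;
\item $\sum_k \mathrm{r}_2\binom{k+1}{2}\mathrm{r}_2(k)$, which contributes $L(\ell)$ through the $\ell\equiv1\pmod 4$ count;
\item the $\alpha$-sum, $\sum_k\alpha_2(\gamma_2)(1,k)$, which contributes $\gamma_2$ exactly when $\ell\ge d_2$.
\end{itemize}

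The main obstacle is the bookkeeping in this third step. Two things must be checked carefully. First, the $\Yleft$-factor must really be the single operator $C_u^v(\ell)$ evaluated at the end, rather than a varying $C_u^v(k)$ inside the sum. I would verify this by a direct induction on $\ell$, using $B_1B_2=0$, $B_2^3=0$, $B_1=-B_2^2$ and $2B_1=0$ to kill the cross terms. Second, the floor and ceiling closed forms $L$, $R$, $S$ must be established; I would check them by separate elementary inductions on $\ell$.
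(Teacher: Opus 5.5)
Your plan is essentially the paper's proof. There too $f(a_1^{\times\ell},(u,v))$ is expanded via \cite{GGV2}*{Proposition~2.37} as $\sum_{j=0}^{\ell-1}f(a_1,(u-jv,v))$, plus $C_u^v(\ell)$ times a sum over $1\le k\le\ell-1$ of $f$ and $\alpha_{\gamma_1,\gamma_2}$ at $(a_1,\bar a_{1k})=\bigl(a_1,(\binom{k+1}{2},k)\bigr)$. Everything is then evaluated with~\eqref{formula general para f}, and your closed forms for the partial sums agree with $L_u^v(\ell)$, $L(\ell)$, $R(\ell)$, $S(\ell)$ and $[\ell\ge d_2]$.

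There is one sign you must reconcile. You take the correction to be $f_1-\alpha_{\gamma_1,\gamma_2}$, which is the sign appearing in $\mathcal{W}_{1j}$ in Proposition~\ref{maps Ti caso d1=2 d202m con m impar}; the paper's proof writes $f+\alpha_{\gamma_1,\gamma_2}$. Yet you say the $\alpha$-sum contributes $+\gamma_2$. With your sign it contributes $-C_u^v(\ell)[\ell\ge d_2]\gamma_2$, which differs from the stated term by $2C_u^v(\ell)[\ell\ge d_2]\gamma_2$.

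This difference vanishes, but only because of the kernel hypothesis, so you need to add this step:
\begin{itemize}
\item $2B_1=0$.
\item On $\ker T_1$ you have $2\mathfrak{f}_{11}=0$ and $m\gamma_1=-2m\mathfrak{f}_{12}$, so $mB_2\gamma_1=-d_2B_2\mathfrak{f}_{12}=0$.
\item Hence the second coordinate of $T_2$ reduces to $-2B_2\gamma_2$, giving $2B_2\gamma_2=0$.
\end{itemize}
Together these give $2C_u^v(\ell)\gamma_2=0$.

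Your worry that the $\Yleft$-factor might vary with $k$ is unfounded, and none of the nilpotency relations are needed. In the step $\ell\to\ell+1$ the twist is at $(a_1\times a_1^{\times\ell})\cdot x=a_1^{\times(\ell+1)}\cdot x$, not at $a_1^{\times\ell}\cdot x$ as you wrote. Using $(h\times h')\cdot x=h'\cdot(h\cdot x)$, every correction produced when unwinding the recursion is evaluated at
\[
a_1^{\times(\ell-i)}\cdot\bigl(a_1^{\times i}\cdot(u,v)\bigr)=a_1^{\times\ell}\cdot(u,v)=(u-\ell v,v).
\]
Since $A=\Ide$, no products of $B_1,B_2$ ever arise, so the factor is the single operator $C_u^v(\ell)$ automatically.
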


\begin{proof} By~\eqref{def de punto} and~\eqref{potencia}, we have
\begin{equation}\label{formula3''}
a_1^{\times j} = \left(\binom{j}{2},j\right)\qquad\text{and}\qquad a_1^{\times j}\cdot (u,v) = (u-j v,v).
\end{equation}
Hence, by~\cite{GGV2}*{Proposition~2.37} and identity~\eqref{formula general para f},
\begin{align*}
f(a_1^{\times \ell},(u,v)) & = \sum_{j=1}^{\ell} f(a_1,(u+v-jv,v))+ C_u^v(\ell)\sum_{j=1}^{\ell-1} (f+\alpha_{\gamma_1,\gamma_2})\left(a_1,\left(\binom{\ell+1-j}{2},\ell-j\right)\right)\\
& = \sum_{i=1}^{\ell} \left(\mathrm{r}_2(u-iv) \mathfrak{f}_{11} + \mathrm{r}_{d_2}(v) \mathfrak{f}_{12} + \left\lfloor \frac{\mathrm{r}_{d_2}(v)}{2}\right\rfloor\gamma_1 + \mathrm{r}_2(u-iv)\mathrm{r}_2(v)\gamma_1\right)\\
& + C_u^v(\ell)\sum_{i=1}^{\ell-1} \left(\mathrm{r}_2\left(\binom{i+1}{2}\right) \mathfrak{f}_{11} + \mathrm{r}_{d_2}(i) \mathfrak{f}_{12} + \left\lfloor \frac{\mathrm{r}_{d_2}(i)}{2}\right\rfloor\gamma_1 + \mathrm{r}_2\left(\binom{i+1}{2}\right)\mathrm{r}_2(i)\gamma_1\right)\\
& + C_u^v(\ell)\sum_{i=1}^{\ell-1} \alpha_2(\gamma_2)(1,\mathrm{r}_{d_2}(i)).
\end{align*}
The formula in the statement follows from this by a direct computation.
\end{proof}

\subsubsection[Case \texorpdfstring{$\lambda$}{lambda} odd, \texorpdfstring{$d_1>2$}{d1>2} and \texorpdfstring{$v_2(d_1)=v_2(d_2)=1$}{v2(d1)=v2(d2)=1}]{Case \texorpdfstring{$\pmb{\lambda}$}{lambda} odd, \texorpdfstring{$\pmb{d_1>2}$}{d1>2} and \texorpdfstring{$\pmb{v_2(d_1)=v_2(d_2)=1}$}{v2(d1)=v2(d2)=1}}
Assume that $\lambda$ is odd, $d_1=2m_1$ and $d_2=2m_2$ for some odd natural numbers $m_1,m_2$ with $1<m_1$ and $m_1\mid m_2$. Let $n$ and $s$ be as at the beginning of this section. By Re\-mark~\ref{caso v_2=1}, we have $s=n=2$, $r_1 = d_1/2$, $r_2 = 2d_2$, and we can take $a_1 = 2e_1\coloneqq (2,0)$ and $a_2 = e_2\coloneqq (0,1)$. Therefore, by~\eqref{def de punto},
\begin{equation}\label{a_j cdot e_i''}
a_i\cdot e_j = \begin{cases} e_j & \text{if $(i,j)\ne (2,2)$,}\\ e_2 -\lambda e_1 & \text{if $(i,j)=(2,2)$.}\end{cases}
\end{equation}
Hence, if we write $a_i=\lambda_{1i}e_1 + \lambda_{2i}e_2$ and  $a_i\cdot e_j=\Gamma^1_{ji}e_1 + \Gamma^2_{ji}e_2$ with $0\le \lambda_{ji}< d_i$ and $0\le \Gamma^k_{ji}< d_k$, then
$$
\lambda_{11} = 2,\quad \lambda_{21} =\lambda_{12} = 0,\quad \lambda_{22} = 1\quad\text{and}\quad \Gamma_{ji}^k = \begin{cases} d_1-\lambda & \text{if $(i,j,k) = (2,2,1)$,}\\ \delta_{jk} & \text{otherwise.}\end{cases}
$$
A straightforward computation, using~\cite{GGV2}*{Subsubsection~2.2.3} and these facts, shows that
$$
T_1,T_2\colon I^6\to I^4,\quad T_3\colon I^6\to I^2\quad\text{and}\quad S\colon I^2\to I^6,
$$
are given by
\begin{align}
& T_1(\gamma_1,\gamma_2, \mathfrak{f}_{11}, \mathfrak{f}_{12}, \mathfrak{f}_{21}, \mathfrak{f}_{22})= \left(d_1 \mathfrak{f}_{11} + \mathcal{U}_{11}, d_2 \mathfrak{f}_{12} + \mathcal{U}_{12}, d_1 \mathfrak{f}_{21} + \mathcal{U}_{21}, d_2 \mathfrak{f}_{22} + \mathcal{U}_{22}\right),\label{T1''}\\
& T_2(\gamma_1,\gamma_2, \mathfrak{f}_{11}, \mathfrak{f}_{12}, \mathfrak{f}_{21}, \mathfrak{f}_{22})= \left(\mathcal{V}_{11} + \mathcal{W}_{11}, \mathcal{V}_{12} + \mathcal{W}_{12}, \mathcal{V}_{21} + \mathcal{W}_{21}, \mathcal{V}_{22} + \mathcal{W}_{22}\right),\label{T2''}\\
& T_3(\gamma_1,\gamma_2, \mathfrak{f}_{11}, \mathfrak{f}_{12}, \mathfrak{f}_{21}, \mathfrak{f}_{22})= (F_1(a_2,a_1,e_1)-F_1(a_1,a_2,e_1),F_1(a_2,a_1,e_2)-F_1(a_1,a_2,e_2))\label{T3''}
\shortintertext{and}
& S(t_1,t_2)=\left(d_1t_1,d_2t_2,\mathcal{X}_{11},\mathcal{X}_{12},\mathcal{X}_{21},\mathcal{X}_{22} + \mathcal{Y}_{22}\right),\label{S''}
\end{align}
in which
\begin{itemize}
	
\item[-] $\mathcal{U}_{ij}\coloneqq \sum_{k=1}^{d_j-1}\alpha_{\gamma_1,\gamma_2}(a_i\cdot e_j,k (a_i\cdot e_j))-a_i\blackdiamond \gamma_j$,
	
\item[-] $\mathcal{V}_{ij}\coloneqq \sum_{\ell=0}^{r_i-1} a_i^{\times (r_i-1-\ell)}\blackdiamond f_1(a_i,a_i^{\times \ell}\cdot e_j)$,
	
\item[-] $\mathcal{W}_{ij}\coloneqq \sum_{\ell=1}^{r_i-1} a_i^{\times \ell}\blackdiamond f_1(a_i,\bar{a}_{i\ell})\Yleft e_j -\sum_{\ell=1}^{r_i-1} a_i^{\times (\ell+1)}\blackdiamond \alpha_{\gamma_1,\gamma_2}(a_i,\bar{a}_{i\ell})\Yleft e_j$, where $\bar{a}_{i\ell}\coloneqq a_i^{\times(\ell+1)}-a_i$,
	
\item[-] $\mathcal{X}_{ij}\coloneqq -t_j + a_i\blackdiamond t_j + \frac{2}{i}(e_j\blackdiamond t_i) \Yleft e_j$,
	
\item[-] $\mathcal{Y}_{22}\coloneqq (\lambda - d_1)t_1 + (d_1-\lambda)(e_2\blackdiamond t_2) \Yleft e_1$,

\item[-] $f_1\coloneqq f_1^{\mathfrak{f}_{11},\mathfrak{f}_{12},\mathfrak{f}_{21},\mathfrak{f}_{22}}$ and $F_1$ is as in~\cite{GGV2}*{Definition~2.25}.

\end{itemize}
By Proposition~\ref{caracterizacion tercer caso}, we know that the actions $\blackdiamond$ and $\Yleft$ are determined by maps~$A_1, A_2, B_1, B_2 \in \End(I)$ satisfying conditions~\eqref{primeras condiciones A_1, A_2, B1 y B2 tercer caso}--\eqref{terceras condiciones A_1, A_2, B1 y B2 tercer caso}. We assume furthermore that $A_1=A_2=\Ide$. Under this hypothesis, ~\eqref{primeras condiciones A_1, A_2, B1 y B2 tercer caso}--\eqref{terceras condiciones A_1, A_2, B1 y B2 tercer caso} reduce to
\begin{equation}\label{pep}
d_1 B_1=d_2 B_2=0,\quad B_2^2=-\lambda B_1, \quad B_1 B_2=0, \quad 4B_1^2=0\quad \text{and} \quad 2B_2 B_1=0.
\end{equation}
So $\lambda B_1^2 = - B_1B_2^2 = 0$ and $\lambda B_2B_1 = - B_2^3 = \lambda B_1B_2 = 0$. Since $\lambda$ is odd and $4B_1^2=2B_2B_1=0$, we conclude that
$$
B_1^2 = B_2B_1=0.
$$

\begin{proposition}\label{S y Ti cuando v2(d1)=2 y d1 ne 2} Under the previous hypothesis the maps $T_i$'s and $S$ become
\begin{align*}
& T_1(\gamma_1,\gamma_2, \mathfrak{f}_{11}, \mathfrak{f}_{12}, \mathfrak{f}_{21},\mathfrak{f}_{22})= \left(d_1 \mathfrak{f}_{11}, d_2 \mathfrak{f}_{12}, d_1 \mathfrak{f}_{21}, d_2 \mathfrak{f}_{22}+(d_2-d_2/d)\gamma_1\right)\\
& T_2(\gamma_1,\gamma_2, \mathfrak{f}_{11}, \mathfrak{f}_{12}, \mathfrak{f}_{21},\mathfrak{f}_{22})= \left(\mathcal{V}_{11} + \mathcal{W}_{11}, \mathcal{V}_{12} + \mathcal{W}_{12}, \mathcal{V}_{21} + \mathcal{W}_{21}, \mathcal{V}_{22} + \mathcal{W}_{22}\right),\\
& T_3(\gamma_1,\gamma_2,\mathfrak{f}_{11},\mathfrak{f}_{12},\mathfrak{f}_{21},\mathfrak{f}_{22})=\left(B_1  (2\mathfrak{f}_{21}-\mathfrak{f}_{12}), (d_1-\lambda)\mathfrak{f}_{11} + (B_2-\lambda B_1)(2\mathfrak{f}_{21}-\mathfrak{f}_{12})\right)
\shortintertext{and}
& S(t_1,t_2)=\left(d_1 t_1,d_2 t_2, 2B_1 t_1, 2B_2 t_1,B_1 t_2,(B_2-\lambda B_1)t_2-(d_1-\lambda)t_1\right),
\end{align*}
in which

\begin{itemize}
	
\item[-] $\mathcal{V}_{ij} = r_i\mathfrak{f}_{ij}$ if $(i,j)\ne (2,2)$ and $\mathcal{V}_{22}\coloneqq d_2(d-1)\lambda \mathfrak{f}_{21} + r_2\mathfrak{f}_{22}+ \left(r_2-\frac{r_2}{d}\right)\gamma_1$,
	
\smallskip

\item[-] $\mathcal{W}_{11} = - B_1\gamma_1$,

\item[-] $\mathcal{W}_{12} = r_1(r_1-1)B_2\mathfrak{f}_{11}- B_2\gamma_1$,

\item[-] $\mathcal{W}_{21} = \lambda \mathrm{r}_d\left(\binom{r_2+1}{3}\right) B_1 \mathfrak{f}_{21} + C_{r_2} B_1\gamma_1 - 2B_1\gamma_2$,

\item[-] $\mathcal{W}_{22} = \sum_{\ell=1}^{r_2-1} \mathrm{r}_{d_1}\left(\lambda \binom{\ell+1}{2} \right) B_2 \mathfrak{f}_{21} + C_{r_2} B_2\gamma_1 - 2B_2\gamma_2$,

\end{itemize}
where $d\coloneqq d_1/\lambda$ and $C_{r_2}\coloneqq \frac{d_2}{d}+\#\bigl\{\ell : 0 \le \ell \le r_2-1\text{ and } \mathrm{r}_d\bigl(\binom{\ell+1}{2}\bigr) + \mathrm{r}_d(-\ell)\ge d\bigr\}$.
\end{proposition}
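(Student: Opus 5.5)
The plan is to evaluate each term in \eqref{T1''}--\eqref{S''} directly. The inputs are \eqref{a_j cdot e_i''}, the power formula~\eqref{potencia} and the definition~\eqref{alpha_{gamma_1,gamma_2}} of $\alpha_{\gamma_1,\gamma_2}$. Throughout I use $A_1=A_2=\Ide$ and the relations~\eqref{pep}, together with their consequences $B_1^2=B_2B_1=B_1B_2=0$ and $B_2^2=-\lambda B_1$. Since $A_i=\Ide$, all the $\blackdiamond$-actions disappear and the term $a_i\blackdiamond\gamma_j$ cancels against the $-\gamma_j$ coming from the $\alpha$-sums.

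\emph{$T_1$.} For $(i,j)\ne(2,2)$ we have $a_i\cdot e_j=e_j$. In these cases the sum $\sum_k\alpha(e_j,ke_j)$ equals $\gamma_j$, which cancels with $-\gamma_j$, so $\mathcal U_{ij}=0$. For $(i,j)=(2,2)$ I argue exactly as in~\eqref{suma de alfas} and~\eqref{formula4}. There $-k\lambda$ runs through the multiples of $\lambda$ modulo $d_1$, and the count of $k<d_2$ with $d\nmid k$ gives $(d_2-d_2/d)\gamma_1$.

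\emph{$S$.} Here $\mathcal X_{ij}=\frac{2}{i}B_jt_i$. The extra term is $\mathcal Y_{22}=(\lambda-d_1)t_1-\lambda B_1t_2$, because $d_1B_1=0$. Assembling these gives the stated formula for $S$.

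\emph{$T_3$.} I would compute $F_1(a_i,a_j,e_k)$ from \cite{GGV2}*{Definition~2.25}, as in the proof of Proposition~\ref{prop1}. The difference from that case is that $a_1=2e_1$ now enters through $f_1(a_2,a_1)=2\mathfrak f_{21}$, and a twisted term $f_1(a_1,a_2\cdot e_2)=f_1(a_1,-\lambda e_1+e_2)$ appears. The latter is evaluated by \cite{GGV2}*{Identities~(2.17) and~(2.18)} and contributes $(d_1-\lambda)\mathfrak f_{11}$. The $\Yleft$-terms combine into $B_k(2\mathfrak f_{21}-\mathfrak f_{12})$. For $k=2$ the operator is $B_2$ twisted by $a_2\cdot e_2=e_2-\lambda e_1$, which yields $B_2-\lambda B_1$.

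\emph{$T_2$.} This is the main work. For $\mathcal V_{ij}$ with $(i,j)\ne(2,2)$ all the arguments equal $e_j$, so $\mathcal V_{ij}=r_i\mathfrak f_{ij}$. For $\mathcal V_{22}$ I use $a_2^{\times\ell}\cdot e_2=(-\lambda\ell,1)$ and proceed as in~\eqref{ultima para T2}, now summing over $\ell<r_2=2d_2$. The coefficient of $\mathfrak f_{21}$ is $\sum_\ell \mathrm r_{d_1}(-\lambda\ell)$. Its value is $\frac{r_2}{d}\lambda\binom d2$ per full period of length $d$, which equals $d_2(d-1)\lambda$. The $\gamma_1$-count is $r_2-r_2/d$.

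For $\mathcal W_{ij}$ I need $\bar a_{i\ell}$. We have $\bar a_{1\ell}=\ell a_1=(2\ell,0)$, so $f_1(a_1,\bar a_{1\ell})=\ell\mathfrak f_{11}$ up to $\alpha$-corrections. Summing these gives $\binom{r_1}{2}\mathfrak f_{11}$; after applying $\Yleft e_j$, the factor $\binom{r_1}{2}$ combines with $d_1B_1=0$ into the stated expressions. The $\alpha$-term gives $-B_j\gamma_1$. For $i=2$ we have $\bar a_{2\ell}=(\lambda\binom{\ell+1}{2},\ell)$, and by~(2.17)--(2.18) $f_1(a_2,\bar a_{2\ell})$ has the form $\mathrm r_{d_1}(\lambda\binom{\ell+1}{2})\mathfrak f_{21}+\ell\mathfrak f_{22}+(\alpha\text{-terms})$. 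Applying $\Yleft e_j$, the $\mathfrak f_{22}$-part sums to a multiple of $\binom{r_2}{2}B_j$, which vanishes since $d_2\mid\binom{2d_2}{2}$. For $j=1$ I reduce the $\mathfrak f_{21}$-part modulo $d$. This uses $\lambda d=d_1$, $d_1B_1=0$, and the identity $\sum_{\ell}\binom{\ell+1}{2}=\binom{r_2+1}{3}$. The $\gamma_1$-contributions are counted by $C_{r_2}$, and the second $\alpha$-sum contributes $-2B_j\gamma_2$, since $\alpha_2(\gamma_2)(1,\ell)$ fires twice over $\ell<2d_2$.

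I expect the hardest step to be the exact bookkeeping of these $\gamma_1$-carry counts. The count $C_{r_2}$ mixes wraps coming from $\bar a_{2\ell}$ with wraps coming from $\alpha(a_2,\bar a_{2\ell})$, and each must be checked against the definition of $\alpha_1$ with first coordinates reduced modulo $d_1$.
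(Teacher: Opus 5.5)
Your route is the paper's. You evaluate $\mathcal U_{ij}$, $\mathcal V_{ij}$, $\mathcal W_{ij}$, $\mathcal X_{ij}$, $\mathcal Y_{22}$ and $F_1$ directly with $A_1=A_2=\Ide$ and~\eqref{pep}, and your treatment of $T_1$, $S$, $T_3$, the $\mathcal V_{ij}$ and the $\mathfrak f$-parts of $\mathcal W_{2j}$ agrees with it. The step that fails is $\mathcal W_{1j}$. Recall that $\mathfrak f_{11}=f_1(a_1,e_1)$, not $f_1(a_1,a_1)$, and that $\bar a_{1\ell}=2\ell e_1$ with $2\ell<d_1$. So Identities~(2.17)--(2.18) of~\cite{GGV2} give exactly $f_1(a_1,\bar a_{1\ell})=\mathrm r_{d_1}(2\ell)\mathfrak f_{11}=2\ell\mathfrak f_{11}$, with no carries. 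Hence $\sum_{\ell=1}^{r_1-1}f_1(a_1,\bar a_{1\ell})=r_1(r_1-1)\mathfrak f_{11}$, not $\binom{r_1}{2}\mathfrak f_{11}$. Your value $\ell\mathfrak f_{11}$ is the one valid when $a_1=e_1$, as in Proposition~\ref{prop1}.

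Your claim that $\binom{r_1}{2}$ ``combines with $d_1B_1=0$ into the stated expressions'' is therefore false. For $j=2$, the relation $d_1B_1=0$ says nothing about $B_2$, and $\binom{r_1}{2}B_2\mathfrak f_{11}\neq r_1(r_1-1)B_2\mathfrak f_{11}$ in general. For instance, take $\lambda=1$, $d_1=d_2=6$, $I=\mathds{Z}_6^2$, $B_1=0$ and $B_2(x,y)\coloneqq(y,0)$; then $3B_2\neq 0=6B_2$. For $j=1$, the relation $2r_1B_1=0$ kills $r_1(r_1-1)B_1$ because $r_1-1$ is even, but it forces $\binom{r_1}{2}B_1=0$ only when $r_1\equiv 1\pmod 4$. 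With the factor $2$ restored, $\mathcal W_{11}=-B_1\gamma_1$ and $\mathcal W_{12}=r_1(r_1-1)B_2\mathfrak f_{11}-B_2\gamma_1$ follow at once. The $-B_j\gamma_1$ comes from the single carry of $\alpha_1(\gamma_1)(2,2\ell)$ at $\ell=r_1-1$.

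Second, the $\gamma_1$-bookkeeping in $\mathcal W_{2j}$, which you only assert, is not a formality. The $\gamma_1$-coefficient of $\sum_{\ell=1}^{r_2-1}f_1(a_2,\bar a_{2\ell})$ is $\sum_{\ell=1}^{r_2-1}\bigl(\mathrm r_{d_2}(\ell)-\lceil \mathrm r_{d_2}(\ell)/d\rceil\bigr)$ plus a bracket count, and that count equals $C_{r_2}-\frac{d_2}{d}$ by definition. The paper's proof evaluates the first sum as $\frac{d_2}{d}(d_2d-d_2-d+1)$. Counting block by block gives $\frac{d_2}{d}(d-1)(d_2-2)$ instead; for $\lambda=1$, $d_1=d_2=6$ the sum is $20$, not $25$. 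This value is congruent to $\frac{2d_2}{d}$, not $\frac{d_2}{d}$, modulo $d_2$. So you must carry this count out explicitly. Done with the paper's own formula for $f_1(a_2,-)$, it gives the coefficient $C_{r_2}+\frac{d_2}{d}$ in front of $B_j\gamma_1$ rather than $C_{r_2}$. The difference is harmless in Proposition~\ref{calculo homo}, where only $C_{r_2}I=0$ is used and both numbers are multiples of $\frac{d_2}{d}$.
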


\begin{proof} We first compute $T_1(\gamma_1,\gamma_2,\mathfrak{f}_{11},\mathfrak{f}_{12},\mathfrak{f}_{21},\mathfrak{f}_{22})$. By~\eqref{a_j cdot e_i''} and~\eqref{alpha_{gamma_1,gamma_2}}, we have $\mathcal{U}_{11}=\mathcal{U}_{12}=\mathcal{U}_{21}=0$ and
$$
\mathcal{U}_{22} = \sum_{k=1}^{d_2-1}\alpha_1(\gamma_1)(d_1-\lambda,k(d_1-\lambda))+\sum_{k=1}^{d_2-1}\alpha_2(\gamma_2)(1,k) - \gamma_2 = \left(d_2-\frac{d_2}{d}\right)\gamma_1,
$$	
where for the last we have used that
$$
\alpha_1(\gamma_1)(d_1-\lambda,k(d_1-\lambda)) = \begin{cases} \gamma_1 & \text{if $d\nmid k$,}\\ 0 & \text{otherwise.}\end{cases}
$$	
The formula for $T_1(\gamma_1,\gamma_2, \mathfrak{f}_{11},\mathfrak{f}_{12},\mathfrak{f}_{21},\mathfrak{f}_{22})$ follows from~\eqref{T1''} and these facts.

We next compute $T_2(\gamma_1,\gamma_2, \mathfrak{f}_{11},\mathfrak{f}_{12},\mathfrak{f}_{21},\mathfrak{f}_{22})$. We begin by establishing a formula for $f_1(a_i,x)$. Let $\lambda_1,\lambda_2\in \mathbb{Z}$. Since, by~\eqref{a_j cdot e_i''} and~\eqref{alpha_{gamma_1,gamma_2}},
\begin{align*}
& \sum_{\ell=1}^{\mathrm{r}_{d_1}(\lambda_1)-1} \alpha_{\gamma_1,\gamma_2} (a_i\cdot e_1,\ell (a_i\cdot e_1)) = 0,\\
&\sum_{\ell=1}^{\mathrm{r}_{d_2}(\lambda_2)-1} \alpha_{\gamma_1,\gamma_2}(a_i\cdot e_2,\ell (a_i\cdot e_2)) = \begin{cases} 0 & \text{if $i=1$,}\\ \left(\mathrm{r}_{d_2}(\lambda_2)-\left\lceil \frac{\mathrm{r}_{d_2}(\lambda_2)}{d}\right\rceil\right) \gamma_1 & \text{if $i=2$,}\end{cases}
\shortintertext{and}
& \alpha_{\gamma_1,\gamma_2}(\lambda_1 e_1,\lambda_2 a_i\cdot e_2) = \begin{cases} 0 & \text{if $i=1$,}\\ \bigl[\mathrm{r}_{d_1}(\lambda_1)+\mathrm{r}_{d_1}(-\lambda\lambda_2)\ge d_1\bigr]\gamma_1 & \text{if $i=2$,}\end{cases}
\end{align*}
by~\cite{GGV2}*{Identitien~(2.17) and~(2.18)}, we have
\begin{align}
& f_1(a_1,\lambda_1 e_1+\lambda_2 e_2) = \mathrm{r}_{d_1}(\lambda_1)\mathfrak{f}_{11}+\mathrm{r}_{d_2}(\lambda_2)\mathfrak{f}_{12}\label{formula general para f'1}
\shortintertext{and}
& f_1(a_2,\lambda_1 e_1+\lambda_2 e_2) = \mathrm{r}_{d_1}(\lambda_1)\mathfrak{f}_{21}+\mathrm{r}_{d_2}(\lambda_2)\mathfrak{f}_{22} +\left(\mathrm{r}_{d_2}(\lambda_2)-\left\lceil \frac{\mathrm{r}_{d_2}(\lambda_2)}{d}\right\rceil + \bigl[\mathrm{r}_{d_1}(\lambda_1)+ \mathrm{r}_{d_1}(-\lambda\lambda_2)\ge d_1\bigr]\right) \gamma_1.\label{formula general para f'2}
\end{align}
Since, by~\eqref{potencia} and~\eqref{def de punto},
$$
a_i^{\times \ell}\cdot e_j= \begin{cases} e_j & \text{ if $(i,j)\ne (2,2)$,}\\ e_2-\lambda\ell e_1 & \text{ if $(i,j)= (2,2)$,} \end{cases}
$$
from this it follows immediately that
\begin{align*}
& \mathcal{V}_{ij} = \sum_{\ell=0}^{r_i-1}f_1(a_i,e_j)=r_i \mathfrak{f}_{ij} &&\text{if $(i,j)\ne (2,2)$,}
\shortintertext{and}
&\mathcal{V}_{22} =\sum_{\ell=0}^{r_2-1} f_1(a_2,-\lambda\ell e_1+e_2)\\
&\phantom{\mathcal{V}_{22}}= \sum_{\ell=0}^{r_2-1} \mathrm{r}_{d_1}(-\lambda\ell)\mathfrak{f}_{21} + r_2\mathfrak{f}_{22} + \sum_{\ell=0}^{r_2-1} \bigl[\mathrm{r}_{d_1}(-\lambda\ell)\!+\!\mathrm{r}_{d_1}(-\lambda)\ge d_1\bigr]\gamma_1\\
&\phantom{\mathcal{V}_{22}}= d_2(d-1)\lambda \mathfrak{f}_{21} + r_2\mathfrak{f}_{22}+ \left(r_2-\frac{r_2}{d}\right)\gamma_1.
\end{align*}
A similar computation using~\eqref{alpha_{gamma_1,gamma_2}}, \eqref{formula general para f'1}, \eqref{formula general para f'2} and that, by~\eqref{potencia} we have $\bar{a}_{1\ell}=(2\ell,0)$ and $\bar{a}_{2\ell}=\bigl(\lambda \binom{\ell+1}{2},\ell\bigr)$, yields
\begin{align*}
\mathcal{W}_{1j} &= \sum_{\ell=1}^{r_1-1}  B_j f\left(a_1,2\ell e_1 \right) - \sum_{\ell=1}^{r_1-1} B_j \alpha_{\gamma_1,\gamma_2}\left(a_1,2\ell e_1\right) = r_1(r_1-1) B_j \mathfrak{f}_{11}-B_j \gamma_1
\shortintertext{and}
\mathcal{W}_{2j} & = \sum_{\ell=1}^{r_2-1}B_j f\left(a_2,\lambda \binom{\ell+1}{2} e_1 + \ell e_2\right) - B_j \sum_{\ell=1}^{r_2-1} \alpha_{\gamma_1,\gamma_2}\left(a_2,\lambda \binom{\ell+1}{2} e_1 + \ell e_2\right)\\
&= \sum_{\ell=1}^{r_2-1} \mathrm{r}_{d_1}\left(\lambda \binom{\ell+1}{2} \right) B_j \mathfrak{f}_{21} + \sum_{\ell=1}^{r_2-1} \mathrm{r}_{d_2}(\ell) B_j \mathfrak{f}_{22} + C_{r_2} B_j\gamma_1 - 2B_j\gamma_2,
\end{align*}
where in the last equality we have use that $d_2B_j = 0$ and that
\begin{align*}
& \sum_{\ell=1}^{r_2-1} \left(\mathrm{r}_{d_2}(\ell)-\left\lceil \frac{\mathrm{r}_{d_2}(\ell)}{d}\right\rceil\right) = \frac{d_2}{d} \left(d_2d-d_2-d+1\right),\\
& \sum_{\ell=1}^{r_2-1} \left[\mathrm{r}_{d_1}\left(\lambda \binom{\ell+1}{2}\right)\!+\!\mathrm{r}_{d_1}(-\lambda \ell)\ge d_1\right] = \sum_{\ell=1}^{r_2-1} \left[\mathrm{r}_d\left(\binom{\ell+1}{2}\right)+\mathrm{r}_d(-\ell)\ge d\right] = C_{r_2}-\frac{d_2}{d}
\shortintertext{and}
& \sum_{\ell=1}^{r_2-1} \alpha_{\gamma_1,\gamma_2}\left(a_2,\lambda \binom{\ell+1}{2} e_1 + \ell e_2\right) = \sum_{\ell=1}^{r_2-1} \alpha_2(\gamma_2)\left(1,\ell\right) =  2\gamma_2.
\end{align*}
Since, by~\eqref{pep},
\begin{align*}
& r_1(r_1-1)B_1 = 0,\quad \sum_{\ell=1}^{r_2-1} r_{d_2}(\ell) B_j = \sum_{\ell=1}^{r_2-1} \ell B_j = d_2(r_2-1)B_j = 0
\shortintertext{and}
& \sum_{\ell=1}^{r_2-1} \mathrm{r}_{d_1}\left(\lambda \binom{\ell+1}{2} \right)B_1 = \mathrm{r}_{d_1}\left(\lambda \sum_{\ell=1}^{r_2-1} \binom{\ell+1}{2} \right)B_1 = \lambda \mathrm{r}_d\left(\binom{r_2+1}{3}\right)B_1,
\end{align*}
we obtain that
\begin{align*}
& \mathcal{W}_{11} =  -B_1\gamma_1,\quad \mathcal{W}_{12} = r_1(r_1-1) B_2 \mathfrak{f}_{11}-B_2 \gamma_1,\quad\mathcal{W}_{21} = \lambda \mathrm{r}_d\left(\binom{r_2+1}{3} \right) B_1 \mathfrak{f}_{21} + C_{r_2} B_1\gamma_1 - 2B_1\gamma_2
\shortintertext{and}
& \mathcal{W}_{22} = \sum_{\ell=1}^{r_2-1} \mathrm{r}_{d_1}\left(\lambda \binom{\ell+1}{2} \right) B_2 \mathfrak{f}_{21} + C_{r_2} B_2\gamma_1 - 2B_2\gamma_2.
\end{align*}
By substituting into~\eqref{T2''} these equalities together with the formulas for $\mathcal{V}_{ij}$ obtained above, we arrive at the formula for $T_2$ as stated in the proposition.

Next we compute $T_3(\gamma_1,\gamma_2, \mathfrak{f}_{11}, \mathfrak{f}_{12}, \mathfrak{f}_{21},\mathfrak{f}_{22})$. Using~\cite{GGV2}*{Definition~2.25}, \eqref{producto}, \eqref{def de punto}, \eqref{formula general para f'1}, \eqref{formula general para f'2}, \eqref{alpha_{gamma_1,gamma_2}} and the fact that $d_1B_1=0$, we obtain
\begin{align*}
&F_1(a_2,a_1,e_1) = f_1(a_1,e_1) + f_1(a_2,e_1) + B_1f_1(a_2,2e_1) - B_1\alpha_{\gamma_1,\gamma_2}(a_2,2e_1)= \mathfrak{f}_{11} + \mathfrak{f}_{21} + 2B_1 \mathfrak{f}_{21},\\
& F_1(a_1,a_2,e_1) = f_1(a_2,e_1) + f_1(a_1,e_1) + B_1 f_1(a_1,e_2) - B_1\alpha_{\gamma_1,\gamma_2}(a_1,e_2)= \mathfrak{f}_{21} + \mathfrak{f}_{11} + B_1\mathfrak{f}_{12},\\
&F_1(a_2,a_1,e_2) = f_1(a_1,-\lambda e_1+e_2) + f_1(a_2,e_2) + (d_1-\lambda)B_1 f_1(a_2,2e_1)\\
&\phantom{F_1(a_2,a_1,e_2)} + B_2 f_1(a_2,2e_1) - (d_1-\lambda)B_1 \alpha_{\gamma_1,\gamma_2}(a_2,2e_1) - B_2 \alpha_{\gamma_1,\gamma_2}(a_2,2e_1)\\
&\phantom{F_1(a_2,a_1,e_2)}= (d_1-\lambda)\mathfrak{f}_{11} + \mathfrak{f}_{12} + \mathfrak{f}_{22} -2\lambda B_1 \mathfrak{f}_{21} + 2B_2 \mathfrak{f}_{21}
\shortintertext{and}
&F_1(a_1,a_2,e_2) = f_1(a_2,e_2) + f_1(a_1,e_2) + (d_1-\lambda)B_1f_1(a_1,e_2) + B_2f_1(a_1,e_2)\\
&\phantom{F_1(a_1,a_2,e_2)} + (d_1-\lambda)B_1\alpha_{\gamma_1,\gamma_2}(a_1,e_2) + B_2\alpha_{\gamma_1,\gamma_2}(a_1,e_2)\\
&\phantom{F_1(a_1,a_2,e_2)}= \mathfrak{f}_{22} + \mathfrak{f}_{12} - \lambda B_1 \mathfrak{f}_{12} + B_2\mathfrak{f}_{12}.
\end{align*}
The formula for $T_3(\gamma_1,\gamma_2, \mathfrak{f}_{11}, \mathfrak{f}_{12}, \mathfrak{f}_{21},\mathfrak{f}_{22})$ follows immediately from these facts.

Finally, the formula for $S(t_1,t_2)$ follows straightforwardly from~\eqref{S''} and the fact that $d_1B_1 = 0$.
\end{proof}

\begin{remark}\label{sobre C{r2}} Since $\binom{\ell+1+2d}{2}=\binom{\ell+1}{2} +\binom{2d}{2}+2d(\ell+1)$ and $d\mid \binom{2d}{2}+2d(\ell+1)$, we see that $C_{r_2} = \frac{d_2}{d} \wt{C}_{r_2}$, where
$$
\wt{C}_{r_2}\coloneqq 1+ \#\left\{\ell : 0 \le \ell \le 2d-1\text{ and } \mathrm{r}_d\left(\binom{\ell+1}{2}\right) + \mathrm{r}_d(-\ell)\ge d\right\}.
$$
\end{remark}

In the following proposition we compute the cohomology, adding some conditions.

\begin{proposition}\label{calculo homo} Assume that $r_1I=0$, $r_1d\mid d_2$, and $3\nmid m_2$ or $v_3(m_1)-v_3(\lambda)<v_3(m_2)$. Assume also that there exists $\wt{B}_2\in \End(I)$ such that $B_2 = \lambda \wt{B}_2$, $B_1 = -\wt{B}_2B_2$ and $\wt{B}_2B_1 = 0$ (note that this is trivially true when $\lambda = 1$). Then
\begin{equation}\label{cohomologia caso d1=2m_1, d2=2m_2 m_1,m_2 impares}
\ho_{\blackdiamond,\Yleft}^2(H^{d_1d_2}_{\lambda},I)\simeq \ker B_2\oplus \ker B_2\oplus I_{\lambda} \oplus \ker B_1\oplus \frac{I}{\ima B_1}\oplus \frac{I}{\lambda I}.
\end{equation}
\end{proposition}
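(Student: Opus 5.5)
We follow the strategy of Propositions~\ref{lambda mid d_1, Yleft=0 and that A_1=A_2=Ide} and~\ref{d2 divide a Bi}. Start from the explicit formulas of Proposition~\ref{S y Ti cuando v2(d1)=2 y d1 ne 2} and simplify them with the extra hypotheses. Then apply an invertible change of variables $\phi\colon I^6\to I^6$, together with an invertible $\psi\colon I^2\to I^2$ if needed, so that $\wt T_i\coloneqq T_i\xcirc\phi$ and $\wt S\coloneqq\phi^{-1}\xcirc S\xcirc\psi$ become ``diagonal''. The quotient $\frac{\ker\wt T_1\cap\ker\wt T_2\cap\ker\wt T_3}{\ima \wt S}$ then splits as the direct sum in~\eqref{cohomologia caso d1=2m_1, d2=2m_2 m_1,m_2 impares}, and~\eqref{formula00} gives the isomorphism with $\ho^2_{\blackdiamond,\Yleft}$.

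\emph{Step 1: simplify $T_1,T_2,T_3,S$.} Since $r_1I=0$ we have $d_1I=2r_1I=0$ and $d_2I=0$. So $d_1\mathfrak f_{ij}=d_2\mathfrak f_{ij}=0$, $r_2\mathfrak f_{22}=2d_2\mathfrak f_{22}=0$ and $r_1\mathfrak f_{1j}=0$. Moreover $d_2/d$ is a multiple of $r_1$ (because $r_1d\mid d_2$), so every coefficient $\frac{d_2}{d}$, $\frac{r_2}{d}$, $C_{r_2}=\frac{d_2}{d}\wt C_{r_2}$ (Remark~\ref{sobre C{r2}}) and $d_2(d-1)\lambda$ kills $I$. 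Hence $T_1=0$ identically. Also $\mathcal V_{ij}=0$, $\mathcal W_{11}=-B_1\gamma_1$ and $\mathcal W_{12}=-B_2\gamma_1$ (using $r_1(r_1-1)B_2\mathfrak f_{11}=0$). For $\mathcal W_{21}$ and $\mathcal W_{22}$ we must evaluate $\mathrm r_d\bigl(\binom{r_2+1}{3}\bigr)$ and $\sum_{\ell=1}^{r_2-1}\mathrm r_{d_1}\bigl(\lambda\binom{\ell+1}{2}\bigr)$ modulo the relevant annihilators. Here the hypothesis ``$3\nmid m_2$ or $v_3(m_1)-v_3(\lambda)<v_3(m_2)$'' should guarantee $d\mid\binom{4m_2+1}{3}$, so that these terms vanish and $\mathcal W_{2j}=-2B_j\gamma_2$. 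The formulas we expect are
\begin{align*}
& T_2(x)=(-B_1\gamma_1,-B_2\gamma_1,-2B_1\gamma_2,-2B_2\gamma_2),\\
& T_3(x)=\bigl(B_1(2\mathfrak f_{21}-\mathfrak f_{12}),\,-\lambda\mathfrak f_{11}+(B_2-\lambda B_1)(2\mathfrak f_{21}-\mathfrak f_{12})\bigr),\\
& S(t_1,t_2)=(0,0,2B_1t_1,2B_2t_1,B_1t_2,(B_2-\lambda B_1)t_2+\lambda t_1).
\end{align*}
Since $2$ is invertible on $I$ ($d_1I=0$ with $d_1=2m_1$ does not give this directly, but $r_1=m_1$ is odd and $r_1I=0$ does), the factors $2$ are harmless. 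Thus $\ker T_2$ forces $\gamma_1,\gamma_2\in\ker B_1\cap\ker B_2=\ker B_2$, using $B_1=-\wt B_2B_2$. This yields the first two summands $\ker B_2\oplus\ker B_2$.

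\emph{Step 2: change of variables on the $\mathfrak f$'s.} Set $y\coloneqq 2\mathfrak f_{21}-\mathfrak f_{12}$; this is invertible, e.g.\ replace $\mathfrak f_{12}$ by $y$. Then use $B_2=\lambda\wt B_2$ to absorb $B_2y$ into $\mathfrak f_{11}$: substitute $\mathfrak f_{11}\mapsto\mathfrak f_{11}+(\wt B_2-B_1)y$. The second coordinate of $T_3$ then becomes $-\lambda\mathfrak f_{11}$, while the first stays $B_1y$. So $\ker T_3$ is described by $\mathfrak f_{11}\in I_\lambda$ and $y\in\ker B_1$, with $\mathfrak f_{21}$ and $\mathfrak f_{22}$ free. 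On the $S$ side, apply the same substitution to $\phi^{-1}S$ and use $\psi$ to triangularize. The image is then generated by $B_1t_2$ in the $\mathfrak f_{21}$ slot and $\lambda t_1$ in the $\mathfrak f_{22}$ slot, and it meets the $y$ and $\mathfrak f_{11}$ coordinates trivially. The identities $B_1B_2=B_2B_1=B_1^2=0$ and $\wt B_2B_1=0$ are what make the cross terms cancel. This produces $I_\lambda\oplus\ker B_1\oplus\frac{I}{\ima B_1}\oplus\frac{I}{\lambda I}$.

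The main obstacle is Step~1: the arithmetic showing that the sums with $\mathrm r_{d_1}\bigl(\lambda\binom{\ell+1}{2}\bigr)$ and $\mathrm r_d\bigl(\binom{r_2+1}{3}\bigr)$ are annihilated against $B_1$ and $B_2$. This is exactly where the $3$-adic hypothesis enters. It works because $\binom{4m_2+1}{3}$ contains a factor $3$ in the denominator, so $d\mid\binom{4m_2+1}{3}$ requires $v_3(d)\le v_3(m_2)$ or $3\nmid m_2$. Secondary care is needed to check that the triangular substitution in Step~2 commutes correctly with $S$.
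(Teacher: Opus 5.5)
Your overall strategy is the paper's, and Step~1 is correct. Your reduced formulas for $T_2$, $T_3$ and $S$ are exactly the ones the paper obtains. For the $B_2\mathfrak{f}_{21}$-term in $\mathcal{W}_{22}$, note that modulo $r_1$ (which kills $I$) the sum equals $\lambda\binom{r_2+1}{3}$, a multiple of $\lambda d=d_1$. The $3$-adic condition needed for $d\mid\binom{4m_2+1}{3}$ is $v_3(d)<v_3(m_2)$, not $\le$; this is exactly the hypothesis, since $v_3(d)=v_3(m_1)-v_3(\lambda)$.

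The gap is in Step~2. Use your substitution $\mathfrak{f}_{12}=2\mathfrak{f}_{21}-y$ and $\mathfrak{f}_{11}=\mathfrak{f}_{11}'+(\wt{B}_2-B_1)y$. The $\mathfrak{f}_{11}'$-component of $\phi^{-1}S(t_1,t_2)$ does vanish, by $\wt{B}_2B_2=-B_1$ and $\wt{B}_2B_1=B_1^2=B_1B_2=0$. But its $y$-component is
\[
2\mathfrak{f}_{21}-\mathfrak{f}_{12}=2B_1t_2-2B_2t_1,
\]
which is nonzero whenever $B_2\neq 0$ (take $t_2=0$; $2$ is invertible on $I$). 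A reparametrization $\psi$ of $(t_1,t_2)$ does not change the image. So your claim that the image meets the $y$-slot trivially is false, and in your coordinates the quotient does not split summand by summand.

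The missing ingredient is one more shear, mixing $y$ with $\mathfrak{f}_{22}$. Since
\[
2B_1t_2-2B_2t_1=-2\wt{B}_2\bigl((B_2-\lambda B_1)t_2+\lambda t_1\bigr),
\]
the $y$-component of the image is $-2\wt{B}_2$ times its $\mathfrak{f}_{22}$-component. So you should work with $y'\coloneqq y+2\wt{B}_2\mathfrak{f}_{22}$ instead of $y$. This does not disturb $\ker T_3$: from $B_1\wt{B}_2=-\lambda\wt{B}_2^3=\wt{B}_2B_1=0$ you get $B_1y'=B_1y$.

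In the coordinates $(\gamma_1,\gamma_2,\mathfrak{f}_{11}',y',\mathfrak{f}_{21},\mathfrak{f}_{22})$ the kernel is $\ker B_2\oplus\ker B_2\oplus I_{\lambda}\oplus\ker B_1\oplus I\oplus I$. The image is $0\oplus0\oplus0\oplus0\oplus\bigl\{\bigl(B_1t_2,\lambda((\wt{B}_2-B_1)t_2+t_1)\bigr)\bigr\}$, which equals $0\oplus0\oplus0\oplus0\oplus\ima B_1\oplus\lambda I$. This gives the statement.

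This shear is exactly the term $2\wt{B}_2\mathfrak{f}_{22}$ in the $\mathfrak{f}_{12}$-coordinate of the paper's $\phi$. The other extra terms there, $-2\wt{B}_2^2\mathfrak{f}_{22}$ and $-\lambda\mathfrak{f}_{21}$, are only bookkeeping that tidies up $\wt{T}_3$ and $\wt{S}$.
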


\begin{proof} Since $r_1d\mid d_2$, we have $C_{r_2}I = \wt{C}_{r_2} \frac{d_2}{d} I = 0$. Also $\mathrm{r}_d\left(\binom{r_2+1}{3} \right) = 0$, because $3\nmid m_2$ or $v_3(m_1)-v_3(\lambda)<v_3(m_2)$. By these facts the formulas in Proposition~\ref{S y Ti cuando v2(d1)=2 y d1 ne 2} reduce to
\begin{align*}
& T_1(\gamma_1,\gamma_2, \mathfrak{f}_{11}, \mathfrak{f}_{12}, \mathfrak{f}_{21},\mathfrak{f}_{22})= (0, 0, 0, 0),\\
& T_2(\gamma_1,\gamma_2, \mathfrak{f}_{11}, \mathfrak{f}_{12},\mathfrak{f}_{21},\mathfrak{f}_{22})= -\bigl(B_1\gamma_1,B_2\gamma_1,2B_1\gamma_2,2B_2 \gamma_2\bigr),\\
& T_3(\gamma_1,\gamma_2,\mathfrak{f}_{11},\mathfrak{f}_{12},\mathfrak{f}_{21},\mathfrak{f}_{22})= \bigl(B_1 (2\mathfrak{f}_{21}-\mathfrak{f}_{12}),
-\lambda\mathfrak{f}_{11}+(B_2-\lambda B_1)(2\mathfrak{f}_{21}-\mathfrak{f}_{12})\bigr)
\shortintertext{and}
& S(t_1,t_2)=(0,0,2B_1 t_1,2B_2 t_1,B_1 t_2,(B_2-\lambda B_1)t_2+\lambda t_1).
\end{align*}
Consider the map $\phi \colon I^6\to I^6$ given by
$$
\phi(\gamma_1,\gamma_2,\mathfrak{f}_{11},\mathfrak{f}_{12},\mathfrak{f}_{21},\mathfrak{f}_{22}) \coloneqq \left(\gamma_1,\gamma_2,\mathfrak{f}_{11} -\wt{B}_2\mathfrak{f}_{12}-2\wt{B}_2^2\mathfrak{f}_{22}, \mathfrak{f}_{12}+2\wt{B}_2\mathfrak{f}_{22} +2\mathfrak{f}_{21}, \mathfrak{f}_{21},\mathfrak{f}_{22} -\lambda \mathfrak{f}_{21}\right).
$$
Note that $\phi$ invertible with inverse
$$
\phi^{-1}(\gamma_1,\gamma_2,\mathfrak{f}_{11},\mathfrak{f}_{12},\mathfrak{f}_{21},\mathfrak{f}_{22}) = \left(\gamma_1,\gamma_2,\mathfrak{f}_{11} +\wt{B}_2\mathfrak{f}_{12}-2\wt{B}_2\mathfrak{f}_{21},\mathfrak{f}_{12}-2\mathfrak{f}_{21}-2\wt{B}_2\mathfrak{f}_{22}-2\lambda\wt{B}_2 \mathfrak{f}_{21} , \mathfrak{f}_{21}, \mathfrak{f}_{22}+\lambda \mathfrak{f}_{21}\right).
$$
Set $\wt{T}_i = T_i \phi$ and $\wt{S} = \phi^{-1} S$, where the $T_i$'s and $S$ are as in Proposition~\ref{S y Ti cuando v2(d1)=2 y d1 ne 2}. By~\eqref{formula00}, we have
\begin{equation*}
\ho_{\blackdiamond,\Yleft}^2(H^{d_1d_2}_{\lambda},I)\simeq \frac{\ker T_1\cap\ker T_2\cap\ker T_3}{\ima S} \simeq \frac{\ker \wt{T}_1\cap\ker \wt{T}_2\cap\ker \wt{T}_2}{\ima \wt{S}},
\end{equation*}
where the last isomorphism is induced by $\phi^{-1}$. A straightforward computation using that
$$
B_2\wt{B}_2 =\lambda\wt{B}_2^2 = \wt{B}_2B_2 = -B_1\quad\text{and}\quad B_1\wt{B}_2 = - \lambda\wt{B}_2^3 = \wt{B}_2B_1 = 0,
$$
shows that
\begin{align*}
&\wt{T}_1(\gamma_1,\gamma_2,\mathfrak{f}_{11},\mathfrak{f}_{12},\mathfrak{f}_{21},\mathfrak{f}_{22})=\left(0,0,0,0\right),\\
&\wt{T}_2(\gamma_1,\gamma_2,\mathfrak{f}_{11},\mathfrak{f}_{12},\mathfrak{f}_{21},\mathfrak{f}_{22}) =-\bigl(B_1\gamma_1,B_2\gamma_1,2B_1\gamma_2,2B_2 \gamma_2\bigr),\\
& \wt{T}_3(\gamma_1,\gamma_2,\mathfrak{f}_{11},\mathfrak{f}_{12},\mathfrak{f}_{21},\mathfrak{f}_{22})= \bigl(-B_1\mathfrak{f}_{12}, -\lambda\mathfrak{f}_{11}+\lambda B_1\mathfrak{f}_{12}\bigr)
\shortintertext{and}
&\wt{S}(t_1,t_2)= (0,0,0,0,B_1 t_2,B_2 t_2+\lambda t_1).
\end{align*}
Since $\ker B_1\subseteq \ker B_2$ and the multiplication by $2$ is an isomorphism of $I$, we have
$$
\ker \wt{T}_1\cap\ker \wt{T}_2\cap\ker \wt{T}_3 = \ker B_2\oplus \ker B_2\oplus I_{\lambda} \oplus \ker B_1\oplus I\oplus I.
$$
Moreover, $\ima \wt{S}= 0\oplus 0\oplus 0\oplus 0\oplus\ima B_1\oplus \lambda I$, because $\ima B_2\subseteq \lambda I$. Thus~\eqref{cohomologia caso d1=2m_1, d2=2m_2 m_1,m_2 impares} holds.
\end{proof}

\begin{example} Let $t,t_1,t_2,\lambda\in \mathds{N}$ be odd natural numbers and let $d_1\coloneqq 2t^3\lambda t_1$, $d_2\coloneqq 2t^6\lambda t_1^2t_2$ and $I\coloneqq \mathds{Z}_{t^3\lambda t_1}$. Let $B_1,B_2\colon I\to I$ be the morphisms defined by $B_1(y)\coloneqq -t^2\lambda t_1^2y$ and $B_2(y)\coloneqq t\lambda t_1y$. Then
\begin{equation*}
\ho_{\blackdiamond,\Yleft}^2(H^{d_1d_2}_{\lambda},I)\simeq t^2I\oplus t^2I\oplus t^3t_1I \oplus \frac{t}{\gcd(t_1,t)}I\oplus \frac{I}{t^2\lambda t_1I}\oplus \frac{I}{\lambda I}.
\end{equation*}
\end{example}

\begin{remark}\label{remark1uu} Assume that we are under the hypothesis of Proposition~\ref{calculo homo}. The map $\phi\colon I^6\to I^6$ induces an~isomor\-phism
$$
\ov{\phi}\colon \ker B_2\oplus \ker B_2\oplus I_{\lambda} \oplus \ker B_1\oplus \frac{I}{\ima B_1}\oplus \frac{I}{\lambda I}\longrightarrow \frac{\ker T_1\cap \ker T_2\cap \ker T_3}{\ima S}.
$$
Let $Z\subseteq \ker B_2\oplus \ker B_2\oplus I_{\lambda} \oplus \ker B_1\oplus I^2$ be such that the canonical map $Z\to \ker B_2\oplus \ker B_2\oplus I_{\lambda} \oplus \ker B_1\oplus \frac{I}{\ima B_1}\oplus \frac{I}{\lambda I}$ is a bijection and let~$T$ be as in equality~\eqref{formula0}, so that
$$
T(\gamma_1,\gamma_2,\mathfrak{f}_{11},\mathfrak{f}_{12},\mathfrak{f}_{21},\mathfrak{f}_{22}) = (\alpha_{\gamma_1,\gamma_2}, f_{\mathfrak{f}_{11}, \mathfrak{f}_{12},\mathfrak{f}_{21},\mathfrak{f}_{22},\gamma_1,\gamma_2}).
$$
Clear\-ly~$T(\psi(Z))$ is a complete family of representatives of~$2$-cocycles modulo coboundaries in $\wh{\mathcal{C}}_N^*(H^{2,d_2}_1,I)$. Hence, computing $T$ becomes essential. Since $\alpha_{\gamma_1,\gamma_2}$ is already known from~\eqref{alpha_{gamma_1,gamma_2}}, the determination of $T$ reduces to establishing a formula for
\begin{equation}\label{etiqueta0}
f(a_1^{\times \ell_1}\times a_2^{\times \ell_2},(u,v)),
\end{equation}
where $f\coloneqq f_{\mathfrak{f}_{11}, \mathfrak{f}_{12},\mathfrak{f}_{21},\mathfrak{f}_{22},\gamma_1,\gamma_2}$, $0\le \ell_1<r_1$ and $0\le \ell_2<r_2$. This formula is too complicated to be written here. Next, we derive results that permit its calculation. To begin with, note that, by~\cite{GGV2}*{Proposition~2.37},
\begin{equation}\label{etiqueta1}
f(a_1^{\times \ell_1}\times a_2^{\times \ell_2},(u,v)) = f(a_1^{\times \ell_1},(u,v)) + f(a_2^{\times \ell_2},(u,v)) +  C_u^v(\ell_2) (f+\alpha_{\gamma_1,\gamma_2})(a_1^{\times \ell_1},a_2^{\times \ell_2}),
\end{equation}
where $C_u^v(\ell_2)\coloneqq uB_1-\ell_2\lambda v B_1+vB_2$. Note now that, by~\eqref{def de punto}, \eqref{producto} and~\eqref{potencia}, we have
\begin{equation}\label{formula3'''}
a_1^{\times \ell_1}\times a_2^{\times \ell_2} = \left(2\ell_1+\lambda \binom{\ell_2}{2},\ell_2\right)\qquad\text{and}\qquad \bigl(a_1^{\times \ell_1}\times a_2^{\times \ell_2}\bigr)\cdot (u,v) = (u-\ell_2\lambda v,v).
\end{equation}
Using this, \eqref{formula general para f'1} and again~\cite{GGV2}*{Proposition~2.37}, we obtain that,
\begin{align*}
f(a_1^{\times \ell_1},(x,y)) & = \ell_1f(a_1,(x,y)) + C_x^y(0) \sum_{j=1}^{\ell_1-1} f(a_1,a_1^{\times \ell_1-j}) + C_x^y(0)\sum_{j=1}^{\ell_1-1} \alpha_{\gamma_1,\gamma_2}(a_1,a_1^{\times \ell_1-j})\\
& = \ell_1 \mathrm{r}_{d_1}(x) \mathfrak{f}_{11} + \ell_1 \mathrm{r}_{d_2}(y)\mathfrak{f}_{12} + \ell_1(\ell_1-1)C_x^y(0)\mathfrak{f}_{11} + C_x^y(0) \sum_{j=1}^{\ell_1-1} \alpha_1(\gamma_1)(2,2\ell_1-2j)\\
& = \ell_1 \mathrm{r}_{d_1}(x) \mathfrak{f}_{11} + \ell_1\mathrm{r}_{d_2}(y)\mathfrak{f}_{12} + \ell_1(\ell_1-1)C_x^y(0)\mathfrak{f}_{11},\\[2pt]
\alpha_{\gamma_1,\gamma_2}(a_1^{\times \ell_1},a_2^{\times \ell_2}) & = \alpha_1(\gamma_1)\left(2\ell_1,\lambda\binom{\ell_2}{2}\right) = \left[2\ell_1+\mathrm{r}_{d_1}\left(\lambda\binom{\ell_2}{2}\right)\ge d_1\right]\gamma_1
\shortintertext{and}
f(a_2^{\times \ell_2},(u,v)) & = \sum_{j=0}^{\ell_2-1} f(a_2,a_2^{\times j}\cdot (u,v)) + C_u^v(\ell_2) \sum_{j=1}^{\ell_2-1} f(a_2,{}^{a_2} a_2^{\times \ell_2-j}) + C_u^v(\ell_2) \sum_{j=1}^{\ell_2-1} \alpha_{\gamma_1,\gamma_2}(a_2,{}^{a_2} a_2^{\times \ell_2-j})\\
& = \sum_{j=0}^{\ell_2-1} f(a_2,(u-j\lambda v,v)) + C_u^v(\ell_2) \sum_{j=1}^{\ell_2-1} (f+\alpha_{\gamma_1,\gamma_2}) \left(a_2,\left(\lambda\binom{\ell_2-j+1}{2},\ell_2-j\right)\right)\\
& = \sum_{j=0}^{\ell_2-1} f(a_2,(u-j\lambda v,v)) + C_u^v(\ell_2) \sum_{j=1}^{\ell_2-1} f \left(a_2,\left(\lambda\binom{\ell_2-j+1}{2},\ell_2-j\right)\right),
\end{align*}
where in the last equality we have used that
$$
\alpha_{\gamma_1,\gamma_2}\left(a_2,\left(\lambda\binom{\ell_2-j+1}{2},\ell_2-j\right)\right) = \alpha_2(\gamma_2)(1,\ell_2-j) = 0.
$$
The formula for $f(a_1^{\times \ell_1}\times a_2^{\times \ell_2},(u,v))$ can be obtained from~\eqref{etiqueta1}, these facts, and equality~\eqref{formula general para f'2}.
\end{remark}

\subsubsection[Case \texorpdfstring{$v_2(d_1)=v_2(d_2)\ge 2$}{v2(d1)=v2(d2) >=2}, and \texorpdfstring{$\lambda$}{lambda} odd]{Case \texorpdfstring{$\pmb{v_2(d_1)=v_2(d_2)\ge 2}$}{v2(d1)=v2(d2)>=2} and  \texorpdfstring{$\pmb{\lambda}$}{lambda} odd}

Assume that $\lambda$ is odd, $d_1=2^wm_1$ and $d_2=2^wm_2$ with $w\ge 2$ and $m_1,m_2$ odd natural numbers such that $m_1\mid m_2$. Let~$n$ and $s$ be as at the beginning of this section. By Re\-mark~\ref{caso final}, we have $s=n=2$, $r_1=d_1/2$, $r_2=2d_2$, and we can take $a_1 = (2t,2q)$ and $a_2 = e_2\coloneqq (0,1)$, where $q\coloneqq d_2/d_1$ and $0\le t < d_1/2$ is such that
\begin{equation*}
\lambda q(2q - 1) - 2t= 1 \text{ in } \mathds{Z}_{d_1} \quad\text{($t$ exists since $\lambda$ and $q$ are odd)}.
\end{equation*}
Therefore, by~\eqref{def de punto},
\begin{equation}\label{a_j cdot e_i'''}
a_i\cdot e_j = \begin{cases} e_1 & \text{if $j= 1$,}\\ e_2 - 2q\lambda e_1 & \text{if $(i,j)=(1,2)$,}\\ e_2 -\lambda e_1 & \text{if $(i,j)=(2,2)$.}\end{cases}
\end{equation}
Hence, if we write $a_i=\lambda_{1i}e_1 + \lambda_{2i}e_2$ and  $a_i\cdot e_j=\Gamma^1_{ji}e_1 + \Gamma^2_{ji}e_2$ with $0\le \lambda_{ji}< d_i$ and $0\le \Gamma^k_{ji}< d_k$, then
$$
\lambda_{11} = 2t,\quad \lambda_{21} = 2q,\quad \lambda_{12} = 0,\quad \lambda_{22} = 1\quad\text{and}\quad \Gamma_{ji}^k = \begin{cases} \delta_{1k} & \text{if $j=1$,}\\ r_{d_1}(-2q\lambda) & \text{if $(i,j,k) = (1,2,1)$,}\\ d_1-\lambda & \text{if $(i,j,k) = (2,2,1)$,}\\ 1 & \text{if $(j,k) = (2,2)$.} \end{cases}
$$
A straightforward computation, using~\cite{GGV2}*{Subsubsection~2.2.3} and these facts, shows that
$$
T_1,T_2\colon I^6\to I^4,\quad T_3\colon I^6\to I^4 \quad\text{and}\quad S\colon I^2\to I^6,
$$
are given by
\begin{align}
& T_1(\gamma_1,\gamma_2, \mathfrak{f}_{11}, \mathfrak{f}_{12}, \mathfrak{f}_{21}, \mathfrak{f}_{22})= \left(d_1 \mathfrak{f}_{11} + \mathcal{U}_{11}, d_2 \mathfrak{f}_{12} + \mathcal{U}_{12}, d_1 \mathfrak{f}_{21} + \mathcal{U}_{21}, d_2 \mathfrak{f}_{22} + \mathcal{U}_{22}\right),\label{T1'''}\\
& T_2(\gamma_1,\gamma_2, \mathfrak{f}_{11}, \mathfrak{f}_{12}, \mathfrak{f}_{21}, \mathfrak{f}_{22})= \left(\mathcal{V}_{11} + \mathcal{W}_{11}, \mathcal{V}_{12} + \mathcal{W}_{12}, \mathcal{V}_{21} + \mathcal{W}_{21}, \mathcal{V}_{22} + \mathcal{W}_{22}\right),\label{T2'''}\\
& T_3(\gamma_1,\gamma_2, \mathfrak{f}_{11}, \mathfrak{f}_{12}, \mathfrak{f}_{21}, \mathfrak{f}_{22})= (F_1(a_2,a_1,e_1)-F_1(a_1,a_2,e_1),F_1(a_2,a_1,e_2)-F_1(a_1,a_2,e_2))\label{T3'''}
\shortintertext{and}
& S(t_1,t_2)=\left(d_1t_1,d_2t_2,\mathcal{X}_{11},\mathcal{X}_{12},\mathcal{X}_{21},\mathcal{X}_{22}\right),\label{S'''}
\end{align}
in which
\begin{itemize}
	
\item[-] $\mathcal{U}_{ij}\coloneqq \sum_{k=1}^{d_j-1}\alpha_{\gamma_1,\gamma_2}(a_i\cdot e_j,k (a_i\cdot e_j))-a_i\blackdiamond \gamma_j$,
	
\item[-] $\mathcal{V}_{ij}\coloneqq \sum_{\ell=0}^{r_i-1} a_i^{\times (r_i-1-\ell)}\blackdiamond f_1(a_i,a_i^{\times \ell}\cdot e_j)$,
	
\item[-] $\mathcal{W}_{ij}\coloneqq \sum_{\ell=1}^{r_i-1} a_i^{\times \ell}\blackdiamond f_1(a_i,\bar{a}_{i\ell})\Yleft e_j -\sum_{\ell=1}^{r_i-1} a_i^{\times (\ell+1)}\blackdiamond \alpha_{\gamma_1,\gamma_2}(a_i,\bar{a}_{i\ell})\Yleft e_j$, where $\bar{a}_{i\ell}\coloneqq a_i^{\times(\ell+1)}-a_i$,
	
\item[-] $\mathcal{X}_{11}\coloneqq -t_1 + a_1\blackdiamond t_1 + 2t (e_1\blackdiamond t_1) \Yleft e_1+ 2q (e_1\blackdiamond t_2) \Yleft e_1$,

\item[-] $\mathcal{X}_{21}\coloneqq -t_1 + a_2\blackdiamond t_1 + (e_1\blackdiamond t_2) \Yleft e_1$,

\item[-] $\mathcal{X}_{12}\coloneqq - r_{d_1}(-2q\lambda) t_1 - t_2 + a_1\blackdiamond t_2 + 2t (e_2\blackdiamond t_1) \Yleft (-2q\lambda e_1 + e_2) + 2q (e_2\blackdiamond t_2) \Yleft (-2q\lambda e_1 + e_2)$,
	
\item[-] $\mathcal{X}_{22}\coloneqq (\lambda-d_1)t_1 - t_2 + a_2\blackdiamond t_2 + (e_2\blackdiamond t_2)\Yleft (-\lambda e_1+e_{22})$,

\item[-] $f_1\coloneqq f_1^{\mathfrak{f}_{11},\mathfrak{f}_{12},\mathfrak{f}_{21},\mathfrak{f}_{22}}$ and $F_1$ is as in~\cite{GGV2}*{Definition~2.25}.

\end{itemize}
By Proposition~\ref{caracterizacion cuarto caso}, we know that the actions $\blackdiamond$ and $\Yleft$ are determined by maps~$A_1, A_2, B_1, B_2 \in \End(I)$ satisfying conditions~\eqref{primeras condiciones A_1, A_2, B1 y B2 cuarto caso}--\eqref{cuartas condiciones A_1, A_2, B1 y B2 cuarto caso}. We assume furthermore that $A_1=A_2=\Ide$, $B_1=B_2=0$ and $d_2=d_1$. Therefore, $q = 1$ and $t = \frac{\lambda-1}{2}$, which implies $a_1 = (\lambda-1,2)$.

\begin{proposition}\label{prop fin} The maps $T_i$'s and $S$ are given by
\begin{align*}
& T_1(\gamma_1,\gamma_2,\mathfrak{f}_{11},\mathfrak{f}_{12},\mathfrak{f}_{21},\mathfrak{f}_{22}) = \left(d_1\mathfrak{f}_{11},d_1\mathfrak{f}_{12} + (d_1-2\lambda)\gamma_1,d_1\mathfrak{f}_{21}, d_1\mathfrak{f}_{22} + (d_1-1)\gamma_1\right),\\
& T_2(\gamma_1,\gamma_2,\mathfrak{f}_{11},\mathfrak{f}_{12},\mathfrak{f}_{21},\mathfrak{f}_{22})= \left(r_1\mathfrak{f}_{11},r_1(r_1-\lambda) \mathfrak{f}_{11} + r_1\mathfrak{f}_{12}+ \left(r_1-\lambda\right)\gamma_1, r_2\mathfrak{f}_{21},d_1(d_1-\lambda) \mathfrak{f}_{21} + r_2\mathfrak{f}_{22}+ 2(d_1-\lambda)\gamma_1 \right),\\
& T_3(\gamma_1,\gamma_2,\mathfrak{f}_{11},\mathfrak{f}_{12},\mathfrak{f}_{21},\mathfrak{f}_{22})=(0,(d_1-\lambda) \mathfrak{f}_{11} - (d_1-2\lambda) \mathfrak{f}_{21})
\shortintertext{and}
&S(t_1,t_2)=\left(d_1t_1,d_2t_2, 0, (2\lambda-d_1)t_1, 0, (\lambda-d_1) t_1\right).
\end{align*}
\end{proposition}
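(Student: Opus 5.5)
The proof will follow the pattern of Propositions~\ref{S y Ti cuando v2(d1)=2 y d1 ne 2} and~\ref{maps Ti caso d1=2 d202m con m impar}. We substitute the data of this subsubsection into the formulas \eqref{T1'''}--\eqref{S'''}: $A_1=A_2=\Ide$, $B_1=B_2=0$, $d_2=d_1$, $q=1$, $t=\frac{\lambda-1}{2}$ and $a_1=(\lambda-1,2)$. Since $\Yleft=0$, every term $\mathcal{W}_{ij}$ vanishes. Every $\Yleft$-term in the $\mathcal{X}_{ij}$ also vanishes. The action $\blackdiamond$ is trivial, so each $a\blackdiamond t_j-t_j$ is zero. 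Hence $S$ reduces to
$$(d_1t_1,\ d_2t_2,\ 0,\ -\mathrm{r}_{d_1}(-2\lambda)t_1,\ 0,\ (\lambda-d_1)t_1).$$
Because $\lambda\mid d_1$ with $\lambda$ odd and $4\mid d_1$, we have $2\lambda<d_1$, so $\mathrm{r}_{d_1}(-2\lambda)=d_1-2\lambda$. This gives the stated formula for $S$.

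For $T_1$, I would compute $\mathcal{U}_{ij}$ from \eqref{a_j cdot e_i'''} and \eqref{alpha_{gamma_1,gamma_2}}.

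\begin{itemize}
\item For $j=1$ we have $a_i\cdot e_1=e_1$. Since $\alpha_1(\gamma_1)(1,k)=0$ for $k<d_1-1$ and equals $\gamma_1$ at $k=d_1-1$, the sum $\sum_k\alpha(e_1,ke_1)$ is $\gamma_1$. Subtracting $a_i\blackdiamond\gamma_1=\gamma_1$ gives $\mathcal{U}_{i1}=0$.
\item For $(1,2)$ we have $a_1\cdot e_2=(-2\lambda,1)$. The $\alpha_2$-part contributes $\gamma_2$, which cancels $-a_1\blackdiamond\gamma_2$. The $\alpha_1$-part counts the $k\in[1,d_1-1]$ with $\mathrm{r}_{d_1}(-2\lambda)+\mathrm{r}_{d_1}(-2k\lambda)\ge d_1$. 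Following the argument of \eqref{formula4}, this count should be $(d_1-2\lambda)$ times $\gamma_1$, up to the precise counting.
\item For $(2,2)$ the $\alpha_1$-part counts, as in Proposition~\ref{S y Ti cuando v2(d1)=2 y d1 ne 2}, the $k$ with $d\nmid k$, where $d=d_1/\lambda$. This gives $\bigl(d_1-\tfrac{d_1}{d}\bigr)\gamma_1=(d_1-\lambda)\gamma_1$, which I would reconcile with the stated coefficient $(d_1-1)\gamma_1$.
\end{itemize}

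For $T_2$ only $\mathcal{V}_{ij}=\sum_{\ell}f_1(a_i,a_i^{\times\ell}\cdot e_j)$ survives. First I would derive general formulas for $f_1(a_i,(\lambda_1,\lambda_2))$ from \cite{GGV2}*{Identities~(2.17) and~(2.18)}, analogous to \eqref{formula general para f'1} and \eqref{formula general para f'2}. These will now contain $\gamma_1$-terms also for $i=1$, because $a_1\cdot e_2$ has a nonzero first coordinate. Then I would use \eqref{potencia} to compute $a_1^{\times\ell}\cdot e_2=(-2\lambda\ell,1)$ and $a_2^{\times\ell}\cdot e_2=(-\lambda\ell,1)$, and sum over $\ell<r_i$ with $r_1=d_1/2$ and $r_2=2d_1$. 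For example,
$$\sum_{\ell<r_1}\mathrm{r}_{d_1}(-2\lambda\ell)=\frac{r_1}{\lambda}\cdot\lambda\binom{d}{2}\cdot(\text{periodicity factor}),$$
which must be simplified to $r_1(r_1-\lambda)$. In the same way, the count of carries gives $(r_1-\lambda)\gamma_1$, and for $i=2$ it gives $d_1(d_1-\lambda)\mathfrak{f}_{21}+2(d_1-\lambda)\gamma_1$.

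For $T_3$ I would expand the four terms $F_1(a_i,a_j,e_k)$ using \cite{GGV2}*{Definition~2.25}, with $\Yleft=0$ and trivial $\blackdiamond$. They reduce to differences such as $f_1(a_1,a_2\cdot e_k)-f_1(a_2,a_1\cdot e_k)$ plus $\alpha$-terms. For $k=1$ everything cancels. For $k=2$ the surviving terms come from $f_1(a_1,(-\lambda,1))$ and $f_1(a_2,(-2\lambda,1))$, giving $(d_1-\lambda)\mathfrak{f}_{11}-(d_1-2\lambda)\mathfrak{f}_{21}$; the $\gamma$-contributions should cancel.

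The main obstacle is the bookkeeping of the carry counts, that is, the $\gamma_1$-coefficients in $\mathcal{U}_{12}$, $\mathcal{U}_{22}$ and the $\mathcal{V}_{i2}$. These require counting the residues $\ell$ with $\mathrm{r}_{d_1}(-c\lambda\ell)+\mathrm{r}_{d_1}(-c\lambda)\ge d_1$ for $c=1,2$, over ranges of length $r_i$ that are not multiples of the period $d$ or $d/\gcd(2,d)$. I would first handle these via the periodicity argument of Remark~\ref{sobre C{r2}}, and check the result on $\lambda=1$, $d_1=4$.
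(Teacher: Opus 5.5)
Your route is the paper's: substitute $A_1=A_2=\Ide$, $B_1=B_2=0$, $d_2=d_1$ into \eqref{T1'''}--\eqref{S'''}, drop all $\mathcal{W}_{ij}$ and all $\Yleft$-terms, derive the general formulas for $f_1(a_i,(u,v))$, and count carries. Your computation of $S$ is complete and correct.

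On the point you flagged, your computation is right and the statement is not. The $\alpha_1$-part of $\mathcal{U}_{22}$ counts the $k\in[1,d_1-1]$ with $(d_1-\lambda)+\mathrm{r}_{d_1}(-k\lambda)\ge d_1$, i.e.\ with $d\nmid k$. There are $(d_1-1)-(\lambda-1)=d_1-\lambda$ of them. This is the same computation as in Proposition~\ref{S y Ti cuando v2(d1)=2 y d1 ne 2}, where it gives $(d_2-d_2/d)\gamma_1$. The paper's proof reaches $(d_1-1)\gamma_1$ only because it writes the summand as $\alpha_1(\gamma_1)(d_1-1,-k)$, which silently puts $\lambda=1$ there; $\mathcal{U}_{12}$ and $T_2$ are computed for general $\lambda$. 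So do not try to reconcile. The fourth coordinate of $T_1$ is $d_1\mathfrak{f}_{22}+(d_1-\lambda)\gamma_1$, and the printed formula holds only when $\lambda=1$.

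The remaining entries are correct, but your proposal only asserts them. The obstacle you single out is not there, because every range consists of whole periods. The function $\ell\mapsto\mathrm{r}_{d_1}(-2\lambda\ell)$ has period $d/2$ (note $4\mid d$) and runs through $0,2\lambda,\dots,d_1-2\lambda$ on each period, and $r_1=\lambda\cdot\frac d2$. The function $\ell\mapsto\mathrm{r}_{d_1}(-\lambda\ell)$ has period $d$, and $r_2=2d_1=2\lambda d$. Hence $\sum_{\ell<r_1}\mathrm{r}_{d_1}(-2\lambda\ell)=\lambda\cdot 2\lambda\binom{d/2}{2}=r_1(r_1-\lambda)$; your expression with $\binom{d}{2}$ uses the wrong period. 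Likewise $\sum_{\ell<r_2}\mathrm{r}_{d_1}(-\lambda\ell)=2\lambda\cdot\lambda\binom{d}{2}=d_1(d_1-\lambda)$.

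For the $\gamma_1$-coefficients, take $v=1$ in the formulas for $f_1(a_i,(u,v))$, with $c=2$ for $i=1$ and $c=1$ for $i=2$. The term $\mathrm{r}_{d_1}(v)-\lceil c\,\mathrm{r}_{d_1}(v)/d\rceil$ then vanishes, so only the carry indicator survives. It equals $[\tfrac d2\nmid\ell]$, respectively $[d\nmid\ell]$. This yields $(r_1-\lambda)\gamma_1$ in $\mathcal{V}_{12}$, $2(d_1-\lambda)\gamma_1$ in $\mathcal{V}_{22}$, and $(d_1-1)-(2\lambda-1)=d_1-2\lambda$ in $\mathcal{U}_{12}$.

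For $T_3$, trivial $\blackdiamond$ and $\Yleft=0$ give $F_1(a_i,a_j,e_k)=f_1(a_j,a_i\cdot e_k)+f_1(a_i,e_k)$, with no $\alpha$-terms. The cancellation of $\gamma_1$ is automatic: $f_1(a_1,(-\lambda,1))$ and $f_1(a_2,(-2\lambda,1))$ contain the same indicator $[\mathrm{r}_{d_1}(-\lambda)+\mathrm{r}_{d_1}(-2\lambda)\ge d_1]\gamma_1$.
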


\begin{proof} We first compute $T_1(\gamma_1,\gamma_2,\mathfrak{f}_{11},\mathfrak{f}_{12},\mathfrak{f}_{21},\mathfrak{f}_{22})$. By~\eqref{a_j cdot e_i'''} and~\eqref{alpha_{gamma_1,gamma_2}}, we have
\begin{align*}
\mathcal{U}_{11} &= \sum_{k=1}^{d_1-1}\alpha_{\gamma_1,\gamma_2}(e_1,k e_1)-\gamma_1=0,\\
\mathcal{U}_{12} &= \sum_{k=1}^{d_1-1}\alpha_{\gamma_1,\gamma_2}(-2\lambda e_1+e_2,-2k\lambda e_1+ke_2)) - \gamma_2 = \sum_{k=1}^{d_1-1}\alpha_1(\gamma_1)(d_1-2\lambda,-2k\lambda) = (d_1-2\lambda)\gamma_1,\\
\mathcal{U}_{21} &= \sum_{k=1}^{d_1-1}\alpha_{\gamma_1,\gamma_2}(e_1,k e_1)-\gamma_1=0
\shortintertext{and}
\mathcal{U}_{22} & = \sum_{k=1}^{d_1-1}\alpha_{\gamma_1,\gamma_2}(-\lambda e_1+e_2,-k\lambda e_1+ke_2) - \gamma_2 = \sum_{k=1}^{d_1-1}\alpha_1(\gamma_1)(d_1-1,-k) = (d_1-1)\gamma_1.
\end{align*}
The formula for $T_1(\gamma_1,\gamma_2,\mathfrak{f}_{11},\mathfrak{f}_{12},\mathfrak{f}_{21},\mathfrak{f}_{22})$ follows immediately from these facts.

We next compute $T_2(\gamma_1,\gamma_2, \mathfrak{f}_{11},\mathfrak{f}_{12},\mathfrak{f}_{21},\mathfrak{f}_{22})$. We begin by establishing a general formula for $f_1(a_i,x)$. Let $u,v\in \mathbb{Z}$. Since, by~\eqref{a_j cdot e_i'''} and~\eqref{alpha_{gamma_1,gamma_2}},
\begin{align*}
& \sum_{\ell=1}^{\mathrm{r}_{d_1}(u)-1} \alpha_{\gamma_1,\gamma_2} (a_i\cdot e_1,\ell (a_i\cdot e_1)) = 0,\\
&\sum_{\ell=1}^{\mathrm{r}_{d_1}(v)-1} \alpha_{\gamma_1,\gamma_2}(a_i\cdot e_2,\ell (a_i\cdot e_2)) = \begin{cases} \left(\mathrm{r}_{d_1}(v)-\left\lceil \frac{2\mathrm{r}_{d_1}(v)}{d}\right\rceil\right) \gamma_1 & \text{if $i=1$,}\\[5pt] \left(\mathrm{r}_{d_1}(v)-\left\lceil \frac{\mathrm{r}_{d_1}(v)}{d}\right\rceil\right) \gamma_1 & \text{if $i=2$,}\end{cases}
\shortintertext{and}
& \alpha_{\gamma_1,\gamma_2}(u e_1,v a_i\cdot e_2) = \begin{cases} \bigl[\mathrm{r}_{d_1}(u)+\mathrm{r}_{d_1}(-2\lambda v)\ge d_1\bigr]\gamma_1 & \text{if $i=1$,}\\ \bigl[\mathrm{r}_{d_1}(u)+\mathrm{r}_{d_1}(-\lambda v)\ge d_1\bigr]\gamma_1 & \text{if $i=2$,}\end{cases}
\end{align*}
by~\cite{GGV2}*{Identities~(2.17) and~(2.18}, we have
\begin{align}
& f_1(a_1,(u,v)) = \mathrm{r}_{d_1}(u)\mathfrak{f}_{11}+\mathrm{r}_{d_1}(v)\mathfrak{f}_{12} +\left(\mathrm{r}_{d_1}(v)-\left\lceil \frac{2\mathrm{r}_{d_1}(v)}{d}\right\rceil\right) \gamma_1+ \bigl[\mathrm{r}_{d_1}(u)+\mathrm{r}_{d_1}(-2\lambda v)\ge d_1\bigr]\gamma_1\label{formula general para f''1}
\shortintertext{and}
& f_1(a_2,(u,v)) = \mathrm{r}_{d_1}(u)\mathfrak{f}_{21}+\mathrm{r}_{d_1}(v)\mathfrak{f}_{22}+ \left(\mathrm{r}_{d_1}(v)- \left\lceil \frac{\mathrm{r}_{d_1}(v)}{d}\right\rceil\right)\gamma_1 + \bigl[\mathrm{r}_{d_1}(u)+\mathrm{r}_{d_1}(-\lambda v)\ge d_1\bigr]\gamma_1.\label{formula general para f''2}
\end{align}
Since, by~\eqref{def de punto} and~\eqref{potencia},
$$
a_i^{\times \ell}\cdot e_j=\begin{cases} e_1 & \text{if $j=1$,}\\-2\ell\lambda e_1 + e_2 & \text{if $i=1$ and $j=2$,}\\-\ell\lambda e_1 + e_2, & \text{if $i=2$ and $j=2$,}\end{cases}
$$
from this it follows immediately that
\begin{align*}
& \mathcal{V}_{i1} = \sum_{\ell=0}^{r_i-1}f_1(a_i,e_1)=r_i \mathfrak{f}_{i1} \\
&\mathcal{V}_{12} =\sum_{\ell=0}^{r_1-1} f_1(a_1,-2\lambda\ell e_1+e_2)\\
&\phantom{\mathcal{V}_{22}}= \sum_{\ell=0}^{r_1-1} \mathrm{r}_{d_1}(-2\lambda\ell)\mathfrak{f}_{11} + r_1\mathfrak{f}_{12} + \sum_{\ell=0}^{r_1-1} \bigl[\mathrm{r}_{d_1}(-2\lambda\ell)+(d_1-2\lambda)\ge d_1\bigr]\gamma_1\\
&\phantom{\mathcal{V}_{22}}= r_1(r_1-\lambda) \mathfrak{f}_{11} + r_1\mathfrak{f}_{12}+ \left(r_1-\lambda\right)\gamma_1
\shortintertext{and}
&\mathcal{V}_{22} =\sum_{\ell=0}^{r_2-1} f_1(a_2,-\lambda\ell e_1+e_2)\\
&\phantom{\mathcal{V}_{22}}= \sum_{\ell=0}^{r_2-1} \mathrm{r}_{d_1}(-\lambda\ell)\mathfrak{f}_{21} + r_2\mathfrak{f}_{22} + \sum_{\ell=0}^{r_2-1} \bigl[\mathrm{r}_{d_1}(-\lambda\ell)\!+\!\mathrm{r}_{d_1}(-\lambda)\ge d_1\bigr]\gamma_1\\
&\phantom{\mathcal{V}_{22}} = d_1(d-1)\lambda \mathfrak{f}_{21} + r_2\mathfrak{f}_{22}+ \left(r_2-\frac{r_2}{d}\right)\gamma_1\\
&\phantom{\mathcal{V}_{22}} = d_1(d-1)\lambda \mathfrak{f}_{21} + r_2\mathfrak{f}_{22}+  2(d_1-\lambda)\gamma_1.
\end{align*}
Since, clearly $\mathcal{W}_{ij} = 0$ for all $i,j$, the formula for $T_2(\gamma_1,\gamma_2,\mathfrak{f}_{11},\mathfrak{f}_{12},\mathfrak{f}_{21}, \mathfrak{f}_{22})$ follows immediately from these facts.
	
Next we compute $T_3(\gamma_1,\gamma_2,\mathfrak{f}_{11},\mathfrak{f}_{12},\mathfrak{f}_{21},\mathfrak{f}_{22})$. Using~\cite{GGV2}*{Definition~2.25} and \eqref{def de punto} we obtain that for $i\ne j$ and $k$,
$$
F_1(a_i,a_j,e_k)=f_1(a_j,a_i\cdot e_k)+f_1(a_i,e_k) = \begin{cases} \mathfrak{f}_{j1}+\mathfrak{f}_{i1} &\text{if $k=1$,}\\ f_1(a_1,-\lambda e_1+e_2)+ \mathfrak{f}_{22} &\text{if $i=2$ and $k=2$,}\\  f_1(a_2,-2\lambda e_1+e_2)+ \mathfrak{f}_{12} &\text{if $i=1$ and $k=2$.}\end{cases}
$$
The formula for $T_3(\gamma_1,\gamma_2,\mathfrak{f}_{11},\mathfrak{f}_{12},\mathfrak{f}_{21},\mathfrak{f}_{22})$ follows from this fact, using that, by~\eqref{formula general para f''1} and~\eqref{formula general para f''2},
$$
f_1(a_1,-\lambda e_1+e_2) = (d_1-\lambda) \mathfrak{f}_{11} + \mathfrak{f}_{12} + \gamma_1\quad\text{and}\quad f_1(a_2,-2\lambda e_1+e_2) = (d_1-2\lambda) \mathfrak{f}_{21} + \mathfrak{f}_{22} + \gamma_1.
$$
Finally, the formula for $S(t_1,t_2)$ follows straightforwardly from~\eqref{S'''}.
\end{proof}

\begin{corollary}\label{pendorcho} Assume that we are under the hypotheses of Proposition~\ref{prop fin}, that $(1-\lambda)dI = 0$ and that there exists an endomorphism $b$ of $I$, such that $\lambda(d-1)b=\Ide_I$. Then
$$
\ho_{\blackdiamond}^2(H^{d_1d_1}_{\lambda},I)\simeq \frac{I}{d_1 I} \oplus I_{r_1} \oplus I_{d_1}.
$$
\end{corollary}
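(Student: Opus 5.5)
The plan is to compute the quotient $\frac{\ker T_1\cap\ker T_2\cap\ker T_3}{\ima S}$ of~\eqref{formula00} directly, using the explicit formulas of Proposition~\ref{prop fin}. In the present situation $d_2=d_1$, $q=1$, $r_1=d_1/2$, $r_2=2d_1$ and $d=d_1/\lambda$. As in the proofs of Corollary~\ref{primer corolario} and Proposition~\ref{calculo homo}, I will look for an invertible $\phi\colon I^6\to I^6$ (and possibly $\psi\colon I^2\to I^2$) such that $\wt T_i\coloneqq T_i\xcirc\phi$ and $\wt S\coloneqq\phi^{-1}\xcirc S\xcirc\psi$ become ``diagonal'', so that the quotient splits coordinatewise.

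The first step is to record what the hypotheses give. From $\lambda(d-1)b=\Ide_I$, both $\lambda$ and $d-1$ act invertibly on $I$. Since $d_1=\lambda d$, we also have $\lambda-d_1=-\lambda(d-1)$ invertible. The hypothesis $(1-\lambda)dI=0$ gives $dy=\lambda d y=d_1y$ for all $y\in I$, so multiplication by $d$ and by $d_1$ coincide on $I$. I will use these facts to simplify the coefficients in Proposition~\ref{prop fin}, namely $d_1-2\lambda$, $d_1-1$, $r_1-\lambda$, $d_1(d_1-\lambda)$ and $2(d_1-\lambda)$.

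Next I treat the image of $S$. The coordinate $\gamma_2$ occurs in no $T_i$, and in $S$ only through $d_2t_2=d_1t_2$; this should produce the summand $\frac{I}{d_1I}$. The sixth coordinate of $S(t_1,t_2)$ is $(\lambda-d_1)t_1$, which is invertible in $t_1$. So I will use $t_1$, via $\psi$ and the substitution $\mathfrak f_{22}\mapsto\mathfrak f_{22}+(\lambda-d_1)^{-1}(\cdots)$ in $\phi$, to normalize $\mathfrak f_{22}$ away. Concretely, I take $\phi$ to shift $\mathfrak f_{12}$ and $\gamma_1$ by suitable multiples of $\mathfrak f_{22}$, built from $b$, so that $\wt S(t_1,t_2)=(0,d_1t_2,0,0,0,t_1)$. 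The quotient then reduces to the remaining five coordinates, with $\mathfrak f_{22}$ free and killed.

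The main obstacle is the kernel of $T_1,T_2,T_3$ in the variables $\gamma_1,\mathfrak f_{11},\mathfrak f_{12},\mathfrak f_{21}$, because these are coupled. The equations are:
\begin{align*}
&d_1\mathfrak f_{11}=r_1\mathfrak f_{11}=0,\qquad d_1\mathfrak f_{21}=2d_1\mathfrak f_{21}=0,\\
&d_1\mathfrak f_{12}+(d_1-2\lambda)\gamma_1=0,\qquad d_1\mathfrak f_{22}+(d_1-1)\gamma_1=0,\\
&r_1(r_1-\lambda)\mathfrak f_{11}+r_1\mathfrak f_{12}+(r_1-\lambda)\gamma_1=0,\\
&(d_1-\lambda)\mathfrak f_{11}-(d_1-2\lambda)\mathfrak f_{21}=0,
\end{align*}
together with the fourth coordinate of $T_2$.

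I will first use the $T_3$ relation, together with the invertibility of $\lambda$, to express $\mathfrak f_{11}$ through $\mathfrak f_{21}$ (or conversely). I then use the $T_1$ relations to express $\gamma_1$ through $\mathfrak f_{12}$ and $\mathfrak f_{22}$; here the invertibility of $d-1$ and the identity $d=d_1$ on $I$ are what should make the coefficient of $\gamma_1$ manageable. After these substitutions, the surviving free parameters should be a variable constrained only by $r_1y=0$ and a variable constrained only by $d_1y=0$. These give the summands $I_{r_1}$ and $I_{d_1}$, respectively.

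If a naive elimination fails because some coefficient is not a unit, I will first try replacing $\mathfrak f_{12}$ by $\mathfrak f_{12}-2\lambda b'\gamma_1$, with $b'$ built from $b$. The aim is to decouple the second coordinate of $T_1$ from the second coordinate of $T_2$. Finally, assembling the three summands gives $\ho_{\blackdiamond}^2(H^{d_1d_1}_{\lambda},I)\simeq\frac{I}{d_1I}\oplus I_{r_1}\oplus I_{d_1}$.
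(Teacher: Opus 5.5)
Your plan is correct and is essentially the paper's proof. Requiring $\wt S$ to have no $t_1$-contribution in the $\gamma_1$ and $\mathfrak f_{12}$ slots forces exactly the paper's change of variables $\phi(\gamma_1,\gamma_2,\mathfrak f_{11},\mathfrak f_{12},\mathfrak f_{21},\mathfrak f_{22})=(\gamma_1-d_1b\mathfrak f_{22},\gamma_2,\mathfrak f_{11},\mathfrak f_{12}+(1-\lambda b)\mathfrak f_{22},\mathfrak f_{21},\mathfrak f_{22})$. After this change, the only $\mathfrak f_{22}$-term left in the $\wt T_i$ is $(1-\lambda)d_1b\mathfrak f_{22}$, which vanishes because $(1-\lambda)dI=0$. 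The one minor deviation is that you get $\gamma_1=0$ from $d_1-1$ acting on $I$ as the unit $d-1$, whereas the paper first extracts $2\gamma_1=0$ from $2(d_1-\lambda)\gamma_1=0$ and then uses that $d_1-1$ and $r_1-\lambda$ are odd. Carry out the elimination in the new coordinates: in the old ones, $\mathfrak f_{12}$ is still tied to $\mathfrak f_{22}$ through $r_1\mathfrak f_{12}=(r_1-\lambda)d_1b\mathfrak f_{22}$. The fallback substitution you mention is not needed.
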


\begin{proof} Consider the invertible map $\phi \colon I^6\to I^6$ given by
$$
\phi(\gamma_1,\gamma_2,\mathfrak{f}_{11},\mathfrak{f}_{12},\mathfrak{f}_{21},\mathfrak{f}_{22}) \coloneqq \left(\gamma_1-d_1b\mathfrak{f}_{22},\gamma_2, \mathfrak{f}_{11},\mathfrak{f}_{12}+(1-\lambda b)\mathfrak{f}_{22},\mathfrak{f}_{21},\mathfrak{f}_{22}\right).
$$
Set $\wt{T}_i=T_i \phi$ and $\wt{S}= \phi^{-1} S$, where the $T_i$'s and $S$ are as in Proposition~\ref{prop fin}. By~\eqref{formula00}, we have
\begin{equation}\label{homo10''}
\ho_{\blackdiamond}^2(H^{d_1d_2}_0,I) \simeq \frac{\ker T_1\cap\ker T_2\cap\ker T_3 }{\ima S} \simeq \frac{\ker \wt{T}_1\cap\ker \wt{T}_2\cap\ker \wt{T}_3 }{\ima \wt{S}},
\end{equation}
where the last isomorphism is induced by $\phi^{-1}$. A direct computation shows that
\begin{align*}
&\wt{T}_1(\gamma_1,\gamma_2,\mathfrak{f}_{11},\mathfrak{f}_{12},\mathfrak{f}_{21},\mathfrak{f}_{22})=\left(d_1\mathfrak{f}_{11},d_1\mathfrak{f}_{12} + (d_1-2\lambda)\gamma_1,d_1\mathfrak{f}_{21},(d_1-1)\gamma_1 + (1-\lambda)d_1b\mathfrak{f}_{22}\right),\\
&\wt{T}_2(\gamma_1,\gamma_2,\mathfrak{f}_{11},\mathfrak{f}_{12},\mathfrak{f}_{21},\mathfrak{f}_{22}) =\left(r_1\mathfrak{f}_{11}, r_1(r_1-\lambda) \mathfrak{f}_{11} + r_1\mathfrak{f}_{12} + \left(r_1-\lambda\right)\gamma_1,r_2 \mathfrak{f}_{21},d_1(d_1-\lambda) \mathfrak{f}_{21} + 2(d_1-\lambda)\gamma_1 \right),\\
& \wt{T}_3(\gamma_1,\gamma_2,\mathfrak{f}_{11},\mathfrak{f}_{12},\mathfrak{f}_{21},\mathfrak{f}_{22}) = (0,(d_1-\lambda)\mathfrak{f}_{11} - (d_1-2\lambda)\mathfrak{f}_{21})
\shortintertext{and}
&\wt{S}(t_1,t_2)= (0,d_1 t_2,0, 0,0,(\lambda-d_1)t_1).
\end{align*}
Clearly $(\gamma_1,\gamma_2,\mathfrak{f}_{11},\mathfrak{f}_{12},\mathfrak{f}_{21},\mathfrak{f}_{22})\in \ker\wt{T}_1\cap\ker\wt{T}_2\cap\ker\wt{T}_3$ if and only if
$$
r_1\mathfrak{f}_{11} = d_1\mathfrak{f}_{21} = 2(d_1-\lambda)\gamma_1 = r_1\mathfrak{f}_{12} + \left(r_1-\lambda\right)\gamma_1 = (d_1-1)\gamma_1 + (1-\lambda)d_1b\mathfrak{f}_{22}= 2\lambda \mathfrak{f}_{21}-\lambda \mathfrak{f}_{11} = 0
$$
But since the multiplication in $I$ by $\lambda$ and $d-1$ are bijections, $r_1-\lambda$ and $d_1-1$ are odd this is equivalent to
$$
r_1\mathfrak{f}_{11} = d_1\mathfrak{f}_{21} = 2\gamma_1 = r_1\mathfrak{f}_{12} + \gamma_1 = \gamma_1 + (1-\lambda)d_1b\mathfrak{f}_{22}= 2 \mathfrak{f}_{21}-\mathfrak{f}_{11} = 0
$$
Como $(1-\lambda)dI = 0$, these conditions reduce to
$$
r_1\mathfrak{f}_{11} = d_1\mathfrak{f}_{21} = \gamma_1 = r_1\mathfrak{f}_{12} = 2 \mathfrak{f}_{21}-\mathfrak{f}_{11} = 0.
$$
Hence $(\gamma_1,\gamma_2,\mathfrak{f}_{11},\mathfrak{f}_{12},\mathfrak{f}_{21},\mathfrak{f}_{22})\in \ker\wt{T}_1\cap\ker\wt{T}_2\cap\ker\wt{T}_3$ if and only if
$$
\gamma_1 = 0,\quad \gamma_2\in I,\quad \mathfrak{f}_{12}\in I_{r_1},\quad \mathfrak{f}_{21}\in I_{d_1},\quad \mathfrak{f}_{22}\in I\quad\text{and}\quad \mathfrak{f}_{11} = 2\mathfrak{f}_{21}.
$$
Moreover $\ima\wt{S} = 0\oplus d_1I\oplus 0\oplus 0\oplus 0\oplus I$, because the multiplication in $I$ by $\lambda-d_1$ is a bijection. Hence
$$
\ho_{\blackdiamond,\Yleft}^2(H^{d_1d_1}_{\lambda},I)\simeq \frac{\ker T_1\cap\ker T_2\cap\ker T_3 }{\ima S}\simeq \frac{\ker \wt{T}_1\cap\ker \wt{T}_2\cap\ker \wt{T}_3 }{\ima \wt{S}} \simeq \frac{I}{d_1 I} \oplus I_{r_1} \oplus I_{d_1},
$$
as desired.
\end{proof}

\begin{remark}\label{pepi} Assume that we are under the hypothesis of Corollary~\ref{pendorcho}. The map $\psi\colon I^3\to I^6$, defined by
\begin{equation}\label{pepito}
\psi(\gamma_2,\mathfrak{f}_{12},\mathfrak{f}_{21})\coloneqq (0,\gamma_2,2\mathfrak{f}_{21},\mathfrak{f}_{12},\mathfrak{f}_{21},0),
\end{equation}
induces an~iso\-mor\-phism
$$
\frac{I}{d_1 I} \oplus I_{r_1} \oplus I_{d_1} \longrightarrow \frac{\ker T_1\cap \ker T_2\cap \ker T_3}{\ima S},
$$
where $T_1$, $T_2$, $T_3$ and $S$ are as in Proposition~\ref{prop fin}. Let $Z\subseteq I\oplus I_{r_1} \oplus I_{d_1}$ be such that the canonical epimorphism $Z\to \frac{I}{d_1 I} \oplus I_{r_1} \oplus I_{d_1}$ is a bijection, and let~$T$ be as in~\eqref{formula0}, so that
$$
T(\gamma_1,\gamma_2,\mathfrak{f}_{11},\mathfrak{f}_{12},\mathfrak{f}_{21},\mathfrak{f}_{22}) = (\alpha_{\gamma_1,\gamma_2}, f_{\mathfrak{f}_{11},\mathfrak{f}_{12},\mathfrak{f}_{21},\mathfrak{f}_{22},\gamma_1,\gamma_2}).
$$
Clear\-ly~$T(\psi(Z))$ is a complete family of representatives of~$2$-cocycles modulo coboundaries in $\wh{\mathcal{C}}_N^*(H^{2,d_2}_1,I)$. Hence, computing $T$ becomes essential. Since $\alpha_{\gamma_1,\gamma_2}$ is already known from~\eqref{alpha_{gamma_1,gamma_2}} and we have~\eqref{pepito}, for this it suffices to compute
$$
f_{2\mathfrak{f}_{21},\mathfrak{f}_{12},\mathfrak{f}_{21},0,0,\gamma_2}\quad\text{with $\gamma_2\in I$, $\mathfrak{f}_{12}\in I_{r_1}$ and $\mathfrak{f}_{21}\in I_{d_1}$.}
$$
\end{remark}

\begin{proposition}\label{Ultcalculo de f} Assume that we are under the hypothesis of the previous remark and let $f\coloneqq f_{2\mathfrak{f}_{21}, \mathfrak{f}_{12},\mathfrak{f}_{21},0,0,\gamma_2}$, where $\gamma_2\in I$, $\mathfrak{f}_{12}\in I_{r_1}$ and $\mathfrak{f}_{21}\in I_{d_1}$. We have:
$$
f(a_1^{\times \ell_1}\times a_2^{\times \ell_2},(u,v)) = \left(2\ell_1 u + \ell_2 u + 4\ell_1 \lambda v - 4\ell_1^2 \lambda v - \lambda \ell_1 \ell_2v - \lambda \binom{\ell_2}{2} v\right)\mathfrak{f}_{21} + \ell_1 v \mathfrak{f}_{12},
$$
where $0\le\ell_1,u<d_1$ and $0\le\ell_2,v<d_2$.
\end{proposition}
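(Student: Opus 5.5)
The plan is to follow the proof of Proposition~\ref{calculo de f}, now with $a_1=(\lambda-1,2)$, $a_2=e_2$, $A_1=A_2=\Ide$, $B_1=B_2=0$, and the specific data $\gamma_1=0$, $\mathfrak{f}_{11}=2\mathfrak{f}_{21}$, $\mathfrak{f}_{22}=0$. Since $\Yleft=0$ and $\blackdiamond$ is trivial, the correction term in~\cite{GGV2}*{Proposition~2.37} carrying $C_u^v$ vanishes. The first step is therefore the splitting
$$
f(a_1^{\times \ell_1}\times a_2^{\times \ell_2},(u,v)) = f(a_1^{\times \ell_1},(u,v)) + f(a_2^{\times \ell_2},(u,v)).
$$
By~\cite{GGV2}*{Proposition~2.12} each summand is then a sum over the orbit. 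Using~\eqref{def de punto} and~\eqref{potencia}, for $a_1^{\times j}=(j(\lambda-1)+4\lambda\binom{j}{2},2j)$ we get $a_1^{\times j}\cdot(u,v)=(u-2j\lambda v,v)$, and similarly $a_2^{\times j}\cdot(u,v)=(u-j\lambda v,v)$. This yields
$$
f(a_1^{\times \ell_1},(u,v))=\sum_{j=0}^{\ell_1-1} f(a_1,(u-2j\lambda v,v)),\qquad f(a_2^{\times \ell_2},(u,v))=\sum_{j=0}^{\ell_2-1} f(a_2,(u-j\lambda v,v)).
$$

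Next I evaluate each term with the general formulas~\eqref{formula general para f''1} and~\eqref{formula general para f''2}. The $\gamma_1$-terms disappear because $\gamma_1=0$, and the $\mathfrak{f}_{22}$-term disappears because $\mathfrak{f}_{22}=0$. Two facts let me drop the reductions $\mathrm{r}_{d_1}$ in the coefficients: $d_1\mathfrak{f}_{21}=0$, so $d_1\mathfrak{f}_{11}=0$, and $r_1\mathfrak{f}_{12}=0$ with $r_1\mid d_1$. This leaves
$$
f(a_1,(x,v))=x\,\mathfrak{f}_{11}+v\,\mathfrak{f}_{12}\quad\text{and}\quad f(a_2,(x,v))=x\,\mathfrak{f}_{21}.
$$
Summing and substituting $\mathfrak{f}_{11}=2\mathfrak{f}_{21}$ gives
$$
\Bigl(2\ell_1u-4\lambda v\tbinom{\ell_1}{2}\Bigr)\mathfrak{f}_{21}+\ell_1v\,\mathfrak{f}_{12}+\Bigl(\ell_2u-\lambda v\tbinom{\ell_2}{2}\Bigr)\mathfrak{f}_{21}.
$$

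Finally I would compare this with the stated expression. The stated formula has coefficient $4\ell_1\lambda v-4\ell_1^2\lambda v-\lambda\ell_1\ell_2 v$, while the computation above gives $-2\ell_1(\ell_1-1)\lambda v$ and no cross term. This mismatch is the main obstacle I expect. I suspect the discrepancy comes from the correction term in~\cite{GGV2}*{Proposition~2.37}, or from the $\alpha_{\gamma_1,\gamma_2}$ contribution involving $\gamma_2$ across $a_1^{\times\ell_1}\times a_2^{\times\ell_2}$, which does not vanish identically even when $\Yleft=0$. I would therefore re-derive Proposition~2.37 in this setting rather than assume the additive splitting. In particular I would check whether the second term should be $f(a_2^{\times\ell_2},a_1^{\times\ell_1}\cdot(u,v))$, with argument $(u-2\ell_1\lambda v,v)$. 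That change contributes exactly $-2\lambda\ell_1\ell_2 v\,\mathfrak{f}_{21}$-type cross terms, and I would reconcile the remaining constants modulo $d_1\mathfrak{f}_{21}=0$.
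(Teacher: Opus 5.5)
Your proposal does not reach the statement, for two reasons.

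\textbf{The first step is wrong.} The additive splitting $f(a_1^{\times\ell_1}\times a_2^{\times\ell_2},(u,v))=f(a_1^{\times\ell_1},(u,v))+f(a_2^{\times\ell_2},(u,v))$ holds in Proposition~\ref{calculo de f} only because there $a_1=(1,0)$ acts trivially on $H$. Here $a_1=(\lambda-1,2)$ and $a_1^{\times\ell_1}\cdot(u,v)=(u-2\lambda\ell_1v,v)$. With $\Yleft=0$ and $\blackdiamond$ trivial, \eqref{condicion equivalente compatibilidad suma cdot con hmassimp3 caso simple} reads $f(h+h',h'')=f(h,h'')+f(h\cdot h',h\cdot h'')$. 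Taking $h'={}^hk$, so that $h+h'=h\times k$ and $h\cdot h'=k$, this gives $f(h\times k,x)=f(h,x)+f(k,h\cdot x)$. So the second summand must be $f(a_2^{\times\ell_2},(u-2\lambda\ell_1v,v))$, as you suspect at the end. This is also the paper's route, via Propositions~2.37 and~2.12 of \cite{GGV2}. There is no hidden $\gamma_2$-contribution. With $\gamma_1=0$, $\alpha_{0,\gamma_2}$ depends only on second coordinates, which $x\mapsto h\cdot x$ preserves, so \eqref{eqq 4} makes $f(h,-)$ additive. Moreover, \eqref{eqq5} has no $\beta$-term when $\Yleft=0$.

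\textbf{The final step cannot succeed.} Your plan to ``reconcile the remaining constants modulo $d_1\mathfrak{f}_{21}=0$'' fails. Carrying out the corrected computation gives
\begin{equation*}
f(a_1^{\times \ell_1}\times a_2^{\times \ell_2},(u,v))=\Bigl(2\ell_1u+\ell_2u-2\lambda\ell_1(\ell_1-1)v-2\lambda\ell_1\ell_2v-\lambda\tbinom{\ell_2}{2}v\Bigr)\mathfrak{f}_{21}+\ell_1v\,\mathfrak{f}_{12},
\end{equation*}
which differs from the displayed formula by $\lambda\ell_1v(2-2\ell_1+\ell_2)\mathfrak{f}_{21}$. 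This is not a multiple of $d_1\mathfrak{f}_{21}$.

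Concretely, take $\ell_1=\ell_2=1$ and $(u,v)=(0,1)$. Directly, $f(a_1,e_2)+f(a_2,(-2\lambda,1))=\mathfrak{f}_{12}-2\lambda\mathfrak{f}_{21}$, while the statement predicts $\mathfrak{f}_{12}-\lambda\mathfrak{f}_{21}$. Since $\lambda(d-1)b=\Ide_I$, $\lambda$ is invertible on $I$, so these differ whenever $\mathfrak{f}_{21}\neq 0$.

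As a cross-check, write $a_1^{\times\ell_1}\times a_2^{\times\ell_2}=(x,y)$. The corrected value equals $(uy-vx)\mathfrak{f}_{21}+\ell_1v\bigl((\lambda-1)\mathfrak{f}_{21}+\mathfrak{f}_{12}\bigr)$. This satisfies \eqref{eqq 4} and \eqref{condicion equivalente compatibilidad suma cdot con hmassimp3 caso simple} and takes the prescribed values on $(a_i,e_j)$, which determine $f$.

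The paper's proof reaches the displayed expression through two slips:
\begin{enumerate}
\item It writes the shifted argument as $(u-\lambda\ell_1v,v)$, contradicting its own formula $a_1^{\times j}\cdot(u,v)=(u-2\lambda jv,v)$.
\item It evaluates $\sum_{j<\ell_1}4\lambda jv$ as $4\lambda\ell_1(\ell_1-1)v$ instead of $2\lambda\ell_1(\ell_1-1)v$.
\end{enumerate}
So your method, with the first step fixed, proves the corrected formula above. The displayed one cannot be proved as stated, and you should not try to force agreement with it.
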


\begin{proof} By~\eqref{def de punto} and~\eqref{potencia}, we obtain
\begin{equation*}
a_i^{\times j}\cdot (u,v) = \begin{cases} (u- 2 \lambda jv,v) & \text{if $i=1$,}\\ (u-\lambda jv,v) & \text{if $i=2$.}\end{cases}
\end{equation*}
Hence, by~\cite{GGV2}*{Proposition~2.37},
$$
f(a_1^{\times \ell_1}\times a_2^{\times \ell_2},(u,v)) = f(a_1^{\times \ell_1},(u,v)) + f(a_2^{\times \ell_2},(u-\lambda\ell_1 v,v)).
$$
Consequently, by~\cite{GGV2}*{Proposition~2.12},
\begin{equation}\label{pepitito12}
f(a_1^{\times \ell_1}\times a_2^{\times \ell_2},(u,v)) = \sum_{j=0}^{\ell_1-1} f(a_1,(u-2\lambda j,v)) + \sum_{j=0}^{\ell_2-1} f(a_2,(u-\lambda (\ell_1+j)v,v)).
\end{equation}
Moreover, by~\eqref{formula general para f''1} and~\eqref{formula general para f''2},
\begin{equation*}
f(a_1,(u-2\lambda j v,v)) = 2(u-2\lambda j v)\mathfrak{f}_{21}+v\mathfrak{f}_{12}\quad\text{and}\quad f(a_2,(u-\lambda (\ell_1+j)v,v)) = (u-\lambda (\ell_1+j)v)\mathfrak{f}_{21}.
\end{equation*}
The formula in the statement follows easily from this and~\eqref{pepitito12}.
\end{proof}

\end{document}